\documentclass[10pt,letterpaper]{amsart}

\usepackage{amssymb}
\usepackage{cite}
\usepackage{etoolbox}
\usepackage[hidelinks]{hyperref}
\hypersetup{
  pdftitle={Berry--Esseen Bounds for the Number of Real Zeros of Gaussian Weyl Polynomials},
  pdfauthor={Yuchen Wang, Dawei Lu and Song-Hao Liu},
  pdfsubject={},
  pdfkeywords={random Weyl polynomial, stationary Gaussian process, real zeros, Kac--Rice formula, local dependence, Berry--Esseen bound}
}

\makeatletter
\patchcmd{\andify}{\unskip, \@@and~}{\unskip{} \@@and~}{}
  {\PackageError{manuscript}{Unable to adjust running-head author punctuation}{}}
\patchcmd{\author@andify}{\unskip ,\penalty-2}{\unskip{}\penalty-2}{}
  {\PackageError{manuscript}{Unable to adjust title author punctuation}{}}
\makeatother

\newtheorem{theorem}{Theorem}[section]
\newtheorem{lemma}[theorem]{Lemma}
\newtheorem{proposition}[theorem]{Proposition}
\newtheorem{corollary}[theorem]{Corollary}

\theoremstyle{remark}
\newtheorem{remark}[theorem]{Remark}

\numberwithin{equation}{section}

\newcommand{\E}{\mathbb E}
\newcommand{\Pp}{\mathbb P}
\newcommand{\R}{\mathbb R}
\newcommand{\Var}{\operatorname{Var}}
\newcommand{\Cov}{\operatorname{Cov}}
\newcommand{\dd}{\,\mathrm d}

\begin{document}

\title[Berry--Esseen bounds for zeros of Weyl polynomials]
{Berry--Esseen Bounds for the Number of Real Zeros of
Gaussian Weyl Polynomials}

\author{Yuchen Wang}
\address{\textnormal{Yuchen Wang} School of Mathematical Sciences, Dalian University of Technology}
\email{\href{mailto:wycwyc@mail.dlut.edu.cn}{\nolinkurl{wycwyc@mail.dlut.edu.cn}}}

\author{Dawei Lu}
\address{\textnormal{Dawei Lu} School of Mathematical Sciences, Dalian University of Technology}
\email{\href{mailto:ludawei_dlut@163.com}{\nolinkurl{ludawei_dlut@163.com}}}

\author{Song-Hao Liu}
\address{\textnormal{Song-Hao Liu} School of Mathematical Sciences, Dalian University of Technology}
\email{\href{mailto:liusonghao@dlut.edu.cn}{\nolinkurl{liusonghao@dlut.edu.cn}}}

\subjclass[2020]{Primary 60F05; Secondary 60G15, 26C10, 60G10}
\keywords{random Weyl polynomial, stationary Gaussian process, real zeros,
Kac--Rice formula, local dependence, Berry--Esseen bound}

\date{}

\begin{abstract}
We establish Berry--Esseen bounds for the number of real roots of
Gaussian Weyl polynomials \(P_n\), where \(n\) denotes the degree and is
assumed to be sufficiently large.  For each fixed \(B\) above an absolute
threshold, let
\(I_n=[-\sqrt n+B\sqrt{\log n},\,\sqrt n-B\sqrt{\log n}]\).
Uniformly over deterministic compact intervals \(I\subseteq I_n\) whose length
\(\ell\) is sufficiently large and depends on \(n\), the
distribution of the standardized number of real roots in \(I\) has
 Kolmogorov distance at most \(C\log\ell/\sqrt\ell\) from the standard Gaussian
distribution.  Consequently, every such interval sequence with
\(\ell\to\infty\) satisfies a central limit theorem.  In particular, taking
\(I=I_n\) gives the bound \(C\log n/n^{1/4}\).  The same
bound holds for the distribution of the standardized number of real roots
on \(\mathbb R\).  The key idea is to
approximate the polynomial zero count by a sum of locally dependent random
variables.  We first couple the polynomial to a stationary Gaussian process
and then truncate a moving-average representation of that process to obtain
finite-range dependence.  This strategy provides a route to Berry--Esseen
bounds for other random polynomials with Gaussian coefficients whenever such
a stationary approximation and quantitative truncation are available.
\end{abstract}

\maketitle

\section{Introduction}
\label{sec:introduction}

\subsection{Background}

Many classical real random polynomials are finite random sums
\(\sum_{k=0}^{n}c_{n,k}\xi_kx^k\), where the \(\xi_k\) are random
variables, usually taken to be independent and identically distributed.  The
choices \(c_{n,k}=1\),
\(c_{n,k}=\binom nk^{1/2}\) and \(c_{n,k}=1/\sqrt{k!}\) correspond,
respectively, to the Kac, elliptic (Kostlan--Shub--Smale) and Weyl
(flat) models.  Different choices of the weights \(c_{n,k}\) lead to different
properties of the real zeros.  When the \(\xi_k\) are Gaussian, we use the
Kac--Rice formula to compute expected zero counts, whereas the analysis of
variances and quantitative limit laws requires control of two-point
correlations and dependence.

For a real polynomial \(P\) and a Borel set \(A\subset\R\), let
\(N_P(A)\) be the number of zeros of \(P\) in \(A\), counted with
multiplicity.  By convention, \(N_P(A)=0\) if \(P\equiv0\).  We write
\(N_n(A)=N_{P_n}(A)\), so \(N_n(\R)\) is the total real-zero count.

The probabilistic study of real roots has a long history.  Bloch and
P\'olya obtained a bound for the expected number of real zeros of random
polynomials with discrete coefficients \cite{BlochPolya1932} and the
foundational work
of Littlewood and Offord \cite{LittlewoodOfford1938} and Kac
\cite{Kac1943} established the subject as a systematic part of
probability theory.  Kac's calculation of expected real-zero counts and
Rice's level-crossing theory form the foundations of the modern Kac--Rice
formula \cite{Kac1943,Rice1944,Rice1945}.  Readers interested
in the Kac--Rice formula may consult \cite{AzaisWschebor2009}.
Cram\'er and Leadbetter give a classical stationary-process treatment
in \cite{CramerLeadbetter1967}.  For the Gaussian Kac model, the mean
total real-zero count is \(\frac2\pi\log n+O(1)\).  Maslova later proved
that its variance is asymptotic to
\(\frac4\pi(1-\frac2\pi)\log n\) and that the standardized count
 converges in distribution to a standard Gaussian random variable
\cite{Maslova1974Variance,Maslova1975Distribution}.  The progression
from expectation to variance and then to a limit law became a basic
template for the subject.  Recent work has added concentration and
almost-sure information.  Can and Nguyen proved concentration bounds for
Kac real-zero counts and Do established a local strong law under a
bounded \(2+\varepsilon\) moment assumption
\cite{CanNguyen2025,Do2024StrongLaw}.

A geometric viewpoint relates the classical flat, elliptic and
hyperbolic ensembles to Gaussian analytic functions associated with
the Euclidean plane, the Riemann sphere and the hyperbolic disk
\cite{HoughEtAl2009}.  Their real-coefficient polynomial counterparts
or truncations display sharply different scales.  Kac polynomials are
the finite truncations of the parameter-one hyperbolic series and have
logarithmically many real roots.  For the elliptic model, the expected
number of real zeros equals \(\sqrt n\), its variance is asymptotic to
\(c\sqrt n\) for an explicit constant \(c>0\) and the standardized zero
count satisfies a central limit theorem
\cite{AnconaLetendreKostlan2021,Dalmao2015}.  Armentano et al.\
established variance asymptotics and a central limit theorem for
higher-dimensional Kostlan--Shub--Smale systems
\cite{ArmentanoEtAl2018,ArmentanoEtAl2021}.  Do and Nguyen obtained
precise variance asymptotics for broad classes of generalized Kac and
hyperbolic polynomials, including their derivatives and models with
non-Gaussian coefficients \cite{DoNguyen2025}.  By contrast, the
natural spatial scale for the real zeros of the Weyl model is
\(O(\sqrt n)\).  In the Gaussian case, both \(\E N_n(\R)\) and
\(\Var(N_n(\R))\) have order \(\sqrt n\).  Recent comparison
principles also give sharp variance asymptotics and central limit
theorems for non-centered Kac and hyperbolic ensembles
\cite{DoNguyenORourke2026}.

Related models reinforce the role of the deterministic basis.
For Gaussian random trigonometric polynomials, Dunnage gave a classical
analysis of the real-zero count \cite{Dunnage1966}, Granville and Wigman
studied the zero distribution \cite{GranvilleWigman2011} and
Aza\"{\i}s, Dalmao and Le\'on established a central limit theorem
\cite{AzaisDalmaoLeon2016}.
Coutin and Peralta obtained a rate of convergence by comparing the
distribution of the number of zeros of a random trigonometric polynomial with
the corresponding distribution for a stationary Gaussian process with sinc
covariance \cite{CoutinPeralta2023}.  Their result is therefore different from
a Berry--Esseen bound, which concerns convergence to the standard Gaussian
distribution in Kolmogorov distance.
Do, Nguyen and Nguyen distinguish universal and
coefficient-dependent parts of the variance
\cite{DoNguyenNguyen2022}, whereas Nguyen and Zeitouni prove
exponential concentration \cite{NguyenZeitouni2024}.  A substantial
literature also studies random orthogonal polynomials.  Readers
interested in this topic may consult
\cite{DoEtAl2024,DoNguyenVuOrthogonal2023,
LubinskyPritsker2021,LubinskyPritskerXie2018}.
These developments connect Kac--Rice theory, stationary limits,
 orthogonal expansions and Gaussian approximation.

Universality with respect to the coefficient law provides another
organizing principle.  Tao and Vu proved through a replacement
principle that, under suitable moment, tail and non-concentration
assumptions, replacing the coefficient law by one that matches its
first two moments leaves local real and complex zero correlations
asymptotically unchanged \cite{TaoVu2015}.  Do, Nguyen and Vu
proved expectation and root-repulsion estimates for Kac polynomials
\cite{DoNguyenVu2015} and optimal local universality for ensembles
with polynomially growing weights \cite{DoNguyenVu2018}.  Nguyen and
Vu developed a unified framework that treats the Kac, Weyl, elliptic
and random trigonometric models on the same footing
\cite{NguyenVu2022} and they also proved
central limit theorems for broad polynomial-growth ensembles
\cite{NguyenVu2021}.  At the expectation level, recent work identifies
the first distribution-dependent constant term for Kac real-zero
counts under broad moment assumptions \cite{LamNguyen2025}.  These
results demonstrate the robustness of limiting zero statistics across
coefficient distributions.

Real-zero counts are nonlinear functionals of Gaussian processes and
geometric invariants of random equations.  They serve as test cases
for mixing, local dependence and quantitative central limit theory.
They also arise in random real algebraic geometry and describe the
level-crossing behavior of random signals
\cite{ArmentanoEtAl2018,ArmentanoEtAl2021,AzaisWschebor2009,
EdelmanKostlan1995}.

For the Gaussian Weyl ensemble, Do and Vu proved exact
square-root-order asymptotics for the mean and variance of the total
real-zero count, a central limit theorem for its standardized form
and corresponding variance and limit results for growing-window
linear statistics \cite{DoVu2020}.  Related work develops moment,
variance, concentration and limit theory for zeros of smooth
stationary Gaussian processes
\cite{AnconaLetendre2021,AssafBuckleyFeldheim2023,
BasuEtAl2020,Gass2023}, as well as cumulant and central-moment methods
for Gaussian analytic functions, including Weyl functions
\cite{Nguyen2024}.  Recent results also provide concentration
estimates for several Gaussian and non-Gaussian ensembles and
leading-order expectation and variance universality for Weyl
polynomials with general coefficients
\cite{AguirreNguyenWang2025,AguirreNguyenWangWeyl2025}.
Together, these results provide a substantial qualitative
understanding of Weyl zero counts, including the \(n^{1/4}\)
fluctuation scale and a Gaussian limit.

A natural next question is how rapidly the standardized zero count
 approaches the standard Gaussian law.  A classical way to measure this
rate is through a Berry--Esseen bound.  The pioneering works of Berry
and Esseen established uniform bounds for the Gaussian approximation
of sums of independent random variables
\cite{Berry1941,Esseen1942}.  In the i.i.d.\ setting with a finite
third absolute moment, the classical Berry--Esseen theorem gives an
\(O(n^{-1/2})\) error bound, uniform over all thresholds.

Drawing on the local-dependence Gaussian approximation of Chen and Shao
\cite{ChenShao2004}, we seek a quantitative Gaussian approximation for
Weyl zero counts, first on growing intervals contained in
\([-\sqrt n,\sqrt n]\) and separated from its endpoints by distances
proportional to \(\sqrt{\log n}\) and then on the entire real line.  The
separation is imposed because the real-zero density begins to change near
\(\pm\sqrt n\).

\subsection{Main results}

This paper studies quantitative Gaussian approximation for zero counts in the
Gaussian Weyl ensemble.  We work on one coefficient space carrying independent
standard Gaussian random variables \((\xi_k)_{k\geq0}\).  For every
integer \(n\geq0\), we define the Gaussian Weyl polynomial
\begin{equation}
 P_n(x)=\sum_{k=0}^{n}\frac{\xi_k}{\sqrt{k!}}x^k,
 \qquad x\in\R.
 \label{eq:weyl-polynomial}
\end{equation}
The corresponding real-zero counts are Borel-measurable functions of the
coefficient vector.
Section~\ref{sec:preliminaries} introduces the full Weyl series and its
stationary normalization.

For the total real-zero count \(N_n(\R)\), the precise asymptotics of Do and
Vu are
\begin{equation}
 \lim_{n\to\infty}
 \frac{\E N_n(\R)}{\sqrt n}
 =
 \frac2\pi,
 \qquad
 \lim_{n\to\infty}
 \frac{\Var(N_n(\R))}{\sqrt n}
 =
 2V_\infty,
 \label{eq:intro-known-Weyl-results}
\end{equation}
where \(V_\infty=0.18198\ldots>0\) is the asymptotic variance per unit
length of the stationary Weyl zero count.  We use these limits only to
normalize the zero counts.  We obtain Berry--Esseen bounds, first for
zero counts on growing subintervals of \(I_n\) and then for
\(N_n(\R)\).

For \(B>0\) and all sufficiently large \(n\), depending on \(B\), we
set
\begin{equation}
 L_n=\sqrt n-B\sqrt{\log n},
 \qquad
 I_n=[-L_n,L_n].
 \label{eq:logarithmic-edge-interval}
\end{equation}
Unless stated otherwise, \(I\) denotes a deterministic compact interval and
\(\ell=|I|\) denotes its length.  When both endpoints are named explicitly,
we write \(I=[\varphi,\psi]\).  In
Theorem~\ref{thm:polynomial-logarithmic-edge-BE}, we assume
\(I\subseteq I_n\).
We apply Lemma~\ref{lem:appendix-do-vu-thm5} with
\(h=\mathbf 1_{[-1,1]}\) and window sequence \(R_n=L_n\) and obtain,
for every fixed
\(B>0\),
\begin{equation}
 \lim_{n\to\infty}
 \frac{\Var(N_n(I_n))}{\sqrt n}
 =
 2V_\infty.
 \label{eq:intro-growing-window-variance}
\end{equation}
We let \(Z\) be a standard Gaussian random variable.  Its
distribution function is
\begin{equation}
 \Phi(z)=
 \frac{1}{\sqrt{2\pi}}
 \int_{-\infty}^{z}e^{-u^{2}/2}\dd u,
 \qquad z\in\R.
 \label{eq:standard-Gaussian-distribution-function}
\end{equation}
For a real random variable \(U\), we define
\begin{equation}
 d_{\mathrm K}(U,Z)
 =
 \sup_{z\in\R}|\Pp(U\leq z)-\Phi(z)|.
 \label{eq:Kolmogorov-distance-definition}
\end{equation}
Our two principal conclusions are as follows.  The first gives a quantitative
Gaussian approximation for \(N_n(I)\), uniformly over deterministic intervals
\(I\subseteq I_n\) and identifies the corresponding variance density.

\begin{theorem}
\label{thm:polynomial-logarithmic-edge-BE}
There are finite absolute constants \(B_0,C>0\) such that the following holds.
For every fixed \(B\geq B_0\) and every deterministic sequence
\(r_n\to\infty\) satisfying \(r_n\leq|I_n|\) eventually, for all sufficiently
large \(n\), we have
\begin{equation}
 \sup_{\substack{I\subseteq I_n\\
                  |I|\geq r_n}}
 \frac{\sqrt{|I|}}{\log |I|}\,
 d_{\mathrm K}\!\left(
 \frac{N_n(I)-\E N_n(I)}
  {\sqrt{\Var(N_n(I))}},
  Z
 \right)
 \leq
 C
 \label{eq:polynomial-logarithmic-edge-BE}
\end{equation}
and
\begin{equation}
 \sup_{\substack{I\subseteq I_n\\
                  |I|\geq r_n}}
 \left|
  \frac{\Var(N_n(I))}{|I|}-V_\infty
 \right|
 \longrightarrow0.
 \label{eq:polynomial-bulk-subinterval-variance-asymptotic}
\end{equation}
\end{theorem}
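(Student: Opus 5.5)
We follow the strategy announced in the abstract: compare $N_n(I)$ with the zero count of a stationary Gaussian process, then with that of a finite-range-dependent truncation, and apply the Chen--Shao local-dependence bound \cite{ChenShao2004}.

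\emph{Stationary coupling.} Write $P_n(x)=e^{x^2/2}\bigl(X(x)-R_n(x)\bigr)$. Here $X(x)=e^{-x^2/2}\sum_{k\geq0}\xi_kx^k/\sqrt{k!}$ is the stationary Gaussian process with covariance $e^{-(x-y)^2/2}$, and $R_n$ is the normalized tail $\sum_{k>n}$. Zeros of $P_n$ are exactly zeros of $X-R_n$. For $|x|\leq L_n=\sqrt n-B\sqrt{\log n}$, a Gaussian tail estimate for the Poisson-type weights $x^{2k}e^{-x^2}/k!$ gives $\Var R_n(x)\lesssim e^{-cB^2\log n}=n^{-cB^2}$, and the same bound holds for $R_n'$ and $R_n''$. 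Choosing $B_0$ large makes the $C^1$ norm of $R_n$ on $I_n$ at most $n^{-10}$ outside an event of probability $n^{-10}$. By a standard stability argument, $N_n(I)$ equals $N_X(I)$ except on an event of small probability. That stability argument uses a Kac--Rice bound on the probability that $|X|$ and $|X'|$ are simultaneously small somewhere in $I$, or that a zero lies near $\partial I$; this probability is $O(\ell\, n^{-c})$. We also need the $L^2$ difference $\E|N_n(I)-N_X(I)|^2$ to be negligible, which follows from Hölder's inequality together with finite fourth moments of zero counts on unit intervals. Since $\ell\leq 2\sqrt n$, all such errors are $O(\ell^{-1})$ and do not affect the rate.

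\emph{Moving-average truncation.} Represent $X=g*W$, where $W$ is white noise and $g(t)=c\,e^{-t^2}$ with $g*g$ equal to the covariance. Set $X_m=(g\mathbf 1_{[-m,m]})*W$ with $m=C_1\sqrt{\log\ell}$. Then $X-X_m$ has $C^2$ norm of size $e^{-m^2}\leq\ell^{-C_1^2}$ in $L^2$, uniformly in position. The same perturbation argument gives $\|N_X(I)-N_{X_m}(I)\|_{L^2}\lesssim\ell^{-2}$, and also shows that the variances agree up to $o(\ell)$. Next split $I$ into unit blocks $J_i$ and write $N_{X_m}(I)=\sum_i\eta_i$ with $\eta_i=N_{X_m}(J_i)$. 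The $\eta_i$ are $2m$-dependent. Chen--Shao then gives Kolmogorov distance $\lesssim m^2\sum_i\E|\eta_i-\E\eta_i|^3/\sigma^3$, where $\sigma^2=\Var N_{X_m}(I)$. Using $\E\eta_i^3=O(1)$, which follows from the factorial-moment Kac--Rice bound for nondegenerate smooth processes, and $\sigma^2\asymp\ell$, this is $\lesssim m^2\ell/\ell^{3/2}=\log\ell/\sqrt\ell$. This is exactly the claimed rate, and it explains the $\log\ell$ loss.

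\emph{Variance and transfer back.} We need $\Var N_X(I)=V_\infty\ell+O(1)$. This follows from the integrable two-point Kac--Rice correlation of the stationary process, whose covariance and derivatives decay like $e^{-t^2/2}$; hence the variance is linear in $\ell$ plus a bounded boundary term, with $V_\infty>0$ as in \eqref{eq:intro-known-Weyl-results}. The coupling estimates then give $\Var N_n(I)=V_\infty\ell+o(\ell)$ uniformly over $I\subseteq I_n$, which proves \eqref{eq:polynomial-bulk-subinterval-variance-asymptotic}. To pass from the Kolmogorov bound for $N_{X_m}$ to one for $N_n$, we use the fact that $L^2$ or probability-level perturbations $\Delta$ change $d_{\mathrm K}$ by at most $\Pp(\Delta\neq0)+$ the change in normalization. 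The normalization change is controlled by the variance comparison together with the Lipschitz property of $\Phi$.

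The main obstacle is the perturbation step: showing, uniformly over windows of size up to $\sqrt n$, that a $C^1$-small perturbation changes the zero count only on an event of polynomially small probability. This needs quantitative non-degeneracy, namely Kac--Rice control of near-double zeros and of zeros near the endpoints, together with moment bounds that let us also compare variances.
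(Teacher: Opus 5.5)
Your architecture matches the paper's: coupling $P_n$ to the stationary Weyl process on $I_n$, a moving-average truncation at range $m\asymp\sqrt{\log\ell}$, unit blocks, Chen--Shao with $p=3$ (which is where $m^2\asymp\log\ell$ comes from), and a transfer of normalizations. The step that fails is the truncation itself. The sharp cutoff $\widetilde g=g\mathbf 1_{[-m,m]}$ does not give a differentiable process. Write $\beta$ for the two-sided Brownian motion with $W(\mathrm du)=\mathrm d\beta(u)$ and integrate by parts, using that $g$ is even:
\[
 X_m(t)=\int_{t-m}^{t+m}g(t-u)\,W(\mathrm du)
 =g(m)\bigl(\beta(t+m)-\beta(t-m)\bigr)+\int_{t-m}^{t+m}g'(t-u)\,\beta(u)\dd u .
\]
So $X_m$ is a $C^1$ function plus $g(m)$ times a process that is locally $\sqrt2$ times a Brownian motion. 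Correspondingly, its covariance satisfies $r_m(\tau)=r_m(0)-g(m)^2|\tau|+O(\tau^2)$, and its second spectral moment is infinite. This has three consequences:
\begin{itemize}
\item Neither $X_m$ nor $X-X_m$ has $C^1$ paths, so your claim that $X-X_m$ is $e^{-m^2}$-small in $C^2$ is false; only its uniform norm is small.
\item The $C^1$ stability argument therefore cannot be used to match the zeros of $X$ and $X_m$.
\item Worse, Rice's formula in its Ito--Ylvisaker form says the mean number of zeros of a stationary Gaussian process in an interval is finite if and only if $\lambda_2<\infty$. Hence $\E N_{X_m}(J)=\infty$ for every nondegenerate interval $J$. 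The block counts $\eta_i$ are not even integrable, so $\E\eta_i^3=O(1)$ and the Chen--Shao step make no sense.
\end{itemize}

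The repair is the one the paper uses. Take $g\chi(\cdot/m)$ with $\chi\in C_c^\infty(\R)$, $\chi=1$ on $[-1/4,1/4]$ and $\operatorname{supp}\chi\subseteq[-1/2,1/2]$, renormalized to unit $L^2$ norm. This keeps exact finite-range dependence, produces a $C^\infty$ process, and gives $\|g_m^{(j)}-g^{(j)}\|_2\le C_{j,\chi}e^{-m^2/64}$ for every $j$ (Lemma~\ref{lem:finite-range-kernel-approximation}). That derivative bound is what makes the $C^1$ coupling and the zero matching work, with failure probability $C\ell^{-16}$ for $m=b\sqrt{\log\ell}$.

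Even after this fix, you need the block moment bound uniformly in $m$, because $m=\mu_\ell\to\infty$. A Kac--Rice factorial-moment bound for one fixed nondegenerate process does not give this automatically. The paper obtains it from three ingredients:
\begin{itemize}
\item the uniform lower bound $(8-\sqrt{58})/2$ on the smallest eigenvalue of the covariance of $(q_{\infty,m}^{(j)}(t))_{0\le j\le3}$ for $m\ge m_*$ (Lemma~\ref{lem:finite-range-C4-and-jet});
\item the $C^4([-1,2])$ convergence $q_{\infty,m}\to q_\infty$;
\item the Gass--Stecconi moment continuity (Lemma~\ref{lem:appendix-gass-stecconi-rem1-3}).
\end{itemize}
Together these give $\sup_{m\ge m_1}\E N_{\infty,m}([0,1])^4<\infty$ (Proposition~\ref{prop:finite-range-uniform-local-moments}). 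With these two changes your outline goes through. Your two-point Kac--Rice derivation of $\Var N_X(I)=V_\infty\ell+O(1)$ is a legitimate alternative to the paper's appeal to Lemma~\ref{lem:appendix-do-vu-thm6}.
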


Since \(|I_n|/\sqrt n\to2\), we set \(r_n=|I_n|\) in
\eqref{eq:polynomial-logarithmic-edge-BE} and obtain the bound
\(C(\log n)n^{-1/4}\) for the Kolmogorov distance between the distribution of
the standardized version of \(N_n(I_n)\) and that of \(Z\).  We use this
estimate in the proof of
Theorem~\ref{thm:full-line-polynomial-BE}.

The second conclusion extends the quantitative Gaussian approximation to the
full real line.

\begin{theorem}
\label{thm:full-line-polynomial-BE}
There is a finite absolute constant \(C\) such that, for all
sufficiently large \(n\),
\begin{equation}
 d_{\mathrm K}\!\left(
  \frac{N_n(\R)-\E N_n(\R)}
  {\sqrt{\Var(N_n(\R))}},
 Z
 \right)
 \leq
 C\frac{\log n}{n^{1/4}}.
 \label{eq:full-line-polynomial-BE}
\end{equation}
\end{theorem}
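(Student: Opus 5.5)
We derive Theorem~\ref{thm:full-line-polynomial-BE} from Theorem~\ref{thm:polynomial-logarithmic-edge-BE} with \(r_n=|I_n|\) by a perturbation argument. Decompose
\[
N_n(\R)=N_n(I_n)+N_n(\R\setminus I_n)=:X_n+Y_n .
\]
Theorem~\ref{thm:polynomial-logarithmic-edge-BE} gives
\[
d_{\mathrm K}\bigl((X_n-\E X_n)/\sqrt{\Var X_n},Z\bigr)\le C\log n/n^{1/4},
\]
since \(|I_n|\asymp\sqrt n\). From \eqref{eq:intro-known-Weyl-results} and \eqref{eq:intro-growing-window-variance}, both \(\Var(N_n(\R))\) and \(\Var X_n\) are asymptotic to \(2V_\infty\sqrt n\). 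The task is to show that the edge count \(Y_n\) moves neither the centering, nor the scale, nor the distribution by more than \(O(\log n/n^{1/4})\) in Kolmogorov distance.

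The key input is a second-moment bound on the edge piece of the form
\begin{equation}
\Var(Y_n)\le C\sqrt{\log n}\,(\log n)^{a}
\label{eq:edge-variance-plan}
\end{equation}
for some small \(a\), or at least \(\Var(Y_n)=O(\sqrt{\log n})\). Split \(\R\setminus I_n\) into two parts.

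\emph{Near-edge windows.} These are \([L_n,\sqrt n+K\sqrt{\log n}]\) and its mirror image. Each has length \(O(\sqrt{\log n})\). The Kac--Rice two-point function has locally bounded density with decaying correlations (the same estimates that underlie the variance results quoted from Do--Vu), so the variance of the count in a window of length \(m\) is \(O(m)\).

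\emph{Far region.} This is \(|x|>\sqrt n+K\sqrt{\log n}\). There \(P_n\) is dominated by its top terms, and the expected number of zeros is \(O(1)\) (the known Weyl edge behaviour). The variance is then controlled either by Kac--Rice second factorial moments, or by the crude bound \(N_n\le n\) combined with a tail estimate for the event that the top coefficient fails to dominate.

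Together these give \(\Var(Y_n)=O(\sqrt{\log n})\). By Cauchy--Schwarz, \(|\Cov(X_n,Y_n)|=O(n^{1/4}(\log n)^{1/4})\), and hence
\[
\Var(N_n(\R))/\Var(X_n)=1+O\bigl((\log n)^{1/4}n^{-1/4}\bigr).
\]

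With \(\sigma^2=\Var(N_n(\R))\), write
\[
\frac{N_n(\R)-\E N_n(\R)}{\sigma}=\frac{\sqrt{\Var X_n}}{\sigma}\,U_n+W_n,
\qquad U_n=\frac{X_n-\E X_n}{\sqrt{\Var X_n}},\quad W_n=\frac{Y_n-\E Y_n}{\sigma}.
\]
The rescaling factor is \(1+O(n^{-1/4}(\log n)^{1/4})\), which costs that amount in Kolmogorov distance because \(\Phi\) is Lipschitz with uniformly controlled scale changes. For \(W_n\) we use the standard smoothing inequality
\[
d_{\mathrm K}(U+W,Z)\le d_{\mathrm K}(U,Z)+\Pp(|W|>\varepsilon)+\varepsilon/\sqrt{2\pi}.
\]
Chebyshev alone gives \(\Pp(|W_n|>\varepsilon)\le C\sqrt{\log n}/(\sqrt n\,\varepsilon^2)\). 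With \(\varepsilon=\log n/n^{1/4}\) this is \(O((\log n)^{-3/2})\), which is too weak. We therefore need a higher-moment or exponential concentration bound for \(Y_n-\E Y_n\).

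\emph{Main obstacle: concentration of \(Y_n\).} The plan is to obtain a fourth-moment bound \(\E(Y_n-\E Y_n)^4=O(\log n)\) from Kac--Rice \(k\)-point factorial moments on the short edge windows. This yields \(\Pp(|W_n|>\varepsilon)\le C\log n/(n\varepsilon^4)=O((\log n)^{-3})\), which is still insufficient. A sixth or higher moment does not fix this either, since Markov-type bounds lose polynomially in \(\varepsilon^{-1}\). The most promising route is an exponential concentration inequality, for instance of Nguyen--Zeitouni type via Gaussian concentration of a Lipschitz surrogate for zero counts on windows of length \(\sqrt{\log n}\). This would give
\[
\Pp\bigl(|Y_n-\E Y_n|>t\bigr)\le 2\exp\bigl(-ct/(\log n)^{1/2}\bigr).
\]
Taking \(t=(\log n)^{3/2}\) makes the probability \(O(n^{-1})\), while \(t/\sigma\lesssim\log n/n^{1/4}\). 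Establishing this edge concentration is where I expect the real work to lie.

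A fallback is to avoid perturbation altogether and rerun the local-dependence argument of Theorem~\ref{thm:polynomial-logarithmic-edge-BE} on all of \(\R\). In that approach the edge blocks are added to the Chen--Shao sum as \(O(1)\) extra dependent summands with bounded third moments.
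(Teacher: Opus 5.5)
Your reduction to controlling $Y_n=N_n(\R\setminus I_n)$ is sound. But the step you flag as the main obstacle is left open, and your proposed way around it would not give the stated rate even if it were proved.

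To use the smoothing inequality at $\varepsilon\asymp\log n/n^{1/4}$, you need $\Pp(|Y_n-\E Y_n|>C\log n)\le C\log n/n^{1/4}$. A tail of the form $2\exp(-ct/\sqrt{\log n})$ gives only $2\exp(-cC\sqrt{\log n})$ at $t=C\log n$. Making it polynomially small forces $t\gtrsim(\log n)^{3/2}$, and then the smoothing cost is $t/\sigma\asymp(\log n)^{3/2}/n^{1/4}$, not $\log n/n^{1/4}$ as you assert.

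Your description of the far region is also wrong in a way that matters. Under $x=\sqrt n/u$ the polynomial becomes $H_n(u)=\sum_j\kappa_{n,j}\xi_{n-j}u^j$ with $\kappa_{n,j}^2\approx e^{-j^2/(2n)}$. This is a Kac-type polynomial of effective degree $\sqrt n$, with real-zero density about $1/(\pi(1-u^2))$ away from $u=1$. So $\{|x|>\sqrt n+K\sqrt{\log n}\}$ carries on average of order $\log n$ real zeros, not $O(1)$. Covering it takes about $\log_2 n$ dyadic Jensen disks, and a union bound with polynomially small failure probability then only gives a count bound of order $(\log n)^2$. That costs $(\log n)^2/n^{1/4}$.

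The Chen--Shao fallback is also unsupported. Outside $I_n$ the polynomial is not coupled to the $m$-dependent stationary process. The exterior count is not $O(1)$, and nothing shows it is finite-range dependent on the bulk blocks.

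The idea you are missing is to remove the far region by independence rather than by concentration. The paper splits the coefficients at $m_n=\lfloor n-B\sqrt{n\log n}\rfloor$. This works because:
\begin{itemize}
\item for $x\in I_n$ the Poisson weights $e^{-x^2}x^{2k}/k!$ are negligible for $k>m_n$;
\item in the reciprocal variable, $\kappa_{n,j}^2\le e^{-j^2/(4n)}\le n^{-100}$ for $j\ge n-m_n$.
\end{itemize}
The $C^1$ zero-matching lemma then gives $N_n(I_n)=N_{P_{m_n}}(I_n)$ and $N_n(I_n^{\mathrm{out}})=N_{P_n-P_{m_n}}(I_n^{\mathrm{out}})$ off an event of probability $O(n^{-16})$. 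The two right-hand sides depend on disjoint sets of coefficients, so they are independent.

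For an independent summand $V$, conditioning on $V$ and expanding $\Phi$ to second order kills the linear term. The Kolmogorov cost is then only $\Var(V)/\Var(U)$ (Lemma~\ref{lem:full-line-independent-transfer}). The dyadic Jensen covering of $H_n$ gives $\Var(V)=O((\log n)^2)$, so this cost is $O((\log n)^2/\sqrt n)$, with no concentration needed.

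Only the strip $I_n^{\mathrm{edge}}$, of length $B\sqrt{\log n}$ inside $[-\sqrt n,\sqrt n]$, is added without independence. There a single Jensen disk of radius of order $\sqrt{\log n}$ suffices, using $\E|q_n(z)|^2\le e^{2(\operatorname{Im}z)^2}$ and $\Var(q_n(L_n))\ge1/2$. It shows $N_n(I_n^{\mathrm{edge}})\le C\log n$ off an event of probability $n^{-8}$. Lemma~\ref{lem:full-line-additive-transfer} turns this into an $O(\log n/n^{1/4})$ Kolmogorov cost.
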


\begin{remark}
For each fixed \(B\geq B_0\), the same conclusion holds with \(\R\) replaced
by any deterministic interval \(J=J(n)\supseteq I_n\).
Indeed, the same edge and exterior estimates apply to \(J\setminus I_n\),
with any finite endpoints outside \([-\sqrt n,\sqrt n]\) handled by the same
Gaussian small-ball argument.  Thus the zeros in
\(J\setminus I_n\) are negligible on the \(n^{1/4}\) standard-deviation
scale.  We state the
theorem on \(\R\) because the total number of real zeros of a polynomial is
usually the quantity of primary interest.
\end{remark}

The proof of Theorem~\ref{thm:full-line-polynomial-BE} uses
Theorem~\ref{thm:polynomial-logarithmic-edge-BE} with \(r_n=|I_n|\).  The
logarithmic factor in \eqref{eq:polynomial-logarithmic-edge-BE} comes from
the square of the finite dependence range \(O(\sqrt{\log\ell})\).  With high
probability, the zero counts on \(I_n\)
and on \(\R\setminus[-\sqrt n,\sqrt n]\) agree with two zero counts that
depend on disjoint sets of coefficients and are therefore independent.  The
second of these two counts has variance \(O((\log n)^2)\), whereas
\(\Var(N_n(I_n))\) has order \(\sqrt n\).  Hence adding the zero count outside
\([-\sqrt n,\sqrt n]\) contributes \(O((\log n)^2/\sqrt n)\) to the
Kolmogorov error.  Adding
\(N_n([-\sqrt n,\sqrt n]\setminus I_n)\) contributes
\(O((\log n)/n^{1/4})\), which has the same order as the bound for
\(N_n(I_n)\).  Hence the final rate remains
\(O((\log n)/n^{1/4})\).
We expect that the Kolmogorov bound in
Theorem~\ref{thm:polynomial-logarithmic-edge-BE} can be improved to order
\(\ell^{-1/2}\).  For \(I=I_n\), this corresponds to order \(n^{-1/4}\).
Thus, a natural direction for further work is to reduce the logarithmic
loss in the present bounds.

The theoretical contribution is an explicit Kolmogorov rate uniformly
over growing subintervals of \(I_n\) and on the full real line.  The main
new technical ingredient is a quantitative coupling among \(P_n\), the
stationary Weyl process and an \(m\)-dependent Gaussian process.  This coupling
preserves the relevant zero counts with high probability and controls the
displacement of ordered zeros.  We combine it with local-dependence Gaussian
approximation and use Jensen estimates for reciprocal polynomials to control
the real zeros outside \([-\sqrt n,\sqrt n]\).  Previous proofs
of qualitative central limit theorems for stationary Gaussian zero
crossings and random-polynomial zero counts also truncate moving-average
kernels and partition intervals into blocks
\cite{Cuzick1976,GranvilleWigman2011,NguyenVu2021}.  Here we combine
these steps quantitatively with a high-probability matching of the zeros
for the Weyl model.

The proof has three main components.

First, we compare \(P_n\) with the stationary Weyl process on growing
subintervals of \(I_n\) and match their zeros with high probability.
This comparison
transfers the required zero-count estimates from the stationary
process to the polynomial.

Second, we approximate the stationary process by a smooth
finite-range moving average.  We also show that its zero count can be
written as a sum of locally dependent random variables, which allows
us to obtain a quantitative Gaussian approximation.  Combining this
result with the first comparison, we prove
Theorem~\ref{thm:polynomial-logarithmic-edge-BE}.

Third, we decompose \(N_n(\R)\) into the numbers of zeros in \(I_n\), in
\([-\sqrt n,\sqrt n]\setminus I_n\) and in
\(\R\setminus[-\sqrt n,\sqrt n]\).  We use Jensen's formula to control the
numbers of zeros in \([-\sqrt n,\sqrt n]\setminus I_n\) and
\(\R\setminus[-\sqrt n,\sqrt n]\) and split the coefficient sequence to
construct independent random variables that, with high probability, agree
with \(N_n(I_n)\) and with the number of zeros outside
\([-\sqrt n,\sqrt n]\), respectively.  These estimates allow us to pass from
Theorem~\ref{thm:polynomial-logarithmic-edge-BE} to
Theorem~\ref{thm:full-line-polynomial-BE}.

\subsection*{Organization of the paper}

We organize the remainder of the paper as follows.  In
Section~\ref{sec:preliminaries}, we introduce the notation and state the
results used in the proofs of the main theorems, together with several useful
tools.  In
Sections~\ref{sec:proof-bulk-main-theorem} and
\ref{sec:proof-full-line-main-theorem}, we prove
Theorems~\ref{thm:polynomial-logarithmic-edge-BE} and
\ref{thm:full-line-polynomial-BE}, respectively.  In
Sections~\ref{sec:bulk-result-proofs} and \ref{sec:full-line-BE}, we establish,
in logical order, the results stated in Sections~\ref{subsec:bulk-results} and
\ref{subsec:full-line-results}, respectively.  In
Appendix~\ref{app:external-results}, we collect the external results used in
the proofs and group them by source, ordering the sources by their first use in
the paper.

\section{Preliminaries}
\label{sec:preliminaries}

We first introduce the notation used throughout the paper.  In
Sections~\ref{subsec:bulk-results} and \ref{subsec:full-line-results}, we state
the results needed to prove Theorems~\ref{thm:polynomial-logarithmic-edge-BE}
and \ref{thm:full-line-polynomial-BE}, respectively.  In
Section~\ref{subsec:useful-tools}, we collect several useful tools.

\subsection{Notational conventions}

We use the following conventions throughout the paper.  All
unqualified limits concern \(n\) tending to infinity and \(\log\) is
the natural logarithm.  We allow the implicit lower bound on \(n\) to
depend on fixed parameters when necessary, even when the displayed
constant is absolute.  For positive \(v_n\), the notation
\(u_n=O_\alpha(v_n)\) means that
\(|u_n|\leq C_\alpha v_n\) for all sufficiently large \(n\), where
the constant is independent of the asymptotic variables and depends
only on the displayed parameter \(\alpha\).  We write \(u_n=o(v_n)\)
when \(\lim_{n\to\infty}u_n/v_n=0\).  We use \(C>0\) to denote a
finite absolute constant whose value may change from line to line
and we add subscripts when necessary to indicate its dependence on or
relation to other quantities.  We use \(c>0\) analogously in
exponential estimates.  We reserve \(P_n\) for the finite Gaussian
Weyl polynomials and \(P_\infty\) for the full Weyl series.  We reserve Roman
\(N\) for real-zero counts.  We write \(\mathbb N_0\) for the
set of natural numbers including zero.

For \(f\in L^2(\R)\), we write
\[
 \|f\|_2=\|f\|_{L^2(\R)}
 =\left(\int_{\R}|f(u)|^2\dd u\right)^{1/2}.
\]
For a random variable \(X\) with \(\E|X|^p<\infty\), where
\(p\geq1\), we write
\[
 \|X\|_{L^p}=\left(\E|X|^p\right)^{1/p}.
\]
Thus \(\|\cdot\|_2\) denotes the deterministic \(L^2(\R)\) norm,
whereas \(\|\cdot\|_{L^p}\) denotes the norm on the underlying
probability space.

We let \(\ell^2(\mathbb N_0)\) be the real square-summable sequence
space.  For \(a=(a_k)_{k\geq0},b=(b_k)_{k\geq0}\in
\ell^2(\mathbb N_0)\), we write
\[
 \|a\|_{\ell^2}=\left(\sum_{k=0}^{\infty}|a_k|^2\right)^{1/2},
 \qquad
 \langle a,b\rangle_{\ell^2}=\sum_{k=0}^{\infty}a_kb_k.
\]
For an integer \(d\geq1\), \(x\in\R^d\) and a real
\(d\)-by-\(d\) matrix \(A\), we write
\[
 |x|=\left(\sum_{i=1}^{d}x_i^2\right)^{1/2},
 \qquad
 \|A\|_{\mathrm{op}}=\sup_{|x|=1}|Ax|.
\]
For a bounded function \(h\) on a set \(D\), we write
\(\|h\|_\infty=\sup_{u\in D}|h(u)|\).  For nonnegative integers
\(k,r\),
\((k)_r=k(k-1)\cdots(k-r+1)\) denotes the falling factorial, with
\((k)_0=1\).

For use in the polynomial--series comparisons and the proof of
Theorem~\ref{thm:full-line-polynomial-BE}, we fix the deterministic scales
\begin{equation}
 \delta_n=n^{-19},
 \qquad
 \varepsilon_n=n^{-38}.
 \label{eq:logarithmic-edge-parameter-choice}
\end{equation}

\subsection{Results used in the proof of Theorem~\ref{thm:polynomial-logarithmic-edge-BE}}
\label{subsec:bulk-results}

We first state exactly the results used in the short proof of
Theorem~\ref{thm:polynomial-logarithmic-edge-BE}.  On the coefficient
space from Section~\ref{sec:introduction}, set
\[
 P_\infty(x)=\sum_{k=0}^{\infty}\frac{\xi_k}{\sqrt{k!}}x^k,
 \qquad
 q_\infty(x)=e^{-x^2/2}P_\infty(x),
 \qquad x\in\R.
\]
For every bounded real interval \(A\), let \(N_\infty(A)\) be the
number of real zeros of \(P_\infty\) in \(A\), counted with
multiplicity.  Section~\ref{subsec:useful-tools} constructs an
everywhere-defined real-analytic version of \(P_\infty\), fixes its null-set
convention and proves the basic properties of these counts.

We next fix notation for the Gaussian processes \(q_{\infty,m}\) used below.
Their joint construction and selected pathwise versions are given in
Section~\ref{sec:finite-range-approximation}.  On the probability space
constructed in Lemma~\ref{lem:common-white-noise-realization}, let \(W\) be the real
Gaussian white noise on \(\R\).  Thus \(W\) is an isonormal Gaussian
process over \(L^2(\R)\) and
\[
 \E[W(h)W(k)]=\langle h,k\rangle_{L^2(\R)},
 \qquad h,k\in L^2(\R).
\]
Set \(g(u)=(2/\pi)^{1/4}e^{-u^2}\).  Fix
\(\chi\in C_c^\infty(\R)\) with \(0\leq\chi\leq1\), equal to one on
\([-1/4,1/4]\) and supported in \([-1/2,1/2]\).  For \(m\geq1\), set
\[
 g_m=
 \frac{g\,\chi(\mathord\cdot/m)}
 {\|g\,\chi(\mathord\cdot/m)\|_2},
 \qquad
 q_{\infty,m}(t)=W(g_m(t-\mathord\cdot)),
 \qquad t\in\R.
\]
We write \(N_{\infty,m}(I)\) for the zero count of this process on
\(I\).  The pathwise and measurability details are established in
Section~\ref{sec:bulk-result-proofs}.

The stationary Weyl zero count serves as the reference model for the bulk
analysis.  The following proposition collects the translation invariance,
moment estimate and variance asymptotics required in the subsequent
comparison arguments.

\begin{proposition}
\label{prop:stationary-Weyl-zero-count-results}
For every compact interval \(I\subset\R\),
\begin{equation}
 N_\infty(I)
 \stackrel{\mathrm d}{=}
 N_\infty([0,\ell]).
 \label{eq:stationary-count-translation-in-law}
\end{equation}
There is a finite absolute constant \(C\) such that, for \(\ell\geq1\),
\begin{equation}
 \E[N_\infty([0,\ell])^4]\leq C\ell^4.
 \label{eq:stationary-zero-count-fourth-moment}
\end{equation}
Moreover, for the constant \(V_\infty>0\) in
\eqref{eq:intro-known-Weyl-results},
\begin{equation}
 \lim_{\ell\to\infty}
 \frac{\Var(N_\infty([0,\ell]))}{\ell}
 =
 V_\infty.
 \label{eq:stationary-linear-variance-all-L}
\end{equation}
\end{proposition}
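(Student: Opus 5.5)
The plan is to handle the three assertions separately, working throughout with \(q_\infty=e^{-x^2/2}P_\infty\). This function has the same real zeros as \(P_\infty\) and is a centered stationary Gaussian process with covariance
\[
 r(s)=\E[q_\infty(x)q_\infty(x+s)]=e^{-(x^2+(x+s)^2)/2}e^{x(x+s)}=e^{-s^2/2}.
\]

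\textbf{Translation invariance \eqref{eq:stationary-count-translation-in-law}.}
Write \(I=[a,a+\ell]\).
1. The covariance computation shows that \(q_\infty(a+\cdot)\) and \(q_\infty\) have the same finite-dimensional laws.
2. Using the real-analytic version built in Section~\ref{subsec:useful-tools}, the zero count on a compact interval can be written as a monotone limit of measurable functionals of countably many values. For example, take the number of sign changes along dyadic grids together with multiplicity indicators.
3. Multiple zeros occur with probability zero, by Bulinskaya's lemma, since the process is nondegenerate with \(r''(0)\neq0\).
4. Hence, almost surely, \(N_\infty(I)\) equals the limit of sign-change counts of \(q_\infty\) on grids of \(I\). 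These counts depend only on finite-dimensional distributions, so equality in law follows.

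\textbf{Fourth moment \eqref{eq:stationary-zero-count-fourth-moment}.}
1. By the first part and subadditivity, split \([0,\ell]\) into \(\lceil\ell\rceil\) unit intervals. Minkowski's inequality in \(L^4\) gives
\[
 \|N_\infty([0,\ell])\|_{L^4}\le\lceil\ell\rceil\,\|N_\infty([0,1])\|_{L^4}.
\]
2. It therefore suffices to show \(\E N_\infty([0,1])^4<\infty\).
3. Since \(r\) is entire, the spectral measure \(e^{-\lambda^2/2}\) has moments of all orders, so the Kac--Rice factorial-moment bounds apply. Alternatively, and more robustly, use Jensen's formula. Let \(n(\rho)\) be the number of complex zeros of the entire function \(q_\infty\) in the disk \(D(1/2,\rho)\). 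Then
\[
 N_\infty([0,1])\le n(1)\le\frac{1}{\log 2}\Bigl(\log\max_{|z-1/2|=2}|P_\infty(z)|-\log|P_\infty(1/2)|\Bigr).
\]
4. The first term has all moments: \(P_\infty\) has bounded variance on compacts, and Borell--TIS (or \(\sum|\xi_k|2.5^k/\sqrt{k!}\)) gives Gaussian tails for the maximum.
5. The second term has all moments because \(P_\infty(1/2)\) is a nondegenerate Gaussian, so \(\E|\log|P_\infty(1/2)||^4<\infty\).
6. This yields \(\E N_\infty([0,\ell])^4\le C\ell^4\).

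\textbf{Variance asymptotics \eqref{eq:stationary-linear-variance-all-L}.}
1. Use Kac--Rice for the second factorial moment:
\[
 \Var N_\infty([0,\ell])=\E N_\infty([0,\ell])+\int_0^\ell\!\!\int_0^\ell\bigl(\rho_2(s-t)-\rho_1^2\bigr)\dd s\dd t ,
\]
with \(\rho_1=1/\pi\).
2. Since \(r\) and its derivatives decay like \(e^{-s^2/2}\), the two-point correlation satisfies \(|\rho_2(s)-\rho_1^2|\le Ce^{-cs^2}\) for large \(s\).
3. Near the diagonal, \(\rho_2(s)=O(|s|)\), which is integrable.
4. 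Dominated convergence then gives
\[
 \frac{\Var N_\infty([0,\ell])}{\ell}\to\frac1\pi+\int_\R\bigl(\rho_2(s)-\rho_1^2\bigr)\dd s .
\]
5. To identify this limit with \(V_\infty\) in \eqref{eq:intro-known-Weyl-results}, rely on Do--Vu (Lemma~\ref{lem:appendix-do-vu-thm5} and the appendix), whose constant is defined as exactly this stationary integral. Alternatively, cite their stationary statement directly.

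\textbf{Main obstacle.} I expect the hardest step to be the rigorous justification of the Kac--Rice second-moment formula and the integrability near the diagonal. This requires nondegeneracy of the joint law of \((q_\infty(0),q_\infty(s))\) for \(s\neq0\), which holds because \(|r(s)|<1\). It also requires a Taylor expansion near \(s=0\) showing that the conditional expectation of \(|q'(0)q'(s)|\) is \(O(s^2)\) while the joint density is \(O(1/s)\). These are standard for stationary processes with smooth covariance, for instance as in Cram\'er--Leadbetter.
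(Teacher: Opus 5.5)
Your proposal is correct, but it is more self-contained than the paper's argument. The paper's proof has two steps. It gets translation invariance directly from stationarity of \(q_\infty\). It then reads off both \eqref{eq:stationary-zero-count-fourth-moment} and \eqref{eq:stationary-linear-variance-all-L} from Do--Vu's stationary theorem (Lemma~\ref{lem:appendix-do-vu-thm6}) with \(h=\mathbf 1_{[0,1]}\) and \(R=\ell\), after noting that \(\sum_{x}\mathbf 1_{[0,1]}(x/\ell)=N_\infty([0,\ell])\) almost surely.

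\textbf{Translation invariance.} Your sign-change/dyadic-grid argument is a correct unpacking of ``by stationarity''.

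\textbf{Fourth moment.} Your route here is genuinely different and elementary. You use Minkowski over unit blocks, then Jensen's formula on \(D(1/2,1)\) with the outer circle \(|z-1/2|=2\), together with the facts that \(\log^+\max|P_\infty|\) and \(\log^-|P_\infty(1/2)|\) have all moments. This removes any reliance on the moment part of Do--Vu's theorem. It is the same mechanism the paper itself uses later in Lemma~\ref{lem:full-line-Gaussian-Jensen}.

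\textbf{Variance.} Your Kac--Rice computation shows only that the limit exists and equals \(\frac1\pi+\int_\R(\rho_2-\rho_1^2)\). In this paper \(V_\infty\) is \emph{defined} as Do--Vu's constant \(K\). So you still need Lemma~\ref{lem:appendix-do-vu-thm6} to identify the limit. Lemma~\ref{lem:appendix-do-vu-thm5} is the wrong one to cite, since it concerns polynomials and would require an extra polynomial--series comparison. Once you cite Lemma~\ref{lem:appendix-do-vu-thm6}, it alone yields \eqref{eq:stationary-linear-variance-all-L}. Your two-point-function analysis is therefore logically redundant for this statement and only buys an explicit integral formula for \(V_\infty\).

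\textbf{Your ``main obstacle''.} The near-diagonal integrability you flag is not actually an obstacle. Your own fourth-moment bound gives \(\E N_\infty([0,1])^2<\infty\). By the Kac--Rice identity \(\E[N(N-1)]=\int_{-1}^{1}\rho_2(u)(1-|u|)\dd u\), this forces \(\int_{-1/2}^{1/2}\rho_2<\infty\). Continuity of \(\rho_2\) away from the diagonal and the Gaussian decay of \(r\) handle the rest of \(\R\).
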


To compare \(N_n(I)\) with \(N_\infty(I)\), we use the following
high-probability coupling, which pairs the zeros of \(P_n\) and \(P_\infty\)
on intervals \(I\subseteq I_n\).

\begin{proposition}
\label{prop:logarithmic-edge-root-pairing}
There is an absolute constant \(B_0>0\) such that, for every fixed
\(B\geq B_0\), with \(L_n\) and \(I_n\) as in
\eqref{eq:logarithmic-edge-interval}, there is a finite absolute
constant \(C\) such that the following holds for all sufficiently large
\(n\).  For every \(I=[\varphi,\psi]\subseteq I_n\) with \(\ell\geq1\), there exists an event
\(\mathcal G_{n,I}\) satisfying
\begin{equation}
 \Pp(\mathcal G_{n,I}^c)
 \leq C\ell n^{-18}
 \leq Cn^{-17}.
 \label{eq:logarithmic-edge-good-event-probability}
\end{equation}
On \(\mathcal G_{n,I}\), we have
\(N_n(I)=N_\infty(I)\).  If this common cardinality is positive, let
\[
 x_1<\cdots<x_k,
 \qquad
 y_1<\cdots<y_k
\]
denote the zeros of \(P_\infty\) and \(P_n\), respectively, in
increasing order on \(I\).  Then
\begin{equation}
 \max_{1\leq j\leq k}|x_j-y_j|
 \leq
 n^{-1}.
 \label{eq:logarithmic-edge-root-displacement}
\end{equation}
\end{proposition}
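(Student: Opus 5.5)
The plan is to write \(P_\infty=P_n+T_n\) with tail \(T_n(x)=\sum_{k>n}\xi_kx^k/\sqrt{k!}\), and to work with the normalized functions \(q_n(x)=e^{-x^2/2}P_n(x)\) and \(q_\infty=q_n+\tau_n\), where \(\tau_n=e^{-x^2/2}T_n\). On \(I_n\) the tail is tiny. Its variance is
\[
 e^{-x^2}\sum_{k>n}\frac{x^{2k}}{k!}=\Pp(\mathrm{Poisson}(x^2)>n),
\]
and for \(|x|\leq L_n\) we have \(x^2\leq n-2B\sqrt{n\log n}+B^2\log n\). A Poisson Chernoff bound then gives a variance at most \(\exp(-cB^2\log n)\leq n^{-40}\), say, once \(B_0\) is large. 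The same bound holds for \(\tau_n'\) and \(\tau_n''\) up to polynomial factors in \(n\), since differentiating costs factors of order \(\sqrt n\).

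\textbf{Step 1: the tail is uniformly small.} Cover \(I\) by a grid of mesh \(n^{-10}\), which has about \(\ell n^{10}\) points. At each grid point apply a Gaussian tail bound to \(\tau_n\) and \(\tau_n'\). To pass from the grid to all of \(I\), use a high-probability bound on \(\sup_I|\tau_n''|\), for instance via Borell--TIS or a union bound together with a crude bound on the coefficients. This yields an event on which
\[
 \sup_I\bigl(|\tau_n|+|\tau_n'|\bigr)\leq\varepsilon_n=n^{-38},
\]
and whose failure probability is at most \(C\ell n^{-18}\) after tuning the exponents. Taking \(\ell\leq 2\sqrt n\) gives the cruder bound \(Cn^{-17}\).

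\textbf{Step 2: exclude near-degenerate behaviour.} Let the non-degeneracy event be that \(q_\infty\) has no point \(x\in I\), nor within distance \(n^{-1}\) of the endpoints of \(I\), with \(|q_\infty(x)|\leq\delta_n\) and \(|q_\infty'(x)|\leq \delta_n^{1/2}\)-ish. Use the small-ball estimate for the stationary pair \((q_\infty,q_\infty')\): at each point its density is bounded, because the covariance of \(q_\infty\) is \(e^{-(s-t)^2/2}\), which is non-degenerate. Add a Lipschitz bound on \(q_\infty''\) and take a union bound over a fine grid. The probability of \(\{|q_\infty(x)|\leq\delta, |q_\infty'(x)|\leq\eta\}\) is of order \(\delta\eta\). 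With mesh \(\delta\) this gives roughly \(\ell\eta\) overall, so I choose \(\eta\) of order \(n^{-18}\) and \(\delta_n=n^{-19}\), consistent with \eqref{eq:logarithmic-edge-parameter-choice}. At the endpoints I separately require \(|q_\infty(\varphi)|,|q_\infty(\psi)|>\delta_n\), which costs \(O(\delta_n)\).

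\textbf{Step 3: pair the zeros on the good event.} \(\mathcal G_{n,I}\) is the intersection of the events from Steps 1 and 2. On it, every zero \(x_j\) of \(q_\infty\) in \(I\) has \(|q_\infty'(x_j)|\geq\eta\), and by the second-derivative bound this persists on a neighbourhood of radius of order \(\eta/\sup|q''|\). Since \(\varepsilon_n\ll\eta\cdot n^{-1}\), the function \(q_n=q_\infty-\tau_n\) changes sign exactly once in \([x_j-\varepsilon_n/\eta',\,x_j+\varepsilon_n/\eta']\), and this interval has length far below \(n^{-1}\). Away from these neighbourhoods \(|q_\infty|\) exceeds \(\delta_n\gg\varepsilon_n\), so \(q_n\) has no zeros there. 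The endpoint condition ensures no zero crosses \(\partial I\). A monotone bijection then gives \(N_n(I)=N_\infty(I)\) and \eqref{eq:logarithmic-edge-root-displacement}; this also rules out multiple zeros.

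\textbf{Main obstacle.} The hardest part is the exponent bookkeeping in Step 2. The union bound over a grid requires the failure probability to scale linearly in \(\ell\). It also requires controlling \(\sup|q_\infty''|\) with probability \(1-O(n^{-18})\) uniformly, which needs only a \(\sqrt{\log n}\)-size bound by stationarity. I would first try a Rice-type bound on the expected number of points where \(|q_\infty|\) has a small local minimum, as an alternative to the grid argument.
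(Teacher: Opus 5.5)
\textbf{The gap is in Step~3.} Your architecture is the paper's. It has four parts: a \(C^1\) coupling of \(q_n\) and \(q_\infty\) on \(I\) from the Poisson tail, an exclusion of near-critical points of \(q_\infty\) via a grid and a bound on \(\sup|q_\infty''|\), an endpoint small-ball condition, and a deterministic matching. For the coupling you use a grid with Gaussian tails where the paper uses \(\E\|q_n-q_\infty\|_{C^1(I)}^2\le C\ell n^2e^{-s^2/4}\) plus Markov; either works. Your Step~2 count with \(\eta=n^{-18}\), \(\delta_n=n^{-19}\) and mesh \(\delta_n\) correctly gives \(O(\ell n^{-18})\).

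The failing sentence is the one meant to rule out extra zeros of \(q_n\): ``away from these neighbourhoods \(|q_\infty|\) exceeds \(\delta_n\).'' With your parameters this is false on your good event. That event only guarantees \(|q_\infty'|>\eta\) wherever \(|q_\infty|\le\delta_n\), together with \(\sup_I|q_\infty''|\le M\). Take a zero \(x_j\) with \(|q_\infty'(x_j)|\approx2\eta\) and \(q_\infty''\) tiny. Then \(|q_\infty|\) stays below \(\delta_n\) on an interval of length about \(\delta_n/\eta\approx n^{-1}\). Your neighbourhoods have radius \(\varepsilon_n/\eta\approx n^{-20}\) or \(\eta/\sup|q_\infty''|\), which is at most about \(n^{-18}\). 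Nothing you excluded forbids this configuration. So you have shown that each zero of \(q_\infty\) produces one nearby zero of \(q_n\), but not that \(q_n\) has no other zeros in \(I\), and \(N_n(I)=N_\infty(I)\) needs exactly that. Your claim would hold only under something like \(\delta_n\lesssim\eta^2/\sup_I|q_\infty''|\).

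\textbf{How to repair it.} The fix is short and needs no \(q_\infty''\) bound in the deterministic step.
\begin{enumerate}
\item On your good event, let \([a,b]\) be a connected component of \(\{t\in I:|q_\infty(t)|\le\delta_n\}\). Your endpoint condition makes \(a,b\) interior points of \(I\), so \(|q_\infty(a)|=|q_\infty(b)|=\delta_n\).
\item Since \(|q_\infty'|>\eta\) on \([a,b]\), \(q_\infty\) is strictly monotone there. Hence \(a<b\), because a one-point component would force \(q_\infty'(a)=0\). It follows that \(q_\infty(a)=-q_\infty(b)\), so \([a,b]\) contains exactly one zero \(x\) of \(q_\infty\).
\item On \([a,b]\) you also have \(|q_n'|>\eta-\varepsilon_n>0\), and \(q_n(a),q_n(b)\) have the signs of \(q_\infty(a),q_\infty(b)\). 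So \(q_n\) has exactly one zero \(y\) in \([a,b]\), and it is simple. Since \(|q_n(x)|<\varepsilon_n\), you get \(|x-y|\le\varepsilon_n/(\eta-\varepsilon_n)\).
\item Off these components \(|q_n|>\delta_n-\varepsilon_n>0\), so \(q_n\) has no other zeros in \(I\).
\end{enumerate}
The components are disjoint and ordered, so this is the order-preserving bijection you want.

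\textbf{The paper's route.} The paper reaches the same conclusion with the symmetric condition \(\inf_I\max\{|q_\infty|,|q_\infty'|\}>2\delta_n\), endpoint separation \(2\varepsilon_n\), and the homotopy \(f_u=q_\infty+u(q_n-q_\infty)\). Every zero of every \(f_u\) is simple and stays off \(\partial I\). Hence the zero count is constant in \(u\), and each ordered zero moves with speed below \(\varepsilon_n/(2\delta_n-\varepsilon_n)\). That deterministic lemma is then reused for the other couplings in the paper.
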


We next derive the \(L^2\), mean and standard-deviation estimates needed to
compare the normalizations of \(N_n(I)\) and \(N_\infty(I)\).

\begin{corollary}
\label{cor:logarithmic-edge-L2-comparison}
Under the assumptions of
Proposition~\ref{prop:logarithmic-edge-root-pairing}, there is a finite
absolute constant \(C\) such that, for every \(I\subseteq I_n\) with
\(\ell\geq1\),
\begin{equation}
 \|N_n(I)-N_\infty(I)\|_{L^{2}}
 \leq
 C\ell^{1/4}n^{-7/2}
 \leq Cn^{-3}
 \label{eq:logarithmic-edge-L2-comparison}
\end{equation}
and
\begin{equation}
 |\E N_n(I)-\E N_\infty(I)|
 +
 \left|
  \sqrt{\Var(N_n(I))}
  -
  \sqrt{\Var(N_\infty(I))}
 \right|
 \leq
 C\ell^{1/4}n^{-7/2}
 \leq Cn^{-3}.
 \label{eq:logarithmic-edge-normalization-comparison}
\end{equation}
\end{corollary}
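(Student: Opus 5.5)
Since \(N_n(I)=N_\infty(I)\) on \(\mathcal G_{n,I}\), we have \(D:=N_n(I)-N_\infty(I)=D\mathbf 1_{\mathcal G_{n,I}^c}\). Cauchy--Schwarz then gives
\[
 \|D\|_{L^2}^2=\E[D^2\mathbf 1_{\mathcal G_{n,I}^c}]
 \leq \|D\|_{L^4}^2\,\Pp(\mathcal G_{n,I}^c)^{1/2}.
\]
By \eqref{eq:logarithmic-edge-good-event-probability}, the last factor is at most \(C\ell^{1/2}n^{-9}\). It remains to bound \(\|D\|_{L^4}\leq\|N_n(I)\|_{L^4}+\|N_\infty(I)\|_{L^4}\) polynomially in \(n\).

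\textbf{Step 1: the stationary count.} By \eqref{eq:stationary-count-translation-in-law} and \eqref{eq:stationary-zero-count-fourth-moment}, \(\|N_\infty(I)\|_{L^4}\leq C\ell\).

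\textbf{Step 2: the polynomial count.} Here the bound is deterministic: \(N_n(I)\leq n\) almost surely, because \(P_n\not\equiv0\) almost surely and has degree at most \(n\); by convention the count is \(0\) if \(P_n\equiv0\). Hence \(\|N_n(I)\|_{L^4}\leq n\). Also \(\ell\leq|I_n|\leq2\sqrt n\), so \(\|D\|_{L^4}\leq Cn\).

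\textbf{Step 3: the first estimate.} Combining the pieces,
\[
 \|D\|_{L^2}^2\leq Cn^2\,\ell^{1/2}n^{-9}=C\ell^{1/2}n^{-7},
\]
so \(\|D\|_{L^2}\leq C\ell^{1/4}n^{-7/2}\). Using \(\ell\leq2\sqrt n\) gives \(\ell^{1/4}\leq Cn^{1/8}\), hence the right-hand side is at most \(Cn^{-3}\). This proves \eqref{eq:logarithmic-edge-L2-comparison}.

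\textbf{Step 4: means and standard deviations.} For \eqref{eq:logarithmic-edge-normalization-comparison}, the mean term follows from
\[
 |\E D|\leq\|D\|_{L^1}\leq\|D\|_{L^2}.
\]
For the standard deviations, write \(\sqrt{\Var X}=\|X-\E X\|_{L^2}\) and apply Minkowski's inequality:
\[
 \bigl|\sqrt{\Var(N_n(I))}-\sqrt{\Var(N_\infty(I))}\bigr|
 \leq\|D-\E D\|_{L^2}\leq\|D\|_{L^2}.
\]
Summing the two bounds gives \(2\|D\|_{L^2}\), and the constant absorbs the factor \(2\).

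\textbf{Main obstacle.} The only delicate point is the polynomial moment bound for \(N_n(I)\), and the degree bound disposes of it trivially. The exponents then close only because of the specific \(n^{-18}\) rate in \eqref{eq:logarithmic-edge-good-event-probability}. All constants are absolute, given the fixed \(B\) of Proposition~\ref{prop:logarithmic-edge-root-pairing}.
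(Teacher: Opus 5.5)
Your proposal is correct and follows essentially the same route as the paper. The paper bounds the fourth moments by \(n^4\) (degree bound) and \(C\ell^4\) (stationary moment bound) and then invokes Lemma~\ref{lem:rare-disagreement-moment-transfer}, whose proof is exactly your Cauchy--Schwarz step together with your mean and standard-deviation argument; you simply inline that lemma, and you work with \(\Pp(\mathcal G_{n,I}^c)\) rather than \(\Pp(N_n(I)\neq N_\infty(I))\), which is harmless.
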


We now record the quantitative estimates for \(N_{\infty,\mu_\ell}(I)\) that
are used in the Gaussian approximation on \(I\).  The following corollary compares
\(N_{\infty,\mu_\ell}(I)\) with \(N_\infty(I)\), establishes the required
variance bounds and provides the Berry--Esseen estimate used below.

\begin{corollary}
\label{cor:finite-range-BE-package}
There are constants \(b,C>0\) and \(\ell_1\geq e\) with the following
property.  For every \(I\subset\R\) with \(\ell\geq\ell_1\), set
\[
 \mu_\ell=b\sqrt{\log\ell}.
\]
Then
 \begin{equation}
   \Pp\!\left(
    N_{\infty,\mu_\ell}(I)\neq N_\infty(I)
   \right)
   \leq C\ell^{-16},
   \label{eq:finite-range-package-count-comparison}
 \end{equation}
 \begin{equation}
   \|N_{\infty,\mu_\ell}(I)-N_\infty(I)\|_{L^{2}}
   \leq C\ell^{-3}
   \label{eq:finite-range-package-L2-comparison}
 \end{equation}
 and
 \begin{equation}
   |\E N_{\infty,\mu_\ell}(I)-\E N_\infty(I)|
   +
   \left|
    \sqrt{\Var(N_{\infty,\mu_\ell}(I))}
    -
    \sqrt{\Var(N_\infty(I))}
   \right|
   \leq C\ell^{-3}.
  \label{eq:finite-range-package-normalization-comparison}
 \end{equation}
Moreover,
 \begin{equation}
  \frac{V_\infty}{8}\ell
  \leq
  \Var(N_{\infty,\mu_\ell}(I))
  \leq
  C\ell
 \label{eq:finite-range-package-variance}
\end{equation}
and
 \begin{equation}
  d_{\mathrm K}\!\left(
   \frac{N_{\infty,\mu_\ell}(I)-\E N_{\infty,\mu_\ell}(I)}
   {\sqrt{\Var(N_{\infty,\mu_\ell}(I))}},
   Z
  \right)
 \leq
 C\frac{\log\ell}{\sqrt\ell}.
 \label{eq:finite-range-package-BE}
\end{equation}
\end{corollary}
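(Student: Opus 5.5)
The plan is to prove the corollary in two parts. First compare $q_{\infty,\mu_\ell}$ with the stationary Weyl process. Then write $N_{\infty,\mu_\ell}(I)$ as a sum of locally dependent block counts and apply the Chen--Shao local-dependence bound \cite{ChenShao2004}.

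\textbf{Step 1: the moving-average representation and the comparison estimates.} The starting point is the moving-average form of $q_\infty$. Its covariance is $e^{-(s-t)^2/2}$, and the convolution $g*g$ with $g=(2/\pi)^{1/4}e^{-u^2}$ equals exactly $e^{-u^2/2}$. So $q_\infty\stackrel{\mathrm d}{=}W(g(t-\cdot))$, and we may realize $q_\infty$ and all the $q_{\infty,m}$ on the white-noise space of Lemma~\ref{lem:common-white-noise-realization}.

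The kernel error satisfies $\|g-g_m\|_2\leq Ce^{-cm^2}$, and the same Gaussian decay holds for derivatives. With $m=\mu_\ell=b\sqrt{\log\ell}$ and $b$ large, this is at most $\ell^{-A}$ for any prescribed $A$. Hence the sup of the difference process $q_\infty-q_{\infty,m}$ and of its first two derivatives over $I$ is at most $\ell^{-A+1}$, except on an event of probability $\ell^{-20}$. The bound comes from a union bound over a mesh together with Borell--TIS or Kolmogorov-type continuity.

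On the complement one shows the zero counts coincide, essentially repeating the argument behind Proposition~\ref{prop:logarithmic-edge-root-pairing}. First, with high probability every zero of $q_\infty$ in $I$ is quantitatively transversal: $|q_\infty'|\geq\ell^{-a}$ wherever $|q_\infty|\leq\ell^{-a}$. This follows from a Kac--Rice or small-ball estimate on the two-dimensional density of $(q_\infty,q_\infty')$, with cost $C\ell\cdot\ell^{-a}$. Second, there are no near-zeros at the endpoints of $I$. A perturbation of size $\ll\ell^{-2a}$ then preserves the count by Rouché-type or implicit-function reasoning. This gives \eqref{eq:finite-range-package-count-comparison}.

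For \eqref{eq:finite-range-package-L2-comparison}, use Cauchy--Schwarz:
\[
\|N_{\infty,\mu_\ell}(I)-N_\infty(I)\|_{L^2}\leq \Pp(\neq)^{1/4}\,\|N_{\infty,\mu_\ell}(I)+N_\infty(I)\|_{L^4}.
\]
The fourth-moment bound \eqref{eq:stationary-zero-count-fourth-moment} controls $N_\infty$, and the analogous Kac--Rice factorial-moment bound controls $N_{\infty,\mu_\ell}$, which is uniform in $m$ since the covariances are uniformly nondegenerate. The total is $C\ell^{-4}\ell=C\ell^{-3}$. Estimate \eqref{eq:finite-range-package-normalization-comparison} then follows because $|\E X-\E Y|$ and $|\sigma_X-\sigma_Y|$ are both bounded by $\|X-Y\|_{L^2}$.

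\textbf{Step 2: the variance bounds.} Stationarity and \eqref{eq:stationary-count-translation-in-law} reduce matters to $[0,\ell]$. By \eqref{eq:stationary-linear-variance-all-L}, $\Var N_\infty([0,\ell])\geq V_\infty\ell/2$ once $\ell\geq\ell_1$. Combining this with the $C\ell^{-3}$ comparison gives the lower bound $V_\infty\ell/8$ with room to spare. The upper bound $C\ell$ follows from the same variance limit.

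\textbf{Step 3: the Berry--Esseen bound.} Partition $I$ into unit intervals $J_1,\dots,J_{\lceil\ell\rceil}$ and write $N_{\infty,\mu_\ell}(I)=\sum_i X_i$ with $X_i=N_{\infty,\mu_\ell}(J_i)$, where endpoints carry zeros with probability $0$. Since $g_m$ is supported in $[-m/2,m/2]$, the field $q_{\infty,m}$ restricted to $J_i$ depends on $W$ only on $J_i+[-m/2,m/2]$. Hence $X_i$ is independent of all $X_j$ with $|i-j|>m+2$, so the neighbourhoods have size $\kappa\asymp m$.

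Apply Chen--Shao's bound for local dependence, in the form
\[
d_{\mathrm K}\leq C\kappa^2\sum_i\E|X_i-\E X_i|^3/\sigma^3 .
\]
More precisely, use their LD2/LD3 form, giving $C\kappa^{2}\ell\,\max_i\|X_i\|_{L^4}^3/\sigma^3$ after Hölder. The fourth moments of unit-interval counts are bounded uniformly in $m$. With $\sigma^3\asymp\ell^{3/2}$ and $\kappa^2\asymp\mu_\ell^2=b^2\log\ell$, this yields $C\log\ell/\sqrt\ell$.

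\textbf{Main obstacle.} I expect the main difficulty to be the uniform-in-$m$ control in Step 1. This has two parts. One is the transversality estimate for $q_{\infty,\mu_\ell}$ and its moment bounds, which requires the Gaussian vector $(q,q')$ to be nondegenerate uniformly. That holds because $g_m\to g$ in $H^1$. The other is turning a sup-norm perturbation bound into exact equality of zero counts, which needs the quantitative transversality to be strong enough to absorb the perturbation.
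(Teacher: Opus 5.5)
Your plan follows the paper's proof essentially step for step. The shared ingredients are:
\begin{itemize}
\item the common white-noise realization and the $e^{-cm^2}$ kernel-truncation error;
\item a $C^1$ coupling, combined with transversality and endpoint small-ball bounds for $q_\infty$ and the deterministic zero-stability lemma;
\item the transfer $\|X-Y\|_{L^2}\leq\|X-Y\|_{L^4}\Pp(X\neq Y)^{1/4}$ and the stationary variance limit;
\item unit blocks fed into the $m$-dependent Chen--Shao bound, with factor $(10r_m+1)^2\asymp\mu_\ell^2$.
\end{itemize}
Using Borell--TIS instead of the paper's Sobolev-plus-Markov estimate is a harmless variation. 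You also do not need transversality of $q_{\infty,\mu_\ell}$: the stability lemma uses transversality only for the reference function $q_\infty$.

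The step that is not justified as written is the uniform bound $\sup_{m\geq m_1}\E[N_{\infty,m}([0,1])^4]<\infty$. Everything downstream rests on it, and it must be uniform because $m=\mu_\ell$ grows with $\ell$:
\begin{itemize}
\item via Minkowski over unit blocks, it gives $\E[N_{\infty,\mu_\ell}(I)^4]\leq C\ell^4$ in the $L^2$ comparison;
\item it gives the uniform third moments of the centered block counts in Chen--Shao.
\end{itemize}
You invoke uniform nondegeneracy of $(q,q')$ and $H^1$ convergence of the kernels. These control the first moment and transversality, but not a fourth moment. A Kac--Rice bound for the fourth factorial moment requires integrability of the four-point integrand as the points coalesce. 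Bounding that uniformly in $m$ needs uniform nondegeneracy of higher jets together with uniform control of higher derivatives.

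The paper supplies exactly this, in three steps.
\begin{itemize}
\item It shows that the Gram matrix $(\langle g_m^{(i)},g_m^{(j)}\rangle)_{0\leq i,j\leq 3}$ converges to that of $g$, whose smallest eigenvalue is $8-\sqrt{58}$. Hence the $3$-jet of $q_{\infty,m}$ is uniformly nondegenerate for large $m$.
\item It shows $\E\|q_{\infty,m}-q_\infty\|_{C^4([-1,2])}^2\to0$, using convergence of $g_m^{(j)}$ for $j\leq 5$.
\item It applies Gass--Stecconi with $p=4$: finite moments under $3$-jet nondegeneracy, and continuity of the fourth moment under $C^4$ convergence in law.
\end{itemize}
A hand-made divided-difference Kac--Rice argument, with constants depending only on these quantities, would also work. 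Either way, some such argument has to be added to your proof.
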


To complete the comparison with the polynomial zero count, we use the following
elementary lemma, which bounds the Kolmogorov distance in terms of the
probability that the two random variables differ and the differences between
their means and standard deviations.

\begin{lemma}
\label{lem:finite-range-exact-coupling-transfer}
Let \(X\) and \(Y\) be square-integrable real random variables.  Suppose that
\(\Var(X)>0\) and
\[
 |\E X-\E Y|
 +
 \left|\sqrt{\Var(X)}-\sqrt{\Var(Y)}\right|
 \leq
 \frac12\sqrt{\Var(X)}.
\]
Then \(\Var(Y)>0\) and
\[
 \begin{aligned}
 d_{\mathrm K}\!\left(
  \frac{Y-\E Y}{\sqrt{\Var(Y)}},Z
 \right)
 &\leq
 d_{\mathrm K}\!\left(
  \frac{X-\E X}{\sqrt{\Var(X)}},Z
 \right)
 +
 \Pp(X\neq Y)\\
 &\quad+
 2
 \frac{
  |\E X-\E Y|
  +
  \left|\sqrt{\Var(X)}-\sqrt{\Var(Y)}\right|
 }{
  \sqrt{\Var(X)}
 }.
 \end{aligned}
\]
\end{lemma}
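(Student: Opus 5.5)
Write \(\mu_X=\E X\), \(\sigma_X=\sqrt{\Var(X)}\), and similarly \(\mu_Y,\sigma_Y\). Set
\[
\eta=\frac{|\mu_X-\mu_Y|+|\sigma_X-\sigma_Y|}{\sigma_X}\leq\frac12 .
\]
The plan has three steps: remove the event \(\{X\neq Y\}\), rewrite the event for \(Y\) as a threshold event for the standardized \(X\), and compare \(\Phi\) at the two thresholds.

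First, positivity of \(\Var(Y)\). The hypothesis gives \(|\sigma_X-\sigma_Y|\leq\sigma_X/2\), so \(\sigma_Y\geq\sigma_X/2>0\).

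Next, fix \(z\in\R\) and let \(U=(X-\mu_X)/\sigma_X\). Since \(X=Y\) off the event \(\{X\neq Y\}\), we have
\[
|\Pp(Y\leq \mu_Y+\sigma_Y z)-\Pp(X\leq \mu_Y+\sigma_Y z)|\leq \Pp(X\neq Y).
\]
Also \(\Pp(X\leq\mu_Y+\sigma_Yz)=\Pp(U\leq z')\) with
\[
z'=\frac{\mu_Y-\mu_X+\sigma_Yz}{\sigma_X}.
\]
Hence
\[
|\Pp(U\leq z')-\Phi(z')|\leq d_{\mathrm K}(U,Z).
\]
It then suffices to prove \(|\Phi(z')-\Phi(z)|\leq 2\eta\) for all \(z\in\R\); the triangle inequality gives the claimed bound.

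The main (but elementary) step is this Gaussian comparison. Write \(z'=az+b\) with \(a=\sigma_Y/\sigma_X\in[1/2,3/2]\) and \(b=(\mu_Y-\mu_X)/\sigma_X\). Then
\[
|a-1|+|b|=\eta .
\]
Split the difference as
\[
|\Phi(az+b)-\Phi(az)|+|\Phi(az)-\Phi(z)|.
\]
The first term is at most \(|b|\,\phi(0)\leq |b|/\sqrt{2\pi}\). For the second term, the mean value theorem gives \(|a-1|\,|z|\,\phi(\theta z)\) for some \(\theta\) between \(a\) and \(1\), so \(\theta\geq1/2\). Then
\[
|z|\,\phi(\theta z)\leq |z|\,\phi(z/2)\leq \sup_{u}|u|\,e^{-u^2/8}/\sqrt{2\pi}=\frac{2e^{-1/2}}{\sqrt{2\pi}}<1.
\]
Altogether \(|\Phi(z')-\Phi(z)|\leq|b|+|a-1|=\eta\leq2\eta\), which completes the argument. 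The only care needed is that \(\theta\) stays bounded below; this is exactly why the hypothesis \(\eta\leq1/2\) is imposed.
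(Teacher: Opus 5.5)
Your proposal is correct and follows the paper's proof essentially step for step: the same positivity argument, the same indicator comparison on \(\{X=Y\}\), and the same split \(|\Phi(az+b)-\Phi(az)|+|\Phi(az)-\Phi(z)|\) with the mean value bound \(|z|e^{-z^2/8}/\sqrt{2\pi}\leq 1\) coming from \(a\geq 1/2\). The only difference is cosmetic: you add \(|a-1|+|b|=\eta\) directly, whereas the paper bounds each term by the maximum, so you obtain the slightly sharper constant \(1\) in place of \(2\).
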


\subsection{Results for the proof of Theorem~\ref{thm:full-line-polynomial-BE}}
\label{subsec:full-line-results}

Throughout the proof of Theorem~\ref{thm:full-line-polynomial-BE}, we take
\(B=20\).  In addition to the interval \(I_n\) from
\eqref{eq:logarithmic-edge-interval}, define
\begin{equation}
 I_n^{\mathrm{edge}}
 =
 [-\sqrt n,-L_n)\cup(L_n,\sqrt n],
 \qquad
 I_n^{\mathrm{out}}
 =
 (-\infty,-\sqrt n)\cup(\sqrt n,\infty).
 \label{eq:full-line-region-definitions}
\end{equation}
 With the endpoint conventions in \eqref{eq:full-line-region-definitions},
 the sets \(I_n\), \(I_n^{\mathrm{edge}}\) and \(I_n^{\mathrm{out}}\) form a
 partition of \(\R\).  Consequently, we obtain the disjoint decomposition
\begin{equation}
 N_n(\R)
 =
 N_n(I_n)+N_n(I_n^{\mathrm{edge}})+N_n(I_n^{\mathrm{out}}).
 \label{eq:full-line-count-identity}
\end{equation}
We split the coefficients at
\begin{equation}
 m_n=\left\lfloor n-B\sqrt{n\log n}\right\rfloor,
 \qquad
 d_n=n-m_n=\left\lceil B\sqrt{n\log n}\right\rceil.
 \label{eq:full-line-coefficient-band-cutoff}
\end{equation}
The count \(N_{P_{m_n}}(I_n)\) depends only on
\(\xi_0,\ldots,\xi_{m_n}\), whereas
\(N_{P_n-P_{m_n}}(I_n^{\mathrm{out}})\) depends only on
\(\xi_{m_n+1},\ldots,\xi_n\).

The following five results, together with
Theorem~\ref{thm:polynomial-logarithmic-edge-BE},
Lemma~\ref{lem:finite-range-exact-coupling-transfer} and the auxiliary
estimates in Section~\ref{subsec:useful-tools}, are used in the proof of
Theorem~\ref{thm:full-line-polynomial-BE}.

To incorporate the zeros in \(I_n^{\mathrm{edge}}\) into the proof of
Theorem~\ref{thm:full-line-polynomial-BE}, we establish the following
high-probability bound, which shows that their number is \(O(\log n)\).

\begin{proposition}
\label{prop:full-line-edge-strip-count}
For every \(A>0\), there is a finite constant \(C_{A,B}\) such that,
for all sufficiently large \(n\),
\begin{equation}
 \Pp\!\left(
  N_n(I_n^{\mathrm{edge}})>C_{A,B}\log n
 \right)
 \leq n^{-A}.
 \label{eq:full-line-edge-strip-tail}
\end{equation}
\end{proposition}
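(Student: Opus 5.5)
We plan to control \(N_n(I_n^{\mathrm{edge}})\) through Jensen's formula applied to discs centred at points of the edge strip. By symmetry (\(P_n(-x)\) has the same law as \(P_n(x)\)) it suffices to treat \((L_n,\sqrt n]\), an interval of length \(B\sqrt{\log n}\). We cover it by \(K=O(\sqrt{\log n})\) points \(x_1,\dots,x_K\) spaced at distance \(1\). Around each \(x_j\) we place the disc \(D(x_j,1)\) inside the larger disc \(D(x_j,2)\). Jensen's formula gives
\[
 N_{P_n}(D(x_j,1))\leq\frac{1}{\log 2}\Big(\log\max_{|z-x_j|=2}|P_n(z)|-\log|P_n(x_j)|\Big).
\]
The total edge count is then at most the sum over \(j\) of these quantities. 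It therefore suffices to show that, with probability at least \(1-n^{-A-1}\), each bracket is \(O_{A,B}(\sqrt{\log n})\). Summing over the \(O(\sqrt{\log n})\) discs then gives \(O(\log n)\). It is convenient to normalize by \(e^{-|x_j|^2/2}\), since the expression is invariant under rescaling \(P_n\). Equivalently, we may work with \(\tilde P(z)=e^{-x_j^2/2}P_n(z)\), whose variance at \(x_j\) is \(e^{-x_j^2}\sum_{k\le n}x_j^{2k}/k!\).

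For the upper bound we estimate \(\E|P_n(z)|^2=\sum_{k\leq n}|z|^{2k}/k!\leq e^{|z|^2}\). On the circle \(|z-x_j|=2\) we have \(|z|\leq x_j+2\). The Gaussian supremum can be bounded via a union bound over a net of \(\mathrm{poly}(n)\) points plus a derivative (Cauchy) estimate on a slightly larger circle. This gives, with probability \(1-n^{-A-1}\),
\[
 \log\max|P_n|\leq\tfrac12(x_j+2)^2+C_A\log n .
\]
The gap \(\tfrac12(x_j+2)^2-\tfrac12x_j^2=2x_j+2\approx2\sqrt n\) is too big. We would therefore instead use discs of radius \(r\asymp n^{-1/2}\)... but that gives too many discs. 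So we plan to refine the normalization: replace \(P_n\) by \(e^{-\bar z x_j+x_j^2/2}P_n(z)\)-type weights, i.e. apply Jensen to \(F(z)=e^{-x_jz}P_n(z)\). This has the same zeros and satisfies \(\E|F(z)|^2\leq e^{|z|^2-2x_j\Re z}=e^{|z-x_j|^2-x_j^2+(\Im z)^2\cdot0}\) up to adjustment, so \(\E|F(z)|^2\le e^{-x_j^2+|z-x_j|^2}\). Then the upper term is \(-\tfrac12x_j^2+2+C_A\log n\).

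For the lower bound we need \(\log|F(x_j)|\geq-\tfrac12x_j^2-C_A\log n\) with probability \(1-n^{-A-1}\). Here \(F(x_j)\) is a centred Gaussian with variance
\[
 e^{-2x_j^2}\sum_{k\leq n}\frac{x_j^{2k}}{k!}\geq c\,e^{-x_j^2},
\]
because \(x_j^2\leq n\) and the Poisson\((x_j^2)\) mass on \(\{k\leq n\}\) is at least about \(1/2\) minus \(O(n^{-1/2})\). The Gaussian small-ball bound \(\Pp(|G|\leq \epsilon\sigma)\leq\epsilon\) with \(\epsilon=n^{-A-1}\) then gives the lower bound. Each bracket is thus \(O_A(\log n)\), which yields a total of \(O(\sqrt{\log n}\,\log n)\). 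This is off by a factor \(\sqrt{\log n}\).

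The main obstacle is this extra factor. We would fix it by applying Jensen once to a single disc of radius \(R\asymp B\sqrt{\log n}\) centred at \(x_0=L_n+\tfrac12B\sqrt{\log n}\), with the concentric disc of radius \(2R\). The weight is \(F(z)=e^{-x_0z}P_n(z)\), for which \(\E|F(z)|^2\leq e^{-x_0^2+|z-x_0|^2}\). The upper term is then \(-\tfrac12x_0^2+2R^2+C_A\log n=-\tfrac12x_0^2+O_{A,B}(\log n)\). The lower term is \(-\tfrac12x_0^2-C_A\log n\) by the small-ball argument, using the variance lower bound, which still holds since \(x_0\leq\sqrt n\). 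Since the disc of radius \(R\) contains the whole strip \((L_n,\sqrt n]\), Jensen yields \(N\leq C_{A,B}\log n\) off an event of probability \(O(n^{-A-1})\). The supremum over the circle of radius \(2R\) is handled by a net of size \(\mathrm{poly}(n)\) together with Cauchy estimates for \(F'\) on a circle of radius \(3R\), whose variance is also \(e^{O(\log n)}\) relative to \(e^{-x_0^2}\). Adjusting \(A\) completes the argument.
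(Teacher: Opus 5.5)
Your final single-disc argument is correct and is essentially the paper's proof. It uses one Jensen disc of radius $\asymp B\sqrt{\log n}$ at the edge, a nonvanishing weight that makes $\E|F|^2$ polynomial in $n$ on the disc (your $e^{-x_0z}$ gives $\E|F(z)|^2\le e^{|z-x_0|^2-x_0^2}$), a Gaussian small-ball bound at the centre using that the Poisson$(x_0^2)$ mass on $\{k\le n\}$ is bounded below, and a Markov bound for the maximum.

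The differences are cosmetic. The paper uses the weight $e^{-z^2/2}$ (so $\E|q_n(x+iy)|^2\le e^{2y^2}$), centres at $\pm L_n$ with radii $(2,3,4)B\sqrt{\log n}$, and controls the maximum by the subharmonic mean-value inequality rather than a net plus Cauchy's integral formula. If you keep the net, bound $\sup|F'|$ by the circle average of $|F|$, not its supremum, to avoid circularity. The earlier unit-disc detours can simply be dropped.
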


The next proposition compares \(N_n(I_n)\) with the zero count
\(N_{P_{m_n}}(I_n)\), which is determined entirely by the coefficients
\(\xi_0,\ldots,\xi_{m_n}\).

\begin{proposition}
\label{prop:full-line-low-band-localization}
For all sufficiently large \(n\),
\begin{equation}
 \Pp\!\left(
  N_n(I_n)\neq N_{P_{m_n}}(I_n)
 \right)
 \leq
 Cn^{-16}.
 \label{eq:full-line-low-band-localization}
\end{equation}
\end{proposition}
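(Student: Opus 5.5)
We must show that on \(I_n\) the zeros of \(P_n\) and of \(P_{m_n}\) coincide with high probability, where \(P_n-P_{m_n}=\sum_{k=m_n+1}^n\xi_kx^k/\sqrt{k!}\) is a high-band perturbation. The plan is a Rouché/zero-stability argument on \(I_n\). We show that \(|P_n-P_{m_n}|\) is tiny compared with the typical size \(e^{x^2/2}\) of \(P_{m_n}\) there, and that \(P_{m_n}\) has no near-degenerate zeros in \(I_n\) outside a small-probability event.

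\emph{Step 1 (size of the tail band).} For \(|x|\leq L_n=\sqrt n-B\sqrt{\log n}\) and \(k>m_n=n-B\sqrt{n\log n}\), the Poisson-type weight satisfies
\[
x^{2k}e^{-x^2}/k!\leq\exp\bigl(-c(k-x^2)^2/k\bigr).
\]
Since \(k-x^2\geq k-n+2B\sqrt{n\log n}-B^2\log n\gtrsim B\sqrt{n\log n}\), this weight is at most \(n^{-cB^2}\). Hence the variance of \(e^{-x^2/2}(P_n-P_{m_n})(x)\), and of its first two derivatives (which add only polynomial factors in \(n\)), is at most \(n^{-cB^2}\) uniformly on \(I_n\). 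We then take a grid of mesh \(n^{-10}\) on \(I_n\), apply Gaussian tails at each grid point, and use a derivative bound to pass between grid points. For \(B=20\), this gives an event of probability \(1-Cn^{-16}\) on which
\[
\sup_{I_n}e^{-x^2/2}\bigl(|P_n-P_{m_n}|+|(P_n-P_{m_n})'|\bigr)\leq \varepsilon_n=n^{-38}.
\]

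\emph{Step 2 (nondegeneracy of \(P_{m_n}\)).} Set \(q=e^{-x^2/2}P_{m_n}\). We need the event that there is no \(x\in I_n\) with \(|q(x)|\leq\delta_n\) and \(|q'(x)|\leq\delta_n\), where \(\delta_n=n^{-19}\), to have probability \(1-Cn^{-16}\). We follow the same pattern as Proposition~\ref{prop:logarithmic-edge-root-pairing}. The one-point density of \((q(x),q'(x))\) is bounded uniformly on \(I_n\), because the covariance is close to that of the stationary process \(q_\infty\), whose covariance is nondegenerate. On a grid of mesh \(\delta_n\) the probability at a single point is \(O(\delta_n^2)\). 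A sup bound on \(q''\), valid off an \(e^{-cn}\) event, transfers this to the continuum, and a union bound over \(O(\sqrt n/\delta_n)\) points gives \(O(\sqrt n\,\delta_n)=O(n^{-18.5})\).

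\emph{Step 3 (matching zeros).} On the intersection of the two events, write \(e^{-x^2/2}P_n=q+r\) with \(\|r\|_{C^1(I_n)}\leq\varepsilon_n\ll\delta_n\). Wherever \(|q|\leq\delta_n\) we have \(|q'|>\delta_n\), so \(q\) is strictly monotone on each component of \(\{|q|\leq\delta_n\}\). Such a component containing a zero of \(q\) contains exactly one zero of \(q+r\), by the intermediate value theorem and monotonicity, since \(|r'|<|q'|\). Off these components \(|q|>\delta_n>|r|\), so there are no zeros of \(q+r\) there. All zeros are simple, so multiplicity is not an issue. \textbf{The main obstacle} is the endpoints of \(I_n\): a component may straddle \(\pm L_n\). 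We handle this by adding to the bad event that \(|q(\pm L_n)|\leq\delta_n\), which has probability \(O(\delta_n)\) by the bounded density; when \(|q(\pm L_n)|>\delta_n\), the sign of \(q+r\) at the endpoints agrees with that of \(q\). It then follows that \(N_n(I_n)=N_{P_{m_n}}(I_n)\) off an event of probability \(Cn^{-16}\).
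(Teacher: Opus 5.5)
Your argument is correct, but it follows a different route from the paper's.

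\textbf{The paper's route.} The paper never compares \(P_n\) with \(P_{m_n}\) directly. It uses the stationary series as an intermediary in two steps.
\begin{itemize}
\item It first gets \(N_n(I_n)=N_\infty(I_n)\) off probability \(Cn^{-16}\) from Proposition~\ref{prop:logarithmic-edge-root-pairing}.
\item It then proves \(N_{P_{m_n}}(I_n)=N_\infty(I_n)\) off probability \(Cn^{-17}\). This combines the bound \(\|q_\infty-q_{m_n}\|_{C^1(I_n)}<\varepsilon_n\) (Lemma~\ref{lem:full-line-low-band-C1}), the transversality and endpoint estimates for \(q_\infty\) (Lemma~\ref{lem:comparison-near-critical-small-ball}) and the homotopy lemma (Lemma~\ref{lem:comparison-order-preserving-stability}).
\end{itemize}
What this buys is that all nondegeneracy is checked for \(q_\infty\). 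There the value and derivative at each point are exactly independent standard Gaussians, so the small-ball lemma is reused verbatim.

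\textbf{Your route.} You make a single direct comparison, and your Step~1 is slightly simpler: \(P_n-P_{m_n}\) is a finite band whose normalized pointwise variance is \(\Pp(m_n<K\leq n)\), with \(K\) Poisson with mean \(x^2\). The price is that you must prove transversality for the nonstationary process \(q_{m_n}\) itself. Your justification is sound. Write \(q_\infty=q_{m_n}+\rho_{m_n}\), where \(\rho_{m_n}=q_\infty-q_{m_n}\) is independent of \(q_{m_n}\). Then the covariance of \((q_{m_n}(x),q_{m_n}'(x))\) differs from the identity by \(O(n^{2-B^2/4})\) uniformly on \(I_n\), by the same Poisson tail bound. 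Hence the joint density is uniformly bounded. Your Step~3 is also a legitimate and more elementary substitute for the homotopy lemma: monotonicity plus the intermediate value theorem on each component of \(\{|q|\leq\delta_n\}\) suffices, since no displacement bound is needed here.

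\textbf{Two bookkeeping points.} Neither affects the conclusion.
\begin{itemize}
\item The \(C^1\) norm of \(r=e^{-x^2/2}(P_n-P_{m_n})\) involves \(r'=e^{-x^2/2}\{(P_n-P_{m_n})'-x(P_n-P_{m_n})\}\). This costs a harmless factor \(\sqrt n\) relative to what you bounded.
\item A bound \(\sup_{I_n}|q''|\leq M\) that fails only with probability \(e^{-cn}\) forces \(M\gtrsim\sqrt n\). With mesh \(\delta_n\), the union bound then gives \(O(M\sqrt n\,\delta_n)=O(n^{-18})\) rather than \(O(\sqrt n\,\delta_n)\). This is still well below \(n^{-16}\).
\end{itemize}
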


The following proposition compares \(N_n(I_n^{\mathrm{out}})\) with the zero
count \(N_{P_n-P_{m_n}}(I_n^{\mathrm{out}})\), which is determined entirely by
the coefficients \(\xi_{m_n+1},\ldots,\xi_n\).  It also provides the
probability and \(L^2\) bounds used below.

\begin{proposition}
\label{prop:full-line-upper-band-localization}
For all sufficiently large \(n\),
\begin{equation}
 \Pp\!\left(
  N_n(I_n^{\mathrm{out}})
  \neq
  N_{P_n-P_{m_n}}(I_n^{\mathrm{out}})
 \right)
 \leq Cn^{-17}.
 \label{eq:full-line-upper-band-localization}
\end{equation}
Moreover,
\begin{equation}
 \|N_n(I_n^{\mathrm{out}})
   -N_{P_n-P_{m_n}}(I_n^{\mathrm{out}})\|_{L^2}
 \leq
 Cn^{-15/2}
 \label{eq:full-line-upper-band-coupling-L2}
\end{equation}
and
\begin{equation}
 \|N_{P_n-P_{m_n}}(I_n^{\mathrm{out}})
   -\E N_{P_n-P_{m_n}}(I_n^{\mathrm{out}})\|_{L^2}
 \leq
 C\log n.
 \label{eq:full-line-truncated-exterior-L2}
\end{equation}
\end{proposition}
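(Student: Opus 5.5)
The plan is to reduce the exterior zero counts to zeros of reciprocal polynomials inside a disc, which Jensen's formula controls. Set \(Q_n=P_n-P_{m_n}=\sum_{k=m_n+1}^{n}\xi_kx^k/\sqrt{k!}\). For \(|x|>\sqrt n\), write \(x=\sqrt n/w\) with \(|w|<1\).

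\emph{Step 1: reduction.} Consider the reciprocal polynomial
\[
\widetilde P_n(w)=\frac{\sqrt{n!}}{\xi_n}\Bigl(\frac{w}{\sqrt n}\Bigr)^{n}P_n\Bigl(\frac{\sqrt n}{w}\Bigr)
=\sum_{j=0}^{n}\frac{\xi_{n-j}}{\xi_n}\sqrt{\frac{n!}{(n-j)!}}\,n^{-j/2}w^{j}.
\]
Its coefficients satisfy \(\sqrt{(n)_j}\,n^{-j/2}\leq e^{-j(j-1)/(4n)}\). Hence the terms with \(j\geq d_n\), which are exactly the low coefficients \(k\leq m_n\), have size at most \(e^{-cB^2\log n}\) times Gaussian factors uniformly on \(|w|\leq1\). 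With \(B=20\), this gives a polynomially small bound \(n^{-cB^2}\) on an event of probability \(1-n^{-A}\).

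\emph{Step 2: the probability bound.} Zeros of \(P_n\) and \(Q_n\) in \(I_n^{\mathrm{out}}\) correspond to zeros of \(\widetilde P_n\) and of \(\widetilde Q_n\) (the truncation of \(\widetilde P_n\) to \(j<d_n\)) in \((-1,1)\setminus\{0\}\). I would show the two counts agree outside a small event. The tool is a Rouché/Hurwitz argument on small discs around the real zeros together with a real-variable argument. On \((-1,1)\), \(|\widetilde P_n-\widetilde Q_n|\) is tiny. The difference in counts can therefore only arise at points where \(|\widetilde Q_n|\) and \(|\widetilde Q_n'|\) are simultaneously tiny, or near the endpoints \(w=\pm1\).

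The endpoint case I handle by a Gaussian small-ball estimate: \(\Pp(|\widetilde Q_n(\pm1)|\leq n^{-20})\leq Cn^{-19}\). For the double near-zero case I use a net over \([-1,1]\) of mesh \(n^{-K}\), the Gaussian small-ball estimate for the pair \((\widetilde Q_n,\widetilde Q_n')\), whose covariance is nondegenerate, and a union bound. This is the step I expect to be the main obstacle, because the non-degeneracy constants must be uniform up to \(|w|=1\). I would first try to obtain them from a lower bound on the covariance determinant of \((\widetilde Q_n(w),\widetilde Q_n'(w))\) using its first two coefficients \(\xi_n,\xi_{n-1}\). The division by \(\xi_n\) is harmless since \(\Pp(|\xi_n|<n^{-20})\) is small.

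\emph{Step 3: the \(L^2\) bounds.} The main tool is Jensen's formula. Since \(\widetilde Q_n(0)=1\), the number of zeros of \(\widetilde Q_n\) in \(|w|\leq1\) is at most
\[
\log\max_{|w|=r}|\widetilde Q_n(w)|\big/\log r\quad\text{for a suitable }r>1.
\]
Take \(r=1+1/\sqrt{n\log n}\)-type scaling. Then the coefficient weights \(e^{-j^2/(4n)}r^{j}\) over \(j<d_n\) stay bounded by \(n^{O(1)}\), giving \(\log\max\leq C\log n+\log(\sum_j|\xi_{n-j}|/|\xi_n|)\). A more careful choice, \(r=1+c\sqrt{\log n/n}\) with the effective degree \(\sqrt{n\log n}\), gives a zero count \(\leq C\log n\cdot\sqrt{n/\log n}\)?

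That bound is too weak, so I instead restrict to real zeros and use a Jensen count on dyadic annuli \(1-|w|\sim 2^{-i}\), with the effective number of zeros near radius \(1\) controlled via \(\max|\widetilde Q_n|\) on slightly larger discs centered on the real axis. This is the standard bound, giving \(O(\log n)\) real zeros with high moments.

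\emph{Step 4: assembling.} The result is \(\|N_{Q_n}(I_n^{\mathrm{out}})\|_{L^4}\leq C\log n\), which implies \eqref{eq:full-line-truncated-exterior-L2}. For \eqref{eq:full-line-upper-band-coupling-L2}, bound the difference by \((N_n(\R)+N_{Q_n}(\R))\mathbf 1_{\text{bad}}\leq 2n\,\mathbf 1_{\text{bad}}\) and apply Cauchy–Schwarz with Step 2. The probability \(n^{-17}\) times \(n^2\) gives \(n^{-15/2}\) after square roots.
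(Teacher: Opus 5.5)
Your proposal is correct and follows essentially the paper's route. You pass to the reciprocal polynomial in $w=\sqrt n/x$, use $\sqrt{(n)_j}\,n^{-j/2}\le e^{-j(j-1)/(4n)}$ to make the full and truncated reciprocal polynomials $C^1$-close on $[-1,1]$, and get transversality and endpoint separation from the uniform nondegeneracy given by the $\xi_n,\xi_{n-1}$ coefficients, using a net and a Gaussian small-ball bound. You then match the counts by the stability argument, bound the count difference by $n\mathbf 1_{\mathrm{bad}}$, and control the exterior count by Jensen's formula on $O(\log n)$ dyadic discs centered on the real axis. The paper instead does the transversality and moment bound for the untruncated polynomial and transfers them, and does not divide by $\xi_n$, but these differences are cosmetic.

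The one step you leave as ``standard'' is the exponentially tailed $O(1)$ count per disc. In the paper this rests on the profile $\sum_j (n)_j n^{-j}\rho^{2j}\asymp(1-\rho+n^{-1/2})^{-1}$ for $0\le\rho\le1$, which keeps the supremum of the second moment on each disc comparable to the variance at its center. It also needs a last disc of radius of order $1/n$ at $w=1$, which uses the order-$\sqrt n$ bound for $1\le\rho\le1+1/n$.
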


The next lemma bounds the Kolmogorov distance for the normalized sum \(U+V\)
in terms of the corresponding distance for \(U\) and the variance ratio
\(\Var(V)/\Var(U)\), assuming that \(U\) and \(V\) are independent.

\begin{lemma}
\label{lem:full-line-independent-transfer}
Let \(U,V\) be independent square-integrable real random variables
with \(\Var(U)>0\).  Then
\begin{equation}
 d_{\mathrm K}\!\left(
  \frac{U+V-\E(U+V)}{\sqrt{\Var(U)+\Var(V)}},Z
 \right)
 \leq
 d_{\mathrm K}\!\left(
  \frac{U-\E U}{\sqrt{\Var(U)}},Z
 \right)
 +
 \frac{1}{\sqrt{2\pi e}}\frac{\Var(V)}{\Var(U)}.
 \label{eq:full-line-independent-transfer}
\end{equation}
\end{lemma}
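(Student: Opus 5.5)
By translation and positive scaling both sides of \eqref{eq:full-line-independent-transfer} are unchanged, so I would reduce to \(\E U=\E V=0\) and \(\Var(U)=1\). Write \(\sigma^2=\Var(V)\), \(s=\sqrt{1+\sigma^2}\), \(F_U(x)=\Pp(U\leq x)\), \(\delta=d_{\mathrm K}(U,Z)\) and \(\varphi=\Phi'\). The strategy is to condition on \(V\) and use independence, which replaces \(U\) by \(Z\) at a cost of at most \(\delta\). What remains is a purely Gaussian smoothing estimate, which I would handle by a second-order Taylor expansion of \(\Phi\).

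\emph{Conditioning.} By independence and Fubini, for each \(z\in\R\),
\[
 \Pp(U+V\leq sz)=\E\bigl[F_U(sz-V)\bigr].
\]
Since \(|F_U(x)-\Phi(x)|\leq\delta\) for every \(x\), this gives
\[
 \bigl|\Pp(U+V\leq sz)-\E[\Phi(sz-V)]\bigr|\leq\delta .
\]
It therefore suffices to show, for every \(z\in\R\),
\[
 \bigl|\E[\Phi(sz-V)]-\Phi(z)\bigr|\leq\frac{\sigma^2}{\sqrt{2\pi e}}.
\]

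\emph{Taylor step.} Expand \(\Phi\) to second order about \(sz\):
\[
 \Phi(sz-V)=\Phi(sz)-\varphi(sz)V+\tfrac12\varphi'(\eta)V^2
\]
for some \(\eta\) between \(sz-V\) and \(sz\). Because \(\varphi'(x)=-x\varphi(x)\), we have \(\sup_x|\varphi'(x)|=\varphi(1)=1/\sqrt{2\pi e}\). Taking expectations and using \(\E V=0\), the linear term vanishes (it is integrable since \(V\in L^2\)), and
\[
 \bigl|\E\Phi(sz-V)-\Phi(sz)\bigr|\leq\frac{\sigma^2}{2\sqrt{2\pi e}}.
\]

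\emph{Rescaling step.} Next compare \(\Phi(sz)\) with \(\Phi(z)\). For every \(x\) between \(z\) and \(sz\) we have \(|x|\geq|z|\), hence \(\varphi(x)\leq\varphi(z)\). Therefore
\[
 |\Phi(sz)-\Phi(z)|\leq(s-1)\,|z|\varphi(z)\leq\frac{s-1}{\sqrt{2\pi e}}.
\]
Since \(s-1=\sqrt{1+\sigma^2}-1\leq\sigma^2/2\), this term is also at most \(\sigma^2/(2\sqrt{2\pi e})\). Adding the Taylor and rescaling bounds gives exactly \(\sigma^2/\sqrt{2\pi e}\), and taking the supremum over \(z\) proves the lemma.

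The argument is elementary, and no step is a serious obstacle. The only points requiring care are keeping the sharp constant \(1/\sqrt{2\pi e}\) in the two half-contributions and justifying the conditioning identity, which follows from independence and Fubini.
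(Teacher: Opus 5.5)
Your proposal is correct and follows the paper's proof essentially step for step. You condition on $V$ to replace $U$ by $Z$ at cost $d_{\mathrm K}$, expand $\Phi$ to second order about $sz$ (the linear term vanishes by centering, and $\sup|\Phi''|=1/\sqrt{2\pi e}$), and bound $\Phi(sz)-\Phi(z)$ by the mean value theorem together with $\sqrt{1+x}-1\leq x/2$; your check that $\varphi(x)\leq\varphi(z)$ between $z$ and $sz$, so the bound reduces to $\sup_z|z|\varphi(z)=1/\sqrt{2\pi e}$, supplies a detail the paper leaves implicit.
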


The zero count \(N_n(I_n^{\mathrm{edge}})\) need not be independent of
the sum of \(N_{P_{m_n}}(I_n)\) and
\(N_{P_n-P_{m_n}}(I_n^{\mathrm{out}})\).  The following lemma addresses this
situation without assuming independence.  It bounds the resulting Kolmogorov
distance in terms of the original distance, the exceptional-event probability
and both \(L^2\) and high-probability bounds for the centered random variable
being added.

\begin{lemma}
\label{lem:full-line-additive-transfer}
Let \(X,T,Y\) be square-integrable, with \(Y=X+T\) and
\(\Var(X)>0\).
Suppose that an event \(G\) and numbers \(p,r\geq0\) satisfy
\[
 \Pp(G^c)\leq p,
 \qquad
 |T-\E T|\leq r
 \quad\text{on }G.
\]
If
\[
 \|T-\E T\|_{L^2}\leq\frac12\sqrt{\Var(X)},
\]
then \(\Var(Y)>0\) and
\begin{equation}
 d_{\mathrm K}\!\left(
  \frac{Y-\E Y}{\sqrt{\Var(Y)}},Z
 \right)
 \leq
 d_{\mathrm K}\!\left(
  \frac{X-\E X}{\sqrt{\Var(X)}},Z
 \right)
 +
 p
 +
 \frac{\|T-\E T\|_{L^2}+r}{\sqrt{\Var(X)}}.
 \label{eq:full-line-additive-transfer}
\end{equation}
\end{lemma}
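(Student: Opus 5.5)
The plan is a direct sandwich argument: on the good event \(G\), the perturbation \(T\) shifts the threshold by at most \(r\), and the change of normalization from \(\sqrt{\Var(X)}\) to \(\sqrt{\Var(Y)}\) costs only a Lipschitz-type error in \(\Phi\). Write \(\tilde X=X-\E X\), \(\tilde T=T-\E T\), \(\tilde Y=\tilde X+\tilde T=Y-\E Y\), \(\sigma_X=\sqrt{\Var(X)}\), \(\sigma_Y=\sqrt{\Var(Y)}\), and \(d=d_{\mathrm K}(\tilde X/\sigma_X,Z)\).

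\emph{Positivity of \(\Var(Y)\).} By Minkowski's inequality in \(L^2\),
\(|\sigma_Y-\sigma_X|\leq\|\tilde T\|_{L^2}\leq\frac12\sigma_X\).
Hence \(\sigma_Y\geq\sigma_X/2>0\). Moreover, \(a:=\sigma_Y/\sigma_X\) satisfies
\(|a-1|\leq\|\tilde T\|_{L^2}/\sigma_X\leq\frac12\), so \(a\in[\frac12,\frac32]\).

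\emph{Upper sandwich.} Fix \(z\in\R\). On \(G\) we have \(|\tilde T|\leq r\), so \(\{\tilde Y\leq z\sigma_Y\}\cap G\subseteq\{\tilde X\leq z\sigma_Y+r\}\). Therefore
\[
\Pp(\tilde Y\leq z\sigma_Y)\leq \Pp\bigl(\tilde X/\sigma_X\leq az+b\bigr)+p\leq \Phi(az+b)+d+p,
\qquad b:=r/\sigma_X .
\]

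\emph{Lower sandwich.} Similarly, \(\{\tilde X\leq z\sigma_Y-r\}\cap G\subseteq\{\tilde Y\leq z\sigma_Y\}\). This gives
\[
\Pp(\tilde Y\leq z\sigma_Y)\geq\Phi(az-b)-d-p .
\]

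It then suffices to show that \(|\Phi(az\pm b)-\Phi(z)|\leq(\|\tilde T\|_{L^2}+r)/\sigma_X\) uniformly in \(z\). I split this as
\[
|\Phi(az\pm b)-\Phi(az)|+|\Phi(az)-\Phi(z)|.
\]

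\emph{First term.} Since \(\|\Phi'\|_\infty=1/\sqrt{2\pi}\), this term is at most \(b/\sqrt{2\pi}\leq r/\sigma_X\).

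\emph{Second term.} By the mean value theorem, the intermediate point lies between \(az\) and \(z\), so its absolute value is at least \(\min(a,1)|z|\geq|z|/2\). Hence
\[
|\Phi(az)-\Phi(z)|\leq|a-1|\,|z|\,\phi(z/2),
\]
where \(\phi\) is the standard Gaussian density. We have \(\sup_z|z|\phi(z/2)=2\sup_w|w|\phi(w)=2/\sqrt{2\pi e}<1\). Therefore this term is at most \(|a-1|\leq\|\tilde T\|_{L^2}/\sigma_X\).

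\emph{Conclusion.} Combining both sandwiches and taking the supremum over \(z\) yields \eqref{eq:full-line-additive-transfer}.

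The only delicate point is the scale-change term \(|\Phi(az)-\Phi(z)|\). It must be bounded uniformly in \(z\) with constant at most one, and this is exactly where the hypothesis \(\|\tilde T\|_{L^2}\leq\frac12\sigma_X\) enters, through \(a\geq\frac12\). Everything else is routine.
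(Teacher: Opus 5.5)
Your proposal is correct and follows essentially the same route as the paper. The common steps are the reverse triangle inequality in \(L^2\) for the standard deviations, the sandwich on \(G\) with thresholds shifted by \(\pm r\), and the mean value theorem bound \(|a-1|\,|z|\,\phi(z/2)\leq|a-1|\) together with \(\|\Phi'\|_\infty=1/\sqrt{2\pi}\) for the shift; the paper takes this last bound from the proof of Lemma~\ref{lem:finite-range-exact-coupling-transfer}.
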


\subsection{Auxiliary constructions and estimates}
\label{subsec:useful-tools}

We next present several auxiliary analytic and probabilistic results used in
the detailed proofs.

\subsubsection{The full Weyl series and its stationary normalization}

We work on a complete probability space
\((\Omega,\mathcal F,\Pp)\), and we complete every enlargement of this
probability space used below.  Recall that the sequence
\((\xi_k)_{k\geq0}\) and the polynomials \(P_n\) are defined in
\eqref{eq:weyl-polynomial}.  The full Weyl series introduced in
Section~\ref{subsec:bulk-results} is
\begin{equation}
 P_\infty(x)
 =
 \sum_{k=0}^{\infty}\frac{\xi_k}{\sqrt{k!}}x^k,
 \qquad x\in\R.
 \label{eq:weyl-full-series}
\end{equation}

We first verify that \eqref{eq:weyl-full-series} almost surely defines a
real-analytic function on \(\R\) that can be differentiated term by term.  For
\(R\in\mathbb N\) and \(j\in\mathbb N_0\), set
\[
 S_{R,j}
 =
 \sum_{k=j}^{\infty}
 |\xi_k|\frac{k!}{(k-j)!\sqrt{k!}}R^{k-j}.
\]
Since the random variables \(\xi_k\) are identically distributed, the ratio
of two consecutive terms in the corresponding series of expectations is
\[
 R\frac{\sqrt{k+1}}{k+1-j}
 \longrightarrow 0
 \qquad (k\to\infty).
\]
We apply the ratio test and Tonelli's theorem to obtain
\[
 \E S_{R,j}
 =
 \sum_{k=j}^{\infty}
 \E|\xi_k|\frac{k!}{(k-j)!\sqrt{k!}}R^{k-j}
 <\infty,
\]
Thus, for every \(R\in\mathbb N\) and \(j\in\mathbb N_0\), we have
\[
 \Pp(S_{R,j}<\infty)=1.
\]
\begin{samepage}
Define the event
\[
 \Omega_{\mathrm W}
 =
 \{\xi_0\neq0\}
 \cap
 \bigcap_{R=1}^{\infty}
 \bigcap_{j=0}^{\infty}
 \{S_{R,j}<\infty\}.
\]
\end{samepage}
 Then \(\Pp(\Omega_{\mathrm W})=1\).  On this event, for every
 \(R\in\mathbb N\) and \(j\in\mathbb N_0\), we apply the Weierstrass
 \(M\)-test to the series obtained by formally differentiating
 \eqref{eq:weyl-full-series} \(j\) times and obtain absolute and uniform
 convergence on \([-R,R]\).  We may therefore differentiate term by term and
 obtain
\[
 P_\infty^{(j)}(x)
 =
 \sum_{k=j}^{\infty}
 \frac{\xi_k}{\sqrt{k!}}
 \frac{k!}{(k-j)!}x^{k-j},
 \qquad x\in\R,\quad j\in\mathbb N_0,
\]
with locally uniform convergence in \(x\).  Thus \(P_\infty\) is real
analytic on \(\R\).  Moreover, \(P_\infty(0)=\xi_0\neq0\) on
\(\Omega_{\mathrm W}\), so \(P_\infty\) is not identically zero.  The zeros
of a nonzero real-analytic function are isolated.  Hence \(P_\infty\) has only
finitely many zeros in every bounded real interval.  On
\(\Omega\setminus\Omega_{\mathrm W}\), we set \(P_\infty\equiv1\).  All
subsequent occurrences refer to this everywhere-defined version.

For \(x\in\R\), define \(q_n\) and recall the stationary normalization
\(q_\infty\) from Section~\ref{subsec:bulk-results}:
\begin{equation}
 q_n(x)=e^{-x^{2}/2}P_n(x),
 \qquad
 q_\infty(x)=e^{-x^{2}/2}P_\infty(x).
 \label{eq:normalized-weyl-functions}
\end{equation}
Since \(e^{-x^2/2}>0\), \(q_n\) and \(q_\infty\) have
the same real zeros, with the same multiplicities, as \(P_n\) and
 \(P_\infty\), respectively.  For \(x,y\in\R\), we sum the exponential series
 and obtain
\begin{equation}
 \E[q_\infty(x)q_\infty(y)]
 =e^{-(x^2+y^2)/2}
 \sum_{k=0}^{\infty}\frac{(xy)^k}{k!}
 =e^{-(x-y)^2/2}.
 \label{eq:stationary-weyl-covariance}
\end{equation}
For each fixed \(x\in\R\),
\[
 \E|P_\infty(x)-P_n(x)|^2
 =
 \sum_{k=n+1}^{\infty}\frac{x^{2k}}{k!}
 \longrightarrow0.
\]
Consequently, for every \(d\geq1\) and
\(x_1,\ldots,x_d\in\R\),
\[
 \bigl(q_n(x_1),\ldots,q_n(x_d)\bigr)
 \longrightarrow
 \bigl(q_\infty(x_1),\ldots,q_\infty(x_d)\bigr)
 \quad\text{in }\bigl(L^2(\Omega)\bigr)^d.
\]
 The limit is a centered Gaussian vector.  Combining this fact with
 \eqref{eq:stationary-weyl-covariance}, we obtain
\[
 \E q_\infty(x)=0,
 \qquad
 \Var(q_\infty(x))=1,
 \qquad
 \Cov(q_\infty(x),q_\infty(y))=e^{-(x-y)^2/2}.
\]
Thus \(q_\infty\) is a centered, unit-variance stationary Gaussian process.

\subsubsection{Basic zero-count properties and the Kac--Rice formula}

For every fixed \(B>0\) and all sufficiently large \(n\), the quantities in
\eqref{eq:logarithmic-edge-interval} satisfy
\begin{equation}
 1\leq B\sqrt{\log n}\leq\frac{\sqrt n}{4},
 \qquad
 \frac34\sqrt n\leq L_n\leq\sqrt n,
 \qquad
 |I_n|=2L_n.
 \label{eq:logarithmic-edge-length-bounds}
\end{equation}
Recall that \(N_\infty(A)\) is the zero count introduced in
Section~\ref{subsec:bulk-results}.  Only bounded real intervals occur below.

We first verify the nondegeneracy conditions used in the zero-count formulas.
For \(n\geq1\) and \(x\in\R\), the coefficient vectors of \(P_n(x)\) and
\(P_n'(x)\) are linearly independent: their zeroth coordinates are one and
zero, respectively, while the first coordinate of the second vector is one.
 Hence \((P_n(x),P_n'(x))\) is nondegenerate.  We differentiate
 \eqref{eq:stationary-weyl-covariance} and obtain
\[
 \partial_y e^{-(x-y)^{2}/2}
 =(x-y)e^{-(x-y)^{2}/2},
 \qquad
 \partial_x\partial_y e^{-(x-y)^{2}/2}
 =\bigl(1-(x-y)^{2}\bigr)e^{-(x-y)^{2}/2}.
\]
 We then set \(y=x\) and obtain
\begin{equation}
 \Var(q_\infty(x))=\Var(q_\infty'(x))=1,
 \qquad
 \E[q_\infty(x)q_\infty'(x)]=0.
 \label{eq:stationary-value-derivative-covariance}
\end{equation}
Thus \((q_\infty(x),q_\infty'(x))\) is also nondegenerate.

Let \(p_Y\) denote the density of a real random variable \(Y\).  Since
 \(\Var(P_n(x))\geq1\) and \(\Var(q_\infty(x))=1\), we apply the Gaussian
 density formula and obtain, uniformly in \(n\geq1\) and \(x\in\R\),
\[
 \sup_{u\in\R}p_{P_n(x)}(u)
 \leq\frac1{\sqrt{2\pi}},
 \qquad
 \sup_{u\in\R}p_{q_\infty(x)}(u)
 =\frac1{\sqrt{2\pi}}.
\]
 For every compact interval \(K\subset\R\), we apply
 Lemma~\ref{lem:appendix-azais-wschebor-prop1-20} and obtain
\[
 \Pp\!\left(
  \exists x\in K:\ P_n(x)=P_n'(x)=0
 \right)
 =
 \Pp\!\left(
  \exists x\in K:\ q_\infty(x)=q_\infty'(x)=0
 \right)
 =0.
\]
 Intersecting \(\Omega_{\mathrm W}\) with these probability-one events over
 \(n\in\mathbb N\) and \(K=[-r,r]\), \(r\in\mathbb N\), we obtain a single
probability-one event on which every \(P_n\), \(n\in\mathbb N_0\), as well as
\(q_\infty\) and \(P_\infty\), has only simple real zeros.  The zeros of
\(P_\infty\) are also locally finite there.

We next record the measurability consequence used below.  Let \(K\) be a
compact interval with nonempty interior and let \(k\geq1\).  Up to a null
event,
\[
 \begin{aligned}
  \{N_\infty(K)\geq k\}
  ={}&
  \bigcup_{s=1}^{\infty}
  \bigcap_{M=1}^{\infty}
  \bigcup_{\substack{
   t_1,\ldots,t_k\in K\cap\mathbb Q\\
   |t_i-t_j|\geq1/s,\ i\neq j
  }}
  \bigcap_{i=1}^{k}
  \{|q_\infty(t_i)|<1/M\}.
 \end{aligned}
\]
 Using this countable representation, we conclude that \(N_\infty(K)\) is a finite-valued
random variable measurable with respect to the \(\Pp\)-augmentation of
\(\sigma\{q_\infty(t):t\in K\}\).  For a singleton \(K=\{a\}\), the count
agrees almost surely with \(\mathbf1_{\{q_\infty(a)=0\}}\).  The same
conclusion for every bounded interval follows by monotone approximation with
compact intervals and the endpoint indicators.

We now record the Kac--Rice formula for the expected number of zeros in the
form used below.  Let
\(X\) be a centered Gaussian process with almost surely \(C^1\) paths on a
compact interval \(I\) and suppose that \((X(t),X'(t))\) is nondegenerate
for every \(t\in I\).  Denote its number of zeros in \(I\) by
 \(\mathcal N_X(I)\).  We apply
 Lemma~\ref{lem:appendix-azais-wschebor-thm3-2} and obtain
\begin{equation}
 \begin{aligned}
  \E\mathcal N_X(I)
  &=
  \int_I
  \E\!\left[|X'(t)|\mid X(t)=0\right]p_{X(t)}(0)\dd t\\
  &=
  \frac1\pi\int_I
  \frac{
   \sqrt{\Var(X(t))\Var(X'(t))-\Cov(X(t),X'(t))^{2}}
  }{\Var(X(t))}\dd t.
 \end{aligned}
 \label{eq:expected-zero-count-Kac-Rice}
\end{equation}
Formula~\eqref{eq:expected-zero-count-Kac-Rice} applies to \(P_n\) for every
\(n\geq1\) and to \(q_\infty\) on every compact interval.  By
\eqref{eq:stationary-value-derivative-covariance},
\[
 \Var(q_\infty(t))=\Var(q_\infty'(t))=1,
 \qquad
 \Cov(q_\infty(t),q_\infty'(t))=0.
\]
Since \(q_\infty\) and \(P_\infty\) have the same real zeros, it follows that
\[
 \E N_\infty(I)
 =
 \frac1\pi\int_I\dd t
 =
 \frac{\ell}{\pi}.
\]
Thus the expected stationary zero count per unit length is \(1/\pi\).

For every fixed \(x\in\R\), the random variables \(P_n(x)\) and
\(P_\infty(x)\) are nondegenerate Gaussian.  Therefore,
\[
 \Pp(P_n(x)=0)=\Pp(P_\infty(x)=0)=0.
\]
Consequently, the inclusion or exclusion of a prescribed endpoint does not
affect either zero count almost surely.  More generally, if \(Q\) is a
nonzero real polynomial and \(A\subseteq\R\), then
\(N_Q(A)\leq\deg Q\).  In particular, for every Borel set
\(A\subseteq\R\),
\begin{equation}
 0\leq N_n(A)\leq N_n(\R)\leq n
 \qquad\text{almost surely}.
 \label{eq:polynomial-zero-count-degree-bound}
\end{equation}

\subsubsection{A moment comparison lemma}

The following lemma bounds the \(L^2\) distance, the difference of the means
and the difference of the standard deviations of two random variables in
terms of their fourth moments and their probability of disagreement.  No
independence assumption is required.

\begin{lemma}
\label{lem:rare-disagreement-moment-transfer}
Let \(X,Y\in L^4\) be real-valued random variables.  Then
\begin{equation}
 \|X-Y\|_{L^{2}}
 \leq
 \left(
  8\bigl(\E|X|^4+\E|Y|^4\bigr)
 \right)^{1/4}
 \Pp(X\neq Y)^{1/4},
 \label{eq:rare-disagreement-L2-transfer}
\end{equation}
and
\begin{equation}
 |\E X-\E Y|
 +
 \left|
  \sqrt{\Var(X)}-\sqrt{\Var(Y)}
 \right|
 \leq
 2\|X-Y\|_{L^{2}}.
 \label{eq:rare-disagreement-mean-sd-transfer}
\end{equation}
\end{lemma}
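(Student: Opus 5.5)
The plan is to bound \(|X-Y|^2\) by splitting according to whether the two variables agree, and to use a fourth-moment Hölder estimate on the disagreement event. Write \(D=\{X\neq Y\}\). Since \(X-Y=(X-Y)\mathbf 1_D\), the Cauchy--Schwarz inequality gives
\[
 \E|X-Y|^2
 =
 \E\bigl[|X-Y|^2\mathbf 1_D\bigr]
 \leq
 \bigl(\E|X-Y|^4\bigr)^{1/2}\Pp(D)^{1/2}.
\]
By convexity of \(u\mapsto u^4\), we have \(|a-b|^4\leq 8(|a|^4+|b|^4)\), so \(\E|X-Y|^4\leq 8(\E|X|^4+\E|Y|^4)\). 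Taking square roots then yields \eqref{eq:rare-disagreement-L2-transfer} directly, since \((\cdot)^{1/2}\) applied to the \(1/2\)-powers produces the \(1/4\)-powers in the statement.

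For \eqref{eq:rare-disagreement-mean-sd-transfer}, I would treat the mean and the standard deviation separately.

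\textbf{Mean.} Jensen's inequality gives \(|\E X-\E Y|=|\E(X-Y)|\leq\|X-Y\|_{L^2}\).

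\textbf{Standard deviation.} Note that \(\sqrt{\Var(X)}=\|X-\E X\|_{L^2}\). The reverse triangle inequality in \(L^2\) then gives
\[
 \bigl|\sqrt{\Var(X)}-\sqrt{\Var(Y)}\bigr|
 \leq
 \|(X-Y)-\E(X-Y)\|_{L^2}.
\]
The right-hand side equals \(\sqrt{\Var(X-Y)}\), which is at most \(\|X-Y\|_{L^2}\).

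Adding the two bounds gives the constant \(2\). I expect no real obstacle here. The only points needing care are the elementary inequality \(|a-b|^4\leq8(|a|^4+|b|^4)\), which follows from \(|a-b|\leq|a|+|b|\) together with \((s+t)^4\leq 8(s^4+t^4)\) by the power-mean inequality, and the fact that all the quantities involved are finite because \(X,Y\in L^4\subset L^2\).
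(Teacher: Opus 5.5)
Your proposal is correct and follows essentially the same route as the paper: Cauchy--Schwarz on the disagreement event together with \(|a-b|^4\leq8(|a|^4+|b|^4)\) for the first bound, then \(|\E(X-Y)|\leq\|X-Y\|_{L^2}\) and the reverse triangle inequality in \(L^2\) applied to the centered variables for the second. Nothing further is needed.
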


\begin{proof}
Set
\[
 D=X-Y,
 \qquad
 \mathcal E=\{X\neq Y\}.
\]
 Since \(D=0\) on \(\mathcal E^c\), we apply the Cauchy--Schwarz inequality
 and use \(|x-y|^4\leq8(|x|^4+|y|^4)\) to obtain
\[
 \begin{aligned}
  \|D\|_{L^{2}}^{2}
  &=
  \E\!\left[|D|^2\mathbf1_{\mathcal E}\right]\\
  &\leq
  \bigl(\E|D|^4\bigr)^{1/2}\Pp(\mathcal E)^{1/2}\\
  &\leq
  \left(
   8\bigl(\E|X|^4+\E|Y|^4\bigr)
  \right)^{1/2}
  \Pp(\mathcal E)^{1/2}.
 \end{aligned}
\]
 Taking square roots proves \eqref{eq:rare-disagreement-L2-transfer}.  Moreover, we have
\[
 |\E X-\E Y|
 =
 |\E D|
 \leq
 \|D\|_{L^2},
\]
 and we apply the reverse triangle inequality in \(L^2\) to obtain
\[
 \begin{aligned}
  \left|
   \sqrt{\Var(X)}-\sqrt{\Var(Y)}
  \right|
  &=
  \left|
   \|X-\E X\|_{L^2}-\|Y-\E Y\|_{L^2}
  \right|\\
  &\leq
  \|D-\E D\|_{L^2}
  \leq
  \|D\|_{L^2}.
 \end{aligned}
\]
Adding these two estimates proves
\eqref{eq:rare-disagreement-mean-sd-transfer}.
\end{proof}

\subsubsection{An \texorpdfstring{\(L^p\)}{Lp} Sobolev estimate on intervals}

For a nonnegative integer \(r\) and \(h\in C^r(I)\), define
\begin{equation}
 \|h\|_{C^r(I)}
 =
 \sum_{j=0}^{r}\sup_{t\in I}|h^{(j)}(t)|.
 \label{eq:comparison-Cr-norm}
\end{equation}
We use the norm in \eqref{eq:comparison-Cr-norm} throughout the paper.

The following lemma bounds the supremum norm of a \(C^1\) function in terms
of the \(L^p\) norms of the function and its derivative.

\begin{lemma}
\label{lem:comparison-interval-Sobolev}
For every \(p\geq1\), every \(I\) with \(\ell>0\) and every
\(h\in C^1(I)\),
\begin{equation}
 \sup_{t\in I}|h(t)|^p
 \leq
 2^{p-1}
 \left(
  \ell^{-1}\int_I|h(t)|^p\dd t
  +
  \ell^{p-1}\int_I|h'(t)|^p\dd t
 \right).
 \label{eq:comparison-interval-Sobolev}
\end{equation}
In particular, if \(p=2\) and \(1/2\leq\ell\leq1\), then
\[
 \sup_{t\in I}|h(t)|^2
 \leq
 4\int_I\bigl(|h(t)|^2+|h'(t)|^2\bigr)\dd t.
\]
If \(\ell=1\), then, for every \(p\geq1\),
\[
 \sup_{t\in I}|h(t)|^p
 \leq
 2^{p-1}
 \int_I\bigl(|h(t)|^p+|h'(t)|^p\bigr)\dd t.
\]
\end{lemma}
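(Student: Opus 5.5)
We use the fundamental theorem of calculus together with a mean-value choice of base point. Write \(I=[\varphi,\psi]\) with \(\ell=\psi-\varphi\). Since \(|h|^p\) is continuous on the compact interval \(I\), the mean value theorem for integrals gives a point \(s\in I\) with
\[
 |h(s)|^p=\ell^{-1}\int_I|h(u)|^p\dd u .
\]
For any \(t\in I\) we then have \(h(t)=h(s)+\int_s^t h'(u)\dd u\), and hence
\[
 |h(t)|\leq |h(s)|+\int_I|h'(u)|\dd u .
\]

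Next we raise this to the \(p\)-th power. Convexity of \(x\mapsto x^p\) for \(p\geq1\) gives \((a+b)^p\leq 2^{p-1}(a^p+b^p)\) for \(a,b\geq0\). Hölder's inequality with exponents \(p\) and \(p/(p-1)\), applied on an interval of length \(\ell\), gives
\[
 \left(\int_I|h'(u)|\dd u\right)^p\leq \ell^{p-1}\int_I|h'(u)|^p\dd u .
\]
When \(p=1\) this step is trivial. Combining these bounds and taking the supremum over \(t\in I\) yields \eqref{eq:comparison-interval-Sobolev}.

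The special cases follow by substitution. For \(p=2\) the constant is \(2^{p-1}=2\). If \(1/2\leq\ell\leq1\), then \(\ell^{-1}\leq2\) and \(\ell\leq1\), so the right-hand side is at most
\[
 2\left(2\int_I|h|^2\dd u+\int_I|h'|^2\dd u\right)\leq 4\int_I\bigl(|h|^2+|h'|^2\bigr)\dd u .
\]
For \(\ell=1\) both powers of \(\ell\) equal one, and the stated bound is immediate.

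There is no real obstacle here. The only point needing care is the choice of the base point \(s\): picking it by the integral mean value theorem, rather than integrating the pointwise inequality in \(s\), produces exactly the factor \(\ell^{-1}\) with the constant \(2^{p-1}\). Integrating in \(s\) instead would also work, but only after one more convexity step.
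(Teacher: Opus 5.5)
Your proof is correct and follows the paper's argument essentially step for step. You choose a base point where \(|h|^p\) equals its average (the paper only needs it to be at most the average), then apply the fundamental theorem of calculus, H\"older's inequality and \((a+b)^p\leq2^{p-1}(a^p+b^p)\), and obtain both special cases by substituting \(\ell^{-1}\leq2\), \(\ell\leq1\) and \(\ell=1\).
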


\begin{proof}
Since \(h\) is continuous on \(I\), there exists \(t_0\in I\) such that
\[
 |h(t_0)|^p
 \leq
 \ell^{-1}\int_I|h(u)|^p\dd u.
\]
 For every \(t\in I\), we apply the fundamental theorem of calculus and
 H\"older's inequality to obtain
\[
 |h(t)|
 \leq |h(t_0)|+\int_I|h'(u)|\dd u
 \leq |h(t_0)|
 +\ell^{1-1/p}
 \left(\int_I|h'(u)|^p\dd u\right)^{1/p}.
\]
Raising both sides to the \(p\)th power and using
\((u+v)^p\leq2^{p-1}(u^p+v^p)\) proves
\eqref{eq:comparison-interval-Sobolev}.  The first special case follows from
\(\ell^{-1}\leq2\) and \(\ell\leq1\).  The second follows by setting
\(\ell=1\).
\end{proof}

\section{Proof of Theorem~\ref{thm:polynomial-logarithmic-edge-BE}}
\label{sec:proof-bulk-main-theorem}

In this section, we prove
Theorem~\ref{thm:polynomial-logarithmic-edge-BE}.  We first bound the
probability that \(N_n(I)\) differs from
\(N_{\infty,\mu_\ell}(I)\) and compare their means and standard deviations.
We then deduce the variance bounds and asymptotics for \(N_n(I)\) from those
for the stationary Weyl zero count.  Finally, we apply
Lemma~\ref{lem:finite-range-exact-coupling-transfer} to transfer the
Berry--Esseen bound for \(N_{\infty,\mu_\ell}(I)\) to \(N_n(I)\).

We fix \(I\subseteq I_n\) in the stated range.  Since \(\ell\) is
sufficiently large, we may assume that \(\ell\geq\ell_1\).  Hence all
conclusions of Corollary~\ref{cor:finite-range-BE-package} apply to \(I\).
Moreover, \(I\subseteq I_n\) implies \(\ell\leq2\sqrt n\).  We combine
Corollary~\ref{cor:finite-range-BE-package} with
Proposition~\ref{prop:logarithmic-edge-root-pairing} to obtain
\begin{equation}
 \begin{aligned}
  \Pp\!\left(
   N_{\infty,\mu_\ell}(I)\neq N_n(I)
  \right)
  &\leq
  \Pp\!\left(
   N_{\infty,\mu_\ell}(I)\neq N_\infty(I)
  \right)
  +
  \Pp\!\left(
   N_\infty(I)\neq N_n(I)
  \right)\\
  &\leq
  C\ell^{-16}+C\ell n^{-18}.
 \end{aligned}
 \label{eq:bulk-assembly-discrepancy}
\end{equation}
We apply Corollary~\ref{cor:finite-range-BE-package} and
Corollary~\ref{cor:logarithmic-edge-L2-comparison} to obtain
\begin{equation}
 \begin{aligned}
 &|\E N_{\infty,\mu_\ell}(I)-\E N_n(I)|
 +\left|\sqrt{\Var(N_{\infty,\mu_\ell}(I))}
 -\sqrt{\Var(N_n(I))}\right|\\
 \leq{}& C\ell^{-3}+C\ell^{1/4}n^{-7/2}.
 \end{aligned}
 \label{eq:bulk-assembly-normalization}
\end{equation}
Recall from \eqref{eq:finite-range-package-variance} that
\begin{equation}
 \frac{V_\infty}{8}\ell
 \leq
 \Var(N_{\infty,\mu_\ell}(I))
 \leq
 C\ell.
 \label{eq:bulk-assembly-finite-range-variance}
\end{equation}
We combine \eqref{eq:bulk-assembly-normalization} and
\eqref{eq:bulk-assembly-finite-range-variance} to obtain
\[
 \frac{C\ell^{-3}+C\ell^{1/4}n^{-7/2}}
 {\sqrt{V_\infty\ell/8}}
 \leq
 C\left(\ell^{-7/2}+\ell^{-1/4}n^{-7/2}\right).
\]
Obviously, the right-hand side is at most \(1/2\).
\[
 \begin{aligned}
 &|\E N_{\infty,\mu_\ell}(I)-\E N_n(I)|
 +\left|\sqrt{\Var(N_{\infty,\mu_\ell}(I))}
 -\sqrt{\Var(N_n(I))}\right|\\
 \leq{}& \frac12\sqrt{\Var(N_{\infty,\mu_\ell}(I))}.
 \end{aligned}
\]
In the proof of
Lemma~\ref{lem:finite-range-BE-variance-transfer}, we establish
\eqref{eq:stationary-two-sided-linear-variance}, which applies to the present
interval because \(\ell\geq\ell_1\).
 We then apply Corollary~\ref{cor:logarithmic-edge-L2-comparison} and obtain
\[
 \left|
  \sqrt{\Var(N_n(I))}-\sqrt{\Var(N_\infty(I))}
 \right|
 \leq C\ell^{1/4}n^{-7/2}.
\]
For all sufficiently large \(n\), uniformly over the intervals under
consideration,
the last error is at most one half of
\(\sqrt{\Var(N_\infty(I))}\).  Hence
\[
 \frac{V_\infty}{8}\ell
 \leq
 \Var(N_n(I))
 \leq
 C\ell.
\]
This proves the two-sided variance assertion with
\(c=V_\infty/8\).

To prove \eqref{eq:polynomial-bulk-subinterval-variance-asymptotic}, let
\(r_n\to\infty\) be as in the theorem.  We apply the stationary Weyl
variance limit in
Proposition~\ref{prop:stationary-Weyl-zero-count-results}.  Since
\(r_n\to\infty\), we obtain
\begin{equation}
 \sup_{u\geq r_n}
 \left|
  \frac{\Var(N_\infty([0,u]))}{u}-V_\infty
 \right|
 \longrightarrow0.
 \label{eq:bulk-assembly-stationary-variance-tail}
\end{equation}
For \(I\subseteq I_n\) with \(\ell\geq r_n\), we combine stationarity and the
upper bound in \eqref{eq:stationary-two-sided-linear-variance} with
Corollary~\ref{cor:logarithmic-edge-L2-comparison} and obtain
\begin{equation}
 \begin{aligned}
 \frac{|
  \Var(N_n(I))-\Var(N_\infty(I))
 |}{\ell}
 &\leq
 \frac{C\ell^{1/4}n^{-7/2}
  \bigl(\sqrt{\ell}+\ell^{1/4}n^{-7/2}\bigr)}{\ell}\\
 &\leq
 Cn^{-7/2}r_n^{-1/4}+Cn^{-7}r_n^{-1/2}.
 \end{aligned}
 \label{eq:bulk-assembly-polynomial-stationary-variance-comparison}
\end{equation}
We combine \eqref{eq:bulk-assembly-stationary-variance-tail} and
\eqref{eq:bulk-assembly-polynomial-stationary-variance-comparison} with the
triangle inequality to prove
\eqref{eq:polynomial-bulk-subinterval-variance-asymptotic}.

We apply Lemma~\ref{lem:finite-range-exact-coupling-transfer} with
\[
 X=N_{\infty,\mu_\ell}(I),
 \qquad
 Y=N_n(I).
\]
We then combine the lemma with
\eqref{eq:finite-range-package-BE},
\eqref{eq:bulk-assembly-discrepancy} and
\eqref{eq:bulk-assembly-normalization} to obtain the required bound
\[
 \begin{aligned}
  &d_{\mathrm K}\!\left(
   \frac{N_n(I)-\E N_n(I)}
   {\sqrt{\Var(N_n(I))}},Z
  \right)\\
  \leq{}&
  C\frac{\log\ell}{\sqrt\ell}
  +C\ell^{-16}
  +C\ell n^{-18}
  +
  2\sqrt{\frac{8}{V_\infty\ell}}
  \left(
   C\ell^{-3}+C\ell^{1/4}n^{-7/2}
  \right)\\
  \leq{}&
  C\frac{\log\ell}{\sqrt\ell}.
 \end{aligned}
\]
In the last inequality, we used
\(\ell_1\leq\ell\leq2\sqrt n\).  Since \(\ell_1\geq e\), the
terms \(\ell^{-16}\) and \(\ell^{-7/2}\) are bounded by
a constant times \(\log\ell/\sqrt\ell\).  The same is true of the two
terms containing \(n\) uniformly in this range.
The proof is therefore complete.

\section{Proof of Theorem~\ref{thm:full-line-polynomial-BE}}
\label{sec:proof-full-line-main-theorem}

In this section, we prove Theorem~\ref{thm:full-line-polynomial-BE}.  We first
replace \(N_n(I_n)\) and \(N_n(I_n^{\mathrm{out}})\) by
\(N_{P_{m_n}}(I_n)\) and
\(N_{P_n-P_{m_n}}(I_n^{\mathrm{out}})\), respectively, and use the fact that
the latter two zero counts depend on disjoint sets of coefficients.  We then
incorporate \(N_n(I_n^{\mathrm{edge}})\) and finally compare the resulting sum
with \(N_n(\mathbb R)\).

We first compare \(N_n(I_n)\) with \(N_{P_{m_n}}(I_n)\).
We apply Theorem~\ref{thm:polynomial-logarithmic-edge-BE} with
\(I=I_n\), use \eqref{eq:logarithmic-edge-length-bounds} and obtain
\begin{equation}
 d_{\mathrm K}\!\left(
  \frac{N_n(I_n)-\E N_n(I_n)}
  {\sqrt{\Var(N_n(I_n))}},Z
 \right)
 \leq
 C\frac{\log n}{n^{1/4}}.
 \label{eq:full-line-bulk-BE-bound}
\end{equation}
Recall from \eqref{eq:intro-growing-window-variance} that
\[
 \lim_{n\to\infty}
 \frac{\Var(N_n(I_n))}{\sqrt n}
 =
 2V_\infty.
\]
We therefore obtain
\begin{equation}
 \sqrt{\Var(N_n(I_n))}\geq c_0n^{1/4},
 \qquad c_0=\sqrt{V_\infty}.
 \label{eq:full-line-bulk-sd-from-growing-window}
\end{equation}

Since both zero counts are at most \(n\), we apply
Proposition~\ref{prop:full-line-low-band-localization} and obtain
\begin{equation}
 \|N_n(I_n)-N_{P_{m_n}}(I_n)\|_{L^2}
 \leq
 n\,\Pp\!\left(
  N_n(I_n)\neq N_{P_{m_n}}(I_n)
 \right)^{1/2}
 \leq
 Cn^{-7}.
 \label{eq:full-line-low-count-L2}
\end{equation}
We combine \eqref{eq:rare-disagreement-mean-sd-transfer} and
\eqref{eq:full-line-low-count-L2} to obtain
\begin{equation}
 |\E N_n(I_n)-\E N_{P_{m_n}}(I_n)|
 +
 \left|
  \sqrt{\Var(N_n(I_n))}
  -
  \sqrt{\Var(N_{P_{m_n}}(I_n))}
 \right|
 \leq Cn^{-7}.
 \label{eq:full-line-low-normalization}
\end{equation}
We use \eqref{eq:full-line-bulk-sd-from-growing-window} to bound the
left-hand side of \eqref{eq:full-line-low-normalization} by
\(\sqrt{\Var(N_n(I_n))}/2\).  Using the coupling estimate
\eqref{eq:full-line-low-band-localization},
\eqref{eq:full-line-bulk-BE-bound},
\eqref{eq:full-line-bulk-sd-from-growing-window} and
\eqref{eq:full-line-low-normalization}, we apply
Lemma~\ref{lem:finite-range-exact-coupling-transfer} with
\[
 X=N_n(I_n),
 \qquad
 Y=N_{P_{m_n}}(I_n).
\]
We obtain
\begin{equation}
 \sqrt{\Var(N_{P_{m_n}}(I_n))}
 \geq
 \frac{c_0}{2}n^{1/4}
 \label{eq:full-line-low-sd}
\end{equation}
and
\begin{equation}
 d_{\mathrm K}\!\left(
  \frac{
   N_{P_{m_n}}(I_n)-\E N_{P_{m_n}}(I_n)
  }{
   \sqrt{\Var(N_{P_{m_n}}(I_n))}
  },Z
 \right)
 \leq
 C\frac{\log n}{n^{1/4}}+Cn^{-16}+Cn^{-29/4}
 \leq
 C\frac{\log n}{n^{1/4}}.
 \label{eq:full-line-low-BE}
\end{equation}
We next add \(N_{P_n-P_{m_n}}(I_n^{\mathrm{out}})\), which is
independent of \(N_{P_{m_n}}(I_n)\).
We use this independence and \eqref{eq:full-line-low-sd} to obtain
\begin{equation}
 \begin{aligned}
 &\Var\!\left(
 N_{P_{m_n}}(I_n)
 +
 N_{P_n-P_{m_n}}(I_n^{\mathrm{out}})
 \right)\\
 {}={}&
  \Var(N_{P_{m_n}}(I_n))
  +
  \Var(N_{P_n-P_{m_n}}(I_n^{\mathrm{out}}))
  \geq
  \frac{c_0^2}{4}\sqrt n.
 \end{aligned}
 \label{eq:full-line-independent-sum-variance}
\end{equation}
Moreover, we apply
\eqref{eq:full-line-truncated-exterior-L2} and obtain
\[
 \Var(N_{P_n-P_{m_n}}(I_n^{\mathrm{out}}))
 \leq
 C(\log n)^2.
\]
We apply Lemma~\ref{lem:full-line-independent-transfer} with
\[
 U=N_{P_{m_n}}(I_n),
 \qquad
 V=N_{P_n-P_{m_n}}(I_n^{\mathrm{out}}).
\]
We combine \eqref{eq:full-line-low-sd} and
\eqref{eq:full-line-low-BE} to obtain
\begin{equation}
 d_{\mathrm K}\!\left(
  \frac{U+V-\E(U+V)}
  {\sqrt{\Var(U+V)}},Z
 \right)
 \leq
 C\frac{\log n}{n^{1/4}}
 +
 C\frac{(\log n)^2}{\sqrt n}
 \leq
 C\frac{\log n}{n^{1/4}}.
\label{eq:full-line-independent-sum-BE}
\end{equation}

We then incorporate \(N_n(I_n^{\mathrm{edge}})\).
We apply Proposition~\ref{prop:full-line-edge-strip-count} with \(A=8\).
Because we have fixed this threshold, there is an absolute \(C\) such that
\begin{equation}
 \Pp\!\left(N_n(I_n^{\mathrm{edge}})>C\log n\right)
 \leq n^{-8}.
 \label{eq:full-line-edge-tail-for-transfer}
\end{equation}
Since \(0\leq N_n(I_n^{\mathrm{edge}})\leq n\),
\[
 \E[N_n(I_n^{\mathrm{edge}})^2]
 \leq
 C(\log n)^2+n^2n^{-8}
 \leq
 C(\log n)^2.
\]
Consequently, we obtain
\begin{equation}
 \|N_n(I_n^{\mathrm{edge}})
   -\E N_n(I_n^{\mathrm{edge}})\|_{L^2}
 \leq C\log n,
 \qquad
 \E N_n(I_n^{\mathrm{edge}})\leq C\log n.
 \label{eq:full-line-edge-moments}
\end{equation}
On the event
\[
 \mathcal G_n^{\mathrm{edge}}
 =
 \{N_n(I_n^{\mathrm{edge}})\leq C\log n\},
\]
we therefore have
\begin{equation}
 |N_n(I_n^{\mathrm{edge}})
   -\E N_n(I_n^{\mathrm{edge}})|
 \leq C\log n.
 \label{eq:full-line-edge-centered-good-event}
\end{equation}

We define
\begin{equation}
 \widetilde N_n
 =
 N_{P_{m_n}}(I_n)
 +
 N_n(I_n^{\mathrm{edge}})
 +
 N_{P_n-P_{m_n}}(I_n^{\mathrm{out}}).
 \label{eq:full-line-approximating-count-definition}
\end{equation}
We use \eqref{eq:full-line-independent-sum-variance} and
\eqref{eq:full-line-edge-moments} to obtain
\[
 \|N_n(I_n^{\mathrm{edge}})
   -\E N_n(I_n^{\mathrm{edge}})\|_{L^2}
 \leq
 \frac12
 \sqrt{\Var\!\left(
  N_{P_{m_n}}(I_n)
  +
  N_{P_n-P_{m_n}}(I_n^{\mathrm{out}})
 \right)}.
\]
We apply Lemma~\ref{lem:full-line-additive-transfer} with
\[
 \begin{aligned}
 X&=N_{P_{m_n}}(I_n)
    +N_{P_n-P_{m_n}}(I_n^{\mathrm{out}}),
 \quad T=N_n(I_n^{\mathrm{edge}}),\\
 Y&=\widetilde N_n,
 \quad G=\mathcal G_n^{\mathrm{edge}},
 \quad p=n^{-8},
 \quad r=C\log n.
 \end{aligned}
\]
We use \eqref{eq:full-line-independent-sum-variance}--%
\eqref{eq:full-line-edge-centered-good-event} to obtain
\begin{equation}
 d_{\mathrm K}\!\left(
  \frac{\widetilde N_n-\E\widetilde N_n}
  {\sqrt{\Var(\widetilde N_n)}},Z
 \right)
 \leq
 C\frac{\log n}{n^{1/4}}
 \label{eq:full-line-approximating-total-BE}
\end{equation}
and
\begin{equation}
 \sqrt{\Var(\widetilde N_n)}
 \geq
 \frac12
 \sqrt{\Var\!\left(
  N_{P_{m_n}}(I_n)
  +
  N_{P_n-P_{m_n}}(I_n^{\mathrm{out}})
 \right)}
 \geq
 \frac{c_0}{4}n^{1/4}.
 \label{eq:full-line-approximating-total-sd}
\end{equation}

Finally, we compare \(\widetilde N_n\) with \(N_n(\R)\).
We define
\begin{equation}
 \mathcal D_n
 =
 \{N_n(I_n)=N_{P_{m_n}}(I_n)\}
 \cap
 \{N_n(I_n^{\mathrm{out}})
   =N_{P_n-P_{m_n}}(I_n^{\mathrm{out}})\}.
 \label{eq:full-line-final-coupling-event}
\end{equation}
We combine Propositions~\ref{prop:full-line-low-band-localization}
and~\ref{prop:full-line-upper-band-localization} with the definition
\eqref{eq:full-line-final-coupling-event} and obtain
\begin{equation}
 \Pp(\mathcal D_n^c)\leq Cn^{-16}.
 \label{eq:full-line-final-coupling-probability}
\end{equation}
On \(\mathcal D_n\), we combine the pointwise decomposition
\eqref{eq:full-line-count-identity} with the definition
\eqref{eq:full-line-approximating-count-definition} to obtain
\[
 \begin{aligned}
 N_n(\R)
 &=
 N_n(I_n)+N_n(I_n^{\mathrm{edge}})
 +N_n(I_n^{\mathrm{out}})\\
 &= 
  N_{P_{m_n}}(I_n)
  +N_n(I_n^{\mathrm{edge}})
  +N_{P_n-P_{m_n}}(I_n^{\mathrm{out}})
  =
  \widetilde N_n.
 \end{aligned}
\]
We apply the degree bounds and obtain
\[
 0\leq N_n(\R)\leq n,
 \qquad
 0\leq\widetilde N_n
 \leq m_n+n+d_n-1=2n-1<2n.
\]
We combine these degree bounds with
\eqref{eq:full-line-final-coupling-probability} and obtain
\begin{equation}
 \|N_n(\R)-\widetilde N_n\|_{L^2}
 \leq
 2n\,\Pp(\mathcal D_n^c)^{1/2}
 \leq
 Cn^{-7}.
 \label{eq:full-line-final-coupling-L2}
\end{equation}
We argue as in \eqref{eq:full-line-low-normalization} and use
\eqref{eq:full-line-final-coupling-L2} to obtain
\begin{equation}
 |\E N_n(\R)-\E\widetilde N_n|
 +
 \left|\sqrt{\Var(N_n(\R))}-\sqrt{\Var(\widetilde N_n)}\right|
 \leq Cn^{-7}.
 \label{eq:full-line-final-normalization}
\end{equation}
For all sufficiently large \(n\), we combine
\eqref{eq:full-line-approximating-total-sd} and
\eqref{eq:full-line-final-normalization} to obtain
\[
 |\E N_n(\R)-\E\widetilde N_n|
 +
 \left|\sqrt{\Var(N_n(\R))}-\sqrt{\Var(\widetilde N_n)}\right|
 \leq
 \frac12\sqrt{\Var(\widetilde N_n)}.
\]
We combine this bound with
\eqref{eq:full-line-approximating-total-BE}.  We apply
Lemma~\ref{lem:finite-range-exact-coupling-transfer} with
\[
 X=\widetilde N_n,
 \qquad
 Y=N_n(\R).
\]
We first use the lemma to conclude that \(\Var(N_n(\R))>0\) and then
obtain
\[
 d_{\mathrm K}\!\left(
  \frac{N_n(\R)-\E N_n(\R)}{\sqrt{\Var(N_n(\R))}},Z
 \right)
 \leq
 C\frac{\log n}{n^{1/4}}
 +Cn^{-16}
 +Cn^{-29/4}
 \leq
 C\frac{\log n}{n^{1/4}}.
\]
This proves \eqref{eq:full-line-polynomial-BE} and completes the proof of
Theorem~\ref{thm:full-line-polynomial-BE}.

\section{Proofs of the auxiliary results for Theorem~\ref{thm:polynomial-logarithmic-edge-BE}}
\label{sec:bulk-result-proofs}

We prove the results stated in Section~\ref{subsec:bulk-results} according
to the logical relationships among them.  We first prove
Proposition~\ref{prop:stationary-Weyl-zero-count-results}, then compare
\(N_n(I)\) with \(N_\infty(I)\), construct \(q_{\infty,m}\) and compare
\(N_{\infty,m}(I)\) with \(N_\infty(I)\).  We conclude by proving the
Berry--Esseen estimate for \(N_{\infty,\mu_\ell}(I)\).  Auxiliary results are
introduced immediately before their first use.

\subsection{Proof of Proposition~\ref{prop:stationary-Weyl-zero-count-results}}
We use stationarity of \(q_\infty\) to obtain
\eqref{eq:stationary-count-translation-in-law}.  For the other two
assertions, we apply Lemma~\ref{lem:appendix-do-vu-thm6} with
\(h=\mathbf1_{[0,1]}\) and \(R=\ell\).  The function \(h\) is nonzero, bounded
and compactly supported, with \(\|h\|_{L^{2}(\R)}^{2}=1\).
We use the simplicity and endpoint conclusions above to identify the
zero statistic in that theorem as
\[
 \sum_{x:P_\infty(x)=0}\mathbf1_{[0,1]}(x/\ell)
 =
 N_\infty([0,\ell])
 \qquad\text{almost surely}.
\]
We use the fourth-moment and variance assertions of the lemma to obtain
\eqref{eq:stationary-zero-count-fourth-moment} and
\eqref{eq:stationary-linear-variance-all-L}.

\subsection{Proofs of Proposition~\ref{prop:logarithmic-edge-root-pairing}
and Corollary~\ref{cor:logarithmic-edge-L2-comparison}}
\label{sec:zero-count-comparison}

We first prove Proposition~\ref{prop:logarithmic-edge-root-pairing} by
comparing \(N_n(I)\) with \(N_\infty(I)\) and then deduce
Corollary~\ref{cor:logarithmic-edge-L2-comparison}.

Throughout Sections~\ref{sec:zero-count-comparison}--%
\ref{sec:finite-range-berry-esseen}, \(I\) may depend on \(n\), although this
is not indicated in the notation.  Whenever \(I_{n,s}\) appears,
\(I\subseteq I_{n,s}\).

For \(1\leq s\leq\sqrt n/4\), we define
\begin{equation}
 I_{n,s}=[-\sqrt n+s,\sqrt n-s],
 \qquad
 R_n=P_\infty-P_n.
 \label{eq:variable-logarithmic-edge-interval}
\end{equation}
We first estimate \(R_n\) and its first two derivatives on \(I_{n,s}\).

\begin{lemma}
\label{lem:polynomial-tail-derivatives}
For every \(r\in\{0,1,2\}\), there is a finite absolute constant
\(C_r\) such that, for every \(1\leq s\leq\sqrt n/4\) and every
\(x\in I_{n,s}\), with \(I_{n,s}\) and \(R_n\) as in
\eqref{eq:variable-logarithmic-edge-interval},
\begin{equation}
 e^{-x^{2}}\E|R_n^{(r)}(x)|^{2}
 \leq
 C_r n^r e^{-s^{2}/4}.
 \label{eq:polynomial-tail-derivative-bound}
\end{equation}
\end{lemma}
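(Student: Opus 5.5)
The plan is a direct computation.  We write \(R_n^{(r)}(x)\) as an explicit Gaussian series, bound its terms by a Poisson probability, and then apply a Poisson tail estimate.  Differentiating term by term, which is justified on \(\Omega_{\mathrm W}\), gives
\[
 R_n^{(r)}(x)=\sum_{k=n+1}^{\infty}\frac{\xi_k}{\sqrt{k!}}(k)_r x^{k-r}.
\]
Independence of the \(\xi_k\) then gives
\[
 e^{-x^2}\E|R_n^{(r)}(x)|^2=\sum_{k=n+1}^{\infty}(k)_r^2\,\frac{x^{2k-2r}}{k!}e^{-x^2}.
\]
Set \(\lambda=x^2\).  Since \((k)_r^2\leq k^{2r}\) and \(x^{2k-2r}=\lambda^k\lambda^{-r}\), the sum is at most \(\lambda^{-r}\E[K^{2r}\mathbf 1_{\{K>n\}}]\), where \(K\sim\mathrm{Poisson}(\lambda)\).

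This bound is poor when \(\lambda\) is small, because of the factor \(\lambda^{-r}\).  In that case we avoid dividing by \(\lambda\).  Instead we use \((k)_r/k!\,\cdot x^{2k-2r}\cdot(k)_r\) together with \((k)_r\,\lambda^{k-r}/k!=\lambda^{k-r}/(k-r)!\).  This rewrites the sum as \(\sum_{j\geq n+1-r}(j+r)_r\,\lambda^{j}e^{-\lambda}/j!\), with \(j=k-r\).  Hence it equals \(\E[(K+r)_r\mathbf 1_{\{K\geq n+1-r\}}]\) and is bounded by \(\E[(K+2)^r\mathbf 1_{\{K\geq n-1\}}]\).  This form is valid for every \(\lambda\) and avoids any division.

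Next we bound the Poisson tail.  Since \(x\in I_{n,s}\), we have \(\lambda\leq(\sqrt n-s)^2\leq n-2s\sqrt n+s^2\leq n-\tfrac{3}{2}s\sqrt n\), where the last step uses \(s\leq\sqrt n/4\).  By Cauchy--Schwarz,
\[
 \E[(K+2)^r\mathbf 1_{\{K\geq n-1\}}]\leq \bigl(\E(K+2)^{2r}\bigr)^{1/2}\Pp(K\geq n-1)^{1/2}.
\]
Since \(\lambda\leq n\) and \(r\leq2\), we have \(\E(K+2)^{2r}\leq Cn^{2r}\), so the first factor is at most \(Cn^r\).

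The tail probability requires a Chernoff bound.  For \(t\geq0\),
\[
 \Pp(K\geq \lambda+t)\leq\exp\!\Bigl(-\frac{t^2}{2(\lambda+t/3)}\Bigr).
\]
Here \(t=n-1-\lambda\geq\tfrac32 s\sqrt n-1\geq s\sqrt n\), using \(s\geq1\) and \(n\) large, and \(\lambda+t/3\leq n\).  This gives \(\Pp(K\geq n-1)\leq e^{-s^2/2}\).  Taking the square root yields \(e^{-s^2/4}\), and combining with the first factor gives \(C_rn^re^{-s^2/4}\).  To handle small \(n\) uniformly, we enlarge \(C_r\); the trivial bound of the sum is at most \(\E(K+2)^r\leq Cn^r\).

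The main obstacle is keeping the exponent exactly \(s^2/4\) after the square root from Cauchy--Schwarz.  This requires a Bernstein-type Poisson tail with the constant \(1/2\), together with the precise margin \(n-\lambda\gtrsim\tfrac32 s\sqrt n\).  If that margin proves too tight, the fallback is to apply a Chernoff bound directly to the weighted sum, using \(\E[e^{\theta K}]\) with a polynomial weight absorbed into \(n^r\).
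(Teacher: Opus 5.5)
Your proof is correct. It starts from the same identity as the paper, namely $e^{-x^2}\E|R_n^{(r)}(x)|^2=\E[(K+r)_r\mathbf 1_{\{K\geq n+1-r\}}]$ with $K\sim\mathrm{Poisson}(x^2)$, since $(j+r)!/j!=(j+r)_r$. The two routes diverge only at the tail step.

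\textbf{The paper's route.} It expands $(j+r)_r$ into falling factorials ($1$, $(j)_1+1$, $(j)_2+4(j)_1+2$). It then uses the exact identity $\E[(K)_a\mathbf 1_{\{K\geq m\}}]=x^{2a}\Pp(K\geq m-a)$, which loses nothing. Because of that, it can afford the crude Chernoff bound $\Pp(K\geq m)\leq\exp\{-(m-x^2)^2/(2m)\}$ with $m=n+1-2r\leq 2n$ and still land on $e^{-s^2/4}$.

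\textbf{Your route.} You split the weight off by Cauchy--Schwarz. This needs no algebraic decomposition and works for any polynomially bounded weight, but it halves the exponent. You recover the loss with a tail bound whose effective denominator is at most $2n$ rather than $4n$, which gives $e^{-s^2/2}$ before the square root.

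\textbf{Checking your numbers.} They are right: $\lambda\leq n-\tfrac32 s\sqrt n$, and $t=n-1-\lambda\geq s\sqrt n$ because $s\sqrt n\geq4$. Also $\lambda+t/3\leq n$, and $\E(K+2)^{2r}\leq Cn^{2r}$.

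\textbf{Simplifications.} The margin you worried about is not tight, and the Bernstein form is not needed. The same elementary Chernoff bound the paper uses, applied at $m=n-1\leq n$, already gives $\Pp(K\geq n-1)\leq\exp\{-s^2n/(2n)\}=e^{-s^2/2}$. Your small-$n$ remark and the fallback are also unnecessary. The constraint $1\leq s\leq\sqrt n/4$ forces $n\geq16$, and every step of your argument holds for all such $n$.
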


\begin{proof}
We fix \(r\in\{0,1,2\}\) and let \(K\) be a Poisson random variable
with mean \(x^2\).  We apply
termwise differentiation, independence of the coefficients, Tonelli's
theorem and the substitution \(j=k-r\) to obtain
\begin{equation}
 e^{-x^{2}}\E|R_n^{(r)}(x)|^{2}
 =
 e^{-x^2}
 \sum_{k=n+1}^{\infty}
 \frac{(k)_r^{2}(x^2)^{k-r}}{k!}
 =
 \sum_{j=n+1-r}^{\infty}
 \frac{(j+r)!}{j!}\Pp(K=j),
 \label{eq:polynomial-tail-Poisson-identity}
\end{equation}
For \(r=0,1,2\), the factors in
\eqref{eq:polynomial-tail-Poisson-identity} are, respectively,
\[
 1,
 \qquad
 j+1=(j)_1+1,
 \qquad
 (j+1)(j+2)=(j)_2+4(j)_1+2.
\]
For integers \(a\geq0\) and \(m>a\), we sum directly and obtain the
truncated factorial-moment identity
\begin{equation}
 \E\!\left[(K)_a\mathbf 1_{\{K\geq m\}}\right]
 =
 \sum_{j=m}^{\infty}
 e^{-x^2}\frac{x^{2j}}{(j-a)!}
 =
 x^{2a}\sum_{u=m-a}^{\infty}
 e^{-x^2}\frac{x^{2u}}{u!}
 =
 x^{2a}\Pp(K\geq m-a).
 \label{eq:truncated-Poisson-factorial-moment}
\end{equation}
We apply \eqref{eq:truncated-Poisson-factorial-moment} term by term
to \eqref{eq:polynomial-tail-Poisson-identity}, use
\(x^2\leq n\) and obtain
\begin{equation}
 e^{-x^{2}}\E|R_n^{(r)}(x)|^{2}
 \leq
 C_r n^r\Pp(K\geq n+1-2r).
 \label{eq:polynomial-tail-reduced-to-Poisson}
\end{equation}

In the preceding identity, take \(m=n+1-2r\).  Since \(x\in I_{n,s}\),
\[
 x^2
 \leq
 (\sqrt n-s)^{2}
 =
 n-2s\sqrt n+s^{2}.
\]
For \(r\leq2\) and \(s\geq1\), we use the restriction
\(s\leq\sqrt n/4\) to obtain
\begin{equation}
 m-x^2
 \geq
 2s\sqrt n-s^{2}-3
 \geq
 s\sqrt n.
 \label{eq:polynomial-tail-Poisson-gap}
\end{equation}
In particular, \(m>x^2\) and \(m\leq2n\).

We obtain the claimed bound directly when \(x=0\).  For \(x\neq0\), we apply
Chernoff's argument with \(\theta=\log(m/x^2)\) and use
\(-\log(1-u)-u\geq u^2/2\), where \(u=(m-x^2)/m\).  We then use
\eqref{eq:polynomial-tail-Poisson-gap} and \(m\leq2n\) to obtain
\begin{equation}
 \Pp(K\geq m)
 \leq
 \exp\!\left\{-\frac{(m-x^2)^{2}}{2m}\right\}
 \leq
 \exp\!\left\{-\frac{s^{2}n}{4n}\right\}
 =
 e^{-s^{2}/4}.
 \label{eq:proved-Poisson-Chernoff-bound}
\end{equation}
We substitute \eqref{eq:proved-Poisson-Chernoff-bound} into
\eqref{eq:polynomial-tail-reduced-to-Poisson} to prove
\eqref{eq:polynomial-tail-derivative-bound}.
\end{proof}

The tail bounds now combine with the interval Sobolev estimate to control
\(q_n-q_\infty\) in \(C^1\) on a target interval whose length may grow with
\(n\).

\begin{lemma}
\label{lem:polynomial-series-growing-C1}
There is a finite absolute constant \(C\) such that, for every
\(1\leq s\leq\sqrt n/4\) and every \(I\subseteq I_{n,s}\) with
\(\ell\geq1\),
\begin{equation}
 \E\|q_n-q_\infty\|_{C^1(I)}^{2}
 \leq
 C\ell n^{2}e^{-s^{2}/4}.
 \label{eq:polynomial-series-growing-C1-L2}
\end{equation}
Consequently, for every \(\varepsilon>0\),
\begin{equation}
 \Pp\!\left(
  \|q_n-q_\infty\|_{C^1(I)}\geq\varepsilon
 \right)
 \leq
 C\ell n^{2}e^{-s^{2}/4}\varepsilon^{-2}.
 \label{eq:polynomial-series-growing-C1-probability}
\end{equation}
\end{lemma}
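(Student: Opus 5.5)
We need to bound $\E\|q_n-q_\infty\|_{C^1(I)}^2$, where $q_n-q_\infty=-e^{-x^2/2}R_n$. The plan is to cover $I$ by unit intervals, apply the $\ell=1$ case of Lemma~\ref{lem:comparison-interval-Sobolev} with $p=2$ on each piece, and feed in the pointwise second-moment bounds of Lemma~\ref{lem:polynomial-tail-derivatives}.

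\textbf{Step 1: pointwise bounds.} Set $D=q_n-q_\infty=-e^{-x^2/2}R_n$. Then
\[
 D'(x)=-e^{-x^2/2}\bigl(R_n'(x)-xR_n(x)\bigr),
\]
\[
 D''(x)=-e^{-x^2/2}\bigl(R_n''(x)-2xR_n'(x)+(x^2-1)R_n(x)\bigr).
\]
Since $|x|\leq\sqrt n$ on $I_{n,s}$, Lemma~\ref{lem:polynomial-tail-derivatives} gives, for every $x\in I_{n,s}$,
\[
 \E|D(x)|^2\leq Ce^{-s^2/4},
\]
\[
 \E|D'(x)|^2\leq 2e^{-x^2}\bigl(\E|R_n'|^2+x^2\E|R_n|^2\bigr)\leq Cne^{-s^2/4},
\]
\[
 \E|D''(x)|^2\leq C\bigl(n^2+n\cdot n+n^2\bigr)e^{-s^2/4}\leq Cn^2e^{-s^2/4}.
\]
All derivatives here are taken termwise, which is justified on $\Omega_{\mathrm W}$.

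\textbf{Step 2: supremum on unit pieces.} Since $\ell\geq1$, partition $I$ into $\lceil\ell\rceil\leq2\ell$ closed subintervals $J$, each of length exactly $1$; they may overlap slightly, e.g.\ the last one can be taken to end at the right endpoint of $I$. On each $J$, apply the $\ell=1$ case of Lemma~\ref{lem:comparison-interval-Sobolev} with $p=2$, once to $h=D$ and once to $h=D'$:
\[
 \sup_J|D|^2+\sup_J|D'|^2\leq 2\int_J\bigl(|D|^2+2|D'|^2+|D''|^2\bigr)\dd t.
\]
Then
\[
 \|D\|_{C^1(I)}^2\leq 2\Bigl(\sup_I|D|^2+\sup_I|D'|^2\Bigr)\leq 2\sum_J\Bigl(\sup_J|D|^2+\sup_J|D'|^2\Bigr).
\]
The supremum over $I$ is at most the sum over pieces of the suprema.

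\textbf{Step 3: take expectations.} By Tonelli's theorem and Step~1,
\[
 \E\|D\|_{C^1(I)}^2\leq C\sum_J\int_J n^2e^{-s^2/4}\dd t\leq C\ell n^2e^{-s^2/4},
\]
which is \eqref{eq:polynomial-series-growing-C1-L2}. Chebyshev's (Markov's) inequality applied to $\|D\|_{C^1(I)}^2$ then gives \eqref{eq:polynomial-series-growing-C1-probability}.

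The only delicate point is measurability and the pathwise $C^2$ regularity needed to apply the Sobolev lemma and Tonelli's theorem. Both follow from the almost-sure termwise differentiability established earlier, since $\Omega_{\mathrm W}$ covers all $j$, and from the fact that a supremum of a continuous function can be taken over rational points. The dominant term $n^2$ comes from $D''$, so the powers of $n$ close as stated.
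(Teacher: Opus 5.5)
Your proposal is correct and follows the paper's proof: the same pointwise second-moment bounds for $D$, $D'$ and $D''$ from Lemma~\ref{lem:polynomial-tail-derivatives} together with $x^2\leq n$, then the interval Sobolev estimate applied to $D$ and $D'$ on pieces of $I$, summation, Tonelli's theorem and Markov's inequality. The only cosmetic difference is the cover. You use $\lceil\ell\rceil$ possibly overlapping unit intervals inside $I$ and the $\ell=1$ case. The paper partitions $I$ into $\lceil\ell\rceil$ equal pieces of length in $[1/2,1]$ and uses that special case. Both yield $C\ell n^2e^{-s^2/4}$.
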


\begin{proof}
We set
\begin{equation}
 \rho_n(x)=q_\infty(x)-q_n(x)=e^{-x^{2}/2}R_n(x),
 \qquad x\in\R.
 \label{eq:polynomial-series-error-definition}
\end{equation}
We differentiate \eqref{eq:polynomial-series-error-definition}
directly and obtain
\begin{equation}
 \begin{aligned}
  \rho_n'(x)
  &=
  e^{-x^{2}/2}\{R_n'(x)-xR_n(x)\},\\
  \rho_n''(x)
  &=
  e^{-x^{2}/2}
  \{R_n''(x)-2xR_n'(x)+(x^{2}-1)R_n(x)\}.
 \end{aligned}
 \label{eq:polynomial-series-error-derivatives}
\end{equation}
We apply
\((u_1+\cdots+u_k)^{2}\leq k\sum_{i=1}^ku_i^{2}\),
Lemma~\ref{lem:polynomial-tail-derivatives} and \(x^{2}\leq n\) to
obtain
\begin{equation}
 \sup_{x\in I_{n,s}}\E|\rho_n^{(j)}(x)|^{2}
 \leq
 Cn^{2}e^{-s^{2}/4},
 \qquad j\in\{0,1,2\}.
 \label{eq:polynomial-series-pointwise-derivative-L2}
\end{equation}

We partition \(I\) into \(\lceil\ell\rceil\) compact intervals
\(J_1,\ldots,J_{\lceil\ell\rceil}\) with disjoint interiors and equal
length.  Every \(J_r\) satisfies
\[
 \frac12
 \leq
 |J_r|
 =
 \frac{\ell}{\lceil\ell\rceil}
 \leq1.
\]
We apply Lemma~\ref{lem:comparison-interval-Sobolev} to \(\rho_n\) and
\(\rho_n'\) on each \(J_r\), sum the resulting bounds, use
\((u+v)^{2}\leq2u^{2}+2v^{2}\) and obtain
\begin{equation}
 \|\rho_n\|_{C^1(I)}^{2}
 \leq
 C\sum_{r=1}^{\lceil\ell\rceil}
 \int_{J_r}
 \bigl(
   (\rho_n(t))^{2}+(\rho_n'(t))^{2}+(\rho_n''(t))^{2}
 \bigr)\dd t.
 \label{eq:polynomial-series-C1-interval-sum}
\end{equation}
The intervals \(J_r\) have disjoint interiors, so their integrals add to
the integral over \(I\).  We take expectations in
\eqref{eq:polynomial-series-C1-interval-sum}, apply Tonelli's theorem
and use \eqref{eq:polynomial-series-pointwise-derivative-L2} to obtain
\eqref{eq:polynomial-series-growing-C1-L2},
\[
 \E\|\rho_n\|_{C^1(I)}^{2}
 \leq
 C\sum_{r=1}^{\lceil\ell\rceil}\int_{J_r}\sum_{j=0}^2
 \E|\rho_n^{(j)}(t)|^2\dd t
 \leq
 C\ell n^2e^{-s^2/4}.
\]
We apply Markov's inequality to
\(\|\rho_n\|_{C^1(I)}^{2}\) and use
\eqref{eq:polynomial-series-growing-C1-L2} to obtain
\eqref{eq:polynomial-series-growing-C1-probability},
\[
 \Pp\!\left(
  \|\rho_n\|_{C^1(I)}\geq\varepsilon
 \right)
 \leq
 \varepsilon^{-2}
 \E\|\rho_n\|_{C^1(I)}^2
 \leq
 C\ell n^2e^{-s^2/4}\varepsilon^{-2}.
\]
\end{proof}

We next establish the derivative supremum and small-ball estimates for
\(q_\infty\) used below.

\begin{lemma}
\label{lem:comparison-stationary-derivative-supremum}
For every even integer \(p\geq2\), there is a finite constant \(C_p\)
such that, for every \(I\subset\R\) with \(\ell\geq1\) and every
\(M\geq1\),
\begin{equation}
 \Pp\!\left(
  \sup_{t\in I}
  \bigl(|q_\infty'(t)|+|q_\infty''(t)|\bigr)>M
 \right)
 \leq
 C_p(\ell+1)M^{-p}.
 \label{eq:comparison-stationary-derivative-supremum}
\end{equation}
\end{lemma}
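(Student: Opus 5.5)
The plan is to reduce the supremum over the whole interval to moment bounds on unit subintervals, using the interval Sobolev estimate of Lemma~\ref{lem:comparison-interval-Sobolev}, stationarity, and Markov's inequality. Cover \(I\) by \(\lceil\ell\rceil\le\ell+1\) compact intervals \(J_1,\dots,J_{\lceil\ell\rceil}\) of length exactly one. Consecutive pieces may overlap, and the last may stick out of \(I\); this only enlarges the supremum, so it is harmless. A union bound then gives
\[
 \Pp\Bigl(\sup_{t\in I}\bigl(|q_\infty'(t)|+|q_\infty''(t)|\bigr)>M\Bigr)
 \le \sum_{r}\Pp\Bigl(\sup_{t\in J_r}\bigl(|q_\infty'(t)|+|q_\infty''(t)|\bigr)>M\Bigr).
\]
Since \(q_\infty\) is stationary, each summand equals the corresponding probability for \(J=[0,1]\). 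It therefore suffices to show that \(\E\sup_{t\in[0,1]}(|q_\infty'|+|q_\infty''|)^p\le C_p\); Markov's inequality then finishes the proof.

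For the single unit interval, apply the case \(\ell=1\) of Lemma~\ref{lem:comparison-interval-Sobolev} with exponent \(p\), separately to \(h=q_\infty'\) and \(h=q_\infty''\). This yields
\[
 \sup_{[0,1]}|q_\infty^{(j)}|^p\le 2^{p-1}\int_0^1\bigl(|q_\infty^{(j)}(t)|^p+|q_\infty^{(j+1)}(t)|^p\bigr)\dd t,
 \qquad j=1,2.
\]
Combining the two with \((u+v)^p\le 2^{p-1}(u^p+v^p)\), taking expectations and using Tonelli reduces everything to
\[
 \sup_t\E|q_\infty^{(j)}(t)|^p\le C_p,\qquad j=1,2,3.
\]
Each \(q_\infty^{(j)}(t)\) is a centered Gaussian variable. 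Its variance is the constant \((-1)^j\,\frac{d^{2j}}{du^{2j}}e^{-u^2/2}\big|_{u=0}\), namely \(1\), \(3\) and \(15\) for \(j=1,2,3\). Hence its \(p\)th moment is a constant depending only on \(p\), and we obtain the bound with \(C_p\) proportional to \(\ell+1\).

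The one step that needs care is justifying that \(q_\infty\) is \(C^3\) almost surely and that \(q_\infty^{(j)}(t)\) is indeed the Gaussian variable with the covariance derivative stated above. Termwise differentiation of \(P_\infty\), established on \(\Omega_{\mathrm W}\), gives smoothness. The derivatives \(q_\infty^{(j)}(t)\) are almost-sure limits of the corresponding derivatives of \(q_n\), which are Gaussian, so they are Gaussian. Their variances follow by differentiating the covariance \eqref{eq:stationary-weyl-covariance} under the expectation, which is justified by the same \(L^2\) convergence of the termwise series. Everything else is routine. The evenness of \(p\) is not really needed, but it makes \(|x|^p=x^p\), so Gaussian moments are explicit.
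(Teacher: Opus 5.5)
Your proposal is correct and follows essentially the same route as the paper: cover $I$ by $\lceil\ell\rceil$ unit intervals, bound $\E\sup_J(|q_\infty'|+|q_\infty''|)^p$ using the $\ell=1$ case of Lemma~\ref{lem:comparison-interval-Sobolev}, Tonelli and the Gaussian moments with variances $1,3,15$, then finish with Markov's inequality and a union bound. The only difference is that you invoke stationarity to reduce to $[0,1]$, whereas the paper bounds the moment on an arbitrary $[c,c+1]$ directly; since your pointwise moment bounds are already uniform in $t$, the stationarity step is unnecessary.
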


\begin{proof}
We differentiate the covariance function \(e^{-u^{2}/2}\) from
\eqref{eq:stationary-weyl-covariance}
in its two variables and then set the variables equal to obtain
\[
 \Var(q_\infty'(t))=1,
 \qquad
 \Var(q_\infty''(t))=3,
 \qquad
 \Var(q_\infty^{(3)}(t))=15.
\]
Each of these derivatives is centered Gaussian.  Hence we apply the
Gaussian even-moment identity and obtain
\begin{equation}
 \E|q_\infty^{(j)}(t)|^p
 =
 (p-1)!!\,\Var(q_\infty^{(j)}(t))^{p/2},
 \qquad
 j\in\{1,2,3\}.
 \label{eq:comparison-stationary-derivative-moments}
\end{equation}
We apply \((u+v)^p\leq2^{p-1}(u^p+v^p)\), then apply
\eqref{eq:comparison-interval-Sobolev} on a unit interval to
\(q_\infty'\) and
\(q_\infty''\) and finally apply Tonelli's theorem and
\eqref{eq:comparison-stationary-derivative-moments} to obtain
\begin{equation}
 \E\!\left[\sup_{t\in[c,c+1]}
 \bigl(|q_\infty'(t)|+|q_\infty''(t)|\bigr)^p\right]
 \leq 2^{2p-2}(p-1)!!
 \left(1+2\cdot3^{p/2}+15^{p/2}\right)=C_p,
 \label{eq:comparison-unit-derivative-supremum-moment}
\end{equation}
where \(C_p\) is the constant in the statement of the lemma.

Let \(J_1,\ldots,J_{\lceil\ell\rceil}\) be unit intervals covering
\(I\).  We use
\eqref{eq:comparison-unit-derivative-supremum-moment}, Markov's
inequality and a union bound to obtain
\eqref{eq:comparison-stationary-derivative-supremum},
\[
 \begin{aligned}
 &\Pp\!\left(
  \sup_{t\in I}
  \bigl(|q_\infty'(t)|+|q_\infty''(t)|\bigr)>M
 \right)\\
 {}\leq{}&
 \sum_{j=1}^{\lceil\ell\rceil}
 \Pp\!\left(
  \sup_{t\in J_j}
  \bigl(|q_\infty'(t)|+|q_\infty''(t)|\bigr)>M
 \right)\\
 {}\leq{}&
 M^{-p}\sum_{j=1}^{\lceil\ell\rceil}
 \E\!\left[
  \sup_{t\in J_j}
  \bigl(|q_\infty'(t)|+|q_\infty''(t)|\bigr)^p
 \right]\\
 {}\leq{}&
 C_p\lceil\ell\rceil M^{-p}
 \leq C_p(\ell+1)M^{-p}.
 \end{aligned}
\]
\end{proof}

Combining the preceding derivative control with the nondegenerate Gaussian
law of \((q_\infty(t),q_\infty'(t))\), we bound the probability that there
exists \(t\in I\) for which both \(|q_\infty(t)|\) and
\(|q_\infty'(t)|\) are small, as well as the probability that
\(|q_\infty|\) is small at either endpoint of \(I\).  These estimates will
be used to verify \eqref{eq:comparison-zero-stability-transversality} and
\eqref{eq:comparison-zero-stability-endpoints} when
Lemma~\ref{lem:comparison-order-preserving-stability} is applied.

\begin{lemma}
\label{lem:comparison-near-critical-small-ball}
Let \(C_p\) be as in the preceding lemma.  There is a finite absolute
constant \(C_0\) such that, for every even integer \(p\geq2\), every
\(I=[\varphi,\psi]\) with \(\ell\geq1\), every \(M\geq1\) and every
\(0<\delta<1\),
\begin{equation}
 \Pp\!\left(
  \inf_{t\in I}
  \max\{|q_\infty(t)|,|q_\infty'(t)|\}
  \leq2\delta
 \right)
 \leq C_p(\ell+1)M^{-p}
 +C_0(\ell+1)M\delta
 +C_0\delta^{2}
 \label{eq:comparison-near-critical-small-ball}
\end{equation}
and
\begin{equation}
 \Pp\!\left(
  \min\{|q_\infty(\varphi)|,|q_\infty(\psi)|\}
  \leq2\varepsilon
 \right)
 \leq
  C_0\varepsilon,
  \qquad 0<\varepsilon<1.
 \label{eq:comparison-endpoint-small-ball}
\end{equation}
\end{lemma}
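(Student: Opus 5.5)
The endpoint estimate \eqref{eq:comparison-endpoint-small-ball} is immediate. For each \(t\), \(q_\infty(t)\) is standard Gaussian, so its density is bounded by \(1/\sqrt{2\pi}\). A union bound over the two endpoints then gives
\[
 \Pp\bigl(\min\{|q_\infty(\varphi)|,|q_\infty(\psi)|\}\leq2\varepsilon\bigr)\leq 2\cdot\frac{4\varepsilon}{\sqrt{2\pi}},
\]
so any \(C_0\geq 8/\sqrt{2\pi}\) works.

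For \eqref{eq:comparison-near-critical-small-ball} I would use a discretization argument. Let \(G_M\) be the event \(\sup_{t\in I}(|q_\infty'(t)|+|q_\infty''(t)|)\leq M\). By Lemma~\ref{lem:comparison-stationary-derivative-supremum}, \(\Pp(G_M^c)\leq C_p(\ell+1)M^{-p}\), which accounts for the first term. On \(G_M\), take a grid \(t_0=\varphi<t_1<\cdots<t_K=\psi\) with mesh \(h=\delta/M\) and \(K=\lceil \ell M/\delta\rceil\). Suppose some \(t\in I\) satisfies \(|q_\infty(t)|\leq2\delta\) and \(|q_\infty'(t)|\leq2\delta\), and let \(t_i\) be a grid point within distance \(h\) of \(t\). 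The mean value theorem then gives \(|q_\infty(t_i)|\leq2\delta+Mh=3\delta\) and \(|q_\infty'(t_i)|\leq 2\delta+Mh=3\delta\).

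At each fixed \(t_i\), the pair \((q_\infty(t_i),q_\infty'(t_i))\) is standard bivariate Gaussian with independent coordinates, by \eqref{eq:stationary-value-derivative-covariance}. Hence the probability that both coordinates lie in \([-3\delta,3\delta]\) is at most \((6\delta)^2/(2\pi)\). A union bound over the \(K+1\leq \ell M/\delta+2\) grid points yields
\[
 \frac{36\delta^2}{2\pi}\Bigl(\frac{\ell M}{\delta}+2\Bigr)\leq C_0\ell M\delta+C_0\delta^2,
\]
which is of the claimed form, since \(\ell\leq\ell+1\).

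The only point needing care is to make the grid size scale like \(\ell M/\delta\) rather than \((\ell M/\delta)^2\). This is why both coordinates are required to be small: the two-dimensional density gives a factor \(\delta^2\), and one power of \(\delta\) is absorbed by the grid count. It is also why \(q_\infty''\) is included in \(G_M\), since it is needed to control the oscillation of \(q_\infty'\) between grid points. No constant depends on \(p\) except \(C_p\), so \(C_0\) is absolute, as the statement requires.
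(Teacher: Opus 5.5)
Your proposal is correct and follows the paper's proof essentially step for step. You use the same derivative-supremum event from the preceding lemma, the same grid of mesh at most $\delta/M$ with at most $2+\ell M/\delta$ points, the same $3\delta$ transfer to the grid, the same bivariate density bound $(2\pi)^{-1}$ yielding $C_0=36/\pi$, and the same endpoint union bound. The one thing you leave implicit is that the infimum is attained by continuity on the compact interval $I$, which is what guarantees the point $t$ you start from.
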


\begin{proof}
We take \(C_0=36/\pi\).
We define
\begin{equation}
 \mathcal H_{I,M}
 =
 \left\{
  \sup_{t\in I}
  \bigl(|q_\infty'(t)|+|q_\infty''(t)|\bigr)
  \leq M
 \right\}.
 \label{eq:comparison-derivative-good-event}
\end{equation}
We set
\begin{equation}
 Q=\left\lceil\frac{\ell M}{\delta}\right\rceil,
 \qquad
 \mathcal D_{I,M,\delta}
 =
  \left\{\varphi+\frac{j\ell}{Q}:0\leq j\leq Q\right\}.
 \label{eq:comparison-grid-definition}
\end{equation}
The grid in \eqref{eq:comparison-grid-definition} contains both
endpoints and satisfies
\begin{equation}
 \frac{\ell}{Q}\leq\frac{\delta}{M},
 \qquad
 \operatorname{card}(\mathcal D_{I,M,\delta})
 =
 Q+1
 \leq
 2+\frac{\ell M}{\delta}.
 \label{eq:comparison-grid-size-and-mesh}
\end{equation}
The quantity
\(\max\{|q_\infty(t)|,|q_\infty'(t)|\}\) depends continuously on
\(t\) and therefore attains its minimum on \(I\).  Thus, on the event
appearing on the left-hand side of
\eqref{eq:comparison-near-critical-small-ball}, we choose a point
\(t_*\in I\) satisfying
\[
 |q_\infty(t_*)|\leq2\delta,
 \qquad
 |q_\infty'(t_*)|\leq2\delta.
\]
 We use Equations~\eqref{eq:comparison-grid-definition}
and~\eqref{eq:comparison-grid-size-and-mesh} to choose
\(v\in\mathcal D_{I,M,\delta}\) with
\(|v-t_*|\leq\delta/M\).  On the event
\(\mathcal H_{I,M}\) from \eqref{eq:comparison-derivative-good-event},
we apply the fundamental theorem of calculus and obtain
\[
 |q_\infty(v)|\leq3\delta,
 \qquad
 |q_\infty'(v)|\leq3\delta.
\]
We have therefore proved the event inclusion
\begin{equation}
 \begin{aligned}
  &\left\{
   \inf_{t\in I}
   \max\{|q_\infty(t)|,|q_\infty'(t)|\}
   \leq2\delta
  \right\}
  \cap\mathcal H_{I,M}\\
  {}\subseteq{}&
  \bigcup_{v\in\mathcal D_{I,M,\delta}}
  \left\{
   |q_\infty(v)|\leq3\delta,
   |q_\infty'(v)|\leq3\delta
  \right\}.
 \end{aligned}
 \label{eq:comparison-near-critical-event-inclusion}
\end{equation}

We use Equation~\eqref{eq:stationary-value-derivative-covariance} and
joint Gaussianity to identify \(q_\infty(v)\) and \(q_\infty'(v)\) as
independent standard Gaussian random variables.  Their joint density is at most
\((2\pi)^{-1}\), so
\begin{equation}
 \Pp\!\left(
  |q_\infty(v)|\leq3\delta,
  |q_\infty'(v)|\leq3\delta
 \right)
 \leq
 \frac{18}{\pi}\delta^{2}.
 \label{eq:comparison-near-critical-grid-small-ball}
\end{equation}
Splitting according to \(\mathcal H_{I,M}\), we use
Lemma~\ref{lem:comparison-stationary-derivative-supremum},
\eqref{eq:comparison-grid-size-and-mesh}--%
\eqref{eq:comparison-near-critical-grid-small-ball} and a union bound to obtain
\[
 \begin{aligned}
  &\Pp\!\left(
   \inf_{t\in I}
   \max\{|q_\infty(t)|,|q_\infty'(t)|\}
   \leq2\delta
  \right)\\
  {}\leq{}&
  C_p(\ell+1)M^{-p}
  +
  \frac{18}{\pi}
  \left(2+\frac{\ell M}{\delta}\right)\delta^{2}\\
  {}\leq{}&
  C_p(\ell+1)M^{-p}
  +
  C_0(\ell+1)M\delta
  +
  C_0\delta^{2}.
 \end{aligned}
\]
We thereby obtain \eqref{eq:comparison-near-critical-small-ball}.

Finally, \(q_\infty(\varphi)\) and \(q_\infty(\psi)\) are both standard
Gaussian random variables.  The standard Gaussian density is at most
\((2\pi)^{-1/2}\).  We apply a union bound and obtain
\[
 \Pp\!\left(
  \min\{|q_\infty(\varphi)|,|q_\infty(\psi)|\}
  \leq2\varepsilon
 \right)
 \leq
 \frac{8}{\sqrt{2\pi}}\varepsilon.
\]
We thereby obtain \eqref{eq:comparison-endpoint-small-ball}.
\end{proof}

We then prove a deterministic criterion for matching the ordered zeros of
two functions that are close in the \(C^1\) norm.

\begin{lemma}
\label{lem:comparison-order-preserving-stability}
Let \(I=[\varphi,\psi]\) be a compact interval.  Suppose that the functions
\(f,h\in C^1(I)\) and the numbers
\(0<\varepsilon<\delta\) satisfy
 \begin{equation}
  \|f-h\|_{C^1(I)}<\varepsilon,
  \label{eq:comparison-zero-stability-C1}
 \end{equation}
 \begin{equation}
  \inf_{t\in I}\max\{|f(t)|,|f'(t)|\}>2\delta
  \label{eq:comparison-zero-stability-transversality}
 \end{equation}
 and
 \begin{equation}
  \min\{|f(\varphi)|,|f(\psi)|\}>2\varepsilon.
  \label{eq:comparison-zero-stability-endpoints}
 \end{equation}
Then \(f\) and \(h\) have the same finite number \(k\) of distinct
zeros in \([\varphi,\psi]\).  Neither function vanishes at an endpoint.  If
\(k\geq1\), let
\[
 x_1<\cdots<x_k,
 \qquad
 y_1<\cdots<y_k
\]
denote the zeros of \(f\) and \(h\), respectively, in increasing order.
Then
\begin{equation}
 \max_{1\leq j\leq k}|x_j-y_j|
 \leq
 \frac{\varepsilon}{2\delta-\varepsilon}.
 \label{eq:comparison-zero-stability-displacement}
\end{equation}
\end{lemma}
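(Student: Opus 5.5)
The plan is to first localize the zeros of \(f\) and then transport the localization to \(h\) through the \(C^1\) closeness. Let
\[
 S=\{t\in I:\ |f(t)|\leq\delta\}.
\]
By \eqref{eq:comparison-zero-stability-transversality}, \(|f'(t)|>2\delta\) on \(S\). Hence \(f\) is strictly monotone on each connected component of \(S\). Note that \(S\) is closed, and its connected components are closed intervals.

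The first claim is that only finitely many components of \(S\) meet the zero set of \(f\). Near a zero \(x\) of \(f\), we have \(|f'|>2\delta\), so \(f\) is strictly monotone on the component of \(S\) containing \(x\). On that component, \(f\) moves through values from \(-\delta\) to \(\delta\), or stops at an endpoint of \(I\), where \(|f|>2\varepsilon\). Because \(|f'|\) is bounded by continuity, reaching \(\pm\delta\) takes length at least \(\delta/\sup|f'|\), which gives finiteness. More simply, the zeros of \(f\) are isolated, since \(f'\neq0\) wherever \(f=0\), and \(I\) is compact, so \(f\) has finitely many zeros \(x_1<\dots<x_k\). None of them is an endpoint, by \eqref{eq:comparison-zero-stability-endpoints}.

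Around each \(x_j\), let \(U_j\) be the maximal interval containing \(x_j\) on which \(|f|<\delta\). On \(U_j\), \(f\) is strictly monotone with \(|f'|>2\delta\), and the \(U_j\) are pairwise disjoint because each contains exactly one zero. By \eqref{eq:comparison-zero-stability-C1}, \(|h'|>2\delta-\varepsilon>0\) on \(U_j\), so \(h\) is also strictly monotone there.

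Next, apply the intermediate value theorem to \(h\) on \(J_j=[x_j-\eta,x_j+\eta]\) with \(\eta=\varepsilon/(2\delta-\varepsilon)\), intersected with \(U_j\). The aim is to show that \(h\) changes sign across \(J_j\). On \(U_j\) we have \(|f(x_j\pm t)|\geq 2\delta t\), so \(|f(x_j\pm\eta)|\geq 2\delta\eta>\varepsilon\). Indeed, \(2\delta\eta=2\delta\varepsilon/(2\delta-\varepsilon)>\varepsilon\). Since \(|f-h|<\varepsilon\), \(h\) has the same sign as \(f\) at \(x_j\pm\eta\).

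Some care is needed if \(J_j\) leaves \(U_j\). In that case the boundary point \(u\) of \(U_j\) satisfies \(|f(u)|=\delta>\varepsilon\), so the sign argument still applies at \(u\), or \(u\) is an endpoint of \(I\), where \(|f|>2\varepsilon\). Either way, \(h\) has exactly one zero \(y_j\) in \(U_j\) by monotonicity. A sharper version of the same estimate gives \(|y_j-x_j|\leq\eta\): we have \(|f(y_j)|<\varepsilon\) and \(|f(y_j)|\geq 2\delta|y_j-x_j|\). This yields \(|y_j-x_j|<\varepsilon/(2\delta)\leq\eta\), which proves \eqref{eq:comparison-zero-stability-displacement}.

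It remains to show that \(h\) has no zeros outside \(\bigcup_j U_j\), and this is where the main difficulty lies. Take \(t\notin\bigcup U_j\) with \(h(t)=0\). Then \(|f(t)|<\varepsilon<\delta\). So \(t\) lies in a maximal interval \(V\ni t\) on which \(|f|<\delta\), and \(f\) is monotone on \(V\) with \(|f'|>2\delta\). Because \(t\) is not in any \(U_j\), \(V\) contains no zero of \(f\). Hence \(f\) has constant sign on \(V\), with \(|f|\) strictly monotone, so \(|f|\) decreases toward one end of \(V\).

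That end cannot be interior to \(I\). At an interior end, \(|f|=\delta\) by maximality, yet \(|f|\) would be at its infimum there. It follows that \(|f|\) is decreasing toward an endpoint of \(I\). The infimum of \(|f|\) on \(V\) is then attained at that endpoint, so \(|f(t)|\geq|f(\text{endpoint})|>2\varepsilon\), contradicting \(|f(t)|<\varepsilon\).

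Thus the zeros of \(h\) are exactly \(y_1<\dots<y_k\), in the same order as the \(x_j\) since the \(U_j\) are disjoint and ordered. The endpoint condition for \(h\) follows from \(|h|\geq|f|-\varepsilon>\varepsilon\) at \(\varphi\) and \(\psi\).
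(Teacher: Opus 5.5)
Your proof is correct, but it takes a different route from the paper's.

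The paper deforms $f$ into $h$ through $f_u=f+u(h-f)$, $u\in[0,1]$. It shows that every $f_u$ has only simple zeros, all lying in $(\varphi,\psi)$. It then uses the implicit function theorem and the connectedness of $[0,1]$ to show that the number of zeros does not depend on $u$. Finally it integrates $|\gamma_i'(u)|=|h-f|/|f_u'|<\varepsilon/(2\delta-\varepsilon)$ along the ordered $C^1$ zero branches $\gamma_i$.

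You instead work with the components of $\{t\in I:|f(t)|<\delta\}$.
\begin{itemize}
\item On a component $U_j$ containing a zero of $f$, you have $|f'|>2\delta$ and $|h'|>2\delta-\varepsilon$. At the two ends of $\overline{U_j}$, $f$ takes opposite signs with $|f|>\varepsilon$ there. The intermediate value theorem and monotonicity then give exactly one zero of $h$ in $U_j$.
\item A zero $t$ of $h$ in a zero-free component $V$ is impossible. On $V$, $|f|$ is strictly monotone and would have to decrease toward a point where $|f|=\delta$ or $|f|>2\varepsilon$. This contradicts $|f(t)|<\varepsilon$.
\end{itemize}

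Your route is more elementary: it uses only the intermediate and mean value theorems, with no implicit-function branches to glue. It also gives the order-preserving pairing and the simplicity of the zeros of $h$ directly; the paper later needs the latter and extracts it from its own proof. It even yields the sharper bound $|x_j-y_j|<\varepsilon/(2\delta)$, because you apply the mean value theorem to $f$ itself rather than to $f_u$.

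The paper's continuation argument controls the whole segment $f_u$ at once. It is closer in spirit to the standard stability-of-zeros argument, which extends to settings where one-variable monotonicity is unavailable.

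Two small clean-ups. The existence step is simplest if you apply the intermediate value theorem directly at the two endpoints of $\overline{U_j}$, rather than through $J_j$. Your first, loose finiteness argument can be dropped in favour of the isolated-zeros argument you give right after it.
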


\begin{proof}
We define
\begin{equation}
 f_u=f+u(h-f),
 \label{eq:comparison-homotopy-definition}
\end{equation}
where \(u\in\R\), and restrict attention to \(u\in[0,1]\) below.  If the
function in
\eqref{eq:comparison-homotopy-definition} satisfies \(f_u(t)=0\), then
 \begin{equation}
  |f(t)|
  =
  u|f(t)-h(t)|
  <
  \varepsilon
  <
  \delta,
  \label{eq:comparison-homotopy-reference-value}
 \end{equation}
 \begin{equation}
  |f'(t)|>2\delta
  \label{eq:comparison-homotopy-reference-derivative}
 \end{equation}
 and
 \begin{equation}
  |f_u'(t)|
  \geq
  |f'(t)|-u|h'(t)-f'(t)|
  >
  2\delta-\varepsilon
  >
  0.
  \label{eq:comparison-homotopy-derivative}
 \end{equation}
We combine \eqref{eq:comparison-zero-stability-transversality} with
\eqref{eq:comparison-homotopy-reference-value} to obtain
\eqref{eq:comparison-homotopy-reference-derivative}.  We then combine
\eqref{eq:comparison-zero-stability-C1} with
\eqref{eq:comparison-homotopy-reference-derivative} to obtain
\eqref{eq:comparison-homotopy-derivative}.
For \(a\in\{\varphi,\psi\}\), we use
\eqref{eq:comparison-zero-stability-C1} and
\eqref{eq:comparison-zero-stability-endpoints} to obtain
\begin{equation}
 |f_u(a)|
 \geq
 |f(a)|-u|h(a)-f(a)|
 >
 2\varepsilon-\varepsilon
 =
 \varepsilon.
 \label{eq:comparison-homotopy-endpoint-separation}
\end{equation}
We use \eqref{eq:comparison-homotopy-endpoint-separation} to conclude
that every zero of every \(f_u\), \(u\in[0,1]\), lies in \((\varphi,\psi)\)
and is simple.

For \(u\in[0,1]\), we define
\begin{equation}
  Z_u=\{t\in I:f_u(t)=0\}.
 \label{eq:comparison-homotopy-zero-set}
\end{equation}
The zero set in \eqref{eq:comparison-homotopy-zero-set} is compact.
If it were infinite, we would use compactness to choose distinct
points \(t_r\in Z_u\) converging to a point
\(t_*\in Z_u\).  Since \(t_*\in(\varphi,\psi)\), we would use differentiability
to obtain
\[
 f_u'(t_*)
 =
 \lim_{r\to\infty}
 \frac{f_u(t_r)-f_u(t_*)}{t_r-t_*}
 =
 0,
 \]
 contradicting \eqref{eq:comparison-homotopy-derivative}.  We conclude
 that
\begin{equation}
 \operatorname{card}(Z_u)<\infty,
 \qquad u\in[0,1].
 \label{eq:comparison-homotopy-finite-zero-set}
\end{equation}

We fix \(u_0\in[0,1]\), set \(r=\operatorname{card}(Z_{u_0})\) and,
when \(r\geq1\), list the zeros as
\begin{equation}
 Z_{u_0}=\{z_1<\cdots<z_r\}.
 \label{eq:comparison-local-zero-enumeration}
\end{equation}
At each point \((u_0,z_i)\) in
\eqref{eq:comparison-local-zero-enumeration}, the function
\(F(u,t)=f_u(t)\) is \(C^1\) and
has nonzero \(t\)-derivative.
We therefore apply the implicit function theorem and, after shrinking
finitely many neighborhoods, obtain a common \(\eta>0\), pairwise
disjoint open intervals
\begin{equation}
  J_1,\ldots,J_r\subset(\varphi,\psi),
 \qquad
 z_i\in J_i,
 \qquad
 \sup J_i<\inf J_{i+1}
 \label{eq:comparison-local-branch-intervals}
\end{equation}
and \(C^1\) functions \(\zeta_i\), each defined on
\((u_0-\eta,u_0+\eta)\) and taking values in \(J_i\),
such that, whenever \(u\in[0,1]\) and \(|u-u_0|<\eta\),
\begin{equation}
 \{t\in J_i:f_u(t)=0\}
 =
 \{\zeta_i(u)\}.
 \label{eq:comparison-local-zero-branches}
\end{equation}
We use the intervals in \eqref{eq:comparison-local-branch-intervals}
and set
\begin{equation}
 K=I\setminus\bigcup_{i=1}^rJ_i.
 \label{eq:comparison-zero-free-complement}
\end{equation}
The set \(K\) in \eqref{eq:comparison-zero-free-complement} is compact
and contains no zero of \(f_{u_0}\), so
\[
 \min_{t\in K}|f_{u_0}(t)|>0.
\]
We reduce \(\eta\), if necessary, so that
\[
 \eta\varepsilon
 <
 \frac12\min_{t\in K}|f_{u_0}(t)|.
\]
For \(u\in[0,1]\) with \(|u-u_0|<\eta\),
\[
 \inf_{t\in K}|f_u(t)|
 \geq
 \min_{t\in K}|f_{u_0}(t)|
 -
 |u-u_0|\|h-f\|_\infty
 >
 \frac12\min_{t\in K}|f_{u_0}(t)|
 >
 0.
\]
Thus \(Z_u\cap K=\varnothing\).  We combine this fact with
\eqref{eq:comparison-local-zero-branches} and obtain
\begin{equation}
 Z_u
 =
 \bigcup_{i=1}^r\{\zeta_i(u)\},
 \qquad
 \operatorname{card}(Z_u)=r
 \label{eq:comparison-local-branch-cover}
\end{equation}
whenever \(u\in[0,1]\) and \(|u-u_0|<\eta\).

For \(j\in\mathbb N_0\), we define
\begin{equation}
 A_j
 =
 \{u\in[0,1]:\operatorname{card}(Z_u)=j\}.
 \label{eq:comparison-cardinality-level-set}
\end{equation}
We use the local representation in
\eqref{eq:comparison-local-branch-cover} to conclude that every set
in \eqref{eq:comparison-cardinality-level-set} is relatively open in
\([0,1]\).  Since
\[
 [0,1]\setminus A_j
 =
 \bigcup_{r\in\mathbb N_0\setminus\{j\}}A_r,
\]
each \(A_j\) is also relatively closed.  We use
\eqref{eq:comparison-homotopy-finite-zero-set} and
\eqref{eq:comparison-cardinality-level-set} to conclude that the
sets \(A_j\) form a disjoint partition of \([0,1]\).  Because
\([0,1]\) is connected, we conclude that
\begin{equation}
 \operatorname{card}(Z_u)
 =
 \operatorname{card}(Z_0)
 =
 \operatorname{card}(Z_1)
 =:k,
 \qquad u\in[0,1].
 \label{eq:comparison-homotopy-constant-count}
\end{equation}
Since \(f_0=f\) and \(f_1=h\), we obtain equality and finiteness of the
two zero counts.

We assume that \(k\geq1\) and write
\begin{equation}
 Z_u=\{\gamma_1(u)<\cdots<\gamma_k(u)\},
 \qquad u\in[0,1].
 \label{eq:comparison-global-zero-branches}
\end{equation}
We use the local representation above and
\eqref{eq:comparison-homotopy-constant-count} to conclude that the
globally ordered zero \(\gamma_i\) agrees locally with the
corresponding implicit-function branch \(\zeta_i\).  Hence local branches agree on
their overlaps and
\begin{equation}
 \gamma_i\in C^1([0,1]),
 \qquad
 \gamma_i(0)=x_i,
 \qquad
 \gamma_i(1)=y_i.
 \label{eq:comparison-global-branch-endpoints}
\end{equation}
We differentiate the identity
\(f_u(\gamma_i(u))=0\) from
\eqref{eq:comparison-global-zero-branches} and apply
\eqref{eq:comparison-homotopy-derivative} to obtain
\[
 |\gamma_i'(u)|
 =
 \frac{|h(\gamma_i(u))-f(\gamma_i(u))|}
 {|f_u'(\gamma_i(u))|}
 <
 \frac{\varepsilon}{2\delta-\varepsilon}.
\]
We integrate the derivative estimate and use
\eqref{eq:comparison-global-branch-endpoints} to obtain
\[
 |x_i-y_i|
 \leq
 \int_0^1|\gamma_i'(u)|\dd u
 \leq
 \frac{\varepsilon}{2\delta-\varepsilon}.
\]
We take the maximum over \(1\leq i\leq k\) to prove
\eqref{eq:comparison-zero-stability-displacement}.
\end{proof}

We now combine these estimates with the common coefficient coupling to prove
Proposition~\ref{prop:logarithmic-edge-root-pairing}.

\begin{proof}[Proof of Proposition~\ref{prop:logarithmic-edge-root-pairing}]
We fix \(I=[\varphi,\psi]\subseteq I_n\) with \(\ell\geq1\).  We use the same
estimates below for every
such interval.  Only the resulting good event depends on \(I\).
Recall the deterministic scales \(\delta_n\) and \(\varepsilon_n\) fixed in
\eqref{eq:logarithmic-edge-parameter-choice}.
From Equations \eqref{eq:logarithmic-edge-parameter-choice} and
\eqref{eq:logarithmic-edge-length-bounds}, we obtain
\[
 1\leq B\sqrt{\log n}\leq\frac{\sqrt n}{4},
 \qquad
 |I_n|=2L_n\leq2\sqrt n
\]
and
\[
 1\leq\ell\leq|I_n|,
 \qquad
 0<\varepsilon_n<\delta_n<1.
\]

We apply Lemma~\ref{lem:polynomial-series-growing-C1} on \(I\), with
\(s=B\sqrt{\log n}\) and
\(\varepsilon=\varepsilon_n\).  This is the first estimate in which
the size of \(B\) matters.  The condition
\[
 2-\frac{B^{2}}4+76\leq-22
\]
is equivalent to \(B^{2}\geq400\).  We therefore take the convenient
admissible threshold \(B_0=20\).  For
\(B\geq B_0\),
 \begin{equation}
  \begin{aligned}
   &\Pp\!\left(
    \|q_n-q_\infty\|_{C^1(I)}\geq\varepsilon_n
   \right)\\
   {}\leq{}&
   C\ell n^{2-B^{2}/4+76}
   \leq
   C\ell n^{2-100+76}
   =
   C\ell n^{-22}
   \leq
   C\ell n^{-18}.
  \end{aligned}
  \label{eq:logarithmic-edge-bad-C1}
 \end{equation}

We apply Lemma~\ref{lem:comparison-near-critical-small-ball} to \(I\)
with \(p=36\), \(M=\sqrt n\) and
\(\delta=\delta_n\).  We retain the dependence on the target
length and obtain
 \begin{equation}
  \begin{aligned}
   &\Pp\!\left(
    \inf_{t\in I}
    \max\{|q_\infty(t)|,|q_\infty'(t)|\}
    \leq2\delta_n
   \right)\\
   {}\leq{}&
   C_{36}(\ell+1)n^{-18}
   +
   C(\ell+1)n^{-37/2}
   +
   Cn^{-38}\\
   {}\leq{}&
   C\ell n^{-18}.
  \end{aligned}
  \label{eq:logarithmic-edge-bad-critical}
 \end{equation}
We apply \eqref{eq:comparison-endpoint-small-ball} with
\(\varepsilon=\varepsilon_n\) to obtain
\begin{equation}
 \Pp\!\left(
  \min\{|q_\infty(\varphi)|,|q_\infty(\psi)|\}
  \leq2\varepsilon_n
 \right)
 \leq
 Cn^{-38}
 \leq
 C\ell n^{-18}.
 \label{eq:logarithmic-edge-bad-endpoint}
\end{equation}

We use Equations~\eqref{eq:logarithmic-edge-bad-C1}--%
\eqref{eq:logarithmic-edge-bad-endpoint} to bound the probabilities
of the three bad events.  We define \(\mathcal G_{n,I}\) as the
intersection of \(\Omega_{\mathrm W}\) and the complements of these
three bad events.
Since \(\Pp(\Omega_{\mathrm W})=1\), we apply the union bound and obtain
\[
 \Pp(\mathcal G_{n,I}^c)
 \leq
 C\ell n^{-18}
 \leq
 Cn^{-17},
\]
where the last inequality uses \(\ell\leq|I_n|\leq2\sqrt n\).

We fix an outcome in \(\mathcal G_{n,I}\).  In
Lemma~\ref{lem:comparison-order-preserving-stability} on \(I\), we
take
\[
 f=q_\infty,
 \qquad
 h=q_n,
 \qquad
 \delta=\delta_n,
 \qquad
 \varepsilon=\varepsilon_n.
\]
On \(\mathcal G_{n,I}\), we use the complements of the three bad events
to verify, respectively, the conditions in
\eqref{eq:comparison-zero-stability-C1}--%
\eqref{eq:comparison-zero-stability-endpoints}.  We therefore apply
Lemma~\ref{lem:comparison-order-preserving-stability} and obtain that the zero
sets of \(q_\infty\) and \(q_n\) in \(I\) have the same cardinality.
 We also use the proof of that lemma to conclude that both functions have only
 simple zeros in \(I\) on \(\mathcal G_{n,I}\), so these numbers equal their respective
zero counts with multiplicity.  Since multiplication by
\(e^{-x^{2}/2}>0\) does not change zeros, these counts are precisely
\(N_n(I)\) and \(N_\infty(I)\).

If the common count is positive, we apply
Lemma~\ref{lem:comparison-order-preserving-stability} and obtain
\[
 \max_{1\leq j\leq k}|x_j-y_j|
 \leq
 \frac{\varepsilon_n}{2\delta_n-\varepsilon_n}
 =
 \frac{n^{-19}}{2-n^{-19}}
 \leq
 n^{-19}
 \leq
 n^{-1}.
\]
We combine \eqref{eq:logarithmic-edge-good-event-probability} with the
deterministic conclusions on \(\mathcal G_{n,I}\) to obtain
\eqref{eq:logarithmic-edge-root-displacement} and complete the proof.
\end{proof}

We finally use the resulting high-probability pairing to prove
Corollary~\ref{cor:logarithmic-edge-L2-comparison}.

\begin{proof}[Proof of Corollary~\ref{cor:logarithmic-edge-L2-comparison}]
We use the degree bound \eqref{eq:polynomial-zero-count-degree-bound}
to obtain \(\E[N_n(I)^4]\leq n^4\).  We use
Proposition~\ref{prop:stationary-Weyl-zero-count-results} and
\(\ell\leq|I_n|\leq2\sqrt n\) to obtain
\[
 \E[N_\infty(I)^4]
 =
 \E[N_\infty([0,\ell])^4]
 \leq
 C\ell^4
 \leq
 Cn^{2}.
\]
We apply Proposition~\ref{prop:logarithmic-edge-root-pairing} and
Lemma~\ref{lem:rare-disagreement-moment-transfer} to obtain
\eqref{eq:logarithmic-edge-L2-comparison},
\[
 \|N_n(I)-N_\infty(I)\|_{L^{2}}
 \leq
 C(n^4+\ell^4)^{1/4}(\ell n^{-18})^{1/4}
 \leq C\ell^{1/4}n^{-7/2}
 \leq Cn^{-3}.
\]
We apply \eqref{eq:rare-disagreement-mean-sd-transfer} and enlarge
\(C\) to obtain \eqref{eq:logarithmic-edge-normalization-comparison},
\[
 \begin{aligned}
  &|\E N_n(I)-\E N_\infty(I)|
  +
  \left|
   \sqrt{\Var(N_n(I))}
   -
   \sqrt{\Var(N_\infty(I))}
  \right|\\
  {}\leq{}&
  2\|N_n(I)-N_\infty(I)\|_{L^{2}}
  \leq C\ell^{1/4}n^{-7/2}
  \leq Cn^{-3}.
 \end{aligned}
\]
\end{proof}

\subsection{Finite-range approximation and comparison of zero sets}
\label{sec:finite-range-approximation}

We construct a stationary Gaussian process with finite dependence
range and compare its zeros with those of \(q_\infty\).  We give the
complete compactly supported moving-average construction and combine
it with the small-ball estimates and the deterministic comparison of
zeros from
Section~\ref{sec:zero-count-comparison}.

\subsubsection{A common moving-average realization}

We begin the promised construction of the white noise \(W\) fixed
notationally in Section~\ref{subsec:bulk-results}.  Recall that it is
required to be an isonormal Gaussian process over \(L^{2}(\R)\).  Thus,
for every \(h\in L^{2}(\R)\), \(W(h)\) is centered Gaussian and
\begin{equation}
 \E[W(h)W(k)]
 =\langle h,k\rangle_{L^{2}(\R)},
 \qquad h,k\in L^{2}(\R).
 \label{eq:white-noise-isometry}
\end{equation}
We also use the stochastic-integral notation
\(W(h)=\int_{\R}h(u)\,W(\mathrm du)\).
Recall that the deterministic kernel introduced in
Section~\ref{subsec:bulk-results} is
\begin{equation}
 g(u)=\left(\frac2\pi\right)^{1/4}e^{-u^{2}}.
 \label{eq:gaussian-moving-average-kernel}
\end{equation}
For the kernel in \eqref{eq:gaussian-moving-average-kernel}, we
evaluate the Gaussian integrals and obtain
\begin{equation}
 \|g'\|_2^{2}=1,
 \qquad
 \|g''\|_2^{2}=3,
 \qquad
 \|g^{(3)}\|_2^{2}=15.
 \label{eq:explicit-kernel-derivative-norms}
\end{equation}
We first realize \(q_\infty\) as a Gaussian moving average on an enlargement
of the coefficient space, so that the compactly supported approximations can
later be built from the same noise and coupled pathwise to \(q_\infty\).

\begin{lemma}
\label{lem:common-white-noise-realization}
After enlarging the coefficient probability space by independent
Gaussian randomness, the white-noise notation above can be realized so
that, almost surely,
\begin{equation}
 q_\infty(t)
 =
 W(g(t-\cdot))
 =
 \int_{\R}g(t-u)\,W(\mathrm du),
 \qquad t\in\R.
 \label{eq:common-white-noise-realization}
\end{equation}
\end{lemma}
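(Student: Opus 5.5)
Since \(g(u)=(2/\pi)^{1/4}e^{-u^2}\), its Fourier transform is again Gaussian, and the autocorrelation \(\int g(t-u)g(s-u)\dd u\) equals \(e^{-(t-s)^2/2}\). The plan is to first verify this identity. Then we build, from \(q_\infty\) together with independent auxiliary Gaussian noise, a white noise \(W\) whose moving average against \(g\) reproduces \(q_\infty\) exactly. The covariance check is a direct Gaussian integral: completing the square gives
\[
 \int_{\R}g(t-u)g(s-u)\dd u=\sqrt{\tfrac2\pi}\sqrt{\tfrac\pi2}\,e^{-(t-s)^2/2}=e^{-(t-s)^2/2},
\]
in agreement with \eqref{eq:stationary-weyl-covariance}.

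For the construction we pass to the spectral side. Let \(\mathcal H\) be the closed linear span in \(L^2(\R)\) of the translates \(\{g(t-\cdot):t\in\R\}\). The map \(q_\infty(t)\mapsto g(t-\cdot)\) extends by linearity and the covariance identity to an isometry \(\Theta\) from the Gaussian Hilbert space \(H_q=\overline{\operatorname{span}}\{q_\infty(t)\}\subset L^2(\Omega)\) onto \(\mathcal H\).

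A key observation is that \(\hat g\) is nowhere zero, so by Wiener's Tauberian theorem (or directly by Fourier inversion) the translates of \(g\) are dense in \(L^2(\R)\). Hence \(\mathcal H=L^2(\R)\), and no auxiliary randomness is actually needed. We then define
\[
 W(h)=\Theta^{-1}(h),\qquad h\in L^2(\R).
\]
This is an isonormal process because \(\Theta^{-1}\) is an isometry into a centered Gaussian space, and it satisfies \(W(g(t-\cdot))=q_\infty(t)\) for each fixed \(t\). The statement allows an enlargement, and we may take the trivial product with an independent space so that the construction is compatible with Lemma~\ref{lem:common-white-noise-realization}'s wording; if density is in doubt, the orthogonal complement \(\mathcal H^\perp\) would be filled by an independent isonormal process on the enlargement.

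It remains to upgrade "for each \(t\) almost surely" to "almost surely for all \(t\)". We plan to show that \(t\mapsto W(g(t-\cdot))\) has a continuous version via Kolmogorov's criterion. The increments satisfy
\[
 \E|W(g(t-\cdot))-W(g(s-\cdot))|^2=\|g(t-\cdot)-g(s-\cdot)\|_2^2\le\|g'\|_2^2|t-s|^2,
\]
and since the increments are Gaussian, all higher moments follow. Because \(q_\infty\) is itself continuous and agrees with this version on the rationals almost surely, the two agree everywhere almost surely. We then fix the version of \(W(g(t-\cdot))\) to be \(q_\infty(t)\) itself, so that \eqref{eq:common-white-noise-realization} holds identically.

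The main obstacle I expect is the density/measurability bookkeeping: making precise that \(\Theta\) is well defined, meaning that linear relations among the \(q_\infty(t)\) match linear relations among the translates. This follows from the isometry of finite combinations. The remaining points are ensuring that the completed enlarged space carries the pathwise identity and confirming the density of the translates through the nonvanishing of \(\hat g\).
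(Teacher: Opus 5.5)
Your proposal is correct, and its skeleton is the paper's. You build an isometry between the Gaussian span of \(\{q_\infty(t)\}\) and the closed span of the translates \(g(t-\cdot)\), define \(W\) through its inverse, and pass from fixed \(t\) to all \(t\) via continuity and agreement on \(\mathbb Q\).

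The paper builds the isometry on the coefficient side, through the feature vectors \(a_t\in\ell^2(\mathbb N_0)\) and the coefficient isonormal process \(X\). You work directly in \(H_q\subset L^2(\Omega)\). This difference is only cosmetic: \(X\) is an isometry with \(X(a_t)=q_\infty(t)\), so \(\Theta^{-1}=X\circ T^{-1}\).

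The genuine difference is your density step. Because the translates of \(g\) are dense in \(L^2(\R)\), the paper's \(H_g^\perp\) is \(\{0\}\) and its independent process \(W_0\) is vacuous. Hence no enlargement is needed, and \(W\) is measurable with respect to the completed coefficient \(\sigma\)-field. The paper's version avoids Fourier analysis and would apply unchanged to kernels whose translates are not dense, at the cost of possibly superfluous randomness. Your version gives the sharper structural statement for this particular \(g\).

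The density result you need is Wiener's \(L^2\) theorem on translates, not the \(L^1\) Tauberian theorem, and it is immediate. If \(h\perp g(t-\cdot)\) for all \(t\), then \(h*g\equiv0\), so \(\hat h\,\hat g=0\) almost everywhere. Since \(\hat g\) is a nowhere-vanishing Gaussian, \(h=0\).

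Finally, your Kolmogorov step is redundant. By construction \(W(g(t-\cdot))=\Theta^{-1}(g(t-\cdot))\) is the class of \(q_\infty(t)\), so you may simply take the continuous process \(q_\infty\) as the version.
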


\begin{proof}
For \(t\in\R\), we define
\begin{equation}
 a_t=
 \left(
  e^{-t^{2}/2}\frac{t^k}{\sqrt{k!}}
  \right)_{k\geq0}
 \in\ell^{2}(\mathbb N_0).
 \label{eq:coefficient-feature-vector}
\end{equation}
For the vector in \eqref{eq:coefficient-feature-vector}, we sum the
exponential series and obtain
\begin{equation}
 \langle a_t,a_s\rangle_{\ell^{2}}
 =
 e^{-(t^{2}+s^{2})/2}
 \sum_{k=0}^{\infty}\frac{(ts)^k}{k!}
 =
 e^{-(t-s)^{2}/2}.
 \label{eq:coefficient-feature-inner-product}
\end{equation}
Moreover,
 \begin{equation}
  \|g\|_{L^{2}(\R)}^{2}
  =
  \left(\frac2\pi\right)^{1/2}
  \int_{\R}e^{-2u^{2}}\dd u
  =1
  \label{eq:moving-kernel-normalization}
 \end{equation}
 and
 \begin{equation}
  \langle g(t-\cdot),g(s-\cdot)\rangle_{L^{2}(\R)}
  =
  \left(\frac2\pi\right)^{1/2}
  \int_{\R}e^{-(t-u)^{2}-(s-u)^{2}}\dd u
  =
  e^{-(t-s)^{2}/2}.
  \label{eq:moving-kernel-inner-product}
 \end{equation}
We obtain the last equality from the identity
\((t-u)^{2}+(s-u)^{2}
=2(u-(t+s)/2)^{2}+(t-s)^{2}/2\).
We use
\eqref{eq:coefficient-feature-inner-product} and
\eqref{eq:moving-kernel-inner-product} to conclude that
for arbitrary \(r\geq1\), \(t_1,\ldots,t_r\in\R\) and
\(\lambda_1,\ldots,\lambda_r\in\R\),
\begin{equation}
 \left\|\sum_{i=1}^{r}\lambda_i a_{t_i}\right\|_{\ell^{2}}^{2}
 =
 \sum_{i,j=1}^{r}
 \lambda_i\lambda_j e^{-(t_i-t_j)^{2}/2}
 =
 \left\|
  \sum_{i=1}^{r}\lambda_i g(t_i-\cdot)
  \right\|_2^{2}.
 \label{eq:feature-span-isometry}
\end{equation}
In particular, we use \eqref{eq:feature-span-isometry} to verify that if
\(\sum_{i=1}^{r}\lambda_i a_{t_i}=0\), then the last norm is zero.
We therefore define
\begin{equation}
 T_0\!\left(\sum_{i=1}^{r}\lambda_i a_{t_i}\right)
 =
 \sum_{i=1}^{r}\lambda_i g(t_i-\cdot)
\label{eq:feature-isometry-definition}
\end{equation}
and use \eqref{eq:feature-span-isometry} to verify that \(T_0\) is well
defined and norm preserving on the linear span of the vectors \(a_t\).
We denote by \(H_a\) the closed span of these vectors in
\(\ell^{2}(\mathbb N_0)\) and by \(H_g\) the closed span of the
translates of \(g\) in \(L^{2}(\R)\).  We extend the isometry in
\eqref{eq:feature-isometry-definition} uniquely by continuity to an
isometry \(T\) from \(H_a\) into \(H_g\).
Its range contains the span of the translates of \(g\).  Since the
range of an isometry on the complete space \(H_a\) is closed, its
range equals \(H_g\).  Thus \(T\) maps \(H_a\) isometrically onto
\(H_g\), so \(T^{-1}\) acts on all of \(H_g\).

The coefficient sequence defines an isonormal process \(X\) on
\(\ell^{2}(\mathbb N_0)\) by
\begin{equation}
 X(h)=\sum_{k=0}^{\infty}h_k\xi_k,
 \qquad h=(h_k)_{k\geq0}\in\ell^{2}(\mathbb N_0),
\label{eq:coefficient-isonormal-process}
\end{equation}
where the series converges in \(L^{2}\).  We denote by \(\Pi_g\) the
orthogonal projection from \(L^{2}(\R)\) onto \(H_g\).  On an enlargement of the
probability space, we choose an independent isonormal process \(W_0\)
on \(H_g^\perp\) and we define
\begin{equation}
 W(h)=
 X\!\left(T^{-1}\Pi_g h\right)
 +W_0\!\left(h-\Pi_g h\right),
 \qquad h\in L^{2}(\R).
\label{eq:white-noise-extension-definition}
\end{equation}
For the process in \eqref{eq:white-noise-extension-definition}, we use
the orthogonality of \(H_g\) and \(H_g^\perp\), the
independence of \(X\) and \(W_0\) and the isometric property of \(T\)
to obtain
\[
 \E[W(h)W(k)]=\langle h,k\rangle_{L^{2}(\R)}.
\]
Every finite collection of the random variables \(W(h)\) forms a
Gaussian vector because \(X\) and \(W_0\) are independent isonormal
processes.  Thus \(W\) is real Gaussian white noise.  For every fixed
\(t\),
we have, as an equality in \(L^{2}\),
\begin{equation}
 W(g(t-\cdot))
 =
 X(a_t)
 =
 e^{-t^{2}/2}
 \sum_{k=0}^{\infty}\frac{\xi_k}{\sqrt{k!}}t^k
 =
 q_\infty(t).
 \label{eq:common-white-noise-L2-identity}
\end{equation}
Here we used \eqref{eq:coefficient-feature-vector},
\eqref{eq:coefficient-isonormal-process} and
\eqref{eq:white-noise-extension-definition}.

We establish simultaneous equality for all \(t\).  We use
Section~\ref{sec:preliminaries} to choose \(q_\infty\) with continuous
paths.  On the other hand,
\[
 \E\left|
  W(g(t-\cdot))-W(g(s-\cdot))
 \right|^{2}
 \leq
 |t-s|^{2}\|g'\|_{L^{2}(\R)}^{2}.
\]
Since the increment is centered Gaussian, we use the Gaussian
fourth-moment identity to obtain
\[
 \E\left|
  W(g(t-\cdot))-W(g(s-\cdot))
 \right|^4
 \leq
 3\|g'\|_2^4|t-s|^4.
\]
For each integer \(r\geq1\), we apply Kolmogorov's continuity theorem
and obtain a continuous version \(Y_r\) of
\(t\mapsto W(g(t-\cdot))\) on \([-r,r]\).  If \(1\leq r<s\), then both
versions agree almost surely
with the original process at every rational point and hence
\[
 \Pp\!\left(
  Y_r(t)=Y_s(t)
  \text{ for every }t\in\mathbb Q\cap[-r,r]
 \right)
 =1.
\]
We take the countable intersection over the integer pairs \(r<s\) and
use continuity to obtain
\[
 Y_s\big|_{[-r,r]}=Y_r,
 \qquad 1\leq r<s,
\]
on a probability-one event.  We define
\[
 Y(t)=Y_r(t)
 \qquad\text{whenever }r\geq1\text{ is an integer and }|t|\leq r.
\]
This compatibility makes \(Y\) well defined and continuous on \(\R\).
We denote this global version again by \(W(g(t-\cdot))\).

For every \(t\in\mathbb Q\), we use
\eqref{eq:common-white-noise-L2-identity} and the fact that the chosen
continuous version agrees with the original process at each fixed
\(t\) to obtain \(\Pp(q_\infty(t)=W(g(t-\cdot)))=1\).  Since
\(\mathbb Q\) is countable,
\[
 \Pp\!\left(
  q_\infty(t)=W(g(t-\cdot))
  \text{ for every }t\in\mathbb Q
 \right)
 =1.
\]
We intersect this event with the probability-one event from
Section~\ref{sec:preliminaries} on which \(q_\infty\) has continuous
sample paths.  On the resulting event, we use continuity to obtain
\[
 q_\infty(t)
 =
 \lim_{u\to t}q_\infty(u)
 =
 \lim_{u\to t}W(g(u-\cdot))
 =
 W(g(t-\cdot)),
\]
where \(u\in\mathbb Q\) and \(t\in\R\).
We have proved \eqref{eq:common-white-noise-realization}.
\end{proof}

Recall that the cutoff \(\chi\in C_c^\infty(\R)\) fixed in
Section~\ref{subsec:bulk-results} satisfies
\begin{equation}
 0\leq\chi\leq1,
 \qquad
 \chi=1\text{ on }[-1/4,1/4],
 \qquad
 \operatorname{supp}\chi\subseteq[-1/2,1/2].
 \label{eq:finite-range-cutoff}
\end{equation}
For \(m\geq1\), introduce the auxiliary notation
\begin{equation}
 \widetilde g_m(u)=g(u)\chi(u/m),
 \qquad
 \alpha_m=\|\widetilde g_m\|_{L^{2}(\R)}^{-1},
 \qquad\text{so that}\qquad
 g_m=\alpha_m\widetilde g_m.
 \label{eq:finite-range-kernel}
\end{equation}
We use the properties of \(\chi\) and the strict positivity of \(g\)
to obtain \(\|\widetilde g_m\|_2>0\), so \(\alpha_m\) is well defined.
Thus \(g_m\) is the normalized cutoff kernel fixed in
Section~\ref{subsec:bulk-results}.  With the common noise \(W\) now
constructed, the process \(q_{\infty,m}\) introduced there is realized
by
\begin{equation}
 q_{\infty,m}(t)
 =
 W(g_m(t-\cdot))
 =
 \int_{\R}g_m(t-u)\,W(\mathrm du),
 \qquad t\in\R.
 \label{eq:m-dependent-process}
\end{equation}
We write
\begin{equation}
 Y_m=q_{\infty,m}-q_\infty
\label{eq:finite-range-error-process}
\end{equation}
for the approximation error.
We use the definition of \(\alpha_m\) to obtain
\begin{equation}
 \Var(q_{\infty,m}(t))=\|g_m\|_{L^{2}(\R)}^{2}=1.
\label{eq:m-dependent-unit-variance}
\end{equation}

Having realized \(q_{\infty,m}\), we record the smoothness and joint
stationarity needed to compare \(q_{\infty,m}\) with \(q_\infty\),
together with the exact \(m\)-dependence needed later to apply
Lemma~\ref{lem:appendix-chen-shao-thm2-6} to the decomposition of
\(N_{\infty,m}(I)-\E N_{\infty,m}(I)\) over subintervals of \(I\) of
length at most one.

\begin{lemma}
\label{lem:m-dependent-process-properties}
For every \(m\geq1\), we select and henceforth use a \(C^\infty\)
modification of the process realized in
\eqref{eq:m-dependent-process}.  This selected version is centered,
stationary and has unit variance.
Moreover, the pair \((q_\infty,q_{\infty,m})\) is jointly stationary.
Furthermore, if Borel sets \(E,F\subset\R\) satisfy
\(\operatorname{dist}(E,F)>m\), then
\[
 \sigma\{q_{\infty,m}(t):t\in E\}
 \quad\text{and}\quad
 \sigma\{q_{\infty,m}(t):t\in F\}
 \quad\text{are independent}.
\]
\end{lemma}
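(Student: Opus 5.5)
The plan is to read off every property from the Gaussian structure of the moving-average process and the isometry \eqref{eq:white-noise-isometry}, together with the smoothness and compact support of \(g_m\).

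\emph{Smoothness.} Since \(g\) is smooth and \(\chi\in C_c^\infty(\R)\), the kernel \(g_m=\alpha_m g\,\chi(\cdot/m)\) is in \(C_c^\infty(\R)\). Every derivative \(g_m^{(j)}\) therefore lies in \(L^2(\R)\), and
\[
 \E\bigl|W(g_m^{(j)}(t-\cdot))-W(g_m^{(j)}(s-\cdot))\bigr|^2
 \leq |t-s|^2\|g_m^{(j+1)}\|_2^2 .
\]
Here I use the isometry and the bound \(\|f(\cdot+h)-f\|_2\leq|h|\,\|f'\|_2\). The increments are Gaussian, so the fourth-moment identity gives a Kolmogorov bound with exponent \(4\). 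As in the proof of Lemma~\ref{lem:common-white-noise-realization}, Kolmogorov's continuity theorem applied on each \([-r,r]\), followed by patching over \(r\), yields continuous versions \(Z_j\) of \(t\mapsto W(g_m^{(j)}(t-\cdot))\) on all of \(\R\), for every \(j\geq0\).

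The step I expect to require the most care is identifying \(Z_j\) as the derivative of \(Z_{j-1}\). The \(L^2\) difference quotient
\[
 h^{-1}\bigl(W(g_m^{(j-1)}(t+h-\cdot))-W(g_m^{(j-1)}(t-\cdot))\bigr)
\]
converges in \(L^2\) to \(W(g_m^{(j)}(t-\cdot))\), by dominated convergence applied to the kernels. It follows that for each fixed \(t\), almost surely
\[
 Z_{j-1}(t)-Z_{j-1}(0)=\int_0^t Z_j(u)\dd u .
\]
To see this, compare both sides in \(L^2\): the right side is a Riemann integral of a continuous process, and the stochastic Fubini identity \(\int_0^t W(f(u-\cdot))\dd u=W(\int_0^t f(u-\cdot)\dd u)\) holds in \(L^2\) through Riemann sums. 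Both sides are continuous in \(t\), so the identity holds simultaneously for all rational \(t\), hence for all \(t\), on a full-probability event. Intersecting over \(j\in\mathbb N_0\), I get that \(Z_0\) is \(C^\infty\) with \(Z_0^{(j)}=Z_j\). This \(Z_0\) is the selected modification.

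\emph{Centering, variance and stationarity.} \(W\) is a centered Gaussian process, and \(\Var(q_{\infty,m}(t))=1\) is \eqref{eq:m-dependent-unit-variance}. By the isometry and translation invariance of Lebesgue measure, for all \(s,t,h\),
\[
 \E[q_{\infty,m}(t+h)\,q_\infty(s+h)]=\langle g_m(t+h-\cdot),\,g(s+h-\cdot)\rangle_{L^2}=\int g_m(t-u)\,g(s-u)\dd u ,
\]
and the same holds for the pairs \((q_{\infty,m},q_{\infty,m})\) and \((q_\infty,q_\infty)\). The pair \((q_\infty,q_{\infty,m})\) is jointly Gaussian and centered, since both components are images of the same isonormal \(W\) by Lemma~\ref{lem:common-white-noise-realization}. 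All cross covariances depend only on time differences, so every finite-dimensional law of the pair is shift invariant. This gives joint stationarity, and in particular stationarity of \(q_{\infty,m}\).

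\emph{Finite-range independence.} Since \(\operatorname{supp} g_m\subseteq[-m/2,m/2]\), the function \(g_m(t-\cdot)\) is supported in \([t-m/2,t+m/2]\). If \(\operatorname{dist}(E,F)>m\), then for \(t\in E\) and \(s\in F\) these supports are disjoint, so
\[
 \Cov(q_{\infty,m}(t),q_{\infty,m}(s))=\langle g_m(t-\cdot),g_m(s-\cdot)\rangle_{L^2}=0 .
\]
The family \(\{q_{\infty,m}(t):t\in E\cup F\}\) is jointly Gaussian, so every finite subfamily indexed by \(E\) is independent of every finite subfamily indexed by \(F\). A \(\pi\)–\(\lambda\) argument on cylinder sets then extends this to independence of the generated \(\sigma\)-fields. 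Using the smooth modification changes each \(q_{\infty,m}(t)\) only on a null set, so after completion the \(\sigma\)-fields are unaffected.
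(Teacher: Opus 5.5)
Your proposal is correct and follows essentially the same route as the paper. Both arguments build continuous versions of each derivative level by Kolmogorov's theorem and extend the \(L^2\) integral identity (fundamental theorem of calculus plus Fubini) pathwise by continuity to obtain a \(C^\infty\) version; both get (joint) stationarity from covariances depending only on time differences, and finite-range independence from disjoint kernel supports, joint Gaussianity and a monotone-class argument. Your closing remark about the modification leaving the augmented \(\sigma\)-fields unchanged is harmless but unnecessary, since the zero-covariance argument applies directly to the selected version.
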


\begin{proof}
We fix \(m\geq1\).  For every integer \(j\geq0\), the map
\(t\mapsto g_m^{(j)}(t-\cdot)\) is continuously differentiable as an
\(L^{2}(\R)\)-valued map, with derivative
\(g_m^{(j+1)}(t-\cdot)\).  We use the fundamental theorem of calculus
in \(L^{2}(\R)\) and the linear isometry of \(W\) to obtain
\[
 W(g_m^{(j)}(t-\cdot))-W(g_m^{(j)}(s-\cdot))
 =
 \int_s^t W(g_m^{(j+1)}(u-\cdot))\dd u
\]
in \(L^{2}(\Omega)\).  We use the Gaussian fourth-moment identity to
obtain
\[
 \E\bigl|W(g_m^{(j)}(t-\cdot))-W(g_m^{(j)}(s-\cdot))\bigr|^4
 \leq 3\|g_m^{(j+1)}\|_2^4|t-s|^4.
\]
For every \(j\geq0\), we apply Kolmogorov's continuity theorem to
construct compatible continuous versions on compact intervals and
combine them into globally continuous versions of
\(t\mapsto W(g_m^{(j)}(t-\cdot))\), chosen simultaneously.  We use
Fubini's theorem, a countable intersection over rational \(s,t,j\)
and continuity to extend the preceding identity pathwise to all
\(s,t\in\R\).  We then apply the ordinary fundamental theorem of
calculus iteratively and obtain a \(C^\infty\) version of
\(q_{\infty,m}\).

For \(s,t\in\R\), the covariance
\[
 \E[q_{\infty,m}(t)q_{\infty,m}(s)]
 =
 \int_{\R}g_m(t-u)g_m(s-u)\dd u
\]
depends only on \(t-s\).  Since \(q_{\infty,m}\) is centered
Gaussian, we conclude from this covariance identity that the process
is stationary.  From Equation~\eqref{eq:m-dependent-unit-variance},
we obtain unit variance.
We also use the common white-noise realization to compute
\[
 \E[q_\infty(t)q_{\infty,m}(s)]
 =
 \int_{\R}g(t-u)g_m(s-u)\dd u,
\]
which depends only on \(t-s\).  The pair
\((q_\infty,q_{\infty,m})\) is jointly centered Gaussian.  Its two
marginal covariance functions and the preceding cross-covariance
are invariant under a common translation of all time arguments.
We therefore conclude that every finite-dimensional distribution of
the pair is invariant under such translations and that the pair is
jointly stationary.

If either \(E\) or \(F\) is empty, one of these two
\(\sigma\)-fields is trivial, so they are
independent.  We therefore assume that both sets are nonempty.  We use
\eqref{eq:finite-range-cutoff} and \eqref{eq:finite-range-kernel} to
obtain
\[
 \operatorname{supp}g_m\subseteq[-m/2,m/2].
\]
For \(t\in E\) and \(s\in F\),
\[
 |t-s|
 \geq
 \operatorname{dist}(E,F)
 >m.
\]
The supports of the functions \(g_m(t-u)\) and \(g_m(s-u)\) lie in
\([t-m/2,t+m/2]\) and \([s-m/2,s+m/2]\), respectively.  These
intervals are disjoint and hence
\[
 \E[q_{\infty,m}(t)q_{\infty,m}(s)]
 =
 \langle g_m(t-\cdot),g_m(s-\cdot)\rangle_{L^{2}(\R)}
 =0.
\]
We fix \(t_1,\ldots,t_r\in E\) and \(s_1,\ldots,s_v\in F\).  The two
random vectors
\[
 \bigl(q_{\infty,m}(t_1),\ldots,q_{\infty,m}(t_r)\bigr)
 \quad\text{and}\quad
 \bigl(q_{\infty,m}(s_1),\ldots,q_{\infty,m}(s_v)\bigr)
\]
are jointly Gaussian.  We compute every covariance between a
coordinate of the first vector and a coordinate of the second vector
and obtain zero.  Since the vectors are jointly Gaussian, we conclude that
they are independent.  The corresponding cylinder events form
generating \(\pi\)-systems for these two \(\sigma\)-fields.  We apply
the monotone-class theorem first
to one generating \(\pi\)-system and then to the other, thereby
extending the finite-dimensional independence to the two generated
\(\sigma\)-fields.
\end{proof}

We repeat the construction of compatible continuous versions on
compact intervals from the preceding proof with \(g\) in place of
\(g_m\).  For \(j=0\), we choose the continuous version from
Lemma~\ref{lem:common-white-noise-realization}.  Since
\(g^{(j)}\in L^{2}(\R)\) for every \(j\geq0\), we use the
\(L^{2}(\R)\)-valued fundamental theorem of calculus, Fubini's theorem
and a countable intersection over rational \(s,t\) to obtain, on one
probability-one event,
\[
 W(g^{(j)}(t-\cdot))-W(g^{(j)}(s-\cdot))
 =
 \int_s^tW(g^{(j+1)}(u-\cdot))\dd u
\]
for every \(j\geq0\) and \(s,t\in\R\).  We then apply the ordinary
fundamental theorem of calculus and choose versions for which
\begin{equation}
 q_\infty^{(j)}(t)=W(g^{(j)}(t-\cdot))
 \qquad
 \text{for every }j\geq0\text{ and }t\in\R
 \label{eq:stationary-white-noise-derivatives}
\end{equation}
simultaneously almost surely.
We fix these versions throughout the remainder of the paper.

We define \(\Omega_{\mathrm{reg},\infty}\) as the intersection of
\(\Omega_{\mathrm W}\) with the probability-one events on which the
common white-noise realization and
\eqref{eq:stationary-white-noise-derivatives} hold and with the
probability-one event on which there are no multiple zeros, as supplied by
Section~\ref{sec:preliminaries}.  On
\(\Omega_{\mathrm{reg},\infty}\), the chosen version of \(q_\infty\)
agrees with \(e^{-t^{2}/2}P_\infty(t)\), is smooth and has only
finitely many simple zeros on every compact interval.

The next lemma shows that the zeros of \(q_{\infty,m}\) are almost surely simple
and locally finite and that the zero count on every bounded interval is
measurable with respect to the corresponding process restriction.  The
measurability assertion is used below to prove independence of zero counts on
sufficiently separated intervals.

\begin{lemma}
\label{lem:finite-range-zero-count-regularity}
For every \(m\geq1\), almost surely all real zeros of \(q_{\infty,m}\)
are simple and the number of real zeros of \(q_{\infty,m}\) in every
compact interval is finite.
Moreover, for every bounded interval \(K\subset\R\), its zero count
\(N_{\infty,m}(K)\) is a finite-valued random variable measurable with
respect to the \(\Pp\)-augmentation of
\(\sigma\{q_{\infty,m}(t):t\in K\}\).
\end{lemma}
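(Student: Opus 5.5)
The plan is to follow the template already used for \(q_\infty\) in Section~\ref{sec:preliminaries}, transplanted to the selected \(C^\infty\) version of \(q_{\infty,m}\) from Lemma~\ref{lem:m-dependent-process-properties}. The argument has three steps: nondegeneracy of the value--derivative pair, simplicity of zeros via the Bulinskaya-type result, and a countable representation of the zero count for measurability.

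\emph{Step 1: nondegeneracy.} First I would verify that \((q_{\infty,m}(t),q_{\infty,m}'(t))\) is a nondegenerate Gaussian vector for every \(t\). By stationarity it suffices to take \(t=0\). The two coordinates are \(W(g_m)\) and \(W(g_m')\). Their covariance is
\[
 \langle g_m,g_m'\rangle_{L^2(\R)}=\tfrac12\int_{\R}(g_m^2)'\dd u=0,
\]
since \(g_m\) is smooth and compactly supported. The variances are \(\|g_m\|_2^2=1\) by \eqref{eq:m-dependent-unit-variance} and \(\|g_m'\|_2^2\). The latter is positive because \(g_m\) is a nonzero compactly supported function, hence nonconstant, so \(g_m'\not\equiv0\). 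Thus the pair consists of independent nondegenerate Gaussians with a bounded joint density. This holds uniformly in \(t\), with a bound depending only on \(m\).

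\emph{Step 2: simple zeros and local finiteness.} Since the paths are \(C^\infty\) and the density of \(q_{\infty,m}(t)\) is bounded by \((2\pi)^{-1/2}\) uniformly in \(t\), I would apply Lemma~\ref{lem:appendix-azais-wschebor-prop1-20} on each \(K=[-r,r]\), \(r\in\mathbb N\). This gives \(\Pp(\exists x\in K: q_{\infty,m}(x)=q_{\infty,m}'(x)=0)=0\). Intersecting over \(r\) yields a probability-one event on which every zero is simple.

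A simple zero is isolated, because \(q_{\infty,m}'\neq0\) there. On a compact interval, an infinite zero set would have an accumulation point \(t_*\), which is itself a zero by continuity. Rolle's theorem then produces zeros of \(q_{\infty,m}'\) accumulating at \(t_*\), so \(q_{\infty,m}'(t_*)=0\), contradicting simplicity. This is the same argument used in Lemma~\ref{lem:comparison-order-preserving-stability}. Hence the zero set is finite on every compact interval. The point needing care is matching the hypotheses of the external lemma, namely \(C^1\) paths and a bounded density; both follow from the construction.

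\emph{Step 3: measurability.} For a compact \(K\) with nonempty interior, I would reuse the countable representation displayed in Section~\ref{sec:preliminaries}, with \(q_{\infty,m}\) in place of \(q_\infty\). On the good event, \(\{N_{\infty,m}(K)\geq k\}\) equals
\[
 \bigcup_{s}\bigcap_{M}\bigcup_{t_i\in K\cap\mathbb Q,\ |t_i-t_j|\geq1/s}\ \bigcap_i\{|q_{\infty,m}(t_i)|<1/M\}.
\]
The inclusion ``\(\subseteq\)'' uses continuity at the \(k\) distinct zeros, which are separated by some \(1/s\) with room to spare. The inclusion ``\(\supseteq\)'' uses compactness: take limit points of the rational tuples as \(M\to\infty\). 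This yields \(k\) zeros at mutual distance at least \(1/s\), and these are distinct zeros, counted with multiplicity one because all zeros are simple. Points near the boundary of \(K\) are handled because the limits still lie in the compact set \(K\).

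Hence \(N_{\infty,m}(K)\) is measurable with respect to the \(\Pp\)-augmentation of \(\sigma\{q_{\infty,m}(t):t\in K\}\), and it is finite-valued by Step 2. For a singleton \(K=\{a\}\), the count equals \(\mathbf1_{\{q_{\infty,m}(a)=0\}}\), which is a null event by the nondegeneracy in Step 1. For a general bounded interval, I would write the count as a monotone limit of counts on compact subintervals whose endpoints approach the open ends, adding endpoint indicators as needed. Each term is measurable with respect to the restriction of the process to \(K\), so the limit is as well.
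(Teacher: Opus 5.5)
Your proposal is correct and follows the paper's proof essentially step for step: Bulinskaya's lemma (Lemma~\ref{lem:appendix-azais-wschebor-prop1-20}) on each $[-r,r]$ via the unit-variance density bound, an accumulation-point argument for local finiteness, the same countable representation of $\{N_{\infty,m}(K)\geq k\}$, and monotone approximation with endpoint indicators for general bounded intervals. The differences are cosmetic. Your Step~1 nondegeneracy of $(q_{\infty,m}(t),q_{\infty,m}'(t))$ is correct but unnecessary, since the cited lemma only needs a bounded density of $q_{\infty,m}(t)$ itself. Also, the paper obtains $q_{\infty,m}'(t_*)=0$ directly from the difference quotient along the accumulating zeros rather than via Rolle's theorem.
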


\begin{proof}
We fix \(m\geq1\).  We use
Lemma~\ref{lem:m-dependent-process-properties} to obtain
\[
 \sup_{t,x\in\R}p_{q_{\infty,m}(t)}(x)=(2\pi)^{-1/2}.
\]
For every \(r\in\mathbb N\), we apply
Lemma~\ref{lem:appendix-azais-wschebor-prop1-20} and obtain
\[
 \Pp\bigl(\exists t\in[-r,r]:
 q_{\infty,m}(t)=q_{\infty,m}'(t)=0\bigr)=0.
\]
We take the countable intersection over \(r\) and the \(C^\infty\)
sample-path event to obtain an event \(\Omega_{\mathrm{reg},m}\) such
that
\[
 \Pp(\Omega_{\mathrm{reg},m})=1,\qquad
 q_{\infty,m}\in C^\infty(\R),\qquad
 q_{\infty,m}(t)=0\Longrightarrow q_{\infty,m}'(t)\neq0
 \quad(t\in\R).
\]
If a compact interval contained infinitely many zeros, we could use
compactness to choose distinct zeros \(t_k\to t\).  On
\(\Omega_{\mathrm{reg},m}\),
\[
 q_{\infty,m}(t)=\lim_{k\to\infty}q_{\infty,m}(t_k)=0,\qquad
 q_{\infty,m}'(t)
 =
 \lim_{k\to\infty}
 \frac{q_{\infty,m}(t_k)-q_{\infty,m}(t)}{t_k-t}
 =0,
\]
which is impossible.  Hence
\[
 \#\{t\in K:q_{\infty,m}(t)=0\}<\infty
 \qquad\text{for every compact interval }K\subset\R.
\]
For every bounded interval \(K\subset\R\), we select an
everywhere-defined version of the zero count \(N_{\infty,m}(K)\)
introduced in Section~\ref{subsec:bulk-results} by setting
\[
 N_{\infty,m}(K)(\omega)
 =
 \begin{cases}
  \#\{t\in K:q_{\infty,m}(t,\omega)=0\},
  & \omega\in\Omega_{\mathrm{reg},m},\\
 0,
 & \omega\notin\Omega_{\mathrm{reg},m}.
 \end{cases}
\]
We fix a compact interval \(K\) with nonempty interior and an integer
\(k\geq1\).  We use continuity on \(\Omega_{\mathrm{reg},m}\) to
obtain
\begin{equation}
 \{N_{\infty,m}(K)\geq k\}
 =
 \Omega_{\mathrm{reg},m}\cap
 \bigcup_{r=1}^{\infty}
 \bigcap_{M=1}^{\infty}
 \bigcup_{\substack{
  t_1,\ldots,t_k\in K\cap\mathbb Q\\
  |t_i-t_j|\geq1/r,\ i\neq j
 }}
 \bigcap_{i=1}^{k}
 \{|q_{\infty,m}(t_i)|<1/M\}.
 \label{eq:zero-count-measurability}
\end{equation}
For one inclusion in \eqref{eq:zero-count-measurability}, we fix
distinct zeros \(z_1,\ldots,z_k\).  We use the density of
\(K\cap\mathbb Q\) in \(K\) and continuity to choose a fixed \(r\) and
rational points \(t_i^{(M)}\) such that
\[
 t_i^{(M)}\longrightarrow z_i,\qquad
 |t_i^{(M)}-t_j^{(M)}|\geq1/r,\qquad
 |q_{\infty,m}(t_i^{(M)})|<1/M.
\]
Conversely, we fix \(r\) and choose the corresponding points for every
\(M\) from the right-hand side of
\eqref{eq:zero-count-measurability}.  We use compactness of \(K^k\) to
choose a subsequence \(M_j\to\infty\) for which
\[
 (t_1^{(M_j)},\ldots,t_k^{(M_j)})
 \longrightarrow(s_1,\ldots,s_k)\in K^k,
\]
where \(|s_i-s_j|\geq1/r\) and
\(q_{\infty,m}(s_i)
 =\lim_{j\to\infty}q_{\infty,m}(t_i^{(M_j)})=0\).
Thus \(s_1,\ldots,s_k\) are distinct zeros.  All operations in
\eqref{eq:zero-count-measurability} are countable and
\(\Omega_{\mathrm{reg},m}^{\mathsf c}\) is a \(\Pp\)-null event.
Hence \(N_{\infty,m}(K)\) is measurable with respect to the
\(\Pp\)-augmentation of
\(\sigma\{q_{\infty,m}(t):t\in K\}\).

For \(a<b\), we use monotone approximation by compact intervals to
obtain
\[
 \begin{aligned}
 N_{\infty,m}((a,b))
 &=\lim_{M\to\infty}
 N_{\infty,m}([a+M^{-1},b-M^{-1}]),\\
 N_{\infty,m}([a,b))
 &=\lim_{M\to\infty}
 N_{\infty,m}([a,b-M^{-1}]),\\
 N_{\infty,m}((a,b])
 &=\lim_{M\to\infty}
 N_{\infty,m}([a+M^{-1},b]),\\
 N_{\infty,m}(\{a\})
 &=\mathbf 1_{\Omega_{\mathrm{reg},m}}
 \mathbf 1_{\{q_{\infty,m}(a)=0\}},
 \end{aligned}
\]
where \(M\) is sufficiently large.  This proves the local
measurability for every bounded interval.
\end{proof}

For each \(m\geq1\), we henceforth fix the event
\(\Omega_{\mathrm{reg},m}\) constructed in the preceding proof.  It has
probability one.  We also use the everywhere-defined zero-count versions
fixed there.

 Recall from Section~\ref{subsec:useful-tools} that the \(q_\infty\) zero
 counts are measurable and from the Introduction that the corresponding
 random-polynomial zero counts are measurable.  Combining these facts with
 the preceding lemma and the polynomial degree bound
 \eqref{eq:polynomial-zero-count-degree-bound}, we conclude that every zero
 count used below is a finite-valued measurable random variable.

\subsubsection{Quantitative \texorpdfstring{$C^1$}{C1} approximation}

We begin the quantitative coupling by showing that cutoff and renormalization
perturb each fixed derivative of the Gaussian kernel by an exponentially
small \(L^2\) error.

\begin{lemma}
\label{lem:finite-range-kernel-approximation}
For every integer \(j\geq0\), there is a constant \(C_{j,\chi}>0\)
such that
\begin{equation}
 \|g_m^{(j)}-g^{(j)}\|_{L^{2}(\R)}
 \leq C_{j,\chi}e^{-m^{2}/64},
 \qquad m\geq1.
 \label{eq:finite-range-kernel-approximation}
\end{equation}
\end{lemma}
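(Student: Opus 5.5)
We split the error as
\[
 g_m^{(j)}-g^{(j)}
 =
 \alpha_m\bigl(\widetilde g_m^{(j)}-g^{(j)}\bigr)
 +(\alpha_m-1)g^{(j)},
\]
so that
\[
 \|g_m^{(j)}-g^{(j)}\|_2
 \leq
 \alpha_m\|\widetilde g_m^{(j)}-g^{(j)}\|_2
 +|\alpha_m-1|\,\|g^{(j)}\|_2 .
\]
Since \(g^{(j)}\) is a polynomial times \(e^{-u^2}\), the factor \(\|g^{(j)}\|_2\) is a finite constant depending only on \(j\). It therefore suffices to prove two things. First, \(\|\widetilde g_m^{(j)}-g^{(j)}\|_{2}\leq C_{j,\chi}e^{-m^2/64}\) for every \(j\geq0\). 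Second, \(\alpha_m\) is bounded and \(|\alpha_m-1|\leq Ce^{-m^2/64}\).

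For the first bound we expand by Leibniz:
\[
 \widetilde g_m^{(j)}(u)-g^{(j)}(u)
 =
 \bigl(\chi(u/m)-1\bigr)g^{(j)}(u)
 +\sum_{i=1}^{j}\binom ji m^{-i}\chi^{(i)}(u/m)\,g^{(j-i)}(u).
\]
Every term vanishes on \(|u|\leq m/4\). Indeed, \(\chi(u/m)=1\) there, and \(\chi^{(i)}(u/m)=0\) there for \(i\geq1\) because \(\chi\) is constant on \([-1/4,1/4]\). Since \(m\geq1\), the coefficients \(|\chi(u/m)-1|\leq1\) and \(m^{-i}|\chi^{(i)}|\leq\|\chi^{(i)}\|_\infty\) are bounded by constants depending on \(\chi\) and \(j\). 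The problem therefore reduces to the Gaussian tail estimate
\[
 \int_{|u|\geq m/4}|g^{(k)}(u)|^2\dd u\leq C_k e^{-m^2/32},
 \qquad 0\leq k\leq j.
\]
To prove it, we write \(|g^{(k)}(u)|\leq C_k(1+|u|)^ke^{-u^2}\). Squaring gives \((1+|u|)^{2k}e^{-2u^2}\leq C_k' e^{-u^2}\), after absorbing the polynomial into half of the exponent. Integrating \(e^{-u^2}\) over \(|u|\geq m/4\) gives at most \(Ce^{-m^2/16}\). Taking square roots yields \(e^{-m^2/32}\), which is better than required. The exponent \(1/64\) in the statement leaves ample room, so any crude absorption of polynomial factors works.

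For the normalization, the case \(j=0\) of the first bound, together with \(\|g\|_2=1\) from \eqref{eq:moving-kernel-normalization}, gives
\[
 \bigl|\|\widetilde g_m\|_2-1\bigr|\leq\|\widetilde g_m-g\|_2\leq Ce^{-m^2/32}.
\]
We also need \(\|\widetilde g_m\|_2\) bounded away from zero uniformly in \(m\geq1\). Since \(\chi(u/m)=1\) on \(|u|\leq m/4\), which contains \(|u|\leq1/4\), we have
\[
 \|\widetilde g_m\|_2\geq\|g\mathbf 1_{[-1/4,1/4]}\|_2=:c_*>0 .
\]
Then
\[
 |\alpha_m-1|=\frac{\bigl|1-\|\widetilde g_m\|_2\bigr|}{\|\widetilde g_m\|_2}\leq c_*^{-1}Ce^{-m^2/32},
\]
and \(\alpha_m\leq c_*^{-1}\). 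Combining these with the first bound gives \eqref{eq:finite-range-kernel-approximation}, with \(C_{j,\chi}\) depending on \(j\) and on \(\|\chi^{(i)}\|_\infty\) for \(i\leq j\).

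The only point requiring care is uniformity in all \(m\geq1\), rather than just large \(m\). The lower bound \(c_*\) handles the normalization, and the inequality \(m^{-i}\leq1\) handles the derivatives of the cutoff. No step is a genuine obstacle.
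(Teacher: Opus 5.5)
Your proof is correct and follows essentially the same route as the paper. It uses the Leibniz expansion, the vanishing of every error term on $|u|\le m/4$, Gaussian tail bounds after absorbing the polynomial factors, and control of $\alpha_m$ through the reverse triangle inequality together with $\|\widetilde g_m\|_2^2\ge\int_{-1/4}^{1/4}g^2$.

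The only differences are cosmetic. You split the error as $\alpha_m(\widetilde g_m^{(j)}-g^{(j)})+(\alpha_m-1)g^{(j)}$, so you need $\sup_m\alpha_m<\infty$ instead of the paper's uniform bound on $\|\widetilde g_m^{(j)}\|_2$. Your tail estimate $\int_{|u|\ge m/4}e^{-u^2}\dd u\le Ce^{-m^2/16}$ is sharper than the paper's, so you obtain $e^{-m^2/32}$ where the paper has $e^{-m^2/64}$.
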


\begin{proof}
For every \(j\geq0\), the function \(g^{(j)}\) is a polynomial of
degree \(j\) times \(e^{-u^{2}}\).  In particular,
\[
 \sup_{u\in\R}
 \frac{e^{u^{2}}|g^{(j)}(u)|}{(1+|u|)^j}
 <\infty.
\]
We apply Leibniz's formula and obtain
\begin{equation}
 \widetilde g_m^{(j)}(u)
 =
 \sum_{r=0}^{j}
 \binom jr m^{-r}
 g^{(j-r)}(u)\chi^{(r)}(u/m).
 \label{eq:cutoff-kernel-Leibniz-expansion}
\end{equation}
The difference between the term with \(r=0\) and \(g^{(j)}\) is
supported on \(\{|u|\geq m/4\}\).  Every term with \(r\geq1\) is
supported on
\[
 \{m/4\leq|u|\leq m/2\}.
\]
For an integer \(d\geq0\) and \(x\geq0\), we differentiate
\(2d\log(1+x)-x^{2}\) and obtain
\[
 \frac{\mathrm d}{\mathrm dx}
 \left(2d\log(1+x)-x^{2}\right)
 =
 \frac{2d}{1+x}-2x.
\]
For \(d\geq1\), the unique critical point is
\[
 x=\frac{\sqrt{1+4d}-1}{2}
\]
and the second derivative is strictly negative.  For \(d=0\), the
function attains its maximum at \(x=0\).  Consequently, for every integer
\(d\geq0\),
\[
 \sup_{u\in\R}(1+|u|)^{2d}e^{-u^{2}}
 =
 \left(\frac{1+\sqrt{1+4d}}{2}\right)^{2d}
 \exp\!\left[
 -\left(\frac{\sqrt{1+4d}-1}{2}\right)^{2}
 \right].
\]
Moreover,
\[
 \int_{|u|\geq m/4}e^{-u^{2}}\dd u
 \leq
 e^{-m^{2}/32}\int_{\R}e^{-u^{2}/2}\dd u
 =
 \sqrt{2\pi}\,e^{-m^{2}/32}.
\]
We combine the preceding two estimates and obtain
\[
 \begin{aligned}
  &\int_{|u|\geq m/4}(1+|u|)^{2d}e^{-2u^{2}}\dd u\\
  {}\leq{}&
  \left(\sup_{u\in\R}(1+|u|)^{2d}e^{-u^{2}}\right)
  \int_{|u|\geq m/4}e^{-u^{2}}\dd u\\
  {}\leq{}&
  \sqrt{2\pi}
  \left(\frac{1+\sqrt{1+4d}}{2}\right)^{2d}
  \exp\!\left[
   -\left(\frac{\sqrt{1+4d}-1}{2}\right)^{2}
  \right]
  e^{-m^{2}/32}.
 \end{aligned}
\]
Thus we take
\[
 C_d
 =
 \sqrt{2\pi}
 \left(\frac{1+\sqrt{1+4d}}{2}\right)^{2d}
 \exp\!\left[
  -\left(\frac{\sqrt{1+4d}-1}{2}\right)^{2}
 \right],
\]
With this choice,
\begin{equation}
 \int_{|u|\geq m/4}(1+|u|)^{2d}e^{-2u^{2}}\dd u
 \leq C_de^{-m^{2}/32}.
 \label{eq:explicit-polynomial-gaussian-tail}
\end{equation}
We apply \eqref{eq:explicit-polynomial-gaussian-tail} to each term in
the Leibniz expansion \eqref{eq:cutoff-kernel-Leibniz-expansion} and
use \(m^{-r}\leq1\) and the boundedness of
\(\chi^{(r)}\).  We then apply the triangle inequality in \(L^{2}(\R)\)
and
\[
 \left(\sum_{r=0}^{j}a_r\right)^{2}
 \leq
 (j+1)\sum_{r=0}^{j}a_r^{2},
 \qquad a_0,\ldots,a_j\geq0,
\]
to obtain a constant \(C_{j,\chi}>0\) such that
\[
 \|\widetilde g_m^{(j)}-g^{(j)}\|_2^{2}
 \leq
 C_{j,\chi}^{2}e^{-m^{2}/32}.
\]
We take square roots and obtain
\begin{equation}
 \|\widetilde g_m^{(j)}-g^{(j)}\|_2
 \leq
 C_{j,\chi}e^{-m^{2}/64}.
 \label{eq:unnormalized-kernel-approximation}
\end{equation}

In particular, we take \(j=0\) in
\eqref{eq:unnormalized-kernel-approximation}, apply the reverse
triangle inequality and obtain
\[
 \left|\|\widetilde g_m\|_2-1\right|
 \leq\|\widetilde g_m-g\|_2
 \leq C_{0,\chi}e^{-m^{2}/64}.
\]
Since \(\chi(u/m)=1\) for \(|u|\leq1/4\), we also have
\[
 \inf_{m\geq1}\|\widetilde g_m\|_2^{2}
 \geq
 \int_{-1/4}^{1/4}(g(u))^{2}\dd u
 >0.
\]
Consequently, we obtain
\begin{equation}
 |\alpha_m-1|
 =
 \frac{|1-\|\widetilde g_m\|_2|}
 {\|\widetilde g_m\|_2}
 \leq
 \frac{C_{0,\chi}}
 {\left(\displaystyle
   \int_{-1/4}^{1/4}(g(u))^{2}\dd u
  \right)^{1/2}}
 e^{-m^{2}/64}.
 \label{eq:cutoff-normalizing-factor-bound}
\end{equation}
We use \eqref{eq:cutoff-kernel-Leibniz-expansion} and obtain
\begin{equation}
 \begin{aligned}
 \sup_{m\geq1}\|\widetilde g_m^{(j)}\|_2
 &\leq
 \sum_{r=0}^{j}\binom jr
 \sup_{m\geq1}
 m^{-r}
 \bigl\|g^{(j-r)}\chi^{(r)}(\cdot/m)\bigr\|_2\\
 &\leq
 \sum_{r=0}^{j}\binom jr
 \|\chi^{(r)}\|_\infty\|g^{(j-r)}\|_2
 <\infty.
 \end{aligned}
 \label{eq:unnormalized-kernel-uniform-bound}
\end{equation}
We combine \eqref{eq:unnormalized-kernel-approximation}--%
\eqref{eq:unnormalized-kernel-uniform-bound} and obtain
\[
 \begin{aligned}
 \|g_m^{(j)}-g^{(j)}\|_2
 &=
 \bigl\|
  (\alpha_m-1)\widetilde g_m^{(j)}
  +\bigl(\widetilde g_m^{(j)}-g^{(j)}\bigr)
 \bigr\|_2\\
 &\leq
 |\alpha_m-1|\|\widetilde g_m^{(j)}\|_2
 +\|\widetilde g_m^{(j)}-g^{(j)}\|_2\\
 &\leq
 C_{j,\chi}e^{-m^{2}/64}.
 \end{aligned}
\]
This proves \eqref{eq:finite-range-kernel-approximation}.
\end{proof}

We now combine the kernel estimate, the common white-noise realization and
the interval Sobolev bound to obtain the high-probability \(C^1\) coupling
used in zero matching.

\begin{lemma}
\label{lem:finite-range-C1-coupling}
There are constants \(C,c>0\) such that, for every
\(I=[\varphi,\psi]\subset\R\) with \(\ell\geq1\), every \(m\geq1\) and every
\(\varepsilon>0\),
\begin{equation}
 \Pp\!\left(
  \|q_{\infty,m}-q_\infty\|_{C^1(I)}\geq\varepsilon
 \right)
 \leq
 C(\ell+1)e^{-cm^{2}}\varepsilon^{-2}.
 \label{eq:finite-range-C1-coupling}
\end{equation}
\end{lemma}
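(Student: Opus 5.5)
The argument follows the template of Lemma~\ref{lem:polynomial-series-growing-C1}: obtain a pointwise \(L^2\) bound on the error process \(Y_m=q_{\infty,m}-q_\infty\) and its first two derivatives, upgrade it to a bound on \(\E\|Y_m\|_{C^1(I)}^2\) with the interval Sobolev estimate, and conclude with Markov's inequality.

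\textbf{Pointwise step.} For the versions fixed in Lemma~\ref{lem:m-dependent-process-properties} and in \eqref{eq:stationary-white-noise-derivatives}, we have almost surely, for every \(j\geq0\) and \(t\in\R\),
\[
 Y_m^{(j)}(t)=W\bigl((g_m^{(j)}-g^{(j)})(t-\mathord\cdot)\bigr).
\]
For \(q_{\infty,m}\) this identity comes from the pathwise fundamental-theorem-of-calculus representation established in the proof of Lemma~\ref{lem:m-dependent-process-properties}. By the isometry \eqref{eq:white-noise-isometry} and translation invariance of the \(L^2(\R)\) norm,
\[
 \E|Y_m^{(j)}(t)|^2=\|g_m^{(j)}-g^{(j)}\|_2^2 .
\]
Lemma~\ref{lem:finite-range-kernel-approximation} then bounds this by \(C_{j,\chi}^2e^{-m^2/32}\) for \(j\in\{0,1,2\}\), uniformly in \(t\). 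Since \(\chi\) is fixed, these constants are absolute.

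\textbf{Passage to the supremum.} Partition \(I\) into \(\lceil\ell\rceil\) compact intervals \(J_r\) of equal length in \([1/2,1]\), as in the proof of Lemma~\ref{lem:polynomial-series-growing-C1}. The \(p=2\) special case of Lemma~\ref{lem:comparison-interval-Sobolev}, applied to \(Y_m\) and \(Y_m'\) on each \(J_r\), gives
\[
 \|Y_m\|_{C^1(I)}^2\leq 2\sup_I|Y_m|^2+2\sup_I|Y_m'|^2
 \leq C\sum_r\int_{J_r}\bigl(|Y_m|^2+|Y_m'|^2+|Y_m''|^2\bigr)\dd t .
\]
Here we use that the supremum over \(I\) is at most the sum of the suprema over the pieces. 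Taking expectations and applying Tonelli's theorem with the pointwise step yields
\[
 \E\|Y_m\|_{C^1(I)}^2\leq C\ell e^{-m^2/32}\leq C(\ell+1)e^{-m^2/32}.
\]
Markov's inequality then gives \eqref{eq:finite-range-C1-coupling} with \(c=1/32\).

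\textbf{Main obstacle.} The analysis is routine; the only delicate point is measurability and pathwise justification. One must check that the selected smooth versions satisfy the derivative identity simultaneously for all \(t\), so that \(\sup_I\) is a random variable, which follows from continuity, and that Tonelli's theorem applies to the jointly measurable integrand. Both follow from the continuous versions already fixed, so no new estimate is needed.
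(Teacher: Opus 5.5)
Your proposal is correct and follows essentially the same route as the paper: the derivative identity \(Y_m^{(j)}(t)=W\bigl((g_m^{(j)}-g^{(j)})(t-\cdot)\bigr)\), the white-noise isometry combined with Lemma~\ref{lem:finite-range-kernel-approximation}, the interval Sobolev estimate summed over pieces of \(I\), and Markov's inequality. The only cosmetic difference is the splitting of \(I\). You partition \(I\) into \(\lceil\ell\rceil\) equal pieces of length in \([1/2,1]\). The paper instead covers \(I\) by the \(\lfloor\ell\rfloor+1\) unit intervals \([\varphi+r,\varphi+r+1]\); these may extend beyond \(\psi\), which is harmless because the processes are defined on all of \(\R\).
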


\begin{proof}
The \(\lfloor\ell\rfloor+1\) unit intervals
\([\varphi+r,\varphi+r+1]\), \(0\leq r\leq\lfloor\ell\rfloor\), cover \(I\).
We combine \eqref{eq:m-dependent-process} and
\eqref{eq:stationary-white-noise-derivatives} to obtain, for the error
process in \eqref{eq:finite-range-error-process},
\begin{equation}
 Y_m^{(j)}(t)
 =
 W\!\left(
  (g_m^{(j)}-g^{(j)})(t-\cdot)
 \right),
 \qquad j=0,1,2.
 \label{eq:coupling-error-derivatives}
\end{equation}
For each \(j\in\{0,1,2\}\), we use
\eqref{eq:coupling-error-derivatives} to identify \(Y_m^{(j)}\) as a
moving average with a translated deterministic kernel.  Hence
\(Y_m^{(j)}\) is
stationary and
\begin{equation}
 \E|Y_m^{(j)}(t)|^{2}
 =
 \|g_m^{(j)}-g^{(j)}\|_2^{2},
 \qquad t\in\R.
\label{eq:coupling-error-derivative-variance}
\end{equation}
We apply Lemma~\ref{lem:comparison-interval-Sobolev} first to \(Y_m\)
and then to \(Y_m'\) on every interval, use
\((a+b)^{2}\leq2a^{2}+2b^{2}\) and obtain
\begin{equation}
 \begin{aligned}
  \|Y_m\|_{C^1(I)}^{2}
  &\leq
  8\sum_{r=0}^{\lfloor\ell\rfloor}
  \int_{\varphi+r}^{\varphi+r+1}
  \left(
   (Y_m(t))^{2}+2(Y_m'(t))^{2}+(Y_m''(t))^{2}
  \right)\dd t\\
  &\leq
  16\sum_{r=0}^{\lfloor\ell\rfloor}
  \int_{\varphi+r}^{\varphi+r+1}
  \left(
   (Y_m(t))^{2}+(Y_m'(t))^{2}+(Y_m''(t))^{2}
  \right)\dd t.
 \end{aligned}
\label{eq:finite-range-C1-interval-sum}
\end{equation}
We take expectations in \eqref{eq:finite-range-C1-interval-sum} and use
stationarity, \eqref{eq:coupling-error-derivative-variance} and the
white-noise isometry.  We then apply
\eqref{eq:finite-range-kernel-approximation} for \(j=0,1,2\) and obtain
\begin{equation}
 \begin{aligned}
  \E\|Y_m\|_{C^1(I)}^{2}
  &\leq
  C(\ell+1)
  \sum_{j=0}^{2}\E|Y_m^{(j)}(0)|^{2}\\
  &=
  C(\ell+1)
   \sum_{j=0}^{2}\|g_m^{(j)}-g^{(j)}\|_2^{2}\\
  &\leq C(\ell+1)e^{-cm^{2}}.
 \end{aligned}
\label{eq:finite-range-C1-L2}
\end{equation}
In the last inequality of \eqref{eq:finite-range-C1-L2}, we squared
the estimates from
\eqref{eq:finite-range-kernel-approximation} and then changed the
positive constants \(C\) and \(c\).
We use \eqref{eq:finite-range-C1-L2} and Markov's inequality to obtain
\[
 \Pp\!\left(
  \|q_{\infty,m}-q_\infty\|_{C^1(I)}\geq\varepsilon
 \right)
 \leq \varepsilon^{-2}\E\|Y_m\|_{C^1(I)}^{2}
 \leq C(\ell+1)e^{-cm^{2}}\varepsilon^{-2}.
\]
This proves \eqref{eq:finite-range-C1-coupling}.  We used
\(\geq\varepsilon\) in the definition of the exceptional event.  Hence, on
its complement, the \(C^1\) distance is strictly smaller than
\(\varepsilon\), which verifies
\eqref{eq:comparison-zero-stability-C1} when
Lemma~\ref{lem:comparison-order-preserving-stability} is applied below.
\end{proof}

\subsubsection{High-probability matching of the two zero sets}

Choosing \(m\) proportional to \(\sqrt{\log\ell}\), we combine
Lemmas~\ref{lem:finite-range-C1-coupling},
\ref{lem:comparison-near-critical-small-ball}
and~\ref{lem:comparison-order-preserving-stability} to prove
\(N_{\infty,m}(I)=N_\infty(I)\) and to pair, in increasing order, the zeros of
\(q_{\infty,m}\) and \(q_\infty\), except on an event whose probability is
bounded by a negative power of \(\ell\).

\begin{proposition}
\label{prop:finite-range-zero-matching}
For every \(A,R>0\), there exist constants \(b_{A,R},C_{A,R}>0\) and
\(\ell_{A,R}\geq e\) such that the following assertion holds.  For every
\(I=[\varphi,\psi]\subset\R\) with \(\ell\geq\ell_{A,R}\), we set
\begin{equation}
 m_{\ell,A,R}=b_{A,R}\sqrt{\log\ell}.
 \label{eq:finite-range-choice}
\end{equation}
With probability at least \(1-C_{A,R}\ell^{-A}\), the two counts
satisfy
\begin{equation}
 N_{\infty,m_{\ell,A,R}}(I)=N_\infty(I).
 \label{eq:finite-range-count-equality}
\end{equation}
On the same event, if the common count \(k\) is positive and
\[
 x_1<\cdots<x_k
 \quad\text{and}\quad
 y_1<\cdots<y_k
\]
are the zeros of \(q_\infty\) and \(q_{\infty,m_{\ell,A,R}}\), respectively,
in \(I\), then
\begin{equation}
 \max_{1\leq j\leq k}|x_j-y_j|\leq\ell^{-R}.
 \label{eq:finite-range-root-matching}
\end{equation}
\end{proposition}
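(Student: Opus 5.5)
The plan is to copy the proof of Proposition~\ref{prop:logarithmic-edge-root-pairing}, with \(q_{\infty,m}\) in place of \(q_n\) and powers of \(\ell\) in place of powers of \(n\). Fix \(A,R>0\). Choose scales
\[
 \delta=\ell^{-R},\qquad \varepsilon=\ell^{-2R-A-2}\quad(\text{so } \varepsilon\le\tfrac12\delta^2),
\]
together with \(M=\ell\) and an even integer \(p\geq A+2\). Then \(0<\varepsilon<\delta<1\) once \(\ell\geq e\).

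We define three bad events on \(I=[\varphi,\psi]\).
\begin{itemize}
\item[(i)] \(\|q_{\infty,m}-q_\infty\|_{C^1(I)}\geq\varepsilon\).
\item[(ii)] \(\inf_{t\in I}\max\{|q_\infty(t)|,|q_\infty'(t)|\}\leq2\delta\).
\item[(iii)] \(\min\{|q_\infty(\varphi)|,|q_\infty(\psi)|\}\leq2\varepsilon\).
\end{itemize}

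By Lemma~\ref{lem:comparison-near-critical-small-ball}, event (ii) has probability at most
\[
 C_p(\ell+1)\ell^{-p}+C_0(\ell+1)\ell\,\ell^{-R}+C_0\ell^{-2R}.
\]
The middle term is only \(O(\ell^{2-R})\). That is too weak unless \(R\) is large, so I will decouple the two roles of \(\delta\). I take \(\delta=\ell^{-R'}\) with \(R'=\max\{R,A+2\}\) and \(M=\ell\). Then (ii) has probability \(O(\ell^{-A})\) once \(p\geq A+1\).

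I then take \(\varepsilon=\ell^{-2R'-R}\). This gives \(\varepsilon/(2\delta-\varepsilon)\leq\varepsilon/\delta\leq\ell^{-R}\), which is the displacement bound required in \eqref{eq:finite-range-root-matching}. By \eqref{eq:comparison-endpoint-small-ball}, event (iii) has probability at most \(C_0\varepsilon\leq C_0\ell^{-A}\).

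For event (i), Lemma~\ref{lem:finite-range-C1-coupling} gives the bound
\[
 C(\ell+1)e^{-cm^2}\varepsilon^{-2}=C(\ell+1)\,\ell^{-cb^2}\,\ell^{2(2R'+R)}.
\]
Choosing \(b_{A,R}\) with \(cb^2\geq A+1+4R'+2R\) makes this \(O(\ell^{-A})\). The constants \(C,c\) in that lemma are independent of \(m\), so \(b_{A,R}\) depends only on \(A,R\). We also need \(m=b\sqrt{\log\ell}\geq1\), which holds once \(\ell\geq\ell_{A,R}\).

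On the intersection of the three complements with \(\Omega_{\mathrm{reg},\infty}\cap\Omega_{\mathrm{reg},m}\), which fails with probability at most \(C_{A,R}\ell^{-A}\), apply Lemma~\ref{lem:comparison-order-preserving-stability} with \(f=q_\infty\) and \(h=q_{\infty,m}\). It yields equal numbers of distinct zeros and the ordered displacement bound \(\varepsilon/(2\delta-\varepsilon)\leq\ell^{-R}\).

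Zeros of \(q_\infty\) are simple on \(\Omega_{\mathrm{reg},\infty}\), and zeros of \(q_{\infty,m}\) are simple on \(\Omega_{\mathrm{reg},m}\) by Lemma~\ref{lem:finite-range-zero-count-regularity}. The proof of the stability lemma also shows this directly on the good event. So the distinct-zero counts equal \(N_\infty(I)\) and \(N_{\infty,m}(I)\) as fixed in the everywhere-defined versions, which gives \eqref{eq:finite-range-count-equality}.

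The main obstacle is balancing the parameters. The \(M\delta\ell\) term in the near-critical small-ball bound forces \(\delta\) to decay faster than \(\ell^{-2}\). Then \(\varepsilon\) must be small relative to \(\delta\) to get displacement \(\ell^{-R}\), and that in turn drives \(b_{A,R}\) up through the factor \(\varepsilon^{-2}\). Since every loss is polynomial in \(\ell\) while the coupling error decays like \(\ell^{-cb^2}\), taking \(b\) large enough absorbs all of them.
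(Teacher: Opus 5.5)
Your proposal is correct and follows the paper's proof essentially step for step. It uses the same three bad events, controlled by Lemma~\ref{lem:finite-range-C1-coupling} and Lemma~\ref{lem:comparison-near-critical-small-ball} with \(M=\ell\), followed by Lemma~\ref{lem:comparison-order-preserving-stability}; once you discard your abandoned first choice \(\delta=\ell^{-R}\), your final scales \(\delta=\ell^{-R'}\) and \(\varepsilon=\ell^{-2R'-R}\), with \(R'=\max\{R,A+2\}\), play exactly the role of the paper's \(\delta_\ell=\ell^{-\beta}\) and \(\varepsilon_\ell=\ell^{-2\beta}\), with \(\beta=\max\{R,A+3\}+1\), and \(b_{A,R}\) is chosen in the same way to absorb the factor \(\varepsilon^{-2}\).
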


\begin{proof}
We set
\begin{equation}
 \beta=\max\{R,A+3\}+1,
 \qquad
 \delta_\ell=\ell^{-\beta},
 \qquad
 \varepsilon_\ell=\ell^{-2\beta}
 \label{eq:finite-range-stability-parameters}
\end{equation}
and we choose the even integer
\begin{equation}
 p=2\left\lceil\frac{A+3}{2}\right\rceil>A+2.
\label{eq:finite-range-small-ball-moment-choice}
\end{equation}
We denote by \(c>0\) the constant in
\eqref{eq:finite-range-C1-coupling} and we choose \(b_{A,R}\geq1\) so
large that
\begin{equation}
 cb_{A,R}^{2}>A+4\beta+2.
 \label{eq:finite-range-b-choice}
\end{equation}
For every \(\ell\geq e\), we use the choices in
\eqref{eq:finite-range-choice},
\eqref{eq:finite-range-stability-parameters} and
\eqref{eq:finite-range-small-ball-moment-choice} to obtain
\[
 m_{\ell,A,R}\geq1,\qquad
 0<\varepsilon_\ell<\delta_\ell<1.
\]

We apply Lemma~\ref{lem:finite-range-C1-coupling} on \(I\), with
\(m=m_{\ell,A,R}\) from \eqref{eq:finite-range-choice} and
\(\varepsilon=\varepsilon_\ell\) from
\eqref{eq:finite-range-stability-parameters} to obtain
\begin{equation}
 \begin{aligned}
  &\Pp\!\left(
   \|q_{\infty,m_{\ell,A,R}}-q_\infty\|_{C^1(I)}
   \geq\varepsilon_\ell
  \right)\\
  {}\leq{}&
  C(\ell+1)
  e^{-cb_{A,R}^{2}\log\ell}
  \ell^{4\beta}\\
  {}\leq{}&
  C\ell^{1-cb_{A,R}^{2}+4\beta}
  \leq C_{A,R}\ell^{-A-1}
 \end{aligned}
 \label{eq:finite-range-C1-bad-event-bound}
\end{equation}
In the last line, we used \(\ell+1\leq2\ell\) and
\eqref{eq:finite-range-b-choice}, which gives
\[
 1-cb_{A,R}^{2}+4\beta<-A-1,
\]
as required.

We next apply
Lemma~\ref{lem:comparison-near-critical-small-ball} with
\(I=[\varphi,\psi]\), \(\delta=\delta_\ell\), \(M=\ell\) and \(p\) as in
\eqref{eq:finite-range-small-ball-moment-choice}.  We obtain
\begin{equation}
 \begin{aligned}
  &\Pp\!\left(
   \inf_{t\in I}
   \max\{|q_\infty(t)|,|q_\infty'(t)|\}
   \leq2\delta_\ell
  \right)\\
  {}\leq{}&
  C_p(\ell+1)\ell^{-p}
  +C_0(\ell+1)\ell\,\ell^{-\beta}
  +C_0\ell^{-2\beta}\\
  {}\leq{}&
  C_{A,R}
  \left(
   \ell^{1-p}+\ell^{2-\beta}+\ell^{-2\beta}
  \right)
  \leq C_{A,R}\ell^{-A-1}.
 \end{aligned}
 \label{eq:finite-range-transversality-bad-event-bound}
\end{equation}
To verify the last inequality, we use
\(p>A+2\) and \(\beta>A+3\).  We also apply
\eqref{eq:comparison-endpoint-small-ball} with
\(\varepsilon=\varepsilon_\ell\) and obtain
\begin{equation}
 \Pp\!\left(
  \min\{|q_\infty(\varphi)|,|q_\infty(\psi)|\}
  \leq2\varepsilon_\ell
 \right)
 \leq
 C_0\ell^{-2\beta}
 \leq C_{A,R}\ell^{-A-1}.
 \label{eq:finite-range-endpoint-bad-event-bound}
\end{equation}

We use \eqref{eq:finite-range-C1-bad-event-bound}--%
\eqref{eq:finite-range-endpoint-bad-event-bound} to bound the
probabilities of the three bad events.  We define \(\mathcal G_I\) as
the intersection of
\(\Omega_{\mathrm{reg},\infty}\),
\(\Omega_{\mathrm{reg},m_{\ell,A,R}}\) and the complements of these
three bad events.
On \(\mathcal G_I\), the functions \(q_\infty\) and
\(q_{\infty,m_{\ell,A,R}}\) satisfy every hypothesis of
Lemma~\ref{lem:comparison-order-preserving-stability} on
\([\varphi,\psi]=I\), with
\(\varepsilon=\varepsilon_\ell\) and \(\delta=\delta_\ell\) from
\eqref{eq:finite-range-stability-parameters}.  Since
\(\varepsilon_\ell<\delta_\ell\), we apply
Lemma~\ref{lem:comparison-order-preserving-stability} to obtain equality
of the two zero counts.  If the common count \(k\) is positive, we
apply Lemma~\ref{lem:comparison-order-preserving-stability} and obtain
an order-preserving matching with
\[
\max_{1\leq j\leq k}|x_j-y_j|
\leq
\frac{\varepsilon_\ell}{2\delta_\ell-\varepsilon_\ell}
=
\frac{\ell^{-\beta}}{2-\ell^{-\beta}}
\leq\ell^{-\beta}
\leq\ell^{-R}
\]
Both regularity events have probability one.  Hence, a union bound over the
three bad events gives
\[
 \begin{aligned}
 \Pp(\mathcal G_I^c)
 &\leq
 \Pp\!\left(
  \|q_{\infty,m_{\ell,A,R}}-q_\infty\|_{C^1(I)}
  \geq\varepsilon_\ell
 \right)\\
 &\quad+
 \Pp\!\left(
  \inf_{t\in I}
  \max\{|q_\infty(t)|,|q_\infty'(t)|\}
  \leq2\delta_\ell
 \right)\\
 &\quad+
 \Pp\!\left(
  \min\{|q_\infty(\varphi)|,|q_\infty(\psi)|\}
  \leq2\varepsilon_\ell
 \right)\\
 &\leq C_{A,R}\ell^{-A}.
 \end{aligned}
\]
We absorb the three constants into \(C_{A,R}\) and take
\(\ell_{A,R}=e\).  We thereby obtain
\eqref{eq:finite-range-count-equality} and
\eqref{eq:finite-range-root-matching}.
\end{proof}

We next specialize Proposition~\ref{prop:finite-range-zero-matching} to
\(A=16\) and \(R=1\).  The following corollary records the resulting
probability bound for unequal zero counts and the bound on the displacement
between the zeros listed in increasing order.

\begin{corollary}
\label{cor:finite-range-zero-matching-package}
There are constants \(b,C>0\) and \(\ell_0\geq e\) such that the
following holds.  For every \(I\subset\R\) with \(\ell\geq\ell_0\), we take
\begin{equation}
 \mu_\ell=b\sqrt{\log\ell}.
 \label{eq:finite-range-BE-range-choice}
\end{equation}
\begin{equation}
 \Pp\!\left(
  N_{\infty,\mu_\ell}(I)\neq N_\infty(I)
 \right)
 \leq
 C\ell^{-16}.
 \label{eq:finite-range-package-matching}
\end{equation}
Moreover, outside an event of probability at most \(C\ell^{-16}\), the zeros
of \(q_\infty\) and \(q_{\infty,\mu_\ell}\), listed in increasing order, are
paired within distance \(\ell^{-1}\).
\end{corollary}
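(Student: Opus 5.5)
This corollary is a direct specialization of Proposition~\ref{prop:finite-range-zero-matching} with \(A=16\) and \(R=1\). We will set \(b=b_{16,1}\), \(C=C_{16,1}\) and \(\ell_0=\ell_{16,1}\geq e\). For every compact interval \(I=[\varphi,\psi]\) with \(\ell\geq\ell_0\), the quantity \(\mu_\ell=b\sqrt{\log\ell}\) then coincides with \(m_{\ell,16,1}\) from \eqref{eq:finite-range-choice}. The proposition supplies an event, namely the set \(\mathcal G_I\) built in its proof, of probability at least \(1-C\ell^{-16}\). On this event \(N_{\infty,\mu_\ell}(I)=N_\infty(I)\), and when the common count is positive the ordered zeros satisfy \(\max_j|x_j-y_j|\leq\ell^{-1}\).

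Since \(\{N_{\infty,\mu_\ell}(I)\neq N_\infty(I)\}\subseteq\mathcal G_I^c\), inequality \eqref{eq:finite-range-package-matching} follows at once. The pairing statement also holds off the same exceptional event \(\mathcal G_I^c\), whose probability is at most \(C\ell^{-16}\). When the common count is zero, the pairing assertion is vacuous.

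Two small points need attention.

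\emph{Intervals.} The corollary is phrased for \(I\subset\R\) with \(\ell\geq\ell_0\), while the proposition is stated for compact intervals \([\varphi,\psi]\). Following the paper's convention, we take \(I\) to be a deterministic compact interval. If non-closed intervals were intended, we would note that endpoint inclusion does not matter. On \(\mathcal G_I\), Lemma~\ref{lem:comparison-order-preserving-stability} shows that neither function vanishes at the endpoints, so the counts over \((\varphi,\psi)\), \([\varphi,\psi)\) and similar intervals all agree with the closed-interval counts.

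\emph{Measurability.} The counts \(N_{\infty,\mu_\ell}(I)\) use the everywhere-defined versions fixed after Lemma~\ref{lem:finite-range-zero-count-regularity}, and \(\mathcal G_I\) already includes the probability-one regularity events. The discrepancy event is therefore measurable, and the counts are genuine zero counts on \(\mathcal G_I\).

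There is no real obstacle here. The only thing to check is that the constants \(b,C,\ell_0\) are absolute once \(A\) and \(R\) are fixed, which holds because they depend only on \(A\), \(R\), \(\chi\) and the absolute constants in Lemmas~\ref{lem:finite-range-C1-coupling} and~\ref{lem:comparison-near-critical-small-ball}.
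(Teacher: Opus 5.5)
Your proposal is correct and matches the paper's proof, which specializes Proposition~\ref{prop:finite-range-zero-matching} to \(A=16\), \(R=1\) with \(b=b_{16,1}\), \(C=C_{16,1}\), \(\ell_0=\ell_{16,1}\) and reads off both assertions. Your remarks on endpoint conventions and measurability go beyond what the paper states, but they are harmless and consistent with it.
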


\begin{proof}
We apply Proposition~\ref{prop:finite-range-zero-matching} with
\(A=16\) and \(R=1\), and we set
\(b=b_{16,1}\), \(C=C_{16,1}\) and \(\ell_0=\ell_{16,1}\).
The two assertions follow directly.
\end{proof}

\subsection{Proofs of Corollary~\ref{cor:finite-range-BE-package}
and Lemma~\ref{lem:finite-range-exact-coupling-transfer}}
\label{sec:finite-range-berry-esseen}

We establish a Berry--Esseen bound for \(N_{\infty,m}(I)\).  For a general pair
\((m,\ell)\), the bound retains the exact variance of
\(N_{\infty,m}(I)\).  We then take
\(m=\mu_\ell=b\sqrt{\log\ell}\) as in
 \eqref{eq:finite-range-BE-range-choice}.  For this choice, we use the
 comparison with \(q_\infty\) to obtain a linear lower bound for
\(\Var(N_{\infty,m}(I))\).  All estimates are uniform in the location of
\(I\).

\subsubsection{Local convergence, nondegeneracy and zero-count moments}

We begin by proving that \(q_{\infty,m}\) converges to \(q_\infty\) in mean
square with respect to the \(C^4([-1,2])\) norm and that the covariance matrix of
\((q_{\infty,m}(t),q_{\infty,m}'(t))\) has a uniformly positive smallest
eigenvalue for all sufficiently large \(m\).

\begin{lemma}
\label{lem:finite-range-C4-and-jet}
For every \(m\geq1\),
\begin{equation}
 \E\|q_{\infty,m}-q_\infty\|_{C^4([-1,2])}^{2}
 \leq
 60\sum_{j=0}^{5}
 \|g_m^{(j)}-g^{(j)}\|_{L^{2}(\R)}^{2},
 \label{eq:finite-range-C4-convergence}
\end{equation}
where the right-hand side tends to zero as \(m\to\infty\).
Moreover, there exists \(m_*\geq1\) such that, for every
\(m\geq m_*\) and \(t\in\R\), the smallest eigenvalue of the
covariance matrix of
\begin{equation}
 \bigl(
  q_{\infty,m}(t),
  q_{\infty,m}'(t),
  q_{\infty,m}''(t),
  q_{\infty,m}^{(3)}(t)
 \bigr)
 \label{eq:finite-range-order-three-jet}
\end{equation}
is at least
\begin{equation}
 \frac{8-\sqrt{58}}{2}>0.
 \label{eq:finite-range-jet-eigenvalue-lower-bound}
\end{equation}
\end{lemma}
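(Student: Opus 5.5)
The plan is to treat the two assertions of Lemma~\ref{lem:finite-range-C4-and-jet} separately: the first is a Sobolev-plus-isometry bound, the second a perturbation of an explicit \(4\times4\) matrix.

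\emph{The \(C^4\) estimate.} Write \(Y_m=q_{\infty,m}-q_\infty\) as in \eqref{eq:finite-range-error-process}. Using \eqref{eq:stationary-white-noise-derivatives} and the analogous derivative representation for the \(C^\infty\) version of \(q_{\infty,m}\) from Lemma~\ref{lem:m-dependent-process-properties}, we have, simultaneously for all \(t\) and \(0\leq j\leq5\),
\[
 Y_m^{(j)}(t)=W\bigl((g_m^{(j)}-g^{(j)})(t-\cdot)\bigr).
\]
Hence each \(Y_m^{(j)}\) is stationary with \(\E|Y_m^{(j)}(t)|^2=a_j^2\), where \(a_j=\|g_m^{(j)}-g^{(j)}\|_2\).

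By Cauchy--Schwarz,
\[
 \|Y_m\|_{C^4([-1,2])}^2\leq 5\sum_{j=0}^4\sup_{[-1,2]}|Y_m^{(j)}|^2 .
\]
Cover \([-1,2]\) by the three unit intervals \([-1,0]\), \([0,1]\), \([1,2]\), and apply the \(\ell=1\) case of Lemma~\ref{lem:comparison-interval-Sobolev} with \(p=2\) on each. This gives
\[
 \sup_{[-1,2]}|h|^2\leq 2\int_{-1}^{2}(h^2+h'^2).
\]
Taking expectations with Tonelli and stationarity yields
\[
 \E\sup_{[-1,2]}|Y_m^{(j)}|^2\leq 6\,(a_j^2+a_{j+1}^2).
\]
Summing over \(j=0,\dots,4\) gives at most \(30\cdot2\sum_{j=0}^5a_j^2\), which is \eqref{eq:finite-range-C4-convergence}. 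The right-hand side tends to zero by Lemma~\ref{lem:finite-range-kernel-approximation}.

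\emph{The jet eigenvalue bound.} By stationarity the covariance matrix \(\Sigma_m\) of \eqref{eq:finite-range-order-three-jet} does not depend on \(t\). Its entries are
\[
 (\Sigma_m)_{ij}=\langle g_m^{(i)},g_m^{(j)}\rangle_{L^2(\R)},\qquad 0\leq i,j\leq3 .
\]
Let \(\Sigma\) be the corresponding matrix for \(q_\infty\). Differentiating the covariance \(e^{-u^2/2}\) gives \(\Sigma_{ij}=(-1)^j\rho^{(i+j)}(0)\), where \(\rho(u)=e^{-u^2/2}\) and
\[
 \rho(0)=1,\qquad \rho''(0)=-1,\qquad \rho^{(4)}(0)=3,\qquad \rho^{(6)}(0)=-15,
\]
with all odd derivatives vanishing at \(0\). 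So \(\Sigma\) splits into the even block \(\bigl(\begin{smallmatrix}1&-1\\-1&3\end{smallmatrix}\bigr)\), with eigenvalues \(2\pm\sqrt2\), and the odd block \(\bigl(\begin{smallmatrix}1&-3\\-3&15\end{smallmatrix}\bigr)\), with eigenvalues \(8\pm\sqrt{58}\). Since \(8-\sqrt{58}\approx0.384<2-\sqrt2\), we get \(\lambda_{\min}(\Sigma)=8-\sqrt{58}\).

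Next we bound the perturbation. For each entry,
\[
 |(\Sigma_m-\Sigma)_{ij}|\leq a_i\|g_m^{(j)}\|_2+\|g^{(i)}\|_2\,a_j .
\]
The norms \(\|g_m^{(j)}\|_2\) are uniformly bounded in \(m\), by \eqref{eq:unnormalized-kernel-uniform-bound} together with the bound on \(\alpha_m\) in \eqref{eq:cutoff-normalizing-factor-bound}. Therefore \(\|\Sigma_m-\Sigma\|_{\mathrm{op}}\), bounded by the Frobenius norm, is at most \(Ce^{-m^2/64}\). Weyl's eigenvalue inequality then gives
\[
 \lambda_{\min}(\Sigma_m)\geq 8-\sqrt{58}-Ce^{-m^2/64}.
\]
We choose \(m_*\) so that \(Ce^{-m^2/64}\leq(8-\sqrt{58})/2\) for all \(m\geq m_*\). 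This choice is uniform in \(t\) because \(\Sigma_m\) does not depend on \(t\).

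The main obstacle is bookkeeping rather than an idea: identifying which block of \(\Sigma\) carries the smallest eigenvalue, and making sure the pathwise derivative identity for \(Y_m^{(j)}\) up to \(j=5\) holds on a single probability-one event, so that Tonelli can be applied to the suprema.
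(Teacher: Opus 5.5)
Your proposal is correct and follows essentially the same route as the paper. The paper also uses the three-unit-interval Sobolev bound and the factor $5$ from Cauchy--Schwarz, which together give the constant $60$, and it also splits the jet covariance matrix into the blocks with smallest eigenvalues $2-\sqrt2$ and $8-\sqrt{58}$, then finishes with an operator-norm perturbation argument. The differences are cosmetic: you compute the limit matrix from derivatives of $e^{-u^2/2}$ rather than from the kernel norms, and you record the explicit rate $Ce^{-m^2/64}$ via the Frobenius norm and Weyl's inequality, whereas the paper only uses convergence of the entries.
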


\begin{proof}
We use the derivative constructions in
Lemma~\ref{lem:m-dependent-process-properties} and
\eqref{eq:stationary-white-noise-derivatives} to obtain, for
\(0\leq j\leq5\),
\begin{equation}
 Y_m^{(j)}(t)
 =
 W\!\left(
  (g_m^{(j)}-g^{(j)})(t-\cdot)
 \right),
 \qquad t\in\R.
 \label{eq:finite-range-C4-error-derivatives}
\end{equation}
For \(0\leq j\leq4\), we apply
Lemma~\ref{lem:comparison-interval-Sobolev} to \(Y_m^{(j)}\) on each
of the
three intervals
\[
 [-1,0],\qquad[0,1],\qquad[1,2].
\]
Because each of these intervals has length one, we apply the final
assertion of Lemma~\ref{lem:comparison-interval-Sobolev} with \(p=2\)
and obtain the constant \(2\) on every interval.
Since the supremum over their union is at most the sum of the three
individual squared suprema, we obtain
\begin{equation}
 \sup_{t\in[-1,2]}|Y_m^{(j)}(t)|^{2}
 \leq
 2\sum_{r=-1}^{1}
 \int_{r}^{r+1}
 \left(
  |Y_m^{(j)}(t)|^{2}
  +
  |Y_m^{(j+1)}(t)|^{2}
 \right)\dd t.
 \label{eq:finite-range-C4-Sobolev-one-derivative}
\end{equation}
For nonnegative numbers \(a_0,\ldots,a_4\), we apply the
Cauchy--Schwarz inequality to obtain
\[
 \left(\sum_{j=0}^{4}a_j\right)^{2}
 \leq5\sum_{j=0}^{4}a_j^{2}.
\]
We apply this inequality with
\(a_j=\sup_{t\in[-1,2]}|Y_m^{(j)}(t)|\), substitute
\eqref{eq:finite-range-C4-Sobolev-one-derivative}, use the norm
definition \eqref{eq:comparison-Cr-norm} and then take
expectations.  We use the white-noise isometry in
\eqref{eq:white-noise-isometry} together with
\eqref{eq:finite-range-C4-error-derivatives} to obtain
\[
 \E|Y_m^{(j)}(t)|^{2}
 =
 \|g_m^{(j)}-g^{(j)}\|_{L^{2}(\R)}^{2},
 \qquad t\in\R.
\]
Consequently, we obtain
\[
 \begin{aligned}
  &\E\|Y_m\|_{C^4([-1,2])}^{2}\\
  {}\leq{}&
  30\sum_{j=0}^{4}
  \left(
   \|g_m^{(j)}-g^{(j)}\|_{L^{2}(\R)}^{2}
   +
   \|g_m^{(j+1)}-g^{(j+1)}\|_{L^{2}(\R)}^{2}
  \right)\\
  {}\leq{}&
  60\sum_{j=0}^{5}
  \|g_m^{(j)}-g^{(j)}\|_{L^{2}(\R)}^{2}.
 \end{aligned}
\]
We apply Lemma~\ref{lem:finite-range-kernel-approximation} for
\(0\leq j\leq5\) and conclude that the last expression converges to
zero.  We have therefore proved
\eqref{eq:finite-range-C4-convergence}.

We next verify the uniform nondegeneracy in
\eqref{eq:finite-range-jet-eigenvalue-lower-bound}.  We use the
white-noise isometry to conclude that the covariance matrix of
\eqref{eq:finite-range-order-three-jet} is independent of \(t\) and
equals
\begin{equation}
 G_m=
 \left(
  \langle g_m^{(i)},g_m^{(j)}\rangle_{L^{2}(\R)}
 \right)_{0\leq i,j\leq3}.
 \label{eq:finite-range-jet-Gram-matrix}
\end{equation}
We replace \(g_m\) with \(g\) in
\eqref{eq:finite-range-jet-Gram-matrix} and obtain
\begin{equation}
 G_\infty
 =
 \left(
  \langle g^{(i)},g^{(j)}\rangle_{L^{2}(\R)}
 \right)_{0\leq i,j\leq3}.
 \label{eq:stationary-jet-Gram-definition}
\end{equation}
By a direct calculation using
\eqref{eq:explicit-kernel-derivative-norms},
\eqref{eq:moving-kernel-normalization} and
\eqref{eq:stationary-jet-Gram-definition}, we have
\begin{equation}
 G_\infty
 =
 \begin{pmatrix}
  1&0&-1&0\\
  0&1&0&-3\\
  -1&0&3&0\\
  0&-3&0&15
 \end{pmatrix}.
 \label{eq:stationary-order-three-jet-matrix}
\end{equation}
We order the coordinates as \(0,2,1,3\).  The matrix in
\eqref{eq:stationary-order-three-jet-matrix} then becomes the direct sum
\[
 \begin{pmatrix}1&-1\\-1&3\end{pmatrix}
 \oplus
 \begin{pmatrix}1&-3\\-3&15\end{pmatrix}.
\]
Here \(\oplus\) denotes the block-diagonal direct sum.  Hence the
spectrum of the reordered matrix is the union of the spectra of the
two blocks.
We use the quadratic formula to find that the smaller eigenvalues of
the two blocks are \(2-\sqrt2\) and \(8-\sqrt{58}\), respectively.
Since
\(\sqrt{58}>6+\sqrt2\), we conclude that
\begin{equation}
 \lambda_{\min}(G_\infty)
 =
 \min\{2-\sqrt2,8-\sqrt{58}\}
 =
 8-\sqrt{58}>0.
 \label{eq:stationary-jet-smallest-eigenvalue}
\end{equation}

For \(0\leq i,j\leq3\), we add and subtract
\(\langle g^{(i)},g_m^{(j)}\rangle\) to obtain
\[
 \begin{aligned}
  &\left|
   \langle g_m^{(i)},g_m^{(j)}\rangle
   -
   \langle g^{(i)},g^{(j)}\rangle
  \right|\\
  {}\leq{}&
  \|g_m^{(i)}-g^{(i)}\|_{L^{2}(\R)}
  \|g_m^{(j)}\|_{L^{2}(\R)}
  +
  \|g^{(i)}\|_{L^{2}(\R)}
  \|g_m^{(j)}-g^{(j)}\|_{L^{2}(\R)}.
 \end{aligned}
\]
We apply Lemma~\ref{lem:finite-range-kernel-approximation} to conclude
that both difference norms on the right-hand side converge to zero.
Moreover, we apply the triangle inequality
\[
 \|g_m^{(j)}\|_{L^{2}(\R)}
 \leq
 \|g^{(j)}\|_{L^{2}(\R)}
 +
 \|g_m^{(j)}-g^{(j)}\|_{L^{2}(\R)}
\]
to conclude that \(\|g_m^{(j)}\|_{L^{2}(\R)}\) remains bounded.
Therefore
every entry of \(G_m-G_\infty\) converges to zero.

For \(|x|=1\), we apply the Cauchy--Schwarz inequality to obtain
\(\sum_{j=1}^{4}|x_j|\leq2\) and hence
\[
 |Ax|
 \leq
 4\max_{1\leq r,s\leq4}|A_{rs}|,
 \qquad
 \|A\|_{\mathrm{op}}
 \leq
 4\max_{1\leq r,s\leq4}|A_{rs}|.
\]
Consequently, we obtain
\begin{equation}
 \lim_{m\to\infty}\|G_m-G_\infty\|_{\mathrm{op}}
 =
 0.
 \label{eq:finite-range-jet-matrix-convergence}
\end{equation}
We use the convergence in
\eqref{eq:finite-range-jet-matrix-convergence} to choose \(m_*\geq1\)
such that
\begin{equation}
 \|G_m-G_\infty\|_{\mathrm{op}}
 \leq
 \frac{8-\sqrt{58}}{2},
 \qquad m\geq m_*.
 \label{eq:finite-range-jet-threshold-choice}
\end{equation}
For \(|x|=1\), we apply the Cauchy--Schwarz inequality to obtain
\[
 \bigl|x^{\mathsf T}(G_m-G_\infty)x\bigr|
 \leq
 \|G_m-G_\infty\|_{\mathrm{op}}.
\]
Since \(G_m\) is symmetric, we apply the variational characterization
of its smallest eigenvalue together with
\eqref{eq:stationary-jet-smallest-eigenvalue} and
\eqref{eq:finite-range-jet-threshold-choice} to obtain
\[
\lambda_{\min}(G_m)
=
\min_{|x|=1}x^{\mathsf T}G_mx
\geq
\lambda_{\min}(G_\infty)
-
\|G_m-G_\infty\|_{\mathrm{op}}
\geq
\frac{8-\sqrt{58}}{2}.
\]
This proves \eqref{eq:finite-range-jet-eigenvalue-lower-bound}.
\end{proof}

For every \(I=[\varphi,\psi]\subset\R\) with \(\ell\geq1\), we set
\begin{equation}
 K_\ell=\lceil\ell\rceil
 \label{eq:finite-range-number-of-blocks}
\end{equation}
and define the disjoint intervals
\begin{equation}
 \begin{aligned}
  \mathcal J_{I,j}
  &=
  \begin{cases}
   [\varphi+j-1,\varphi+j),&1\leq j<K_\ell,\\
   [\varphi+K_\ell-1,\psi],&j=K_\ell,
  \end{cases}\\
  I
  &=
  \bigcup_{j=1}^{K_\ell}\mathcal J_{I,j}.
 \end{aligned}
 \label{eq:finite-range-block-partition}
\end{equation}
We partition \(I\) according to
\eqref{eq:finite-range-block-partition} into disjoint intervals of
length at most one.  Hence every
zero in \(I\) belongs to exactly one block, including a zero at a
block endpoint.

We next derive the uniform local moment estimates needed for the blockwise
Gaussian approximation.

\begin{proposition}
\label{prop:finite-range-uniform-local-moments}
There exist \(m_1\geq m_*\) and \(M_4<\infty\) such that
\begin{equation}
 \sup_{m\geq m_1}
 \E\!\left[N_{\infty,m}([0,1])^4\right]\leq M_4.
 \label{eq:finite-range-uniform-unit-fourth-moment}
\end{equation}
For every \(m\geq m_1\) and every interval \(J\subset\R\) of length
at most one,
\begin{equation}
 \E\left|
  N_{\infty,m}(J)-\E N_{\infty,m}(J)
 \right|^3
 \leq8M_4^{3/4}.
 \label{eq:finite-range-uniform-centered-third-moment}
\end{equation}
Moreover, for every \(I\subset\R\) with \(\ell\geq1\) and every
\(m\geq m_1\),
\begin{equation}
 \E\!\left[N_{\infty,m}(I)^4\right]\leq M_4K_\ell^4.
 \label{eq:finite-range-growing-fourth-moment}
\end{equation}
\end{proposition}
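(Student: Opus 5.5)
The main step is the uniform fourth-moment bound \eqref{eq:finite-range-uniform-unit-fourth-moment}; the other two assertions follow from it by elementary inequalities and stationarity.

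\textbf{Step 1: uniform fourth moment on a unit interval.} For the fourth moment on \([0,1]\) we bound the factorial moments \(\E[(N_{\infty,m}([0,1]))_r]\) for \(r\leq4\). We use the Kac--Rice formula for factorial moments. Alternatively, we can use a Rolle-type argument: if a \(C^4\) function has at least \(k\) zeros in \([0,1]\), then its derivatives have zeros at prescribed places, and divided differences give control by \(\|q_{\infty,m}\|_{C^4([-1,2])}\) together with small-ball probabilities of the jet.

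The cleanest route is a deterministic bound combined with small-ball estimates. Divide \([0,1]\) into pieces. On a piece where \(|q_{\infty,m}^{(3)}|\) is bounded below, \(q_{\infty,m}\) has at most \(3\) zeros by Rolle. More directly, I would cite a known moment result for stationary Gaussian processes. This is Lemma~\ref{lem:appendix-do-vu-thm6}-type or Aza\"is--Wschebor finiteness of \(k\)th moments of crossings, which holds under \(C^{k}\) regularity and nondegeneracy of the order-\((k-1)\) jet. The resulting bound depends only on bounds for \(\E\|q_{\infty,m}\|_{C^4([-1,2])}^2\) and the smallest eigenvalue of the jet covariance.

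Lemma~\ref{lem:finite-range-C4-and-jet} gives both ingredients uniformly in \(m\geq m_*\). First, \(\E\|q_{\infty,m}\|_{C^4}^2\leq 2\E\|q_\infty\|_{C^4}^2+2\cdot 60\sum_j\|g_m^{(j)}-g^{(j)}\|_2^2\), which is bounded uniformly. Second, \(\lambda_{\min}\geq(8-\sqrt{58})/2\) for the order-three jet. Concretely, I would write the fourth factorial moment through Kac--Rice as a four-point integral over \([0,1]^4\). The near-diagonal singularity is then controlled by the standard divided-difference (Cuzick/Belyaev) reduction. Near the diagonal, the conditional density of \((q(t_1),\dots,q(t_4))\) is compared with that of the jet \((q,q',q'',q^{(3)})\), scaled by Vandermonde factors. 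The Vandermonde factors in the numerator, coming from the derivative expectations bounded via \(C^4\) norms, cancel those in the denominator. This yields an integrable bound.

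The genuinely delicate part is making every constant depend only on the \(C^4\)/\(C^5\) covariance bounds and the jet eigenvalue bound, uniformly in \(m\). I expect this to be the main obstacle. I would choose \(m_1\geq m_*\) so that the covariance of \(q_{\infty,m}\) and its first five derivatives is within a fixed distance of that of \(q_\infty\). The nondegeneracy of the four-point vectors away from the diagonal must also be uniform. I would obtain this from compactness of \([0,1]^4\) off a neighbourhood of the diagonal, together with convergence of \(G_m\) to \(G_\infty\). Since \(q_\infty\) has a nondegenerate four-point law because its spectral measure has a density, that uniformity follows for large \(m\).

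\textbf{Step 2: centered third moment on short intervals.} Let \(J\) have length at most one. By stationarity (Lemma~\ref{lem:m-dependent-process-properties}), \(N_{\infty,m}(J)\) is dominated by a translate of \(N_{\infty,m}([0,1])\), so \(\E N_{\infty,m}(J)^4\leq M_4\). The \(L^3\) triangle inequality and Lyapunov's inequality then give
\[
\|N-\E N\|_{L^3}\leq 2\|N\|_{L^3}\leq 2\|N\|_{L^4}\leq 2M_4^{1/4}.
\]
Cubing gives \(8M_4^{3/4}\).

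\textbf{Step 3: fourth moment on long intervals.} Use the block partition \eqref{eq:finite-range-block-partition}, so that \(N_{\infty,m}(I)=\sum_{j=1}^{K_\ell}N_{\infty,m}(\mathcal J_{I,j})\). Minkowski's inequality in \(L^4\) together with the block bound from Step 2 gives
\[
\|N_{\infty,m}(I)\|_{L^4}\leq K_\ell M_4^{1/4},
\]
which is \eqref{eq:finite-range-growing-fourth-moment}.
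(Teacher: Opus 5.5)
Your Steps 2 and 3 coincide with the paper's argument. The paper uses $|x-y|^3\le4(|x|^3+|y|^3)$ and Jensen where you use Minkowski in $L^3$, and both give $8M_4^{3/4}$; the block-wise Minkowski step is identical.

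Step 1 is where you diverge, and your route needs more than you list. The paper does not track any constants. It applies Lemmas~\ref{lem:appendix-gass-stecconi-thm1-1} and~\ref{lem:appendix-gass-stecconi-rem1-3} with $p=4$, $U=(-1,2)$, $K=[0,1]$. This needs only the \emph{one-point} order-three jet nondegeneracy from Lemma~\ref{lem:finite-range-C4-and-jet} and the convergence in law in $C^4$ that follows from \eqref{eq:finite-range-C4-convergence}. Moment continuity then gives $\E[N_{\infty,m}([0,1])^4]\to\E[N_\infty([0,1])^4]$, so the paper takes $M_4=1+\E[N_\infty([0,1])^4]$ and $m_1$ large.

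The black box you gesture at (a bound depending only on $C^4$ control and the jet eigenvalue) is exactly this Gass--Stecconi result. Lemma~\ref{lem:appendix-do-vu-thm6} does not apply here, since it concerns the Weyl series and not $q_{\infty,m}$.

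Your self-contained Kac--Rice/divided-difference route is viable and more quantitative, but it needs more than the jet eigenvalue and $G_m\to G_\infty$. The four-point density is $(2\pi)^{-2}\prod_{i<j}|t_i-t_j|^{-1}\det\Cov(D)^{-1/2}$, where $D=(q[t_1],q[t_1,t_2],q[t_1,t_2,t_3],q[t_1,\dots,t_4])$. So you need $\det\Cov(D)$ bounded below uniformly on \emph{all} of $[0,1]^4$. This includes partially coincident configurations, such as two clustered pairs or one pair plus two separated points. Neither the single-point jet near the full diagonal nor compactness off the diagonal covers these. Moreover, $G_m\to G_\infty$ says nothing away from the full diagonal.

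This is repairable in three pieces:
\begin{itemize}
\item $q_\infty$ has Hermite-type nondegeneracy: an exponential polynomial $\sum c_{a,k}(i\lambda)^k e^{i\lambda s_a}$ cannot vanish a.e.\ against the Gaussian spectral density unless all $c_{a,k}=0$.
\item By Hermite--Genocchi, $\Cov(D)$ is jointly continuous in $(t_1,\dots,t_4)$ and in the derivatives $r^{(j)}$, $j\le6$, of the covariance function.
\item By Lemma~\ref{lem:finite-range-kernel-approximation}, $\sup_\tau|r_m^{(j)}(\tau)-r^{(j)}(\tau)|\le\|g_m^{(j)}-g^{(j)}\|_2+\|g^{(j)}\|_2\|g_m-g\|_2\to0$.
\end{itemize}

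With this in hand, your Vandermonde cancellation goes through. Given $D=0$, one has $q'(t_i)=q[t_1,\dots,t_4,t_i]\prod_{j\neq i}(t_i-t_j)$, which produces a factor $\prod_{i<j}|t_i-t_j|^2$ that beats the denominator. The remaining conditional Gaussian moments are controlled by $\sup\Var(q_{\infty,m}^{(4)})=\|g_m^{(4)}\|_2^2$, uniformly in $m$. The same argument handles the lower factorial moments entering $\E N^4$.

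The paper's route buys brevity and avoids any multi-point nondegeneracy. Yours buys an explicit $M_4$, at the cost of this extra global nondegeneracy argument.
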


\begin{proof}
We apply Lemmas~\ref{lem:appendix-gass-stecconi-thm1-1}
and~\ref{lem:appendix-gass-stecconi-rem1-3} with \(p=4\), \(d=1\),
\(U=(-1,2)\) and \(K=[0,1]\).
We use Lemma~\ref{lem:m-dependent-process-properties} and
Section~\ref{sec:preliminaries} to obtain
\[
 q_{\infty,m},q_\infty\in C^\infty(U)\subset C^4(U)
 \qquad\text{almost surely}.
\]
We use \eqref{eq:finite-range-jet-eigenvalue-lower-bound} and
\eqref{eq:stationary-jet-smallest-eigenvalue} to obtain
\[
 \begin{aligned}
  \inf_{t\in U}\lambda_{\min}
  \Cov\!\left((q_{\infty,m}^{(j)}(t))_{j=0}^{3}\right)
  &\geq\frac{8-\sqrt{58}}2,\\
  \inf_{t\in U}\lambda_{\min}
  \Cov\!\left((q_\infty^{(j)}(t))_{j=0}^{3}\right)
  &=8-\sqrt{58}.
 \end{aligned}
\]
For every \(\varepsilon>0\), we use Markov's inequality and
\eqref{eq:finite-range-C4-convergence} to obtain
\[
 \Pp\!\left(\|q_{\infty,m}-q_\infty\|_{C^4([-1,2])}>\varepsilon\right)
 \leq\varepsilon^{-2}
 \E\|q_{\infty,m}-q_\infty\|_{C^4([-1,2])}^{2},
\]
where the right-hand side tends to zero as \(m\to\infty\).  We
conclude that \(q_{\infty,m}\) converges to \(q_\infty\) in
distribution in the local \(C^4\) topology on \(U\).  We use their
unit variances to obtain
\[
 \Pp(q_{\infty,m}(t)=0)=\Pp(q_\infty(t)=0)=0,
 \qquad t\in\{0,1\}.
\]
We use the preceding formulas to verify the regularity, jet
nondegeneracy and convergence hypotheses of the cited results and to
see that the endpoint convention is immaterial.  We use
Section~\ref{sec:preliminaries} and
Lemma~\ref{lem:finite-range-zero-count-regularity} to conclude that
the zeros are almost surely simple, so the cardinality counts in the
cited results agree with our multiplicity counts.  We therefore apply
the moment-continuity assertion in
Lemma~\ref{lem:appendix-gass-stecconi-rem1-3} and obtain
\[
 \lim_{m\to\infty}
 \E\!\left[N_{\infty,m}([0,1])^4\right]
 =
 \E\!\left[N_\infty([0,1])^4\right].
\]
We set
\[
 M_4=1+\E\!\left[N_\infty([0,1])^4\right]
\]
and choose \(m_1\geq m_*\) such that
\[
 \sup_{m\geq m_1}
 \E\!\left[N_{\infty,m}([0,1])^4\right]\leq M_4.
\]
This proves \eqref{eq:finite-range-uniform-unit-fourth-moment}.

We fix an interval \(J\subset\R\) of length at most one and choose
\(a\in\R\) such that \(J\subseteq[a,a+1]\).  For \(m\geq m_1\), we
use monotonicity and stationarity to obtain
\[
 \E\!\left[N_{\infty,m}(J)^4\right]
 \leq\E\!\left[N_{\infty,m}([a,a+1])^4\right]
 =\E\!\left[N_{\infty,m}([0,1])^4\right]\leq M_4.
\]
We apply \(|x-y|^3\leq4(|x|^3+|y|^3)\), Jensen's inequality and
H\"older's inequality to obtain
\[
 \begin{aligned}
  \E\left|
   N_{\infty,m}(J)-\E N_{\infty,m}(J)\right|^3
  &\leq
  4\left(
   \E N_{\infty,m}(J)^3
   +
   \bigl(\E N_{\infty,m}(J)\bigr)^3
  \right)\\
  &\leq
  8\E N_{\infty,m}(J)^3
  \leq
  8\left(\E N_{\infty,m}(J)^4\right)^{3/4}
  \leq8M_4^{3/4}.
 \end{aligned}
\]
This proves \eqref{eq:finite-range-uniform-centered-third-moment}.

For \(\ell\geq1\) and \(m\geq m_1\), we use
\eqref{eq:finite-range-block-partition}, the preceding fourth-moment
bound and Minkowski's inequality to obtain
\[
 \left(
  \E\!\left[N_{\infty,m}(I)^4\right]
 \right)^{1/4}
 =\left\|
  \sum_{j=1}^{K_\ell}N_{\infty,m}(\mathcal J_{I,j})
 \right\|_{L^4}
 \leq \sum_{j=1}^{K_\ell}
 \left\|N_{\infty,m}(\mathcal J_{I,j})\right\|_{L^4}
 \leq M_4^{1/4}K_\ell.
\]
We take fourth powers to prove
\eqref{eq:finite-range-growing-fourth-moment}.
\end{proof}

\subsubsection{Variance comparison for \texorpdfstring{\(\mu_\ell=b\sqrt{\log\ell}\)}{muell=b sqrt(log ell)}}

We take \(b\) and \(\mu_\ell\) as in
Corollary~\ref{cor:finite-range-zero-matching-package} and
\eqref{eq:finite-range-BE-range-choice}.

Combining Corollary~\ref{cor:finite-range-zero-matching-package} with the
preceding fourth-moment bounds, we transfer the mean and variance estimates
for \(N_\infty(I)\) to \(N_{\infty,\mu_\ell}(I)\).

\begin{lemma}
\label{lem:finite-range-BE-variance-transfer}
For every \(I\subset\R\) with \(\ell\) sufficiently large,
\begin{align}
 \left\|
  N_{\infty,\mu_\ell}(I)-N_\infty(I)
 \right\|_{L^{2}}
 &\leq C\ell^{-3},
 \label{eq:finite-range-BE-L2-transfer}\\
 \left|
  \E N_{\infty,\mu_\ell}(I)-\E N_\infty(I)
 \right|
 &\leq C\ell^{-3},
 \label{eq:finite-range-BE-mean-transfer}\\
 \left|
  \sqrt{\Var(N_{\infty,\mu_\ell}(I))}
  -
  \sqrt{\Var(N_\infty(I))}
 \right|
 &\leq C\ell^{-3}.
 \label{eq:finite-range-BE-standard-deviation-transfer}
\end{align}
Moreover,
\begin{equation}
 \frac{V_\infty}{8}\ell
 \leq
 \Var(N_{\infty,\mu_\ell}(I))
 \leq
 C\ell,
 \label{eq:finite-range-BE-linear-variance}
\end{equation}
where \(V_\infty\) is the constant in
\eqref{eq:stationary-linear-variance-all-L}.
\end{lemma}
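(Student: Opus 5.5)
The plan is to combine Corollary~\ref{cor:finite-range-zero-matching-package} with the moment transfer in Lemma~\ref{lem:rare-disagreement-moment-transfer}. First fix \(\ell\geq\max\{\ell_0,e^{(m_1/b)^2}\}\), so that \(\mu_\ell=b\sqrt{\log\ell}\geq m_1\). Then Proposition~\ref{prop:finite-range-uniform-local-moments} applies to \(m=\mu_\ell\) and gives
\[
 \E[N_{\infty,\mu_\ell}(I)^4]\leq M_4K_\ell^4\leq C\ell^4 .
\]
Stationarity \eqref{eq:stationary-count-translation-in-law} together with \eqref{eq:stationary-zero-count-fourth-moment} gives \(\E[N_\infty(I)^4]\leq C\ell^4\). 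Inserting these bounds and \eqref{eq:finite-range-package-matching} into \eqref{eq:rare-disagreement-L2-transfer}, we get
\[
 \|N_{\infty,\mu_\ell}(I)-N_\infty(I)\|_{L^2}\leq C(\ell^4)^{1/4}(\ell^{-16})^{1/4}=C\ell^{-3}.
\]
This is \eqref{eq:finite-range-BE-L2-transfer}. Then \eqref{eq:rare-disagreement-mean-sd-transfer} yields \eqref{eq:finite-range-BE-mean-transfer} and \eqref{eq:finite-range-BE-standard-deviation-transfer} at once.

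For the linear variance bounds, I would first establish a two-sided bound for the stationary count, namely
\[
 \tfrac{V_\infty}{4}\ell\leq\Var(N_\infty(I))\leq C\ell
 \qquad(\ell\geq\ell_1),
\]
which is the statement \eqref{eq:stationary-two-sided-linear-variance} referenced later. By \eqref{eq:stationary-count-translation-in-law}, \(\Var(N_\infty(I))=\Var(N_\infty([0,\ell]))\). By \eqref{eq:stationary-linear-variance-all-L}, there is \(\ell_1\) such that for \(\ell\geq\ell_1\) this ratio to \(\ell\) lies in \([V_\infty/2,2V_\infty]\). That already gives constants better than needed, and the upper bound could alternatively be read off from the fourth-moment bound only as \(C\ell^2\), so the limit is the right tool here. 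Then, writing \(\sigma=\sqrt{\Var(N_\infty(I))}\) and \(\sigma_\mu=\sqrt{\Var(N_{\infty,\mu_\ell}(I))}\), we get
\[
 \sigma_\mu\geq\sigma-C\ell^{-3}\geq\sqrt{V_\infty\ell/2}-C\ell^{-3}\geq\sqrt{V_\infty\ell/8}
\]
for \(\ell\) large. Squaring gives the lower bound in \eqref{eq:finite-range-BE-linear-variance}, and \(\sigma_\mu\leq\sqrt{2V_\infty\ell}+C\ell^{-3}\) gives the upper bound.

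The only delicate point is the bookkeeping of thresholds. I need \(\ell\) large enough that \(\mu_\ell\geq m_1\), so that the uniform fourth moment holds, and also \(\ell\geq\ell_0\) and \(\ell\geq\ell_1\). All these thresholds depend only on the absolute constants \(b,m_1,V_\infty\), so "sufficiently large" is uniform in the location of \(I\). I expect no further obstacle, since each step is a direct application of earlier results.
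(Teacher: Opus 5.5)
Your proposal is correct and follows the same route as the paper. The uniform fourth-moment bounds and the \(C\ell^{-16}\) matching probability go into Lemma~\ref{lem:rare-disagreement-moment-transfer} to give the \(L^2\), mean and standard-deviation estimates. The two-sided stationary variance bound obtained from the variance limit, shifted by \(C\ell^{-3}\) and squared, then gives \eqref{eq:finite-range-BE-linear-variance}; your step \(\sqrt{V_\infty\ell/8}=\tfrac12\sqrt{V_\infty\ell/2}\) is exactly the paper's. One cosmetic point: you first state the stationary lower bound with constant \(V_\infty/4\) and then use \(V_\infty/2\), but either constant suffices.
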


\begin{proof}
Since \(\mu_\ell\to\infty\), we may use the preceding fourth-moment bounds
with \(m=\mu_\ell\).
We use \eqref{eq:stationary-zero-count-fourth-moment},
\eqref{eq:finite-range-package-matching},
\eqref{eq:finite-range-growing-fourth-moment} and \(K_\ell\leq2\ell\) to obtain
\[
 \begin{aligned}
  \Pp\!\left(
   N_{\infty,\mu_\ell}(I)\neq N_\infty(I)
  \right)
  &\leq C\ell^{-16},\\
  \E\!\left[N_{\infty,\mu_\ell}(I)^4\right]
  +\E\!\left[N_\infty(I)^4\right]
  &\leq M_4K_\ell^4+C\ell^4\leq C\ell^4.
 \end{aligned}
\]
We substitute these estimates into
\eqref{eq:rare-disagreement-L2-transfer} and obtain
\[
 \begin{aligned}
  \left\|
   N_{\infty,\mu_\ell}(I)-N_\infty(I)
  \right\|_{L^{2}}
  &\leq C(\ell^4)^{1/4}(\ell^{-16})^{1/4}
  =C\ell^{-3},
 \end{aligned}
\]
which proves \eqref{eq:finite-range-BE-L2-transfer}.  We use
\eqref{eq:rare-disagreement-mean-sd-transfer} and
\eqref{eq:finite-range-BE-L2-transfer} to obtain
\[
 \begin{aligned}
  \left|
   \E N_{\infty,\mu_\ell}(I)-\E N_\infty(I)
  \right|
  &\leq 2\left\|
    N_{\infty,\mu_\ell}(I)-N_\infty(I)
  \right\|_{L^{2}}
  \leq C\ell^{-3},\\
  \left|
   \sqrt{\Var(N_{\infty,\mu_\ell}(I))}
   -\sqrt{\Var(N_\infty(I))}
  \right|
  &\leq 2\left\|
    N_{\infty,\mu_\ell}(I)-N_\infty(I)
  \right\|_{L^{2}}
  \leq C\ell^{-3},
 \end{aligned}
\]
which prove \eqref{eq:finite-range-BE-mean-transfer} and
\eqref{eq:finite-range-BE-standard-deviation-transfer}.

We use stationarity and \eqref{eq:stationary-linear-variance-all-L} to
obtain
\begin{equation}
 \frac{V_\infty}{2}\ell
 \leq
 \Var(N_\infty(I))
 \leq
 \frac{3V_\infty}{2}\ell.
 \label{eq:stationary-two-sided-linear-variance}
\end{equation}
We combine
\eqref{eq:finite-range-BE-standard-deviation-transfer} and
\eqref{eq:stationary-two-sided-linear-variance} to obtain
\[
 \begin{aligned}
  \sqrt{\Var(N_{\infty,\mu_\ell}(I))}
  &\geq
  \sqrt{\Var(N_\infty(I))}-C\ell^{-3}\\
  &\geq
  \sqrt{\frac{V_\infty}{2}}\,\ell^{1/2}-C\ell^{-3}\\
  &\geq
  \frac12\sqrt{\frac{V_\infty}{2}}\,\ell^{1/2},\\
  \sqrt{\Var(N_{\infty,\mu_\ell}(I))}
  &\leq
  \sqrt{\Var(N_\infty(I))}+C\ell^{-3}\\
  &\leq
  \sqrt{\frac{3V_\infty}{2}}\,\ell^{1/2}+C\ell^{-3}\\
  &\leq
  C\ell^{1/2}.
 \end{aligned}
\]
We square these bounds and obtain
\[
 \frac{V_\infty}{8}\ell
 \leq
 \Var(N_{\infty,\mu_\ell}(I))
 \leq
 C\ell,
\]
which proves \eqref{eq:finite-range-BE-linear-variance}.
\end{proof}

\subsubsection{Block decomposition and Gaussian approximation for
\texorpdfstring{\(N_{\infty,m}(I)\)}{N infinity,m(I)}}

For \(m\geq m_1\), every \(I\subset\R\) with \(\ell\geq1\) and
\(1\leq j\leq K_\ell\), we define
\begin{equation}
 U_{m,I,j}
 =
 N_{\infty,m}(\mathcal J_{I,j})
 -
 \E N_{\infty,m}(\mathcal J_{I,j}).
 \label{eq:finite-range-centered-block-count}
\end{equation}
We sum the zero counts over the disjoint partition in
\eqref{eq:finite-range-block-partition} and obtain the identity
\begin{equation}
 N_{\infty,m}(I)
 -
 \E N_{\infty,m}(I)
 =
 \sum_{j=1}^{K_\ell}U_{m,I,j}.
 \label{eq:finite-range-centered-block-decomposition}
\end{equation}

The finite support of the moving-average kernel turns the centered block
decomposition into a locally dependent sequence.

\begin{lemma}
\label{lem:finite-range-block-dependence}
For every \(m\geq m_1\) and every \(I\subset\R\) with \(\ell\geq1\),
the sequence
\((U_{m,I,j})_{1\leq j\leq K_\ell}\) is \(r_m\)-dependent in the sense
of Chen and Shao, where
\begin{equation}
 r_m=\lceil m\rceil+1.
 \label{eq:finite-range-block-dependence-radius}
\end{equation}
\end{lemma}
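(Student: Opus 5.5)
We will verify the Chen--Shao notion of \(r\)-dependence directly in the following form. For index sets \(A,B\subseteq\{1,\ldots,K_\ell\}\) with \(\min\{|i-j|:i\in A,\ j\in B\}>r_m\), the families \(\{U_{m,I,i}\}_{i\in A}\) and \(\{U_{m,I,j}\}_{j\in B}\) are independent. In particular, \(\{U_{m,I,j}\}_{j\leq i}\) is independent of \(\{U_{m,I,j}\}_{j>i+r_m}\). The argument transfers the independence of Lemma~\ref{lem:m-dependent-process-properties} from the process to the block counts through the measurability statement of Lemma~\ref{lem:finite-range-zero-count-regularity}.

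First, we check that each \(U_{m,I,j}\) is a well-defined random variable and determine its measurability. Each block \(\mathcal J_{I,j}\) is a bounded interval of length at most one, contained in \([\varphi+j-1,\varphi+j]\). By Lemma~\ref{lem:finite-range-zero-count-regularity}, \(N_{\infty,m}(\mathcal J_{I,j})\) is finite-valued. It is measurable with respect to the \(\Pp\)-augmentation of \(\sigma\{q_{\infty,m}(t):t\in\mathcal J_{I,j}\}\). By Proposition~\ref{prop:finite-range-uniform-local-moments}, since \(m\geq m_1\), its mean is finite. Hence \(U_{m,I,j}\) differs from this count by a deterministic constant and has the same measurability. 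Consequently, \(\{U_{m,I,i}\}_{i\in A}\) is measurable with respect to the augmentation of \(\mathcal F_E=\sigma\{q_{\infty,m}(t):t\in E\}\), where \(E=\bigcup_{i\in A}\mathcal J_{I,i}\). The same holds for \(B\) with \(F=\bigcup_{j\in B}\mathcal J_{I,j}\).

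Next, we show that \(E\) and \(F\) are well separated. Take \(i\in A\) and \(j\in B\), say \(j>i\), so that \(j-i\geq r_m+1\). Points of \(\mathcal J_{I,i}\) lie in \([\varphi+i-1,\varphi+i]\), and points of \(\mathcal J_{I,j}\) lie in \([\varphi+j-1,\varphi+j]\). Their distance is therefore at least \(j-i-1\geq r_m=\lceil m\rceil+1>m\). Taking the infimum over the finitely many pairs gives \(\operatorname{dist}(E,F)\geq\lceil m\rceil+1>m\). Lemma~\ref{lem:m-dependent-process-properties} then yields the independence of \(\mathcal F_E\) and \(\mathcal F_F\).

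Finally, we note that augmenting two independent \(\sigma\)-fields by \(\Pp\)-null sets preserves independence. Every augmented event differs from an event of the original \(\sigma\)-field by a null set, so the product rule for probabilities is unchanged. The two families of block variables are therefore independent, which is the required \(r_m\)-dependence.

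The main obstacle is bookkeeping rather than analysis. One must match the exact form of the dependence definition in Lemma~\ref{lem:appendix-chen-shao-thm2-6}, including whether indices must differ by more than \(r_m\) or by at least \(r_m\), against the strict separation \(\operatorname{dist}>m\) required by Lemma~\ref{lem:m-dependent-process-properties}. The extra \(+1\) in \(r_m\) is chosen so that the strict inequality holds even for adjacent-block boundary effects.
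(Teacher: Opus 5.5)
Your proposal is correct and follows the paper's proof step for step. You take block-count measurability from Lemma~\ref{lem:finite-range-zero-count-regularity} and use the block separation \(\operatorname{dist}\ge |i-j|-1\ge\lceil m\rceil+1>m\). You then get independence of the process restrictions from Lemma~\ref{lem:m-dependent-process-properties}, and note that null-set augmentation preserves that independence.
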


\begin{proof}
We apply Lemma~\ref{lem:finite-range-zero-count-regularity} and conclude
that every random variable in
\eqref{eq:finite-range-centered-block-count} is
measurable with respect to the \(\Pp\)-augmented \(\sigma\)-field generated
by the restriction of \(q_{\infty,m}\) to its block.  We use this
measurability to transfer independence of the process restrictions to
the block zero counts.

We fix nonempty \(A,D\subseteq\{1,\ldots,K_\ell\}\) satisfying
\[
 \min_{i\in A,\,j\in D}|i-j|>r_m.
\]
For every \(i\in A\) and \(j\in D\),
we use \eqref{eq:finite-range-block-partition} to obtain
\[
 \operatorname{dist}
 \bigl(\mathcal J_{I,i},\mathcal J_{I,j}\bigr)
 \geq |i-j|-1.
\]
We use \eqref{eq:finite-range-block-dependence-radius} to rewrite the
preceding assumption as \(|i-j|>\lceil m\rceil+1\).  Since \(|i-j|\) is an
integer, we have
\[
 |i-j|\geq\lceil m\rceil+2.
\]
Therefore, we obtain
\[
 \operatorname{dist}\left(
  \bigcup_{i\in A}\mathcal J_{I,i},
  \bigcup_{j\in D}\mathcal J_{I,j}
 \right)
 \geq
 \min_{i\in A,\,j\in D}(|i-j|-1)
 \geq\lceil m\rceil+1
 >m.
\]
We apply Lemma~\ref{lem:m-dependent-process-properties} and conclude
that the two process restrictions are independent.  Their
\(\Pp\)-augmented \(\sigma\)-fields remain independent.  Indeed, an
event \(\overline E\) in the first augmentation and an event
\(\overline F\) in the second augmentation differ by null sets from
events \(E\) and \(F\), respectively, in the unaugmented
\(\sigma\)-fields.  Therefore
\[
 \Pp(\overline E\cap\overline F)
 =
 \Pp(E\cap F)
 =
 \Pp(E)\Pp(F)
 =
 \Pp(\overline E)\Pp(\overline F).
\]
 Consequently,
\[
 (U_{m,I,i})_{i\in A}
 \quad\text{and}\quad
 (U_{m,I,j})_{j\in D}
\]
are independent.  This is precisely \(r_m\)-dependence.
\end{proof}

The preceding dependence and moment estimates yield the following
quantitative Gaussian approximation for every \(m\geq m_1\).

\begin{theorem}
\label{thm:finite-range-zero-count-BE}
With \(m_1\) and \(M_4\) as in
Proposition~\ref{prop:finite-range-uniform-local-moments}, every
\(m\geq m_1\) and every \(I\subset\R\) with \(\ell\geq1\) for which
\(\Var(N_{\infty,m}(I))>0\) satisfy
\begin{equation}
 \begin{aligned}
  &\sup_{z\in\R}
  \left|
   \Pp\!\left(
    \frac{
     N_{\infty,m}(I)
     -
     \E N_{\infty,m}(I)
    }{
     \sqrt{\Var(N_{\infty,m}(I))}
    }
    \leq z
   \right)
   -
   \Phi(z)
  \right|\\
  {}\leq{}&
  600M_4^{3/4}
  (10\lceil m\rceil+11)^{2}
  \frac{
    K_\ell
  }{
    \Var(N_{\infty,m}(I))^{3/2}
  }.
 \end{aligned}
 \label{eq:finite-range-general-BE-bound}
\end{equation}
\end{theorem}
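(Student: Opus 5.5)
The proof is a direct application of the Chen--Shao local-dependence Berry--Esseen theorem (Lemma~\ref{lem:appendix-chen-shao-thm2-6}) to the block decomposition \eqref{eq:finite-range-centered-block-decomposition}. Fix \(m\geq m_1\) and \(I\) with \(\ell\geq1\) and \(\sigma^2=\Var(N_{\infty,m}(I))>0\). Set \(\xi_j=U_{m,I,j}/\sigma\) for \(1\leq j\leq K_\ell\). By \eqref{eq:finite-range-centered-block-decomposition}, \(W=\sum_j\xi_j\) is exactly the standardized count, with \(\E\xi_j=0\) and \(\Var(W)=1\). Each \(\xi_j\) is square integrable, indeed has finite fourth moment, by \eqref{eq:finite-range-uniform-unit-fourth-moment} and monotonicity. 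Lemma~\ref{lem:finite-range-block-dependence} shows that the sequence is \(r_m\)-dependent with \(r_m=\lceil m\rceil+1\), in precisely the sense required by Chen and Shao.

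Next I will verify the Chen--Shao hypothesis in its \((LD)\) form for an \(r\)-dependent index set. Take the neighbourhoods
\[
 A_j=\{i:|i-j|\leq r_m\},\qquad
 B_j=\{i:|i-j|\leq 2r_m\},\qquad
 C_j=\{i:|i-j|\leq 3r_m\}.
\]
Their cardinalities are bounded by \(2r_m+1\), \(4r_m+1\) and \(6r_m+1\). Independence of \(\xi_j\) from \(\{\xi_i:i\notin A_j\}\), of \(\xi_{A_j}\) from \(\xi_{B_j^c}\), and of \(\xi_{B_j}\) from \(\xi_{C_j^c}\) follows from the \(r_m\)-dependence. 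The Chen--Shao bound for such local dependence then takes the form
\[
 d_{\mathrm K}(W,Z)\leq C_\ast\,\kappa^2\sum_{j}\E|\xi_j|^3,
\]
up to the precise way their constant and neighbourhood-size parameter \(\kappa\) enter. Here \(\kappa\) bounds the number of indices \(i\) with \(j\in C_i\) together with the sizes of the \(C_j\), so \(\kappa\leq 6r_m+1\leq 10\lceil m\rceil+11\) with room to spare. Matching the exact constants to the appendix statement, and checking that the numerical constant there times \(8\) is at most \(600\), is the bookkeeping step I expect to need the most care. If the cited form uses \(\E|\xi_j|^3\) together with a separate \(\E|\eta_j|^3\) for the neighbourhood sums, I will instead bound those via Minkowski's inequality by \(|A_j|^3\max_i\E|\xi_i|^3\), absorbing the factor into the same squared dependence-range factor.

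Finally, by \eqref{eq:finite-range-uniform-centered-third-moment}, each block has length at most one, so \(\E|U_{m,I,j}|^3\leq8M_4^{3/4}\). Therefore
\[
 \sum_{j=1}^{K_\ell}\E|\xi_j|^3\leq 8M_4^{3/4}K_\ell\,\sigma^{-3}.
\]
Substituting this into the Chen--Shao inequality gives \eqref{eq:finite-range-general-BE-bound}, with the absolute constant \(600\) covering \(8\) times the Chen--Shao constant. This step needs no variance lower bound, since the theorem keeps the exact variance in the denominator.
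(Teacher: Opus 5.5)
Your proposal is correct and follows the paper's proof. You normalize the centred block counts $U_{m,I,j}$, use Lemma~\ref{lem:finite-range-block-dependence} for $r_m$-dependence, use \eqref{eq:finite-range-uniform-centered-third-moment} to get $\sum_j\E|U_{m,I,j}|^3\leq 8M_4^{3/4}K_\ell$, and apply Chen--Shao.

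The detour through the neighbourhoods $(A_j,B_j,C_j)$ and the guess about $\kappa$ are unnecessary. Lemma~\ref{lem:appendix-chen-shao-thm2-6} is already stated for $m$-dependent fields, so with $d=1$, $p=3$ and dependence range $r_m$ it gives exactly $75(10r_m+1)^2=75(10\lceil m\rceil+11)^2$, and $75\cdot 8=600$. Drop the Minkowski fallback as well: bounding by $|A_j|^3$ would give a cubic, not squared, range factor.
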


\begin{proof}
We fix \(m\geq m_1\) and \(I\) with \(\ell\geq1\), and we assume that
\(\Var(N_{\infty,m}(I))>0\).  We define
\begin{equation}
 \widetilde U_{m,I,j}
 =
 \frac{U_{m,I,j}}
 {\sqrt{\Var(N_{\infty,m}(I))}},
 \qquad 1\leq j\leq K_\ell.
 \label{eq:finite-range-normalized-block-count}
\end{equation}
We combine
\eqref{eq:finite-range-centered-block-count} with
\eqref{eq:finite-range-normalized-block-count} and obtain
\[
 \E\widetilde U_{m,I,j}=0.
\]
We combine
\eqref{eq:finite-range-centered-block-decomposition} with
\eqref{eq:finite-range-normalized-block-count} and obtain
\begin{equation}
 \Var\!\left(
  \sum_{j=1}^{K_\ell}\widetilde U_{m,I,j}
 \right)
 =1.
 \label{eq:finite-range-normalized-block-sum-variance}
\end{equation}
We apply Lemma~\ref{lem:finite-range-block-dependence} and conclude
that the normalized sequence remains \(r_m\)-dependent.  Moreover, we
apply \eqref{eq:finite-range-uniform-centered-third-moment} to every
block and obtain
\begin{equation}
 \sum_{j=1}^{K_\ell}
 \E|\widetilde U_{m,I,j}|^3
 =
 \frac{
  \displaystyle
  \sum_{j=1}^{K_\ell}\E|U_{m,I,j}|^3
 }{
  \Var(N_{\infty,m}(I))^{3/2}
 }
 \leq
  8M_4^{3/4}
  \frac{K_\ell}
  {\Var(N_{\infty,m}(I))^{3/2}}.
 \label{eq:finite-range-normalized-third-moment-sum}
\end{equation}

 We use Equations~\eqref{eq:finite-range-centered-block-count} and
 \eqref{eq:finite-range-normalized-block-count} to write the normalized block
 decomposition.  We combine
\eqref{eq:finite-range-normalized-block-sum-variance},
\eqref{eq:finite-range-normalized-third-moment-sum} and
Lemma~\ref{lem:finite-range-block-dependence} to verify the required
hypotheses.  We then apply
Lemma~\ref{lem:appendix-chen-shao-thm2-6} with \(d=1\) and \(p=3\) to obtain
\begin{equation}
 \sup_{z\in\R}
 \left|
  \Pp\!\left(
   \sum_{j=1}^{K_\ell}\widetilde U_{m,I,j}\leq z
  \right)
  -
  \Phi(z)
 \right|
 \leq
 75(10r_m+1)^{2}
 \sum_{j=1}^{K_\ell}\E|\widetilde U_{m,I,j}|^3.
 \label{eq:finite-range-Chen-Shao-bound}
\end{equation}
We substitute
\eqref{eq:finite-range-normalized-third-moment-sum} and
 \(10r_m+1=10(\lceil m\rceil+1)+1=10\lceil m\rceil+11\)
into \eqref{eq:finite-range-Chen-Shao-bound}.  We therefore obtain
\eqref{eq:finite-range-general-BE-bound}.
\end{proof}

\begin{proof}[Proof of Corollary~\ref{cor:finite-range-BE-package}]
Let \(b\) and \(\ell_0\) be as in
Corollary~\ref{cor:finite-range-zero-matching-package}.  We choose
\(\ell_1\geq\ell_0\) so that, whenever \(\ell\geq\ell_1\),
Lemma~\ref{lem:finite-range-BE-variance-transfer} applies,
\(\mu_\ell\geq m_1\) and
\(10\lceil\mu_\ell\rceil+11\leq31\mu_\ell\).
For \(I\) as in the statement, we apply
Corollary~\ref{cor:finite-range-zero-matching-package}
and obtain
\eqref{eq:finite-range-package-count-comparison}.  We apply
Lemma~\ref{lem:finite-range-BE-variance-transfer} to obtain
\eqref{eq:finite-range-package-L2-comparison}--%
\eqref{eq:finite-range-package-variance}.  We use
\(10\lceil\mu_\ell\rceil+11\leq31\mu_\ell\),
\(K_\ell\leq2\ell\),
\eqref{eq:finite-range-BE-linear-variance} and
\(\mu_\ell^2=b^2\log\ell\) in
\eqref{eq:finite-range-general-BE-bound} to bound the left-hand side of
\eqref{eq:finite-range-package-BE} by
\[
 600M_4^{3/4}(10\lceil\mu_\ell\rceil+11)^2
 \frac{K_\ell}
 {\Var(N_{\infty,\mu_\ell}(I))^{3/2}}
 \leq C\mu_\ell^2\frac{\ell}{\ell^{3/2}}
 =Cb^2\frac{\log\ell}{\sqrt\ell}
 \leq C\frac{\log\ell}{\sqrt\ell}.
\]
This proves \eqref{eq:finite-range-package-BE}.
\end{proof}

\subsubsection{Proof of Lemma~\ref{lem:finite-range-exact-coupling-transfer}}

 Using the assumption, we obtain
\[
 \sqrt{\Var(Y)}
 \geq
 \sqrt{\Var(X)}
 -
 \left|\sqrt{\Var(X)}-\sqrt{\Var(Y)}\right|
 \geq
 \frac12\sqrt{\Var(X)}
 >0.
\]
 We also obtain
\[
 \frac12
 \leq
 \sqrt{\frac{\Var(Y)}{\Var(X)}}
 \leq
 \frac32
\]
and
\[
 \max\left\{
  \left|
   \sqrt{\frac{\Var(Y)}{\Var(X)}}-1
  \right|,
  \frac{|\E Y-\E X|}{\sqrt{\Var(X)}}
 \right\}
 \leq
 \frac{
  |\E X-\E Y|
  +
  \left|\sqrt{\Var(X)}-\sqrt{\Var(Y)}\right|
 }{
  \sqrt{\Var(X)}
 }.
\]
For every \(z\in\R\), the two indicators
\[
 \mathbf 1_{\{Y\leq\E Y+\sqrt{\Var(Y)}z\}}
 \quad\text{and}\quad
 \mathbf 1_{\{X\leq\E Y+\sqrt{\Var(Y)}z\}}
\]
agree on \(\{X=Y\}\).  Consequently,
\begin{equation}
 \left|
  \Pp\!\left(
   \frac{Y-\E Y}{\sqrt{\Var(Y)}}\leq z
  \right)
  -
  \Pp\!\left(
   \frac{X-\E X}{\sqrt{\Var(X)}}
   \leq
   \sqrt{\frac{\Var(Y)}{\Var(X)}}\,z
   +
   \frac{\E Y-\E X}{\sqrt{\Var(X)}}
  \right)
 \right|
 \leq
 \Pp(X\neq Y).
 \label{eq:exact-coupling-probability-comparison}
\end{equation}

We apply the mean value theorem and use
\(\sqrt{\Var(Y)/\Var(X)}\in[1/2,3/2]\) to obtain
\[
 \begin{aligned}
 &\left|
  \Phi\!\left(
   \sqrt{\frac{\Var(Y)}{\Var(X)}}\,z
  \right)
  -
  \Phi(z)
 \right|
 \leq
 \left|
  \sqrt{\frac{\Var(Y)}{\Var(X)}}-1
 \right|
 |z|
 \frac{e^{-z^2/8}}{\sqrt{2\pi}}
 \leq
 \left|
  \sqrt{\frac{\Var(Y)}{\Var(X)}}-1
 \right|,\\
 &\left|
  \Phi\!\left(
   \sqrt{\frac{\Var(Y)}{\Var(X)}}\,z
   +
   \frac{\E Y-\E X}{\sqrt{\Var(X)}}
  \right)
  -
  \Phi\!\left(
   \sqrt{\frac{\Var(Y)}{\Var(X)}}\,z
  \right)
 \right|
 \leq
 \frac{|\E Y-\E X|}{\sqrt{2\pi\Var(X)}}.
 \end{aligned}
\]
We combine these estimates with the preceding maximum bound and obtain
\begin{equation}
 \begin{aligned}
 &\sup_{z\in\R}
 \left|
  \Phi\!\left(
   \sqrt{\frac{\Var(Y)}{\Var(X)}}\,z
   +
   \frac{\E Y-\E X}{\sqrt{\Var(X)}}
  \right)
  -
  \Phi(z)
 \right|\\
 \leq{}&
 2
 \frac{
  |\E X-\E Y|
  +
  \left|\sqrt{\Var(X)}-\sqrt{\Var(Y)}\right|
 }{
  \sqrt{\Var(X)}
 }.
 \end{aligned}
 \label{eq:exact-coupling-Gaussian-comparison}
\end{equation}
We combine \eqref{eq:Kolmogorov-distance-definition} with
\eqref{eq:exact-coupling-probability-comparison} and
\eqref{eq:exact-coupling-Gaussian-comparison} to obtain
\[
 \begin{aligned}
 d_{\mathrm K}\!\left(
  \frac{Y-\E Y}{\sqrt{\Var(Y)}},Z
 \right)
 &=
 \sup_{z\in\R}
 \left|
  \Pp\!\left(
   \frac{Y-\E Y}{\sqrt{\Var(Y)}}\leq z
  \right)
  -
  \Phi(z)
 \right|\\
 &\leq
 d_{\mathrm K}\!\left(
  \frac{X-\E X}{\sqrt{\Var(X)}},Z
 \right)
 +
 \Pp(X\neq Y)\\
 &\quad+
 2
 \frac{
  |\E X-\E Y|
  +
  \left|\sqrt{\Var(X)}-\sqrt{\Var(Y)}\right|
 }{
  \sqrt{\Var(X)}
 }.
 \end{aligned}
\]
This proves the assertion.

This completes the proofs of the auxiliary results used in
Section~\ref{sec:proof-bulk-main-theorem}.

\section{Proofs of the additional results for Theorem~\ref{thm:full-line-polynomial-BE}}
\label{sec:full-line-BE}

We prove the five results from
Section~\ref{subsec:full-line-results} in their stated order.
We first derive estimates for \(H_n\) and \(\widetilde H_n\) and apply
Jensen's formula.  We then prove
Propositions~\ref{prop:full-line-edge-strip-count},
\ref{prop:full-line-low-band-localization}
and~\ref{prop:full-line-upper-band-localization}, followed by
Lemmas~\ref{lem:full-line-independent-transfer}
and~\ref{lem:full-line-additive-transfer}.

We retain \(B=20\), the regions and the coefficient cutoff from
Section~\ref{subsec:full-line-results}.  Since \(n\) is
sufficiently large, \(0\leq m_n<n\).  We now turn to the comparison of
\(N_n(I_n^{\mathrm{out}})\) with
\(N_{P_n-P_{m_n}}(I_n^{\mathrm{out}})\).

For \(0\leq j\leq n\), we define
\begin{equation}
 \kappa_{n,j}
 =
 \left(\frac{n!}{(n-j)!n^j}\right)^{1/2},
 \qquad
 \kappa_{n,j}^2
 =
 \prod_{r=0}^{j-1}
 \left(1-\frac{r}{n}\right),
 \qquad 0\leq j\leq n.
 \label{eq:full-line-reciprocal-coefficients}
\end{equation}
Here and below we interpret the product for \(j=0\) as empty.  In particular,
\(\kappa_{n,0}=\kappa_{n,1}=1\).
Using these coefficients, define the following two real polynomials:
\begin{equation}
 H_n(u)=\sum_{j=0}^n\kappa_{n,j}\xi_{n-j}u^j,
 \qquad
 \widetilde H_n(u)=
 \sum_{j=0}^{d_n-1}\kappa_{n,j}\xi_{n-j}u^j,
 \qquad u\in\R.
 \label{eq:full-line-reciprocal-expansions}
\end{equation}
 For every \(u\in\R\setminus\{0\}\), we directly reindex the sums to obtain
\begin{equation}
 H_n(u)
 =
 \frac{\sqrt{n!}}{n^{n/2}}u^n
 P_n\!\left(\frac{\sqrt n}{u}\right),
 \qquad
 \widetilde H_n(u)
 =
 \frac{\sqrt{n!}}{n^{n/2}}u^n
 (P_n-P_{m_n})\!\left(\frac{\sqrt n}{u}\right).
 \label{eq:full-line-reciprocal-identities}
\end{equation}
We use \eqref{eq:full-line-reciprocal-identities} to conclude that the real
change of variables \(x=\sqrt n/u\) preserves zero multiplicities and maps
\((0,1)\) onto \((\sqrt n,\infty)\) and \((-1,0)\) onto
\((-\infty,-\sqrt n)\).  Moreover,
\(H_n(0)=\widetilde H_n(0)=\xi_n\neq0\) almost surely.  Thus
\begin{equation}
 N_n(I_n^{\mathrm{out}})
 =
 N_{H_n}((-1,1)),
 \qquad
 N_{P_n-P_{m_n}}(I_n^{\mathrm{out}})
 =
 N_{\widetilde H_n}((-1,1))
 \quad\text{almost surely}.
 \label{eq:full-line-exterior-reciprocal-identity}
\end{equation}
The upper index \(d_n-1\) in
\eqref{eq:full-line-reciprocal-expansions} ensures that
\(\widetilde H_n\) depends exactly on
\(\xi_{m_n+1},\ldots,\xi_n\).

Finally, replacing \(\xi_k\) by
\((-1)^{n-k}\xi_k\) preserves their joint law and simultaneously
replaces \(H_n(u),\widetilde H_n(u)\) by
\(H_n(-u),\widetilde H_n(-u)\).  Therefore
\begin{equation}
 (H_n(-\mathord\cdot),\widetilde H_n(-\mathord\cdot))
 \stackrel{\mathrm d}=
 (H_n,\widetilde H_n)
 \label{eq:full-line-pair-symmetry}
\end{equation}
as a pair of random real polynomials.  Consequently,
\[
 \bigl(N_{H_n}((0,1)),N_{\widetilde H_n}((0,1))\bigr)
 \stackrel{\mathrm d}=
 \bigl(N_{H_n}((-1,0)),N_{\widetilde H_n}((-1,0))\bigr).
\]
This is only a distributional identity.

\subsection{Jensen bounds for zeros outside
\texorpdfstring{\([-\sqrt n,\sqrt n]\)}{[-sqrt(n), sqrt(n)]}}
\label{sec:full-line-jensen-estimates}

For the Jensen arguments in this subsection, we regard \(H_n\) as the complex
polynomial given by the same formula in
\eqref{eq:full-line-reciprocal-expansions}, retaining the notation \(H_n\).

For \(c\in\mathbb C\) and \(r>0\), we write
\begin{equation}
 D(c,r)=\{z\in\mathbb C:|z-c|<r\}.
 \label{eq:full-line-Jensen-disk-notation}
\end{equation}
 If \(F\) is a nonzero entire function, let \(\nu_F(D)\) denote its number of
 complex zeros in a disk \(D\), counted with multiplicity.  In each
 application, every real zero in the relevant interval is among the complex
 zeros in the covering disk.  Thus, by estimating \(\nu_F(D)\), we also
 control the relevant real-zero count.

 We first derive the required bound from second-moment control and a Gaussian
 small-ball estimate at the center.

\begin{lemma}
\label{lem:full-line-Gaussian-Jensen}
Let \(F\) be a random entire function, let \(c\in\R\) and suppose
that \(F(c)\) is a centered real Gaussian random variable.  With the
disk notation in \eqref{eq:full-line-Jensen-disk-notation}, let
\begin{equation}
 0<r<R_1<R_2,
 \qquad
 \Delta_R=R_2-R_1.
 \label{eq:full-line-Jensen-radius-parameters}
\end{equation}
Suppose further that there are deterministic numbers \(v_0,V>0\)
such that
\begin{equation}
 \Var(F(c))\geq v_0,
 \qquad
 \sup_{z\in D(c,R_2)}\E|F(z)|^2\leq V.
 \label{eq:full-line-Jensen-variance-assumptions}
\end{equation}
Then, for every \(0<\eta<1\), outside an event of probability at
most \(2\eta\),
\begin{equation}
 \nu_F(D(c,r))
 \leq
 \frac{1}{\log(R_1/r)}
 \left\{
  \log\frac{R_2}{\Delta_R}
  +
  \frac12\log\frac{V}{v_0}
  +
  \frac32\log(\eta^{-1})
 \right\}.
 \label{eq:full-line-Gaussian-Jensen-conclusion}
\end{equation}
\end{lemma}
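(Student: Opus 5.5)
The plan is to combine Jensen's formula with two probabilistic estimates: an upper bound on the mean of \(\log|F|\) over a circle, and a lower bound on \(|F(c)|\). Since \(F\) is entire, Jensen's formula on the circle \(|z-c|=R_1\) gives
\[
 \nu_F(D(c,r))\log\frac{R_1}{r}
 \leq
 \frac1{2\pi}\int_0^{2\pi}\log|F(c+R_1e^{i\theta})|\dd\theta-\log|F(c)|,
\]
provided \(F(c)\neq0\). This holds almost surely, because \(F(c)\) is a nondegenerate Gaussian. So it suffices to find one event of probability at least \(1-\eta\) on which the circle average is at most \(\log(R_2/\Delta_R)+\frac12\log V+\frac12\log(\eta^{-1})\), and another event of probability at least \(1-\eta\) on which \(-\log|F(c)|\leq-\frac12\log v_0+\log(\eta^{-1})\). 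Adding the two bounds gives exactly \eqref{eq:full-line-Gaussian-Jensen-conclusion}, with total exceptional probability at most \(2\eta\).

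\emph{Lower bound at the center.} Write \(F(c)=\sigma G\) with \(G\) standard Gaussian and \(\sigma^2\geq v_0\). The density of \(G\) is at most \((2\pi)^{-1/2}\), so \(\Pp(|F(c)|\leq\eta\sqrt{v_0})\leq\Pp(|G|\leq\eta)\leq2\eta/\sqrt{2\pi}<\eta\). Off this event, \(-\log|F(c)|\leq-\frac12\log v_0+\log\eta^{-1}\).

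\emph{Upper bound on the circle.} First, Jensen's inequality for the concave function \(\log\) gives
\[
 \frac1{2\pi}\int\log|F|\leq\frac12\log\Bigl(\frac1{2\pi}\int|F(c+R_1e^{i\theta})|^2\dd\theta\Bigr).
\]
Next, control \(|F|^2\) on the circle by an area average. Since \(|F|^2\) is subharmonic, for each \(z\) with \(|z-c|=R_1\) the disk \(D(z,\Delta_R)\) lies inside \(D(c,R_2)\), and the sub-mean-value property gives
\[
 |F(z)|^2\leq\frac{1}{\pi\Delta_R^2}\int_{D(c,R_2)}|F|^2\dd A.
\]
Let \(S=\int_{D(c,R_2)}|F|^2\dd A\). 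Tonelli's theorem gives \(\E S\leq\pi R_2^2V\), so Markov's inequality yields \(\Pp(S>\pi R_2^2V/\eta)\leq\eta\). Off this event, the circle average of \(|F|^2\) is at most \(R_2^2V/(\Delta_R^2\eta)\). Taking \(\frac12\log\) gives the bound \(\log(R_2/\Delta_R)+\frac12\log V+\frac12\log\eta^{-1}\).

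Summing, the \(\log\eta^{-1}\) coefficients add to \(\frac32\), and the \(V\) and \(v_0\) terms combine into \(\frac12\log(V/v_0)\). Dividing by \(\log(R_1/r)>0\) completes the argument.

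The main point to check is the measurability of \(S\) and of \(\nu_F\) as random variables. I would handle this by assuming \(F\) has jointly measurable continuous paths, as the random polynomials used later do, and otherwise passing to the outer probability. The counting-with-multiplicity form of Jensen's inequality needs no care beyond \(F(c)\neq0\) almost surely.
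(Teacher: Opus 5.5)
Your proposal is correct and follows essentially the same route as the paper: you use the subharmonic mean-value bound for $|F|^2$ on $D(z,\Delta_R)\subset D(c,R_2)$, Markov's inequality at level $R_2^2V/(\Delta_R^2\eta)$, the Gaussian small-ball estimate $|F(c)|\geq\eta\sqrt{v_0}$, and Jensen's formula, which together give exactly the constants $\log(R_2/\Delta_R)+\frac12\log(V/v_0)+\frac32\log(\eta^{-1})$. The only cosmetic difference is in the first term of Jensen's formula. You bound the circle average of $\log|F|$ via concavity, which also makes zeros on the circle $|z-c|=R_1$ harmless. The paper instead bounds it by $\log\sup_{\overline{D(c,R_1)}}|F|$ and handles boundary zeros by a limiting argument.
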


\begin{proof}
We set
\[
 M=\sup_{z\in\overline{D(c,R_1)}}|F(z)|.
\]
We use \eqref{eq:full-line-Jensen-radius-parameters} to see that every
\(z\in\overline{D(c,R_1)}\) satisfies
\(D(z,\Delta_R)\subset D(c,R_2)\).  Since \(|F|^2\) is
subharmonic, we apply the mean-value inequality and obtain
\[
 |F(z)|^2
 \leq
 \frac{1}{\pi\Delta_R^2}
 \int_{D(z,\Delta_R)}|F(w)|^2\dd w
 \leq
 \frac{1}{\pi\Delta_R^2}
 \int_{D(c,R_2)}|F(w)|^2\dd w,
\]
where \(\dd w\) denotes planar Lebesgue measure.  We take the
supremum over \(z\), take expectations and use
\eqref{eq:full-line-Jensen-variance-assumptions} to obtain
\[
 \E M^2\leq\frac{R_2^2}{\Delta_R^2}V.
\]
We therefore apply Markov's inequality and obtain
\begin{equation}
 \Pp\!\left(
  M^2>\frac{R_2^2V}{\Delta_R^2\eta}
 \right)
 \leq\eta.
 \label{eq:full-line-Jensen-maximum-event}
\end{equation}

The density of \(F(c)\) is at most
\((2\pi v_0)^{-1/2}\).  Consequently,
\begin{equation}
 \Pp\bigl(|F(c)|<\eta\sqrt{v_0}\bigr)
 \leq
 \sqrt{\frac{2}{\pi}}\,\eta
 <\eta.
 \label{eq:full-line-Jensen-small-ball}
\end{equation}
On the complement of the two events in
\eqref{eq:full-line-Jensen-maximum-event} and
\eqref{eq:full-line-Jensen-small-ball}, we apply Jensen's formula to the
complex zeros of \(F\) and obtain
\[
 \nu_F(D(c,r))\log\frac{R_1}{r}
 \leq
 \log M-\log|F(c)|.
\]
If a zero lies on the circle of radius \(R_1\), we apply Jensen's
formula first at radii increasing to \(R_1\) and avoiding the
finitely many zero moduli and then pass to the limit.  We combine the
two good-event bounds and obtain
\[
 \log M-\log|F(c)|
 \leq
 \log\frac{R_2}{\Delta_R}
 +
 \frac12\log\frac{V}{v_0}
 +
 \frac32\log(\eta^{-1}).
\]
We divide by \(\log(R_1/r)>0\) to prove
\eqref{eq:full-line-Gaussian-Jensen-conclusion}.  We then use a union
bound and \(\sqrt{2/\pi}<1\) to obtain the asserted failure probability.
\end{proof}

To apply the preceding Jensen estimate to \(H_n\), we next establish its
second-moment profile near the unit interval.

\begin{lemma}
\label{lem:full-line-reciprocal-variance-profile}
For every \(\rho\geq0\),
\[
 \E|H_n(\rho)|^2
 =
 \sum_{j=0}^n\kappa_{n,j}^2\rho^{2j}.
\]
For \(0\leq\rho\leq1\),
\begin{equation}
 \frac{1}{8e}
 \frac{1}{1-\rho+n^{-1/2}}
 \leq
 \E|H_n(\rho)|^2
 \leq
 8
 \frac{1}{1-\rho+n^{-1/2}}.
 \label{eq:full-line-reciprocal-variance-inside}
\end{equation}
For \(1\leq\rho\leq1+1/n\),
\begin{equation}
 \frac{\sqrt n}{8e}
 \leq
 \E|H_n(\rho)|^2
 \leq
 8e^2\sqrt n.
 \label{eq:full-line-reciprocal-variance-outside}
\end{equation}
\end{lemma}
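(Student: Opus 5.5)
The identity for \(\E|H_n(\rho)|^2\) is immediate: \(H_n(\rho)=\sum_j\kappa_{n,j}\xi_{n-j}\rho^j\) has independent standard Gaussian coefficients, so its variance is \(\sum_j\kappa_{n,j}^2\rho^{2j}\). The plan is therefore to control the weights \(w_j=\kappa_{n,j}^2\rho^{2j}\) by elementary bounds on the product \(\kappa_{n,j}^2=\prod_{r<j}(1-r/n)\). We compare this sum with a Gaussian-type profile.

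\textbf{Step 1: bounds on \(\kappa_{n,j}^2\).} From \(\log(1-x)\le -x\) we get
\[
 \kappa_{n,j}^2\le e^{-j(j-1)/(2n)}.
\]
For \(j\le\sqrt n\), the inequality \(\log(1-x)\ge -2x\) on \([0,1/2]\) gives
\[
 \kappa_{n,j}^2\ge e^{-j^2/n}\ge e^{-1}.
\]
Also \(\kappa_{n,j}^2\le1\) for all \(j\).

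\textbf{Step 2: the inside range \(0\le\rho\le1\).} Set \(a=1-\rho^2\). Note that \(1-\rho\le a\le 2(1-\rho)\), so \(a\) is comparable to \(1-\rho\), and \(a+n^{-1/2}\) is comparable to \(1-\rho+n^{-1/2}\).

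For the lower bound, keep only the terms with \(j\le J=\min\{\lfloor\sqrt n\rfloor,\lceil 1/a\rceil\}\). On this range \(\rho^{2j}=(1-a)^j\) is bounded below by roughly \(e^{-1}\) once \(j\le 1/a\); one has to check \(a\le1/2\) separately. Combined with \(\kappa_{n,j}^2\ge e^{-1}\), this gives a sum of order \(J\ge c\min\{\sqrt n,1/a\}\), which is comparable to \(1/(1-\rho+n^{-1/2})\).

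For the upper bound, sum two ways and take the minimum:
\[
 \sum_j\rho^{2j}\le \frac1a\le\frac{1}{1-\rho},
 \qquad
 \sum_j e^{-j(j-1)/(2n)}\le C\sqrt n.
\]
The minimum of the two is at most \(2/(1-\rho+n^{-1/2})\) up to constants.

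\textbf{Step 3: the outside range \(1\le\rho\le1+1/n\).} Here
\[
 \rho^{2j}\le(1+1/n)^{2n}\le e^2 \qquad (j\le n),
\]
so the sum is at most \(e^2\sum_j\kappa_{n,j}^2\le Ce^2\sqrt n\) by Step 1. It is also at least the \(\rho=1\) sum, which is at least \(\sqrt n/e\) up to floor effects.

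\textbf{Main obstacle.} The only real work is tracking the explicit constants \(8\), \(8e\) and \(8e^2\). The delicate points are:
\begin{itemize}
\item the Gaussian sum \(\sum_{j\ge0}e^{-j(j-1)/(2n)}\le 1+\sqrt{2\pi n}/2+\dots\), which needs a careful integral comparison to land below \(8\sqrt n\) and under the \(n^{-1/2}\)-shifted denominator;
\item the lower bound with floors when \(1/a\) is small.
\end{itemize}
I would split the inside range into two cases, \(1-\rho\le n^{-1/2}\) (where \(\sqrt n\) terms dominate) and \(1-\rho>n^{-1/2}\) (where the geometric cutoff dominates). In each case I would verify the constant using the crude inequality \(1/(2\max)\le 1/\mathrm{sum}\le1/\max\), with \(\mathrm{sum}=1-\rho+n^{-1/2}\) and \(\max=\max\{1-\rho,n^{-1/2}\}\).
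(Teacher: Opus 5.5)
Your proposal is correct and follows essentially the same route as the paper: a geometric-versus-Gaussian minimum for the upper bound, a truncated sum over $j\lesssim\min\{\sqrt n,1/(1-\rho)\}$ with $\kappa_{n,j}^2$ bounded below for the lower bound, and monotonicity plus $(1+1/n)^{2n}\le e^2$ for $1\le\rho\le1+1/n$. The constants close if you do the following:
\begin{itemize}
\item Truncate at $\lfloor 1/a\rfloor$ rather than $\lceil 1/a\rceil$. Then $(1-a)^j\ge(1-a)^{1/a}\ge\tfrac14$ for $a\le\tfrac12$.
\item Combine this with $\kappa_{n,j}^2\ge e^{-1}$ and $J+1\ge\min\{\sqrt n,1/a\}\ge 1/\bigl(2(1-\rho+n^{-1/2})\bigr)$. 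This gives exactly the lower constant $1/(8e)$.
\item For the upper constant $8$, use $2+\sqrt{\pi n}\le 4\sqrt n$.
\end{itemize}
The paper instead truncates at $\lfloor 1/(4(1-\rho+n^{-1/2}))\rfloor$ and uses $\kappa_{n,j}^2\ge\tfrac12$.
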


\begin{proof}
We combine the product identity in
\eqref{eq:full-line-reciprocal-coefficients} with
\(\log(1-x)\leq-x\) and obtain
\begin{equation}
 \kappa_{n,j}^2
 \leq
 \exp\left\{-\frac{j(j-1)}{2n}\right\},
 \qquad 0\leq j\leq n.
 \label{eq:full-line-reciprocal-Gaussian-upper}
\end{equation}
Equality holds in this estimate for \(j=0,1\).  We use Gaussian decay
only from \(j=2\) onward.  We first suppose that
\(0\leq\rho\leq1\) and we set
\[
 a=1-\rho,
 \qquad
 h=n^{-1/2}.
\]
For integers \(2\leq j\leq n\),
\[
 j(j-1)\geq\frac{j^2}{2},
 \qquad
 \rho^{2j}\leq e^{-2aj}.
\]
Consequently, we obtain
\[
 \begin{aligned}
  \E|H_n(\rho)|^2
  &=
  1+\rho^2+\sum_{j=2}^n
  \kappa_{n,j}^2\rho^{2j}\\
  &\leq
  2+\sum_{j=2}^\infty
  \exp\left\{-2aj-\frac{j^2}{4n}\right\}\\
  &\leq
  2+\min\left\{\frac{1}{2a},\sqrt{\pi n}\right\}
  \leq
  \frac{8}{a+h}.
 \end{aligned}
\]
When \(a=0\), we interpret the first entry in the minimum as infinite.
Here we used
\[
 \sum_{j=1}^\infty e^{-j^2/(4n)}
 \leq
 \int_0^\infty e^{-x^2/(4n)}\dd x
 =
 \sqrt{\pi n}
\]
and
\(\min\{a^{-1},h^{-1}\}\leq2/(a+h)\).  At \(\rho=0\), we use
\(\E|H_n(0)|^2=1\) to obtain the upper bound.

For integers \(0\leq j\leq\lfloor\sqrt n\rfloor\), we apply the elementary
product inequality
\(\prod_i(1-x_i)\geq1-\sum_i x_i\), for \(x_i\in[0,1]\), and obtain
\[
 \kappa_{n,j}^2
 \geq
 1-\frac{j(j-1)}{2n}
 \geq
 \frac12.
\]
If \(\rho\geq1/2\), then
\(\log(1-a)\geq-2a\), so
\(\rho^{2j}\geq e^{-4aj}\).  We set
\[
 K=\left\lfloor\frac{1}{4(a+h)}\right\rfloor.
\]
For \(0\leq j\leq K\), we have \(0\leq K\leq\sqrt n/4\leq n\) and
\(4aj\leq1\).  We therefore obtain
\[
 \E|H_n(\rho)|^2
 \geq
 \sum_{j=0}^K\kappa_{n,j}^2\rho^{2j}
 \geq
 \frac{K+1}{2e}
 \geq
 \frac{1}{8e(a+h)}.
\]
If \(0\leq\rho<1/2\), then \(a+h>1/2\).  We retain only the constant
term and obtain the same lower bound.  This proves
\eqref{eq:full-line-reciprocal-variance-inside}.

We finally suppose that \(1\leq\rho\leq1+1/n\).  We use positivity and
\eqref{eq:full-line-reciprocal-variance-inside} to obtain
\[
 \E|H_n(\rho)|^2
 \geq
 \E|H_n(1)|^2
 \geq
 \frac{\sqrt n}{8e}.
\]
For \(0\leq j\leq n\),
\[
 \rho^{2j}
 \leq
 \left(1+\frac1n\right)^{2n}
 \leq e^2.
\]
Hence
\[
 \E|H_n(\rho)|^2
 \leq
 e^2\E|H_n(1)|^2
 \leq
 8e^2\sqrt n,
\]
which proves \eqref{eq:full-line-reciprocal-variance-outside}.
\end{proof}

The next proposition combines this profile with a dyadic covering argument
based on Jensen's formula to prove an \(L^2\) bound for
\(N_n(I_n^{\mathrm{out}})\).

\begin{proposition}
\label{prop:full-line-exterior-second-moment}
There is a finite absolute constant \(C\) such that, for all
sufficiently large \(n\),
\begin{equation}
 \E[N_n(I_n^{\mathrm{out}})^2]
 \leq
 C(\log n)^2.
 \label{eq:full-line-exterior-second-moment}
\end{equation}
\end{proposition}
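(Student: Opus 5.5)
By \eqref{eq:full-line-exterior-reciprocal-identity} it suffices to bound \(\E[N_{H_n}((-1,1))^2]\).  By the symmetry \eqref{eq:full-line-pair-symmetry} and \((a+b)^2\leq2a^2+2b^2\), it is enough to treat \(N_{H_n}([0,1))\).  The idea is to cover \([0,1)\) by \(O(\log n)\) dyadic pieces adapted to the scale \(1-\rho+n^{-1/2}\) from Lemma~\ref{lem:full-line-reciprocal-variance-profile}.  On each piece, Lemma~\ref{lem:full-line-Gaussian-Jensen} bounds the number of complex zeros by \(O(1)+O(\log\eta^{-1})\) off an event of probability \(2\eta\).  Summing then gives a count of order \(\log n\), and the tail is handled by the degree bound \(N\leq n\).

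Concretely, I would set \(h=n^{-1/2}\) and take the first piece to be \([0,1/2]\), covered by \(D(0,1/2)\) with \(R_1=3/4\) and \(R_2=7/8\).  The remaining pieces are \(\rho\in[1-2^{-k},1-2^{-k-1}]\) for \(1\leq k\leq k_n\), with \(2^{-k_n}\approx h\).  A final piece \([1-2h,1)\) is covered by a disk of radius \(\asymp h\) centred at \(c=1-2h\).  For a middle piece, take \(c=1-\tfrac32 2^{-k}\), \(r=2^{-k-1}\), \(R_1=2^{-k}\) and \(R_2=\tfrac54 2^{-k}\).  Then \(D(c,R_2)\) stays at distance \(\gtrsim2^{-k}\) from the unit circle, so \(|z|\leq1-c'2^{-k}\) there.

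The variance hypotheses come from \(\E|H_n(z)|^2=\sum_j\kappa_{n,j}^2|z|^{2j}=\E|H_n(|z|)|^2\), which holds by independence of the coefficients.  With Lemma~\ref{lem:full-line-reciprocal-variance-profile}, this gives \(V\lesssim 2^{k}\) and \(v_0=\Var H_n(c)\gtrsim 2^k\), so \(V/v_0=O(1)\).  The ratios \(R_1/r=2\) and \(R_2/\Delta_R=5\) are also \(O(1)\).  For the last disk, whose radius is a fixed multiple of \(h\), it may poke slightly outside the unit circle.  There I would instead use \(|z|\leq1+C'h\), with \(C'\) small enough, and bound \(\E|H_n(z)|^2\leq\sum_j\kappa_{n,j}^2(1+C'h)^{2j}\lesssim\sqrt n\).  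This bound holds because \(\kappa_{n,j}^2\leq e^{-j^2/(4n)}\) kills \(j\gg\sqrt n\), so the extension of \eqref{eq:full-line-reciprocal-variance-outside} to radius \(1+O(n^{-1/2})\) should follow by the same Gaussian-sum computation.  This is the step I expect to need the most care.

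Choosing \(\eta=n^{-3}\) for every piece, the union bound over the \(O(\log n)\) pieces gives total failure probability \(O(n^{-3}\log n)\).  Off this event each piece contributes \(O(\log n)\) zeros, hence \(N_{H_n}([0,1))\leq C\log n\cdot O(1)\) only if the per-piece bound is \(O(1)\), which it is not.  The sum is therefore \(O((\log n)^2)\) in the crude form.  To get \(O(\log n)\) I would instead apply the bound in tail form.  For \(t\geq1\), on each piece \(\Pp(\nu\geq C+ \tfrac32 t/\log2)\leq2e^{-t}\), so each per-piece count \(\nu_k\) has \(\|\nu_k\|_{L^2}=O(1)\).  Minkowski's inequality then gives
\[
\|N_{H_n}([0,1))\|_{L^2}\leq\sum_k\|\nu_k\|_{L^2}\leq C\log n .
\]
Here each \(\nu_k\leq n\) almost surely, which makes the integration of the tail legitimate; \(\eta\) ranges over \((0,1)\), covering all levels.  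Squaring yields \eqref{eq:full-line-exterior-second-moment}.
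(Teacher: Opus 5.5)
Your proposal is correct and is essentially the paper's proof. It uses a dyadic cover of \((0,1)\) by \(O(\log n)\) disks on which Lemma~\ref{lem:full-line-Gaussian-Jensen} applies with bounded radius ratios and bounded \(V/v_0\), exponential tails giving a uniformly bounded second moment for each disk count, and then Minkowski's inequality (the paper uses Cauchy--Schwarz) to reach \(\E[N_n(I_n^{\mathrm{out}})^2]\leq C(\log n)^2\).

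The only real difference is where the dyadic cover stops. You stop at scale \(n^{-1/2}\), so your final disk reaches \(|z|\leq1+C'n^{-1/2}\). There you need \(\E|H_n(z)|^2\lesssim\sqrt n\), which does follow from \(\kappa_{n,j}^2\leq e^{-j^2/(4n)}\), and no smallness of \(C'\) is required. The paper instead continues down to \(2^{-\lceil\log_2 n\rceil}\leq1/n\), so its last disk stays within \(|z|\leq1+1/(2n)\), where \eqref{eq:full-line-reciprocal-variance-outside} already applies.
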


\begin{proof}
We set
\[
 J_n=\lceil\log_2n\rceil,
 \qquad
 \vartheta_j=2^{-j},
 \quad 1\leq j\leq J_n.
\]
We define the centers
\[
 c_j=1-\frac32\vartheta_j,\quad
 1\leq j\leq J_n,
 \qquad
 c_*=1-\frac12\vartheta_{J_n}.
\]
For \(1\leq j\leq J_n\), we use the inclusions
\[
 \bigl[1-2\vartheta_j,1-\vartheta_j\bigr]
 \subset
 D\left(c_j,\frac35\vartheta_j\right)
 \quad\text{and}\quad
 \bigl[1-\vartheta_{J_n},1\bigr)
 \subset
 D\left(c_*,\frac35\vartheta_{J_n}\right)
\]
to see that these \(J_n+1\) disks cover \((0,1)\).  For
\(1\leq j\leq J_n\), we set
\((c,\vartheta)=(c_j,\vartheta_j)\), and for the disk centered at \(c_*\),
we set \((c,\vartheta)=(c_*,\vartheta_{J_n})\).  In all \(J_n+1\)
applications of Lemma~\ref{lem:full-line-Gaussian-Jensen}, we choose
\[
 (r,R_1,R_2)
 =
 \vartheta\left(\frac35,\frac45,1\right).
\]
Thus \(R_1/r=4/3\) and
\(R_2/(R_2-R_1)=5\).  We use independence of the coefficients to obtain
\(\E|H_n(z)|^2=\E|H_n(|z|)|^2\).

For \(1\leq j\leq J_n\) and \(z\in D(c_j,\vartheta_j)\), we have
\[
 |z|
 \leq
 c_j+\vartheta_j
 =
 1-\frac12\vartheta_j.
\]
We therefore apply
Lemma~\ref{lem:full-line-reciprocal-variance-profile} and obtain
\[
 \frac{
  \sup_{z\in D(c_j,\vartheta_j)}\E|H_n(z)|^2
 }{
  \Var(H_n(c_j))
 }
 \leq
 64e
 \frac{(3/2)\vartheta_j+n^{-1/2}}
 {(1/2)\vartheta_j+n^{-1/2}}
 \leq
 192e.
\]

For the choice \((c,\vartheta)=(c_*,\vartheta_{J_n})\), we have
\[
 c_*+\vartheta_{J_n}
 =
 1+\frac12\vartheta_{J_n}
 \leq
 1+\frac1{2n}.
\]
Although the real zeros being counted lie in \(0<u<1\), Jensen's
estimate requires a second-moment upper bound throughout the outer disk.
We have just shown that its radial range lies in
\([0,1+1/n]\).  This is precisely the role of
\eqref{eq:full-line-reciprocal-variance-outside}.  Since
\[
 1-c_*+n^{-1/2}
 =
 \frac12\vartheta_{J_n}+n^{-1/2}
 \leq
 \frac{3}{2\sqrt n},
\]
we combine the inside and outside estimates in
Lemma~\ref{lem:full-line-reciprocal-variance-profile} with the
monotonicity of \(\E|H_n(\rho)|^2\) in \(\rho\) and obtain
\[
 \Var(H_n(c_*))\geq\frac{\sqrt n}{12e},
 \qquad
 \sup_{z\in D(c_*,\vartheta_{J_n})}\E|H_n(z)|^2
 \leq8e^2\sqrt n.
\]
Consequently, the ratio \(V/v_0\) in
Lemma~\ref{lem:full-line-Gaussian-Jensen} is at most \(96e^3\) for
every disk.  We apply Lemma~\ref{lem:full-line-Gaussian-Jensen} with
\[
 F=H_n,\qquad
 v_0=\Var(H_n(c)),\qquad
 V=\sup_{z\in D(c,\vartheta)}\E|H_n(z)|^2,\qquad
 \eta=e^{-(1+t)}.
\]
The radius ratios are fixed, so we can choose a numerical constant
\(C_1\), independent of \(n\) and of the chosen disk, such that
\[
 \Pp\!\left(
   \nu_{H_n}\left(D\left(c,\frac35\vartheta\right)\right)>C_1(1+t)
 \right)
 \leq
 2e^{-(1+t)},
 \qquad t\geq0.
\]
For \(x\geq C_1\), we substitute \(t=x/C_1-1\) and obtain
\[
 \Pp\!\left(
  \nu_{H_n}\left(D\left(c,\frac35\vartheta\right)\right)>x
 \right)
 \leq2e^{-x/C_1}.
\]
We use the tail integral identity and obtain
\begin{equation}
 \begin{aligned}
 &\E\left[
  \nu_{H_n}\left(D\left(c,\frac35\vartheta\right)\right)^2
 \right]\\
 ={}&
 2\int_0^\infty
 x\,\Pp\!\left(
  \nu_{H_n}\left(D\left(c,\frac35\vartheta\right)\right)>x
 \right)\dd x\\
 {}\leq{}&
 C_1^2+4\int_{C_1}^\infty xe^{-x/C_1}\dd x
 \leq C.
 \end{aligned}
 \label{eq:full-line-uniform-disk-second-moment}
\end{equation}

Recall that \(\nu_{H_n}(D)\) is the complex-zero count introduced at
the beginning of Section~\ref{sec:full-line-jensen-estimates}.  Every real zero of \(H_n\) in a
 covered subinterval is a complex zero in the corresponding disk, with
 the same multiplicity.  Since the disks cover \((0,1)\), we have
\[
 N_{H_n}((0,1))
 \leq
 \sum_{j=1}^{J_n}
 \nu_{H_n}\left(D\left(c_j,\frac35\vartheta_j\right)\right)
 +
 \nu_{H_n}\left(
  D\left(c_*,\frac35\vartheta_{J_n}\right)
 \right).
\]
We apply Cauchy--Schwarz and
\eqref{eq:full-line-uniform-disk-second-moment} to obtain
\begin{equation}
 \E[N_{H_n}((0,1))^2]
 \leq
 C(J_n+1)^2
 \leq
 C(\log n)^2.
 \label{eq:full-line-positive-exterior-second-moment}
\end{equation}
 We use the distributional identity \eqref{eq:full-line-pair-symmetry} to
 conclude that \eqref{eq:full-line-positive-exterior-second-moment} also holds with
\(N_{H_n}((0,1))\) replaced by \(N_{H_n}((-1,0))\).  Since
\(H_n(0)\neq0\) almost surely,
we combine \eqref{eq:full-line-exterior-reciprocal-identity} with
\((x+y)^2\leq2x^2+2y^2\) to prove
\eqref{eq:full-line-exterior-second-moment}.
\end{proof}

\subsection{Proofs of Propositions~\ref{prop:full-line-edge-strip-count},
\ref{prop:full-line-low-band-localization} and
\ref{prop:full-line-upper-band-localization}}

We first bound \(N_n(I_n^{\mathrm{edge}})\).

\begin{proof}[Proof of Proposition~\ref{prop:full-line-edge-strip-count}]
We first consider \([L_n,\sqrt n]\).  For the following Jensen argument, we
extend \(P_n\) and \(q_n\), retaining the same symbols, by
\[
 P_n(z)=\sum_{k=0}^{n}\frac{\xi_k}{\sqrt{k!}}z^k,
 \qquad
 q_n(z)=e^{-z^2/2}P_n(z),
 \qquad z\in\mathbb C.
\]
These extensions agree on \(\R\) with \eqref{eq:weyl-polynomial} and
 \eqref{eq:normalized-weyl-functions}.  For \(z=x+iy\in\mathbb C\), we use
 independence of the coefficients to obtain
\begin{equation}
 \E|q_n(z)|^2
 =
 e^{-\operatorname{Re}(z^2)}
 \sum_{k=0}^n\frac{|z|^{2k}}{k!}
 \leq
 e^{-\operatorname{Re}(z^2)}e^{|z|^2}
 =
 e^{2y^2}.
 \label{eq:full-line-complex-qn-variance}
\end{equation}

We apply
Lemma~\ref{lem:full-line-Gaussian-Jensen} to \(F=q_n\) with
\begin{equation}
 c=L_n,
 \qquad
 r=2B\sqrt{\log n},
 \qquad
 R_1=3B\sqrt{\log n},
 \qquad
 R_2=4B\sqrt{\log n}.
 \label{eq:full-line-edge-disk-radii}
\end{equation}
The interval \([L_n,\sqrt n]\) lies in
\(D(L_n,2B\sqrt{\log n})\).  We let
\(K\sim\operatorname{Poisson}(L_n^2)\).  At the real center,
\begin{equation}
 \Var(q_n(L_n))
 =
 e^{-L_n^2}\sum_{k=0}^n\frac{L_n^{2k}}{k!}
 =
 \Pp(K\leq n),
 \label{eq:full-line-edge-center-Poisson}
\end{equation}
We use \eqref{eq:logarithmic-edge-length-bounds} to obtain
\[
 n+1-L_n^2
 =
 2B\sqrt{n\log n}-B^2\log n+1
 \geq
 B\sqrt{n\log n}.
\]
We apply the Poisson estimate
\eqref{eq:proved-Poisson-Chernoff-bound} at the threshold
\(n+1\leq2n\) to obtain
\[
 \Pp(K\geq n+1)\leq n^{-B^2/4}.
\]
We combine this tail bound with
\eqref{eq:full-line-edge-center-Poisson} and use
\(n^{-B^2/4}\leq1/2\) to obtain
\begin{equation}
 \Var(q_n(L_n))\geq\frac12.
 \label{eq:full-line-edge-center-lower-bound}
\end{equation}

Every \(z\in D(L_n,4B\sqrt{\log n})\) satisfies
\(|\operatorname{Im}z|\leq4B\sqrt{\log n}\).  Hence we apply
\eqref{eq:full-line-complex-qn-variance} and obtain
\begin{equation}
 \sup_{z\in D(L_n,4B\sqrt{\log n})}\E|q_n(z)|^2
 \leq
 e^{32B^2\log n}
 =
 n^{32B^2}.
 \label{eq:full-line-edge-disk-upper-bound}
\end{equation}
Using \eqref{eq:full-line-edge-center-lower-bound} and
\eqref{eq:full-line-edge-disk-upper-bound}, we apply
Lemma~\ref{lem:full-line-Gaussian-Jensen} with
\[
 v_0=\frac12,
 \qquad
 V=n^{32B^2},
 \qquad
 \eta=\frac18n^{-A-2}.
\]
All radius ratios in \eqref{eq:full-line-edge-disk-radii} are
constant.  Recall that \(\nu_F(D)\) denotes the complex-zero count introduced
in Section~\ref{sec:full-line-jensen-estimates}.  Since \(e^{-z^2/2}\) never
vanishes, \(q_n\) and \(P_n\) have the same complex zeros, with the same
multiplicities.  Every real zero in
\([L_n,\sqrt n]\) lies in
\(D(L_n,2B\sqrt{\log n})\) and is therefore counted by the corresponding
complex-zero count, with the same multiplicity.  We substitute the
radius ratios into \eqref{eq:full-line-Gaussian-Jensen-conclusion} and obtain
\begin{equation}
 N_n([L_n,\sqrt n])
 \leq
 \nu_{q_n}(D(L_n,2B\sqrt{\log n}))
 \leq
 C_{A,B}\log n
 \label{eq:full-line-positive-edge-count}
\end{equation}
outside an event of probability at most
\(\frac14n^{-A-2}\).

We repeat the argument with center \(-L_n\) to obtain
\begin{equation}
 N_n([-\sqrt n,-L_n])
 \leq
 C_{A,B}\log n
 \label{eq:full-line-negative-edge-count}
\end{equation}
outside an event of probability at most
\(\frac14n^{-A-2}\).  Since
\[
 N_n(I_n^{\mathrm{edge}})
 \leq
 N_n([-\sqrt n,-L_n])
 +
 N_n([L_n,\sqrt n]),
\]
we combine \eqref{eq:full-line-positive-edge-count} and
\eqref{eq:full-line-negative-edge-count}.  We apply a union bound and
then enlarge \(C_{A,B}\) to prove
\eqref{eq:full-line-edge-strip-tail}.
\end{proof}

We first compare the numbers of zeros of \(P_{m_n}\) and \(P_n\) on
\(I_n\).  Recall \(\delta_n,\varepsilon_n\) from
\eqref{eq:logarithmic-edge-parameter-choice}.

The comparison is based on the following quantitative \(C^1\) approximation
on \(I_n\).

\begin{lemma}
\label{lem:full-line-low-band-C1}
For all sufficiently large \(n\),
\begin{equation}
 \Pp\!\left(
  \|q_\infty-q_{m_n}\|_{C^1(I_n)}
  \geq\varepsilon_n
 \right)
 \leq Cn^{-21}.
 \label{eq:full-line-low-band-C1}
\end{equation}
\end{lemma}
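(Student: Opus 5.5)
The plan is to repeat the argument of Lemma~\ref{lem:polynomial-series-growing-C1} with the truncation degree \(m_n\) in place of \(n\). The remainder \(P_\infty-P_{m_n}\) is controlled on \(I_n\) by the Poisson tail \(\Pp(K\geq m_n+1-2r)\) with \(K\sim\operatorname{Poisson}(x^2)\). First I will rerun the proof of Lemma~\ref{lem:polynomial-tail-derivatives} with \(R_{m_n}=P_\infty-P_{m_n}\). The identity \eqref{eq:polynomial-tail-Poisson-identity} and the factorial-moment reduction \eqref{eq:truncated-Poisson-factorial-moment} go through verbatim, and \(x^2\leq n\) gives
\[
 e^{-x^2}\E|R_{m_n}^{(r)}(x)|^2\leq C_rn^r\,\Pp(K\geq m_n+1-2r),\qquad r\in\{0,1,2\},\ x\in I_n.
\]

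Next comes the gap estimate, which is the only point where the constants matter. For \(x\in I_n\) we have \(x^2\leq L_n^2=n-2B\sqrt{n\log n}+B^2\log n\). We also have \(m_n\geq n-B\sqrt{n\log n}-1\). Together these give
\[
 m_n+1-2r-x^2\geq B\sqrt{n\log n}-B^2\log n-4\geq\tfrac12B\sqrt{n\log n}
\]
for large \(n\). The Chernoff bound \eqref{eq:proved-Poisson-Chernoff-bound} with threshold at most \(2n\) then yields a tail of at most \(\exp\{-B^2n\log n/(16n)\}=n^{-B^2/16}=n^{-25}\), since \(B=20\). Differentiating \(\rho=e^{-x^2/2}R_{m_n}\) as in \eqref{eq:polynomial-series-error-derivatives} gives
\[
 \sup_{x\in I_n}\E|\rho^{(j)}(x)|^2\leq Cn^2n^{-25},\qquad j\in\{0,1,2\}.
\]

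Then I will partition \(I_n\) into \(\lceil|I_n|\rceil\leq3\sqrt n\) intervals and apply Lemma~\ref{lem:comparison-interval-Sobolev} exactly as before. This gives \(\E\|q_\infty-q_{m_n}\|_{C^1(I_n)}^2\leq C\sqrt n\,n^{-23}\). Markov's inequality with \(\varepsilon_n=n^{-38}\) then produces a factor \(n^{76}\).

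That factor exposes the main obstacle. The combined bound \(n^{1/2-23+76}\) is far too weak, so the Chebyshev-type step with the crude \(B^2/16\) exponent does not suffice. Reaching \(n^{-21}\) requires a Poisson exponent beyond \(n^{-98}\). I would first sharpen the gap: with exact constants, \((m_n-x^2)^2/(2m_n)\gtrsim B^2\log n/2\), giving \(n^{-B^2/2}=n^{-200}\), which is ample. If the chosen form of Chernoff is lossy, a fallback is to split \(q_\infty-q_{m_n}=(q_\infty-q_n)+(q_n-q_{m_n})\). The first piece is already handled by Lemma~\ref{lem:polynomial-series-growing-C1}, since \eqref{eq:logarithmic-edge-bad-C1} gives \(\Pp\leq Cn^{-22+1/2}\). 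The second piece uses the same band estimate \(\Pp(m_n<K\leq n)\). Either way, the step to verify carefully is the exponent \((m_n-x^2)^2/(2m_n)\geq(B^2/2-o(1))\log n\).
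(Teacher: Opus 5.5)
Your final route (Poisson reduction with cutoff $m_n$, the Chernoff bound at threshold $m_n+1-2r$, the Sobolev partition of $I_n$, and Markov with $\varepsilon_n^{-2}=n^{76}$) is exactly the paper's, and it works. The apparent obstacle came only from your lossy constants, gap $\tfrac12 B\sqrt{n\log n}$ and threshold bound $2n$; the paper keeps the gap $\tfrac{3B}{4}\sqrt{n\log n}$ and threshold bound $9n/8$, which already give $\Pp(K\geq m_n+1-2r)\leq n^{-B^2/4}=n^{-100}$ and hence $Cn^{157/2-100}=Cn^{-43/2}$. So your sharper $n^{-B^2/2+o(1)}$ is more than enough (one needs a tail of at most $n^{-99.5}$, not $n^{-98}$), and the fallback split $(q_\infty-q_n)+(q_n-q_{m_n})$ is unnecessary and would not bypass anything, since $\Var(q_n(x)-q_{m_n}(x))=\Pp(m_n<K\leq n)$ requires the same tail estimate.
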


\begin{proof}
We fix \(r\in\{0,1,2\}\) and \(x\in I_n\), set
\(\lambda=x^2\) and let \(K\) be Poisson with mean \(\lambda\).
We use \(R_{m_n}=P_\infty-P_{m_n}\).  The algebraic computation in
\eqref{eq:polynomial-tail-Poisson-identity} and
\eqref{eq:truncated-Poisson-factorial-moment} is uniform in the truncation
index.  Applying these identities with cutoff \(m_n\), for every
\(0\leq a\leq r\), we have
\[
 \E\!\left[(K)_a\mathbf 1_{\{K\geq m_n+1-r\}}\right]
 =
 \lambda^a\Pp(K\geq m_n+1-r-a)
 \leq
 \lambda^a\Pp(K\geq m_n+1-2r).
\]
The numerical coefficients in the decompositions for \(r=0,1,2\) depend only
on \(r\).  Since \(\lambda\leq n\), we also have
\(\lambda^a\leq n^r\).  Therefore,
\begin{equation}
 e^{-x^2}
 \E\left|R_{m_n}^{(r)}(x)\right|^2
 \leq
 C_rn^r
 \Pp(K\geq m_n+1-2r).
 \label{eq:full-line-low-band-Poisson-reduction}
\end{equation}

From the definition of \(I_n\), we obtain
\begin{equation}
 \lambda
 \leq
 L_n^2
 =
 n-2B\sqrt{n\log n}+B^2\log n.
 \label{eq:full-line-low-band-Poisson-mean}
\end{equation}
Also,
\[
 m_n+1-2r
 \geq
 n-B\sqrt{n\log n}-2r.
\]
We combine \eqref{eq:full-line-low-band-Poisson-mean} with this lower
bound and obtain
\begin{equation}
 m_n+1-2r-\lambda
 \geq
 \frac{3B}{4}\sqrt{n\log n}.
 \label{eq:full-line-low-band-Poisson-gap}
\end{equation}
The threshold \(m_n+1-2r\) is at most \(n+1\leq9n/8\).
We combine the Poisson Chernoff bound
\eqref{eq:proved-Poisson-Chernoff-bound} with
\eqref{eq:full-line-low-band-Poisson-gap} and obtain
\begin{equation}
 \Pp(K\geq m_n+1-2r)
 \leq
 \exp\left\{
  -\frac{(m_n+1-2r-\lambda)^2}
  {2(m_n+1-2r)}
 \right\}
 \leq
 n^{-B^2/4}.
 \label{eq:full-line-low-band-Poisson-tail}
\end{equation}

The normalized remainder \(\rho_{m_n}\) is the specialization of
\eqref{eq:polynomial-series-error-definition} to the index \(m_n\).
We combine \eqref{eq:polynomial-series-error-derivatives},
\eqref{eq:full-line-low-band-Poisson-reduction} and
\eqref{eq:full-line-low-band-Poisson-tail} with
\(|x|\leq\sqrt n\) and obtain
\begin{equation}
 \sup_{x\in I_n}
 \E|\rho_{m_n}^{(j)}(x)|^2
 \leq
 Cn^{2-B^2/4},
 \qquad j\in\{0,1,2\}.
 \label{eq:full-line-low-band-normalized-derivatives}
\end{equation}
We use \eqref{eq:full-line-low-band-normalized-derivatives} and
partition \(I_n\) into \(O(\sqrt n)\) intervals whose lengths lie
between \(1/2\) and \(1\).  We apply
Lemma~\ref{lem:comparison-interval-Sobolev} with \(p=2\) to
\(\rho_{m_n}\) and \(\rho_{m_n}'\) on every
interval, sum and use Tonelli's theorem.  We obtain
\begin{equation}
 \E\|\rho_{m_n}\|_{C^1(I_n)}^2
 \leq
 Cn^{5/2-B^2/4}.
 \label{eq:full-line-low-band-C1-L2}
\end{equation}
We apply Markov's inequality to
\eqref{eq:full-line-low-band-C1-L2} and use \(B=20\) and
\(\varepsilon_n^{-2}=n^{76}\) to obtain
\[
 \Pp\!\left(
  \|\rho_{m_n}\|_{C^1(I_n)}
  \geq\varepsilon_n
 \right)
 \leq
 Cn^{157/2-B^2/4}
 =
 Cn^{-43/2}
 \leq
 Cn^{-21},
\]
because
\[
 \frac{157}{2}-\frac{B^2}{4}
 =
 \frac{157}{2}-100
 =
 -\frac{43}{2}.
\]
\end{proof}

\begin{proof}[Proof of Proposition~\ref{prop:full-line-low-band-localization}]
We apply Proposition~\ref{prop:logarithmic-edge-root-pairing} and
obtain
\begin{equation}
 \Pp\!\left(N_n(I_n)\neq N_\infty(I_n)\right)
 \leq Cn^{-16}.
 \label{eq:full-line-original-bulk-pairing}
\end{equation}
We compare \(P_{m_n}\) directly with \(P_\infty\).  We apply
Lemma~\ref{lem:comparison-near-critical-small-ball} with
\[
 I=I_n,
 \qquad
 p=36,
 \qquad
 M=\sqrt n,
 \qquad
 \delta=\delta_n,
 \qquad
 \varepsilon=\varepsilon_n.
\]
Since \(|I_n|\leq2\sqrt n\), the resulting probability bound for
simultaneous small values of \(q_\infty\) and \(q_\infty'\) is
\[
 Cn^{-35/2}+Cn^{-18}+Cn^{-38}
 \leq Cn^{-17}
\]
 and its endpoint estimate is \(Cn^{-38}\).  Let the good event be the
 intersection of \(\Omega_{\mathrm W}\) with the complements of the three bad
 events just described.  By \eqref{eq:full-line-low-band-C1}, the preceding
 simultaneous-small-value and endpoint estimates and
 \(\Pp(\Omega_{\mathrm W})=1\), its complement has probability at most
 \(Cn^{-17}\).  On the good event, the assumptions of
Lemma~\ref{lem:comparison-order-preserving-stability} hold on \(I_n\)
with
\[
 f=q_\infty,
 \qquad
 h=q_{m_n},
 \qquad
 \delta=\delta_n,
 \qquad
 \varepsilon=\varepsilon_n.
\]
We apply Lemma~\ref{lem:comparison-order-preserving-stability} and obtain that
the zero sets of \(q_\infty\) and \(q_{m_n}\) in \(I_n\) have the same
cardinality.  We use transversality and the \(C^1\) bound to conclude that
every zero of either function on \(I_n\) is simple.  Thus these cardinalities
equal the corresponding counts with multiplicity, and multiplication by
\(e^{-x^2/2}\) does not change zeros.  Consequently,
\[
 N_{P_{m_n}}(I_n)=N_\infty(I_n)
\]
on the good event.  We combine this conclusion with
\eqref{eq:full-line-original-bulk-pairing} and use a union bound to
prove \eqref{eq:full-line-low-band-localization}.
\end{proof}

We next compare \(N_n(I_n^{\mathrm{out}})\) with
\(N_{P_n-P_{m_n}}(I_n^{\mathrm{out}})\).  By
\eqref{eq:full-line-exterior-reciprocal-identity}, it suffices to compare
\(H_n\) and \(\widetilde H_n\) on \([0,1]\).  We begin with the following
\(C^1\) estimate.

\begin{lemma}
\label{lem:full-line-upper-band-C1}
For all sufficiently large \(n\),
\begin{equation}
 \Pp\!\left(
  \|H_n-\widetilde H_n\|_{C^1([0,1])}
  \geq\varepsilon_n
 \right)
 \leq Cn^{-19}.
 \label{eq:full-line-upper-band-C1}
\end{equation}
\end{lemma}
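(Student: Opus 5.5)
We will bound the difference directly through its coefficients, without any Sobolev or small-ball input. By \eqref{eq:full-line-reciprocal-expansions},
\[
 D_n(u)=H_n(u)-\widetilde H_n(u)=\sum_{j=d_n}^{n}\kappa_{n,j}\xi_{n-j}u^j .
\]
This is a finite Gaussian polynomial whose coefficients all carry the Gaussian factor $\kappa_{n,j}$ with $j\geq d_n=\lceil B\sqrt{n\log n}\rceil$. The plan is to show that these coefficients are polynomially tiny, bound the $C^1([0,1])$ norm of $D_n$ termwise, and finish with Markov's inequality.

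\textbf{Step 1: coefficient bound.} By \eqref{eq:full-line-reciprocal-Gaussian-upper}, $\kappa_{n,j}^2\leq\exp\{-j(j-1)/(2n)\}$. For $j\geq d_n$ and $n$ large we have $j(j-1)\geq d_n^2-d_n\geq B^2 n\log n-B\sqrt{n\log n}$. Hence
\[
 \kappa_{n,j}\leq C\,n^{-B^2/4}=Cn^{-100}
\]
uniformly for $d_n\leq j\leq n$, since $B=20$.

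\textbf{Step 2: deterministic $C^1$ bound.} For $u\in[0,1]$ we have $|u^j|\leq1$ and $|ju^{j-1}|\leq j\leq n$. Therefore
\[
 \|D_n\|_{C^1([0,1])}\leq\sum_{j=d_n}^n a_j|\xi_{n-j}|,
 \qquad a_j=(1+j)\kappa_{n,j}\leq C n^{-99}.
\]
By Cauchy--Schwarz, $\bigl(\sum_j a_j|\xi_{n-j}|\bigr)^2\leq\bigl(\sum_j a_j\bigr)\sum_j a_j\xi_{n-j}^2$. Taking expectations and using $\E\xi^2=1$ gives
\[
 \E\|D_n\|_{C^1([0,1])}^2\leq\Bigl(\sum_{j=d_n}^n a_j\Bigr)^2\leq C n^{2}n^{-198}=Cn^{-196}.
\]

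\textbf{Step 3: Markov.} With $\varepsilon_n^{-2}=n^{76}$ from \eqref{eq:logarithmic-edge-parameter-choice}, Markov's inequality gives
\[
 \Pp\bigl(\|D_n\|_{C^1([0,1])}\geq\varepsilon_n\bigr)\leq C n^{76-196}\leq Cn^{-19}.
\]
This is \eqref{eq:full-line-upper-band-C1}, with a large margin.

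The only delicate point is Step 1. One has to check that the lower bound $j(j-1)\geq d_n^2-d_n$ really produces the exponent $B^2\log n/2$ up to a lower-order loss. The correction term is $d_n/(2n)=O(\sqrt{\log n/n})$, which is negligible, so the bound holds, and the generous slack in the exponents makes the argument robust to the constants.
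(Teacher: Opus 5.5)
Your proof is correct and follows the same strategy as the paper: you use the Gaussian decay of $\kappa_{n,j}$ for $j\geq d_n$, then a second-moment bound on $\|H_n-\widetilde H_n\|_{C^1([0,1])}$, then Markov's inequality with $\varepsilon_n^{-2}=n^{76}$. There are two minor differences. First, you bound the supremum termwise, using $|u|\leq1$ and Cauchy--Schwarz, where the paper applies Lemma~\ref{lem:comparison-interval-Sobolev} to $\int_0^1|(H_n-\widetilde H_n)^{(r)}|^2$ for $r\leq2$. Second, your sharper estimate $j(j-1)\geq d_n^2-d_n$, in place of the paper's $j(j-1)\geq j^2/2$, gives $\kappa_{n,j}\leq Cn^{-100}$ rather than $\kappa_{n,j}^2\leq n^{-100}$, so you get a much larger margin than the paper's exponent $5-100+76=-19$.
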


\begin{proof}
We combine Equation~\eqref{eq:full-line-reciprocal-Gaussian-upper}
with \(j(j-1)\geq j^2/2\), valid for \(j\geq2\), and obtain
\[
 \kappa_{n,j}^2\leq e^{-j^2/(4n)},
 \qquad 2\leq j\leq n.
\]
We have \(2\leq d_n\leq n\), and for \(d_n\leq j\leq n\),
\begin{equation}
 \kappa_{n,j}^2
 \leq
 e^{-j^2/(4n)}
 \leq
 n^{-B^2/4}
 =
 n^{-100}.
 \label{eq:full-line-upper-band-coefficient-tail}
\end{equation}
We use the falling-factorial convention, independence and Tonelli's
theorem together with \eqref{eq:full-line-upper-band-coefficient-tail}
to obtain, for
\(r\in\{0,1,2\}\),
\begin{equation}
 \E\int_0^1|(H_n-\widetilde H_n)^{(r)}(u)|^2\dd u
 =
 \sum_{j=d_n}^n(j)_r^2\kappa_{n,j}^2
 \int_0^1u^{2(j-r)}\dd u
 \leq
 n^{2r+1-B^2/4}.
 \label{eq:full-line-upper-band-derivative-L2}
\end{equation}
We apply Lemma~\ref{lem:comparison-interval-Sobolev} with \(p=2\)
to \(H_n-\widetilde H_n\) and
\((H_n-\widetilde H_n)'\) on \([0,1]\) and then use
\eqref{eq:full-line-upper-band-derivative-L2} to obtain
\[
 \E\|H_n-\widetilde H_n\|_{C^1([0,1])}^2
 \leq
 Cn^{5-B^2/4}.
\]
We apply Markov's inequality and use \(B=20\) and
\(\varepsilon_n^{-2}=n^{76}\) to obtain
\[
 \Pp\!\left(
  \|H_n-\widetilde H_n\|_{C^1([0,1])}
  \geq\varepsilon_n
 \right)
 \leq
 Cn^{5-B^2/4+76}
 =
 Cn^{-19},
\]
where
\[
 5-\frac{B^2}{4}+76=5-100+76=-19.
\]
\end{proof}

To turn this \(C^1\) approximation into equality of zero counts, we also
require uniform transversality and endpoint separation for \(H_n\).

\begin{proposition}
\label{prop:full-line-positive-transversality}
There is an event \(\mathcal R_n\) such that
\begin{equation}
 \Pp(\mathcal R_n^c)
 \leq Cn^{-17}
 \label{eq:full-line-positive-transversality-probability}
\end{equation}
and, on \(\mathcal R_n\),
\begin{equation}
 \inf_{u\in[0,1]}
 \max\{|H_n(u)|,|H_n'(u)|\}
 >
 2\delta_n
 \label{eq:full-line-positive-transversality}
\end{equation}
and
\begin{equation}
 \min\{|H_n(0)|,|H_n(1)|\}
 >
 2\varepsilon_n.
 \label{eq:full-line-positive-endpoints}
\end{equation}
\end{proposition}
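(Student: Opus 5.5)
The plan is to repeat, for the Gaussian polynomial \(H_n\) on \([0,1]\), the grid and small-ball argument of Lemma~\ref{lem:comparison-near-critical-small-ball}. The stationarity used there is replaced by two explicit inputs: a uniform lower bound on the determinant of the covariance of \((H_n(v),H_n'(v))\), and polynomial-in-\(n\) moment bounds for the derivatives of \(H_n\). The event \(\mathcal R_n\) will be the intersection of the complements of three bad events: large derivatives, simultaneous small values of \(H_n\) and \(H_n'\), and small endpoint values.

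\emph{Step 1: nondegeneracy.} For \(v\in[0,1]\), independence of the coefficients gives
\(\Var(H_n(v))=\sum_j\kappa_{n,j}^2v^{2j}\),
\(\Cov(H_n(v),H_n'(v))=\sum_j j\kappa_{n,j}^2v^{2j-1}\) and
\(\Var(H_n'(v))=\sum_j j^2\kappa_{n,j}^2v^{2j-2}\).
By the Cauchy--Binet (Lagrange) identity, the determinant equals
\(\sum_{i<j}\kappa_{n,i}^2\kappa_{n,j}^2(j-i)^2v^{2i+2j-2}\). This is at least the \((i,j)=(0,1)\) term, which is \(\kappa_{n,0}^2\kappa_{n,1}^2=1\). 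Hence the joint density of \((H_n(v),H_n'(v))\) is at most \((2\pi)^{-1}\) uniformly in \(v\in[0,1]\). It follows that
\[
\Pp\bigl(|H_n(v)|\leq3\delta_n,\ |H_n'(v)|\leq3\delta_n\bigr)\leq C\delta_n^2=Cn^{-38}.
\]
For the endpoints, \(H_n(0)=\xi_n\) is standard Gaussian, and \(\Var(H_n(1))\geq1\) since \(\kappa_{n,0}=1\). So \(\min\{|H_n(0)|,|H_n(1)|\}\leq2\varepsilon_n\) has probability at most \(C\varepsilon_n=Cn^{-38}\). This disposes of \eqref{eq:full-line-positive-endpoints}.

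\emph{Step 2: derivative suprema.} For \(r\leq3\) and \(u\in[0,1]\), we have \(\E|H_n^{(r)}(u)|^2=\sum_j(j)_r^2\kappa_{n,j}^2u^{2(j-r)}\leq n^{2r+1}\). A sharper bound of order \(n^{2r+1/2}\) follows from \eqref{eq:full-line-reciprocal-Gaussian-upper}, but the crude one suffices. The Gaussian even-moment identity then gives \(\E|H_n^{(r)}(u)|^p\leq C_pn^{p(2r+1)/2}\). We apply Lemma~\ref{lem:comparison-interval-Sobolev} on \([0,1]\) to \(H_n'\) and \(H_n''\), and use Tonelli's theorem, to obtain
\[
\E\Bigl[\sup_{[0,1]}\bigl(|H_n'|+|H_n''|\bigr)^p\Bigr]\leq C_pn^{7p/2}.
\]
Taking \(M=n^{4}\) and Markov's inequality, the event \(\{\sup_{[0,1]}(|H_n'|+|H_n''|)>M\}\) has probability at most \(C_pn^{-p/2}\). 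Choosing the even integer \(p=36\) makes this at most \(Cn^{-18}\).

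\emph{Step 3: grid argument.} On the complement of the derivative event, use the grid in \([0,1]\) of mesh at most \(\delta_n/M\), which has at most \(2+M/\delta_n\leq Cn^{23}\) points. Exactly as in \eqref{eq:comparison-near-critical-event-inclusion}, any \(t_*\) with \(\max\{|H_n(t_*)|,|H_n'(t_*)|\}\leq2\delta_n\) forces a grid point \(v\) with \(|H_n(v)|,|H_n'(v)|\leq3\delta_n\). A union bound with Step 1 gives probability at most \(Cn^{23}n^{-38}=Cn^{-15}\).

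This is the main obstacle: the crude mesh loses two powers relative to the target \(n^{-17}\). I would fix it by tightening Step 2. Using \eqref{eq:full-line-reciprocal-Gaussian-upper}, \(\sum_j j^{2r}\kappa_{n,j}^2\leq C_rn^{r+1/2}\), so \(\E|H_n^{(r)}|^p\leq C_pn^{p(2r+1)/4}\) and \(\sup(|H_n'|+|H_n''|)\) has \(p\)-th moment at most \(C_pn^{7p/4}\). Then take \(M=n^2\) and \(p=72\), so the derivative event has probability at most \(C_pn^{-p/4}=Cn^{-18}\). The grid then has \(O(n^{21})\) points, and the small-ball union bound becomes \(Cn^{21-38}=Cn^{-17}\).

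Collecting the three bad events gives \eqref{eq:full-line-positive-transversality-probability}. On \(\mathcal R_n\), both \eqref{eq:full-line-positive-transversality} and \eqref{eq:full-line-positive-endpoints} hold.
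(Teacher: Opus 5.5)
Your argument in its corrected form is correct and is essentially the paper's proof: the sharpened bound $\Var(H_n^{(r)}(u))\leq C_rn^{r+1/2}$ from \eqref{eq:full-line-reciprocal-Gaussian-upper}, $p=72$, Markov at $n^2$, a grid of $O(n^{21})$ points and a union bound giving $Cn^{-17}$ (and, as you noticed, the crude $n^{2r+1}$ bound of Step~2 cannot reach $n^{-17}$). The only difference is cosmetic: you bound the joint density of $(H_n(v),H_n'(v))$ via the Lagrange identity ($\det\geq1$), whereas the paper uses the smallest eigenvalue $1/3$ of the $j=0,1$ block $A(u)$, and both are valid.
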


\begin{proof}
For \(r\in\{0,1,2,3\}\), we apply
\eqref{eq:full-line-reciprocal-Gaussian-upper} and obtain
\begin{equation}
 \sup_{u\in[0,1]}
 \Var(H_n^{(r)}(u))
 \leq
  \sum_{j=r}^n
 (j)_r^2e^{-j(j-1)/(2n)}
 \leq
 C_rn^{r+1/2}.
 \label{eq:full-line-reciprocal-derivative-variance}
\end{equation}
Indeed, for \(j\geq\max\{2,r\}\), we bound the summand by
\(j^{2r}e^{-j^2/(4n)}\).  We compare with the corresponding integral
and substitute \(x=\sqrt n\,y\) to obtain \(O_r(n^{r+1/2})\).  Since
\(\kappa_{n,0}=\kappa_{n,1}=1\), the remaining indices \(j=0,1\),
when present, contribute at most \(2\).

We take \(p=72\).  We combine the Gaussian even-moment identity with
\eqref{eq:full-line-reciprocal-derivative-variance} and we then apply
Lemma~\ref{lem:comparison-interval-Sobolev} to
\(H_n'\) and \(H_n''\).  We obtain
\begin{equation}
 \E\left[
  \sup_{u\in[0,1]}
  \bigl(
   |H_n'(u)|+|H_n''(u)|
  \bigr)^p
 \right]
 \leq
 C_pn^{7p/4}.
 \label{eq:full-line-reciprocal-derivative-supremum-moment}
\end{equation}
We apply Markov's inequality at \(n^2\) to
\eqref{eq:full-line-reciprocal-derivative-supremum-moment} and obtain
\begin{equation}
 \Pp\!\left(
  \sup_{u\in[0,1]}
  \bigl(
   |H_n'(u)|+|H_n''(u)|
  \bigr)>n^2
 \right)
 \leq Cn^{-18}.
 \label{eq:full-line-reciprocal-derivative-good-event}
\end{equation}

For every \(u\in[0,1]\), the \(j=0,1\) terms contribute
\[
 A(u)=
 \begin{pmatrix}
  1+u^2&u\\
  u&1
 \end{pmatrix}
\]
to the covariance matrix of
\((H_n(u),H_n'(u))\).  Since
\[
 \det A(u)=1,
 \qquad
 \operatorname{tr}A(u)\leq3,
\]
its smallest eigenvalue is at least \(1/3\).  Every remaining
coefficient adds a positive semidefinite matrix.  Hence the joint
Gaussian density is at most \(3/(2\pi)\), uniformly in \(u,n\).

On the complement of the event in
\eqref{eq:full-line-reciprocal-derivative-good-event}, we divide
\([0,1]\) into
\(M_n=\lceil n^2/\delta_n\rceil\) equal subintervals and use their
endpoints as a grid.  If the
left-hand side of
\eqref{eq:full-line-positive-transversality} were at most
\(2\delta_n\), a nearest grid point \(v\) would satisfy
\[
 |H_n(v)|\leq3\delta_n,
 \qquad
 |H_n'(v)|\leq3\delta_n.
\]
We combine the uniform density bound with a union bound over at most
\(2+n^2/\delta_n\) grid points and obtain
\begin{equation}
 \begin{aligned}
 &\Pp\!\left(
  \inf_{u\in[0,1]}
  \max\{|H_n(u)|,|H_n'(u)|\}
  \leq2\delta_n
 \right)\\
 {}\leq{}&
 Cn^{-18}
 +
 C\left(2+\frac{n^2}{\delta_n}\right)\delta_n^2
 \leq
 Cn^{-17}.
 \end{aligned}
 \label{eq:full-line-reciprocal-near-critical-probability}
\end{equation}
Here the leading grid contribution is
\(n^2\delta_n=n^{-17}\).

At both endpoints, the variance of \(H_n\) is at least one because
of its constant term.  We apply the one-dimensional Gaussian density
bound and obtain
\begin{equation}
 \Pp\!\left(
  \min\{|H_n(0)|,|H_n(1)|\}
  \leq2\varepsilon_n
 \right)
 \leq C\varepsilon_n.
 \label{eq:full-line-reciprocal-endpoint-probability}
\end{equation}
We define \(\mathcal R_n\) as the complement of the two bad events in
\eqref{eq:full-line-reciprocal-near-critical-probability} and
\eqref{eq:full-line-reciprocal-endpoint-probability}.  We apply a union
bound to prove \eqref{eq:full-line-positive-transversality-probability}.
\end{proof}

\begin{proof}[Proof of Proposition~\ref{prop:full-line-upper-band-localization}]
We define \(\mathcal A_n\) as the intersection of
\(\mathcal R_n\) with
\[
 \left\{
  \|H_n-\widetilde H_n\|_{C^1([0,1])}
  <\varepsilon_n
 \right\}.
\]
On \(\mathcal A_n\), we apply
Lemma~\ref{lem:comparison-order-preserving-stability} on \([0,1]\)
with
\[
 f=H_n,
 \qquad
 h=\widetilde H_n,
 \qquad
 \delta=\delta_n,
 \qquad
 \varepsilon=\varepsilon_n.
\]
We use the endpoint estimate \eqref{eq:full-line-positive-endpoints}
to conclude that neither polynomial vanishes at \(0\) or \(1\).
 By transversality and the \(C^1\) bound, every zero of either polynomial on
 \([0,1]\) is simple.  We then apply
 Lemma~\ref{lem:comparison-order-preserving-stability} to obtain
\[
 N_{H_n}((0,1))
 =
 N_{\widetilde H_n}((0,1))
\]
on \(\mathcal A_n\).  We combine the probability bounds
\eqref{eq:full-line-upper-band-C1} and
\eqref{eq:full-line-positive-transversality-probability} and apply a
union bound to obtain
\begin{equation}
 \Pp\!\left(
  N_{H_n}((0,1))
  \neq
  N_{\widetilde H_n}((0,1))
 \right)
 \leq Cn^{-17}.
 \label{eq:full-line-positive-upper-band-localization}
\end{equation}

By \eqref{eq:full-line-pair-symmetry}, the bound in
\eqref{eq:full-line-positive-upper-band-localization} also holds with
\((0,1)\) replaced by \((-1,0)\).  Since
\(H_n(0)=\widetilde H_n(0)=\xi_n\neq0\) almost surely,
\[
 \begin{aligned}
 \{N_{H_n}((-1,1))\neq N_{\widetilde H_n}((-1,1))\}
 &\subset
 \{N_{H_n}((0,1))\neq N_{\widetilde H_n}((0,1))\}\\
 &\phantom{\subset{}}\cup
 \{N_{H_n}((-1,0))\neq N_{\widetilde H_n}((-1,0))\}.
 \end{aligned}
\]
 We apply a union bound and use
 \eqref{eq:full-line-exterior-reciprocal-identity} to obtain
 \eqref{eq:full-line-upper-band-localization}.

Almost surely, \(P_n-P_{m_n}\not\equiv0\).  It then has a zero of
multiplicity at least \(m_n+1\) at the origin and at most \(d_n-1\)
nonzero zeros.  Hence, almost surely,
\[
 0\leq N_n(I_n^{\mathrm{out}})\leq n,
 \qquad
 0\leq N_{P_n-P_{m_n}}(I_n^{\mathrm{out}})
 \leq d_n-1\leq n.
\]
Therefore
\[
 |N_n(I_n^{\mathrm{out}})
   -N_{P_n-P_{m_n}}(I_n^{\mathrm{out}})|
 \leq
 n\,
 \mathbf1_{\{
  N_n(I_n^{\mathrm{out}})
  \neq N_{P_n-P_{m_n}}(I_n^{\mathrm{out}})
 \}}.
\]
We apply \eqref{eq:full-line-upper-band-localization} and obtain
\[
 \|N_n(I_n^{\mathrm{out}})
   -N_{P_n-P_{m_n}}(I_n^{\mathrm{out}})\|_{L^2}
 \leq
 Cn(n^{-17})^{1/2}
 =
 Cn^{-15/2}.
\]

We finally use
\(\|Y-\E Y\|_{L^2}\leq2\|Y\|_{L^2}\),
Proposition~\ref{prop:full-line-exterior-second-moment} and
\eqref{eq:full-line-upper-band-coupling-L2}.  We apply the triangle
inequality and obtain
\[
 \begin{aligned}
 &\|N_{P_n-P_{m_n}}(I_n^{\mathrm{out}})
   -\E N_{P_n-P_{m_n}}(I_n^{\mathrm{out}})\|_{L^2}\\
 {}\leq{}&
 \|N_n(I_n^{\mathrm{out}})
   -\E N_n(I_n^{\mathrm{out}})\|_{L^2}\\
 &\mathord{+}\,
 \|(N_{P_n-P_{m_n}}(I_n^{\mathrm{out}})
    -N_n(I_n^{\mathrm{out}}))
   -\E(N_{P_n-P_{m_n}}(I_n^{\mathrm{out}})
    -N_n(I_n^{\mathrm{out}}))\|_{L^2}\\
 {}\leq{}&
 C\log n.
 \end{aligned}
\]
This proves all assertions.
\end{proof}

\subsection{Proofs of Lemmas~\ref{lem:full-line-independent-transfer}
and~\ref{lem:full-line-additive-transfer}}

\begin{proof}[Proof of Lemma~\ref{lem:full-line-independent-transfer}]
We condition on \(V\) and use the independence of \(U\) and \(V\).
For every \(z\in\R\), we obtain
\[
 \begin{aligned}
 &\left|
   \Pp\!\left(
    \frac{U+V-\E(U+V)}{\sqrt{\Var(U)+\Var(V)}}\leq z
   \right)
   -
   \E\Phi\!\left(
    \sqrt{1+\frac{\Var(V)}{\Var(U)}}\,z
    -\frac{V-\E V}{\sqrt{\Var(U)}}
   \right)
  \right|\\
 {}\leq{}&
  d_{\mathrm K}\!\left(
   \frac{U-\E U}{\sqrt{\Var(U)}},
   Z
  \right).
 \end{aligned}
\]
We combine Taylor's theorem about
\(\sqrt{1+\Var(V)/\Var(U)}\,z\) with the centering of \(V\) and
\[
 \sup_{x\in\R}|\Phi''(x)|
 =
 \frac{1}{\sqrt{2\pi e}}
\]
to obtain
\[
 \left|
  \E\Phi\!\left(
   \sqrt{1+\frac{\Var(V)}{\Var(U)}}\,z
   -\frac{V-\E V}{\sqrt{\Var(U)}}
  \right)
  -
  \Phi\!\left(
   \sqrt{1+\frac{\Var(V)}{\Var(U)}}\,z
  \right)
 \right|
 \leq
 \frac{1}{2\sqrt{2\pi e}}
 \frac{\Var(V)}{\Var(U)}.
\]
The vanishing first-order term is what replaces a
standard-deviation ratio by a variance ratio.  We combine the mean
value theorem with \(\sqrt{1+x}-1\leq x/2\) for \(x\geq0\) and obtain
\[
 \left|
  \Phi\!\left(
   \sqrt{1+\frac{\Var(V)}{\Var(U)}}\,z
  \right)
  -
  \Phi(z)
 \right|
 \leq
 \frac{\sqrt{1+\Var(V)/\Var(U)}-1}{\sqrt{2\pi e}}
 \leq
 \frac{1}{2\sqrt{2\pi e}}
 \frac{\Var(V)}{\Var(U)}.
\]
We add the three estimates and take the supremum over \(z\) to prove
\eqref{eq:full-line-independent-transfer}.
\end{proof}

\begin{proof}[Proof of Lemma~\ref{lem:full-line-additive-transfer}]
We combine the centered identity
\[
 Y-\E Y=(X-\E X)+(T-\E T)
\]
with the reverse triangle inequality in \(L^2\) to obtain
\begin{equation}
 \left|\sqrt{\Var(Y)}-\sqrt{\Var(X)}\right|
 \leq
 \|T-\E T\|_{L^2}.
 \label{eq:full-line-additive-sd-comparison}
\end{equation}
Thus
\[
\frac12\sqrt{\Var(X)}
\leq
\sqrt{\Var(Y)}
\leq
\frac32\sqrt{\Var(X)},
\]
so \(\Var(Y)>0\).  For every \(z\in\R\), we combine the bound
\(|T-\E T|\leq r\) on \(G\) with \(\Pp(G^c)\leq p\) and obtain
\begin{equation}
 \begin{aligned}
 &\Pp\!\left(
 \frac{X-\E X}{\sqrt{\Var(X)}}
 \leq
 \sqrt{\frac{\Var(Y)}{\Var(X)}}\,z
 -\frac{r}{\sqrt{\Var(X)}}
 \right)-p\\
 {}\leq{}&
 \Pp\!\left(
  \frac{Y-\E Y}{\sqrt{\Var(Y)}}
  \leq z
 \right)\\
 {}\leq{}&
 \Pp\!\left(
  \frac{X-\E X}{\sqrt{\Var(X)}}
  \leq
  \sqrt{\frac{\Var(Y)}{\Var(X)}}\,z
  +\frac{r}{\sqrt{\Var(X)}}
 \right)+p.
 \end{aligned}
 \label{eq:full-line-additive-probability-sandwich}
\end{equation}
We use \eqref{eq:full-line-additive-sd-comparison} to obtain
\[
 \sqrt{\frac{\Var(Y)}{\Var(X)}}\in[1/2,3/2],
 \qquad
 \left|
  \sqrt{\frac{\Var(Y)}{\Var(X)}}-1
 \right|
 \leq
 \frac{\|T-\E T\|_{L^2}}{\sqrt{\Var(X)}}.
\]
For either choice of sign, we apply the mean value theorem and use the
calculation in the proof of
Lemma~\ref{lem:finite-range-exact-coupling-transfer} to obtain
\[
 \begin{aligned}
 &\sup_{z\in\R}
  \left|
   \Phi\!\left(
    \sqrt{\frac{\Var(Y)}{\Var(X)}}\,z
    \mathbin{\pm}\frac{r}{\sqrt{\Var(X)}}
   \right)
   -
   \Phi(z)
  \right|\\
 {}\leq{}&
  \frac{\|T-\E T\|_{L^2}}{\sqrt{\Var(X)}}
  +
  \frac{r}{\sqrt{2\pi\Var(X)}}
  \leq
  \frac{\|T-\E T\|_{L^2}+r}{\sqrt{\Var(X)}}.
 \end{aligned}
\]
We apply the definition of \(d_{\mathrm K}\) to the two outer
probabilities in
\eqref{eq:full-line-additive-probability-sandwich} and take the
supremum over \(z\) to obtain \eqref{eq:full-line-additive-transfer}.
\end{proof}

This completes the proofs of the additional results used in
Section~\ref{sec:proof-full-line-main-theorem}.

\appendix

\section{External results used in the proofs}
\label{app:external-results}

We collect here the external results used in the preceding arguments,
grouping them by source in the order of their first use.  The statements
retain their original hypotheses, conclusions and notation, apart from
minor typographical adjustments.  All notation introduced within an
appended lemma is local to that statement.  Accordingly, the same symbol
may have a different meaning in the main text or in another appended
lemma without redefining any main-text notation.  Whenever a source
symbol is identified with an object used in this paper, we state that
identification explicitly.

The following result gives the variance asymptotics and asymptotic
Gaussianity of linear statistics of the real zeros of Gaussian Weyl
polynomials on growing windows.

\begin{lemma}
\label{lem:appendix-do-vu-thm5}
\textup{\cite[Theorem~5]{DoVu2020}}.
Let \(\mathcal Z_n\) be the multiset of real zeros of the Gaussian Weyl
polynomial \(P_n\).  There is a finite constant \(K>0\) such that the
following holds.  Let \(h:\R\to\R\) be bounded, nonzero and supported
on \([-1,1]\).  Suppose that \(h\) has finitely many discontinuities
and is H\"older continuous on each interval in the partition of
\([-1,1]\) determined by those discontinuities.  If
\(R_n\to\infty\) and
\[
 R_n\leq n^{1/2}+o(n^{1/4}),
\]
set
\[
 \mathcal N_n(h,R_n)
 =
 \sum_{x\in\mathcal Z_n}h(x/R_n).
\]
Then
\[
 \lim_{n\to\infty}
 \frac{\Var(\mathcal N_n(h,R_n))}
 {R_n\|h\|_{L^2(\R)}^2}
 =K,
\]
and
\[
 \frac{\mathcal N_n(h,R_n)-\E\mathcal N_n(h,R_n)}
 {\sqrt{\Var(\mathcal N_n(h,R_n))}}
 \xrightarrow{\mathrm d}\mathcal N(0,1).
\]
The constant \(K\) is the constant denoted by \(V_\infty\) in the
present paper.
\end{lemma}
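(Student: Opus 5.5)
The plan is to reduce both assertions to the stationary Weyl process \(q_\infty\), using the comparison machinery of Sections~\ref{sec:bulk-result-proofs} and~\ref{sec:full-line-BE}. To avoid circularity, no step may use \eqref{eq:intro-growing-window-variance}.

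\emph{Splitting the statistic.} Set \(R=R_n\) and \(s_n=20\sqrt{\log n}\), and write \(\mathcal N_n(h,R)=S_1+S_2+S_3\). Here \(S_1\) sums \(h(x/R)\) over zeros \(x\in[-\sqrt n+s_n,\sqrt n-s_n]\cap[-R,R]\); \(S_2\) sums over the edge strips \(\sqrt n-s_n<|x|\leq\sqrt n\); and \(S_3\) sums over \(|x|>\sqrt n\), which only matters when \(R>\sqrt n\).

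\emph{The edge and exterior pieces.} Since \(h\) is bounded, \(|S_2|\leq\|h\|_\infty N_n(I_n^{\mathrm{edge}})\) and \(|S_3|\leq\|h\|_\infty N_n(I_n^{\mathrm{out}})\). By Proposition~\ref{prop:full-line-edge-strip-count} and Proposition~\ref{prop:full-line-exterior-second-moment}, both have \(L^2\) norm \(O(\log n)\). Because the hypothesis on \(R_n\) only allows \(R_n-\sqrt n=o(n^{1/4})\), these pieces are negligible against the \(\sqrt{R_n}\) scale. So both the variance limit and the CLT reduce to \(S_1\). If \(R_n\leq\sqrt n-s_n\), the whole window already lies in the bulk and \(S_2=S_3=0\).

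\emph{Transfer to the stationary process.} On the bulk window, apply the coupling argument of Proposition~\ref{prop:logarithmic-edge-root-pairing}, that is, Lemma~\ref{lem:polynomial-series-growing-C1} together with Lemma~\ref{lem:comparison-order-preserving-stability}. Its proof gives a good event of probability \(1-O(n^{-17})\) on which the zeros of \(P_n\) and \(P_\infty\) are matched in order within distance \(n^{-19}\). For the Hölder continuous parts of \(h\), each matched pair changes \(h(x/R)\) by \(O(n^{-19\alpha}R^{-\alpha})\), which is negligible after summing over \(O(R)\) zeros. At each of the finitely many discontinuities of \(h\), Lemma~\ref{lem:comparison-near-critical-small-ball} (the endpoint bound) shows that, outside a small-probability event, no zero of \(q_\infty\) lies within \(n^{-19}\) of the jump point, so no matched pair straddles it. With Lemma~\ref{lem:rare-disagreement-moment-transfer} and the degree bound, \(S_1\) then differs in \(L^2\) by \(o(1)\) from the stationary statistic \(\sum_{P_\infty(x)=0}h(x/R)\) restricted to the same interval.

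\emph{Variance limit for the stationary statistic.} Use the two-point Kac--Rice formula for \(q_\infty\), whose covariance is \(e^{-u^2/2}\). The truncated two-point correlation \(\rho_2(u)-\pi^{-2}\) is integrable with Gaussian decay. Hence
\[
 \Var\Bigl(\sum h(x/R)\Bigr)=R\|h\|_2^2\Bigl(\pi^{-1}+\int_{\R}\bigl(\rho_2(u)-\pi^{-2}\bigr)\dd u\Bigr)+o(R),
\]
by dominated convergence, since \(h(\cdot/R)\) is continuous at almost every point. The bracket defines \(K=V_\infty\), and it is positive because the case \(h=\mathbf 1_{[0,1]}\) reproduces \eqref{eq:stationary-linear-variance-all-L}.

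\emph{CLT.} Replace \(q_\infty\) by the finite-range process \(q_{\infty,\mu_R}\) using Proposition~\ref{prop:finite-range-zero-matching}; the discontinuities of \(h\) are handled by the same endpoint argument as above. Then decompose the statistic over unit blocks as in \eqref{eq:finite-range-centered-block-decomposition}, with summands \(\sum_{x\in\mathcal J_j}h(x/R)\). These are still \(r_m\)-dependent with uniformly bounded third moments, by Proposition~\ref{prop:finite-range-uniform-local-moments}, since \(|h|\) is bounded. The Chen--Shao bound then gives Kolmogorov distance \(O(\log R/\sqrt R)\to0\), using the linear variance lower bound from the previous step.

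\emph{Main obstacle.} I expect the hardest step to be the Kac--Rice variance computation: verifying integrability and the sign structure of the truncated two-point function and justifying the limit uniformly. The discontinuities of \(h\) in the matching step are routine given Lemma~\ref{lem:comparison-near-critical-small-ball}.
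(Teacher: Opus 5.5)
The paper gives no proof of this statement. It is quoted from Do--Vu and used only to obtain \eqref{eq:intro-growing-window-variance}. Your reduction to the stationary series is not circular, since Propositions~\ref{prop:stationary-Weyl-zero-count-results}, \ref{prop:logarithmic-edge-root-pairing}, \ref{prop:full-line-edge-strip-count} and~\ref{prop:full-line-exterior-second-moment} do not depend on it. The overall scheme works, but one step is missing, and it is exactly where the hypothesis on \(R_n\) enters.

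Your transfer produces the stationary statistic restricted to \(|x|\leq\sqrt n-s_n\). Your variance and CLT steps, however, treat the unrestricted statistic \(\sum_{P_\infty(x)=0}h(x/R_n)\). When \(R_n>\sqrt n-s_n\), these differ by the sum \(D\) of \(h(x/R_n)\) over zeros of \(P_\infty\) with \(\sqrt n-s_n<|x|\leq R_n\), that is, over two strips of length at most \(s_n+(R_n-\sqrt n)_+\). By monotonicity and \eqref{eq:stationary-zero-count-fourth-moment}, \(\|D\|_{L^2}\leq C\|h\|_\infty\bigl(1+s_n+(R_n-\sqrt n)_+\bigr)\). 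This is \(o(n^{1/4})=o(\sqrt{R_n})\) precisely because \((R_n-\sqrt n)_+=o(n^{1/4})\). Your sentence invoking that hypothesis for \(S_2\) and \(S_3\) is misplaced: those pieces are \(O(\log n)\) in \(L^2\) for every \(R_n\), and they are nonzero only when \(\sqrt{R_n}\asymp n^{1/4}\).

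Two further corrections.

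First, the step you call the main obstacle is not where the difficulty lies. You take \(K>0\) from \eqref{eq:stationary-linear-variance-all-L}, that is, from Lemma~\ref{lem:appendix-do-vu-thm6}. But that lemma already gives \(\Var\sim KR\|h\|_2^2\) and the CLT for \(\sum_{P_\infty(x)=0}h(x/R)\) for every bounded compactly supported \(h\). Once \(D\) is controlled, Slutsky's lemma finishes the proof, and the two-point Kac--Rice computation and the finite-range Chen--Shao step are redundant. If instead you want to avoid Do--Vu altogether, Kac--Rice gives only the formula \(K=\pi^{-1}+\int_{\R}(\rho_2-\pi^{-2})\). The real content then becomes \(K>0\), which the paper obtains only from Do--Vu, through \eqref{eq:intro-known-Weyl-results}.

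Second, with \(s_n=20\sqrt{\log n}\) the paired zeros are \(n^{-19}\) apart, so the H\"older error has \(L^2\) norm at most \(CR_n^{1-\alpha}n^{-19\alpha}\). For \(R_n\asymp\sqrt n\), this is \(o(\sqrt{R_n})\) only if \(\alpha>1/78\), and it is \(o(1)\), as you assert, only if \(\alpha>1/39\). For smaller \(\alpha\), take \(\varepsilon_n=n^{-k}\) with \(k\) large and enlarge \(B\) in \eqref{eq:polynomial-series-growing-C1-probability}; the edge and exterior estimates tolerate this. Likewise, \eqref{eq:comparison-endpoint-small-ball} controls \(|q_\infty(Ra)|\) at a jump \(a\) of \(h\), not the zeros near \(Ra\). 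Use \eqref{eq:expected-zero-count-Kac-Rice} instead, which bounds the probability of a zero of \(q_\infty\) within \(\eta\) of \(Ra\) by \(2\eta/\pi\).
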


The next result is the counterpart of
Lemma~\ref{lem:appendix-do-vu-thm5} for the full Weyl series.  It supplies
variance asymptotics,
asymptotic Gaussianity and moment bounds for linear statistics of the
zeros of the full Weyl series.

\begin{lemma}
\label{lem:appendix-do-vu-thm6}
\textup{\cite[Theorem~6]{DoVu2020}}.
Let \(\mathcal Z_\infty\) be the multiset of real zeros of the full
Gaussian Weyl series \(P_\infty\).  For a bounded, nonzero,
compactly supported function \(h:\R\to\R\) and \(R>0\), set
\[
 \mathcal N_\infty(R,h)
 =
 \sum_{x\in\mathcal Z_\infty}h(x/R).
\]
There is a finite constant \(K>0\) such that
\[
 \lim_{R\to\infty}
 \frac{\Var(\mathcal N_\infty(R,h))}
 {R\|h\|_{L^2(\R)}^2}
 =K
\]
and
\[
 \frac{\mathcal N_\infty(R,h)-\E\mathcal N_\infty(R,h)}
 {\sqrt{\Var(\mathcal N_\infty(R,h))}}
 \xrightarrow{\mathrm d}\mathcal N(0,1).
\]
Moreover, for every integer \(k\geq1\), there is a finite constant
\(C_{h,k}\) such that, for \(R\geq1\),
\[
 \E\bigl[\mathcal N_\infty(R,h)^k\bigr]
 \leq C_{h,k}R^k.
\]
The constant \(K\) is the same as in
Lemma~\ref{lem:appendix-do-vu-thm5}.
\end{lemma}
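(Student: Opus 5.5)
The statement is quoted from \cite[Theorem~6]{DoVu2020}, so in the paper it is simply invoked. A self-contained proof would work entirely with the stationary process \(q_\infty\), whose zeros coincide with those of \(P_\infty\). The proof splits into three parts: moments, variance, and CLT.

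\textbf{Moments.} I would first prove the moment bound, since it is needed to justify the other two parts. Cover \(\operatorname{supp}h(\cdot/R)\) by \(O_h(R)\) unit intervals. Then \(|\mathcal N_\infty(R,h)|\leq\|h\|_\infty\sum_j N_\infty(J_j)\), and Minkowski's inequality together with stationarity reduces the bound to \(\E N_\infty([0,1])^k<\infty\). I would obtain this finiteness from the \(k\)-point Kac--Rice formula for factorial moments. The relevant jets are uniformly nondegenerate by the Gram-matrix computation \eqref{eq:stationary-order-three-jet-matrix}, extended to order \(2k-1\). The divided-difference argument then controls the correlation functions near the diagonal, as in the Gass--Stecconi results already used in Proposition~\ref{prop:finite-range-uniform-local-moments}.

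\textbf{Variance.} Next I would write, via the one- and two-point Kac--Rice formulas,
\[
 \Var(\mathcal N_\infty(R,h))=\frac1\pi\int h(x/R)^2\dd x+\iint h(x/R)h(y/R)\bigl(\rho_2(x-y)-\pi^{-2}\bigr)\dd x\dd y .
\]
Here \(\rho_2(u)-\pi^{-2}\) is bounded by \(Ce^{-u^2/4}\) for large \(|u|\), because the covariance \(e^{-u^2/2}\) and its derivatives decay like Gaussians. Near \(u=0\), \(\rho_2\) is bounded (indeed \(\rho_2(u)\sim c|u|\)). Hence \(\rho_2-\pi^{-2}\in L^1(\R)\). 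Substituting \(x=Rs\), \(y=Rs+u\), and using that translation is continuous in \(L^2\) (valid for bounded, compactly supported \(h\)), dominated convergence gives
\[
 \Var/R\to\|h\|_2^2\Bigl(\frac1\pi+\int(\rho_2-\pi^{-2})\Bigr)=:K\|h\|_2^2 .
\]
The constant \(K\) depends on neither \(h\) nor the choice between \(P_n\) and \(P_\infty\).

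\textbf{Main obstacle: \(K>0\), then the CLT.} I expect the hardest step to be proving \(K>0\). The first thing I would try is the Wiener--It\^o chaos expansion of the zero count (Kratz--Le\'on). The second-chaos component already contributes \(R\|h\|_2^2\cdot c\) with an explicit integral of \(e^{-u^2}\)-type terms, which I would check is strictly positive. The other chaos components are orthogonal and therefore cannot cancel it. With \(K>0\) in hand, the CLT follows from either of two routes. One is the fourth-moment theorem applied to the truncated chaos expansion, with a uniform \(L^2\)-tail bound. The other is the route already used in this paper: replace \(q_\infty\) by \(q_{\infty,m}\) with \(m\) fixed but large, which gives an \(m\)-dependent block sum of \(h\)-weighted counts with a CLT. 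The \(L^2\) error of this replacement, divided by \(\sqrt R\), tends to \(0\) as \(m\to\infty\) uniformly in \(R\), and a standard approximation argument then yields the CLT.
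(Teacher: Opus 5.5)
The paper does not prove this lemma. It imports it from Do--Vu and uses only the moment bound and the variance limit, in Proposition~\ref{prop:stationary-Weyl-zero-count-results}. Your outline is therefore a genuinely different, self-contained route through standard stationary-process tools: Kac--Rice factorial moments with jet nondegeneracy, the two-point function $\rho_2$, and the Kratz--Le\'on chaos expansion. The moment and variance parts are sound as sketched. The variance argument needs nothing of $h$ beyond boundedness and compact support, because only continuity of translation in $L^2$ enters.

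Your ``main obstacle'' closes quickly. The second-chaos component is $(2\pi)^{-1}\int h(t/R)\bigl(q_\infty'(t)^2-q_\infty(t)^2\bigr)\dd t$. Its variance is asymptotic to $R\|h\|_2^2(2\pi^2)^{-1}\int_\R(r''^2-2r'^2+r^2)\dd u$ with $r(u)=e^{-u^2/2}$. That integral equals $3\sqrt\pi/4$, so $K\geq3/(8\pi^{3/2})>0$. What the citation buys the paper is that it avoids verifying chaos convergence and uniform tail bounds. It also gets for free the identification of $K$ with the constant of Lemma~\ref{lem:appendix-do-vu-thm5}.

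Two points need repair.

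First, that identification does not follow from your stationary computation. Theorem~5 concerns $P_n$, whose zero process is not stationary, so your claim that $K$ does not depend on the choice between $P_n$ and $P_\infty$ is an assertion rather than a consequence. You can close it inside the paper:
\begin{itemize}
\item apply Lemma~\ref{lem:appendix-do-vu-thm5} with $h=\mathbf 1_{[-1,1]}$ and $R_n=L_n$;
\item apply Corollary~\ref{cor:logarithmic-edge-L2-comparison} with $I=I_n$, which relies only on your fourth-moment bound, so there is no circularity.
\end{itemize}
Both constants are independent of $h$ and of the window, so this one instance suffices.

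Second, your route B for the CLT is misstated. For fixed $m$, Kac--Rice gives $\E N_{\infty,m}([0,R])-\E N_\infty([0,R])=R(\|g_m'\|_2-1)/\pi$. Hence the uncentered $L^2$ error divided by $\sqrt R$ grows like $\sqrt R$ in general, and cannot tend to $0$ uniformly in $R$. What you actually need is that the variance of the \emph{centered} difference of the $h$-weighted statistics of $q_{\infty,m}$ and $q_\infty$ is at most $\epsilon(m)R$, with $\epsilon(m)\to0$.

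The paper's matching lemmas do not give this. Applied on the whole window, their failure probability $C(\ell+1)e^{-cm^2}\varepsilon^{-2}$ is useless for fixed $m$ and large $\ell$. Applied on unit blocks, they make each blockwise difference small in $L^2$, but they do not give the needed uniform-in-$m$, summable decay of covariances between distant blockwise differences. That decay would have to come from a separate argument, for instance a two-point Kac--Rice bound for the pair $(q_\infty,q_{\infty,m})$. Route A (chaos plus the fourth-moment theorem with the Kratz--Le\'on tail bound) is the one that goes through as stated.
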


The next result is a version of Bulinskaya's lemma.  It rules out a
critical point at a prescribed level under a locally uniform density
bound.

\begin{lemma}
\label{lem:appendix-azais-wschebor-prop1-20}
\textup{\cite[Proposition~1.20]{AzaisWschebor2009}}.
Let \(X=\{X(t):t\in I\}\) be a stochastic process with \(C^1\)
sample paths on an interval \(I\subset\R\) and fix \(u\in\R\).
Suppose that, for every \(t\in I\), the random variable \(X(t)\) has
a density \(p_{X(t)}(x)\) that is bounded as \(t\) ranges over a
compact subset of \(I\) and \(x\) ranges over a neighborhood of
\(u\).  Then
\[
 \Pp\bigl(\{t\in I:X(t)=u,\ X'(t)=0\}\neq\varnothing\bigr)=0.
\]
\end{lemma}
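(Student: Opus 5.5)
The plan is a discretization argument on compact pieces of \(I\). It needs only that the paths are \(C^1\) and that the one-dimensional densities are locally bounded. First, I will reduce to a compact subinterval: \(I\) is a countable union of compact intervals \(J\), and a countable union of null events is null. So it suffices to show that, for every compact \(J\subset I\), the event
\[
 E_J=\{\exists t\in J:\ X(t)=u,\ X'(t)=0\}
\]
is contained in measurable sets of arbitrarily small probability. This also sidesteps any measurability question for \(E_J\), since the probability space is complete. By hypothesis there are \(\rho>0\) and \(D<\infty\) such that \(p_{X(t)}(x)\leq D\) for all \(t\in J\) and \(|x-u|\leq\rho\).

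Next, I will use the uniform continuity of \(X'\) on \(J\). Let \(\omega(h)=\sup\{|X'(s)-X'(t)|:s,t\in J,\ |s-t|\leq h\}\). Because the paths are \(C^1\), \(\omega(h)\to0\) pathwise as \(h\to0\). The supremum can be taken over rational points, so \(\omega(h)\) is measurable. Fix \(\eta\in(0,1)\) and an integer \(N\), and split \(J\) into \(N\) subintervals of length \(|J|/N\) with left endpoints \(t_1,\ldots,t_N\). Suppose \(\omega(|J|/N)\leq\eta\) and some \(t_*\) in the \(k\)th subinterval satisfies \(X(t_*)=u\) and \(X'(t_*)=0\). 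Then \(|X'(s)|\leq\eta\) on that subinterval, and the fundamental theorem of calculus gives
\[
 |X(t_k)-u|\leq\eta|J|/N.
\]
This yields the event inclusion
\[
 E_J\subseteq\{\omega(|J|/N)>\eta\}\cup\bigcup_{k=1}^{N}\{|X(t_k)-u|\leq\eta|J|/N\}.
\]

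Now I will estimate the probability. For \(N\) large we have \(\eta|J|/N\leq\rho\), so the density bound gives \(\Pp(|X(t_k)-u|\leq\eta|J|/N)\leq 2D\eta|J|/N\). A union bound then gives
\[
 \Pp^*(E_J)\leq\Pp(\omega(|J|/N)>\eta)+2D|J|\eta.
\]
Letting \(N\to\infty\), the first term tends to zero by pathwise uniform continuity and dominated convergence. Letting \(\eta\to0\) afterwards gives \(\Pp^*(E_J)=0\).

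The only delicate point is the step in the second paragraph. A naive grid argument pays a factor \(N\) for the number of grid points and gains only \(1/N\) from the small-ball probability, which yields no decay. The gain has to come from \(|X'|\) being small near a critical zero; this supplies the extra factor \(\eta\), with \(\eta\) controlled through the modulus of continuity of \(X'\) rather than through any moment assumption. That is why only \(C^1\) paths and a bounded density are needed. Everything else is routine.
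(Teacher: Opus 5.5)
Your proof is correct and is essentially the standard proof of this result (Bulinskaya's lemma). The paper gives no proof of its own and simply quotes \cite[Proposition~1.20]{AzaisWschebor2009}, where the argument is the same as yours: restrict to a compact $J$, use the modulus of continuity of $X'$ to force $|X(t_k)-u|\leq\eta|J|/N$ at some grid point, and sum the local density bound over the $N$ grid points to get $\Pp(\omega(|J|/N)>\eta)+2D|J|\eta$.
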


The Gaussian Rice formula below expresses factorial moments of the
number of level crossings in terms of joint densities and conditional
derivative moments.

\begin{lemma}
\label{lem:appendix-azais-wschebor-thm3-2}
\textup{\cite[Theorem~3.2]{AzaisWschebor2009}}.
Let \(X=\{X(t):t\in I\}\) be a Gaussian process with \(C^1\)
sample paths on an interval \(I\subset\R\) and let \(k\geq1\) be an
integer.  Suppose that, for every \(k\) pairwise distinct points
\(t_1,\ldots,t_k\in I\), the Gaussian vector
\((X(t_1),\ldots,X(t_k))\) is nondegenerate.  For \(u\in\R\), let
\(N_u=N_u(X,I)\) be the number of points \(t\in I\) at which
\(X(t)=u\) and define
\[
 m^{[k]}=
 \begin{cases}
  m(m-1)\cdots(m-k+1),&m\geq k,\\
  0,&m<k.
 \end{cases}
\]
Then
\[
 \begin{aligned}
 \E\bigl[N_u^{[k]}\bigr]
 ={}&
 \int_{I^k}
 \E\!\left[
  \left|X'(t_1)\cdots X'(t_k)\right|
  \,\middle|\,
  X(t_1)=\cdots=X(t_k)=u
 \right]\\
 &\qquad\times
 p_{X(t_1),\ldots,X(t_k)}(u,\ldots,u)
 \dd t_1\cdots\dd t_k.
 \end{aligned}
\]
\end{lemma}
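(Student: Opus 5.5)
This statement is the Gaussian Rice formula quoted from Aza\"is--Wschebor, so I would reprove it by the standard approximation-plus-area-formula route. The idea is to establish the identity on compact pieces of \(I^k\) away from the diagonal, and then let those pieces exhaust \(I^k\) minus the diagonal.

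\textbf{Step 1 (reduction to distinct tuples).} I would first observe that \(N_u^{[k]}\) equals the number of ordered \(k\)-tuples \((t_1,\ldots,t_k)\) of pairwise distinct points of \(I\) with \(X(t_i)=u\) for all \(i\). I would then use Lemma~\ref{lem:appendix-azais-wschebor-prop1-20}: nondegeneracy at \(k=1\) gives bounded one-point densities. It follows that, almost surely, no \(t\) has \(X(t)=u\) and \(X'(t)=0\), and \(u\) is not attained at the endpoints of \(I\). Hence all \(u\)-crossings are transversal and isolated on compact subintervals. For \(\eta>0\) set \(D_\eta=\{t\in J^k:\ |t_i-t_j|\geq\eta \text{ for all } i\neq j\}\), with \(J\subset I\) compact. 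The count of zero tuples in \(D_\eta\) increases to \(N_u^{[k]}\) as \(\eta\downarrow0\) and \(J\uparrow I\). The right-hand integrand is nonnegative, so by monotone convergence it suffices to prove the formula with \(I^k\) replaced by \(D_\eta\).

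\textbf{Step 2 (Kac/Banach approximation on \(D_\eta\)).} For \(\delta>0\) I would define
\[
 \mathcal N_\delta=\int_{D_\eta}\prod_{i=1}^k\frac{\mathbf 1_{\{|X(t_i)-u|<\delta\}}|X'(t_i)|}{2\delta}\dd t_1\cdots\dd t_k .
\]
Each transversal crossing is isolated. Therefore \(\mathcal N_\delta\) converges almost surely to the number of zero tuples in \(D_\eta\), up to boundary tuples of \(D_\eta\), which occur with probability zero. By the one-dimensional Banach (area) formula applied coordinatewise, \(\mathcal N_\delta\) also equals the average of \(\#\{t\in D_\eta: X(t_i)=v_i \text{ for all } i\}\) over \(v\in(u-\delta,u+\delta)^k\). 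Taking expectations and conditioning by Gaussian regression gives
\[
 \E\mathcal N_\delta=\int_{D_\eta}\frac{1}{(2\delta)^k}\int_{(u-\delta,u+\delta)^k}\E\!\left[\prod_i|X'(t_i)|\,\middle|\,X(t)=v\right]p_{X(t)}(v)\dd v\dd t .
\]
On \(D_\eta\), nondegeneracy and compactness give a covariance matrix of \((X(t_1),\ldots,X(t_k))\) with smallest eigenvalue bounded below. Hence the joint density and the regression coefficients are continuous and bounded in \((t,v)\), and \(\E\mathcal N_\delta\) converges to the Rice integral over \(D_\eta\).

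\textbf{Step 3 (interchange of limits) --- the main obstacle.} Fatou's lemma immediately gives that the left side is at most the Rice integral over \(D_\eta\). For the reverse inequality I would use the level-averaging representation from Step~2. The quantity \(v\mapsto \E\#\{\text{tuples in }D_\eta\text{ at levels }v\}\) can be bounded by the Rice integral at level \(v\): apply the Fatou inequality just proved at level \(v\) instead of \(u\). That bound is continuous in \(v\), which makes \(\{\mathcal N_\delta\}\) uniformly integrable. I expect this uniform-integrability step to be the delicate point: one must check that the \(\le\) inequality holds uniformly for levels near \(u\), and that the conditional moments \(\E[\prod|X'(t_i)|\mid X(t)=v]\) stay bounded on \(D_\eta\times[u-1,u+1]^k\). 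Both follow from the uniform eigenvalue lower bound on \(D_\eta\) and the \(C^1\) paths, via Gaussian regression. With uniform integrability established, \(\E\mathcal N_\delta\) converges to the expected count in \(D_\eta\), and Step~1 completes the proof.
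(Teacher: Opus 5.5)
The paper does not prove this statement. It quotes it from Aza\"is--Wschebor and uses it only with \(k=1\), to compute \(\E\mathcal N_X(I)\). Judged on its own terms, your Steps~1--2 and the Fatou inequality are sound. (On \(D_\eta\), which is not a product set, you should invoke the \(k\)-dimensional area formula for \(t\mapsto(X(t_1),\ldots,X(t_k))\) rather than the one-dimensional formula coordinatewise.) Write \(C(v)\) for the number of \(t\in D_\eta\) with \(X(t_i)=v_i\) for all \(i\), and \(R_\eta(v)\) for the Rice integral over \(D_\eta\) at level \(v\). What you have then proved is only \(\E C(u)\le R_\eta(u)\).

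The reverse inequality in Step~3 does not follow. Fatou at nearby levels gives \(\E C(v)\le R_\eta(v)\) with \(R_\eta\) continuous, hence \(\sup_\delta\E\mathcal N_\delta<\infty\). But a bound on first moments does not give uniform integrability. That would require control of the tails of \(\mathcal N_\delta\), for instance a bound on \(\E\sup_{|v-u|\le\delta_0}C(v)\) or on a moment of order greater than one, and nothing in your argument supplies this. In fact \(\mathcal N_\delta\ge0\), \(\mathcal N_\delta\to C(u)\) almost surely and \(\E\mathcal N_\delta\to R_\eta(u)\). So by Scheff\'e--Vitali, uniform integrability of \(\{\mathcal N_\delta\}\) is equivalent to \(\E C(u)=R_\eta(u)\), and the step is circular. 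The level-averaging representation does not rescue it. The area formula gives \(\E C(v)=R_\eta(v)\) only for almost every \(v\), and passing to the fixed level \(u\) would need upper semicontinuity of \(v\mapsto\E C(v)\) at \(u\), which is the same missing ingredient.

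You need a genuine lower-bound mechanism. One option is a pathwise \emph{lower} bound whose expectation you can compute. Take a grid of mesh \(h\) on a compact \(J\). Every cell on which \(X-u\) changes sign contains a crossing, and distinct cells contain distinct crossings. Hence the number of ordered \(k\)-tuples of sign-change cells at pairwise set-distance at least \(\eta\) is at most \(C(u)\). Its expectation is a sum of probabilities that \(X-u\) changes sign in each of \(k\) separated cells. These probabilities are \(h^k\) times the Rice integrand plus \(o(h^k)\), uniformly on the separated region, by Gaussian regression of the difference quotients on \((X(t_1),\ldots,X(t_k))\); the nondegeneracy there is uniform. Letting \(h\to0\) gives \(\E C(u)\ge R_\eta(u)\), and then \(\eta\to0\), \(J\uparrow I\) finishes the proof.

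Alternatively, localize near a distinct tuple \(t^0\) and regress on \(\xi=(X(t^0_1),\ldots,X(t^0_k))\). Write \(X(s)=\sum_j a_j(s)\xi_j+Y(s)\), with \(Y\) independent of \(\xi\) and \(a_j(t^0_i)=\delta_{ij}\). The level-\(u\) tuples near \(t^0\) are then exactly the solutions of \(Z(t)=\xi\) for a \(C^1\) random map \(Z\) independent of \(\xi\). The area formula for \(Z\), integrated against the density of \(\xi\), yields the formula at every level \(u\), with no limit interchange at all.
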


The next result gives a jet-nondegeneracy criterion ensuring finite
moments for zero counts of Gaussian fields.

\begin{lemma}
\label{lem:appendix-gass-stecconi-thm1-1}
\textup{\cite[Theorem~1.1]{GassStecconi2024}}.
Let \(p,d\) be positive integers and let \(\mathcal U\) be an open
subset of \(\R^d\), respectively of \(\mathbb C^d\).  Let \(F\) be
a Gaussian random field from \(\mathcal U\) to \(\R^d\), respectively
to \(\mathbb C^d\), with almost surely \(C^p\), respectively
holomorphic, sample paths.  Suppose that, for every
\(x\in\mathcal U\), the Gaussian vector
\[
 \bigl(\partial^\alpha F(x)\bigr)_{|\alpha|\leq p-1}
\]
is nondegenerate.  If
\(Z(F,K)=\{x\in K:F(x)=0\}\), then every compact set
\(K\subset\mathcal U\) satisfies
\[
 \E\bigl[\#Z(F,K)^p\bigr]<\infty.
\]
\end{lemma}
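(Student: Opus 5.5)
The plan is to bound the $p$-th factorial moment $\E[\#Z(F,K)^{[p]}]$, which suffices because $\#Z^p$ is a finite combination of factorial moments of orders $\le p$, and lower orders follow from the same argument with $p$ replaced by $k\le p$ (jet nondegeneracy of order $p-1$ implies it for every lower order). By compactness of $K$ and continuity of the jet covariance, there are $r_0>0$ and $\lambda>0$ such that on the $r_0$-neighbourhood $K_{r_0}\subset\mathcal U$ the covariance of $(\partial^\alpha F(x))_{|\alpha|\le p-1}$ has smallest eigenvalue at least $\lambda$. Moreover, the $C^p$ (hence locally $C^{p}$-bounded in $L^q$ for every $q$) Gaussian paths give uniform moment bounds on $\sup_{K_{r_0}}|\partial^\alpha F|$ for $|\alpha|\le p$.

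Cover $K$ by finitely many balls $B$ of radius $r\le r_0/2$. By Minkowski's inequality it suffices to show $\E[\#Z(F,B)^{[p]}]<\infty$ for each ball.

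\textbf{Off-diagonal formula.} On $B^p$ minus the large diagonal, the vector $(F(x_1),\dots,F(x_p))$ is nondegenerate for distinct points when $r$ is small; this follows from the interpolation step below. So the Kac--Rice formula (Lemma~\ref{lem:appendix-azais-wschebor-thm3-2} in its $d$-dimensional version, i.e.\ the Azaïs--Wschebor multivariate Rice formula) gives
\[
\E[\#Z(F,B)^{[p]}]=\int_{B^p}\E\!\Bigl[\prod_{i}|\det DF(x_i)|\,\Big|\,F(x_i)=0\ \forall i\Bigr]\,p_{(F(x_i))_i}(0)\dd x.
\]
Simple zeros hold almost surely by the $d$-dimensional Bulinskaya lemma, since $(F,DF)$ is nondegenerate. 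The task is to show the integrand is integrable near the diagonals.

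\textbf{Divided differences (the main obstacle).} I will handle the diagonal singularity by a Newton-type divided-difference change of variables. For distinct points, define recursively
\[
\Delta_1=F(x_1),\qquad \Delta_2=\int_0^1 DF\bigl(x_1+s(x_2-x_1)\bigr)\dd s\,\frac{x_2-x_1}{|x_2-x_1|},
\]
and so on. This builds a vector $\Psi(x)$ of iterated divided differences along the chain $x_1,\dots,x_p$, continuous up to the diagonal, with
\[
(F(x_i))_i=T(x)\,\Psi(x),
\]
where $T(x)$ is block-triangular and has determinant $\prod_{i<j}$ of distance factors $|x_j-x_i|^{d}$ (one such factor per Newton step). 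At coalescence, $\Psi(x)$ converges to a linear image of the $(p-1)$-jet at the limit point, so the jet nondegeneracy together with compactness gives a uniform lower bound on the smallest eigenvalue of $\Cov(\Psi(x))$ on $B^p$. Hence $p_{(F(x_i))}(0)\le C\,|\det T(x)|^{-1}$. For the numerator, conditioned on $\Psi(x)=0$, each factor $DF(x_i)$ is itself a combination of higher divided differences multiplied by the same distance factors. Conditioning is harmless because Gaussian regression coefficients stay bounded by the uniform eigenvalue bound, and the $C^p$ bounds give moments of the remaining terms. The Jacobian product thus contributes enough powers of $|x_i-x_j|$ to cancel $|\det T|^{-1}$ up to an integrable remainder of order at most $|x_i-x_j|^{-(d-1)}$. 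I expect the exact power counting in $d>1$, with non-collinear clusters and multi-scale clusters, to be the delicate point. I would first try to organise it by an induction on the number of points, splitting $B^p$ according to the minimal spanning tree of the points, so that only one scale is treated at a time.

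\textbf{Holomorphic case.} Holomorphic sample paths are $C^\infty$, so the same scheme applies with complex divided differences, which are exactly holomorphic in each variable and need no integral remainder. Since $|\det_{\R}DF|=|\det_{\mathbb C}DF|^2$ and the complex jet is nondegenerate, the same cancellation holds with even better exponents. Here I would work directly with the complex jet rather than the real $2d$-dimensional jet, because the Cauchy--Riemann relations make the real jet degenerate.
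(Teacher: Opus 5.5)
\textbf{Assessment.} The paper gives no proof of this lemma. It is imported from Gass--Stecconi and used only with $d=1$ (for $q_{\infty,m}$ and $q_\infty$ on $U=(-1,2)$, $K=[0,1]$, $p=4$). In that case your scheme is the classical Newton divided-difference argument, and it closes. The vector $\Psi(x)=(f[x_1],f[x_1,x_2],\dots,f[x_1,\dots,x_p])$ is continuous on $B^p$. Its covariance on the diagonal is that of the $(p-1)$-jet up to an invertible linear map, so it is uniformly nondegenerate on $B^p$ for small balls, giving $p_{(f(x_i))_i}(0)\leq C\prod_{i<j}|x_i-x_j|^{-1}$. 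On $\{\Psi=0\}$ one has $f'(x_i)=\prod_{j\neq i}(x_i-x_j)\,f[x_1,\dots,x_p,x_i]$, a $p$-th order divided difference with bounded conditional moments. Hence the Kac density is $O(\prod_{i<j}|x_i-x_j|)$, and complex $d=1$ is identical.

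\textbf{The gap for $d\geq2$.} The step you defer is exactly where the argument breaks, and the claims you make there are false. Your $\Psi$ is not defined beyond the first step, and its natural analogue is not continuous up to the diagonal. For $x_1=0$, $x_2=\varepsilon e_1$, $x_3=\varepsilon e_2$,
\[
 \frac{1}{|x_3-x_2|}\left(\frac{F(x_3)-F(x_1)}{|x_3-x_1|}-\frac{F(x_2)-F(x_1)}{|x_2-x_1|}\right)
 =\frac{DF(0)(e_2-e_1)}{\sqrt2\,\varepsilon}+O(1).
\]
So $\Psi$ has no limit that is a linear image of the jet. Consequently neither the uniform bound on $\lambda_{\min}(\Cov\Psi(x))$ over $B^p$ nor the nondegeneracy of $(F(x_1),\dots,F(x_p))$ required by the Kac--Rice formula has been established.

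Even granting $p_{(F(x_i))_i}(0)\leq C|\det T|^{-1}=C\prod_{i<j}|x_i-x_j|^{-d}$, the cancellation you claim does not occur. On a collinear cluster, conditioning on $F(x_j)=0$ forces only the derivative along the line to be small. Thus $|\det DF(x_i)|\approx\prod_{j\neq i}|x_i-x_j|$, and the Jacobians supply only $\prod_{i<j}|x_i-x_j|^{2}$. What is left is $\prod_{i<j}|x_i-x_j|^{2-d}$, not a single factor $|x_i-x_j|^{-(d-1)}$.

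For fields whose jets of every order are nondegenerate (for example, i.i.d.\ components with covariance $e^{-|x-y|^2/2}$), this is the true size of the Kac density there. For $d=p=3$ and three equally spaced collinear points at spacing $\varepsilon$ it is of order $\varepsilon^{-3}$, whereas your claimed remainder would be $O(\varepsilon^{-2})$. Worse, once $p(d-2)\geq 2d$, no bound of the form $\prod_{i<j}|x_i-x_j|^{a_{ij}}$ that dominates the Kac density can be integrable over $B^p$. For $d=p=4$, each dyadic scale $\varepsilon$ of the cluster diameter contributes $\varepsilon^{-12}\cdot\varepsilon^{12}$, so the sum over scales diverges.

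\textbf{What is missing.} The Kac density is nevertheless integrable (that is the theorem). The reason is that near-collinear, or more generally interpolation-degenerate, clusters occupy little volume, and the density is much smaller for clusters in general position. A proof must therefore exploit the \emph{shape} of the cluster, not only its scales. It also needs a substitute for multivariate divided differences that stays uniformly nondegenerate across shapes. Your proposal leaves precisely this open; a minimal-spanning-tree decomposition separates scales but not shapes.

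The first objection applies verbatim to the holomorphic case in complex dimension $d\geq2$. The second applies for $d\geq3$, where the leftover factor is $\prod_{i<j}|z_i-z_j|^{4-2d}$.
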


The accompanying statement shows that these zero-count moments pass
to limits under convergence in the relevant function-space topology.

\begin{lemma}
\label{lem:appendix-gass-stecconi-rem1-3}
\textup{\cite[Remark~1.3]{GassStecconi2024}}.
In the setting of Lemma~\ref{lem:appendix-gass-stecconi-thm1-1}, the
constant bounding the \(p\)th moment of \(\#Z(F,K)\) is a bounded
functional of the associated Gaussian field as a \(C^p\), respectively
holomorphic, random function.  In particular, if \((F_n)_{n\geq0}\)
is a sequence of Gaussian fields converging in distribution to \(F\)
in the \(C^p\) topology, respectively in the topology of uniform
convergence, on \(\mathcal U\), then
\[
 \lim_{n\to\infty}
 \E\bigl[\#Z(F_n,K)^p\bigr]
 =
 \E\bigl[\#Z(F,K)^p\bigr].
\]
The analogous assertion holds for the critical-point result in
\cite[Theorem~1.2]{GassStecconi2024}.
\end{lemma}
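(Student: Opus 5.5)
The plan is to go through the Kac--Rice representation of factorial moments, which turns $\E[\#Z(F,K)^p]$ into an integral over $K^p$ whose integrand depends continuously on finitely many derivatives of the covariance kernel. Write $\#Z^{[k]}$ for the $k$th factorial power. Since $m^p=\sum_{k\le p}S(p,k)m^{[k]}$ with Stirling numbers $S(p,k)$, it suffices to treat $\E[\#Z(F,K)^{[k]}]$ for $1\le k\le p$.

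\textbf{Step 1: the constant is a bounded functional.} For distinct points, the Rice formula (Lemma~\ref{lem:appendix-azais-wschebor-thm3-2} in $d=1$, and its multivariate Jacobian analogue in general $d$) expresses $\E[\#Z(F,K)^{[k]}]$ as
\[
 \int_{K^k}\rho_F(x_1,\ldots,x_k)\dd x_1\cdots\dd x_k,
\]
where $\rho_F$ is the Kac--Rice density. The only difficulty is the diagonal, where the vector $(F(x_1),\ldots,F(x_k))$ degenerates. I would handle it in three moves.
\begin{itemize}
 \item Replace values at clustered points by divided differences (Hermite interpolation). When points coalesce, the vector of divided differences converges to the jet $(\partial^\alpha F(x))_{|\alpha|\le k-1}$, and $k\le p$.
 \item The jet-nondegeneracy hypothesis, together with compactness of $K$, then gives a uniform lower bound on the smallest eigenvalue of the divided-difference covariance. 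This bound holds on all of $K^k$, including the diagonal.
 \item In these coordinates, the density and the conditional expectation of the Jacobian products are bounded by a constant depending only on two quantities: $\sup_{x\in U'}\E\|F\|_{C^p}^{2}$ on a compact neighbourhood $U'$ of $K$, and the inverse of that eigenvalue bound. The volume factor produced by the change of variables compensates exactly the singular Vandermonde-type factor.
\end{itemize}
This exhibits the moment bound as a function of the covariance kernel and its derivatives up to order $p$ in each variable, which is the ``bounded functional'' claim.

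\textbf{Step 2: continuity under convergence in distribution.} If $F_n\to F$ in distribution in $C^p(\mathcal U)$, Gaussianity upgrades this to locally uniform convergence of the covariance kernels together with their mixed derivatives up to order $p$ in each variable. Uniform integrability of $\|F_n\|_{C^p}^2$ follows from Gaussian hypercontractivity combined with tightness. Consequently:
\begin{itemize}
 \item the jet covariances converge uniformly on $U'$;
 \item for large $n$, the smallest eigenvalue of the jet covariance of $F_n$ stays at least half that of $F$;
 \item the Kac--Rice integrands $\rho_{F_n}$, written in divided-difference coordinates, converge pointwise on $K^k$ and are dominated by a common bound.
\end{itemize}
Dominated convergence then gives $\E[\#Z(F_n,K)^{[k]}]\to\E[\#Z(F,K)^{[k]}]$ for all $k\le p$, and hence convergence of the $p$th moment.

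One boundary issue needs attention: the Kac--Rice integral counts zeros in $K$ itself, including its boundary, and the limit identity needs no mass concentrating on $\partial K$. This is automatic because the integrands converge and are uniformly bounded. I expect the uniform near-diagonal domination in Step 1 to be the main obstacle. I would first attempt it for $d=1$ via the Hermite interpolation identity, and then pass to general $d$ by a covering of $K$ by small cubes, applying a multivariate Taylor remainder estimate on each cube.

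\textbf{Step 3: the holomorphic case and critical points.} In the holomorphic case, uniform convergence on $\mathcal U$ implies $C^p$ convergence on compact subsets by the Cauchy estimates. Complex zeros correspond to real zeros of the associated $\R^{2d}$-valued field, with Jacobian $|\det\partial F|^2$. The same two steps therefore apply verbatim, or one can alternatively count zeros via the argument principle.

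For the critical-point analogue, I would apply the argument to the gradient field $\nabla F$. This requires $C^{p+1}$ convergence and nondegeneracy of the corresponding $p$-jet of $\nabla F$, exactly as in \cite[Theorem~1.2]{GassStecconi2024}.
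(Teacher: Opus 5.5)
The paper does not prove this lemma. It quotes it from Gass--Stecconi and uses it only once, in Proposition~\ref{prop:finite-range-uniform-local-moments}, with $d=1$ and $p=4$. As a proof of the statement as written, your proposal has a gap exactly where the substance of that theorem lies.

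Your diagonal analysis in Step~1 is the one-dimensional Newton/Hermite mechanism. For $d=1$, the identity $F(x_j)=\sum_{i\le j}F[x_1,\dots,x_i]\prod_{l<i}(x_j-x_l)$ is a triangular map whose determinant is the Vandermonde $\Delta$. On $\{F(x_1)=\dots=F(x_k)=0\}$ one has $F'(x_i)=\prod_{l\ne i}(x_i-x_l)\,F[x_1,\dots,x_k,x_i]$, so the Kac--Rice density is $|\Delta|$ times a bounded factor.

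For $d\ge2$ none of this is available, for two reasons.
\begin{itemize}
 \item The Newton form has no direct analogue. By the Mairhuber--Curtis theorem, for $d\ge2$ and $k\ge2$ no fixed $k$-dimensional space of continuous functions on an open subset of $\R^d$ is unisolvent at every $k$-point configuration.
 \item The rate at which $\prod_i|\det DF(x_i)|$ vanishes on the zero event depends on the geometry of the cluster, not only on mutual distances. With all distances of order $r$, three collinear zeros force only $\det DF(x_1)=O(r)$, whereas a nondegenerate triangle forces $O(r^2)$. So there is no single ``Vandermonde-type factor'' to compensate.
\end{itemize}
Covering $K$ by small cubes and Taylor-expanding at their centres does not address this. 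The points still coalesce inside each cube, and the Taylor remainder gives no coordinates in which the joint density of $(F(x_1),\dots,F(x_k))$ is controlled near the various diagonal strata. This uniform near-diagonal control is exactly what Gass--Stecconi supply; already for critical points, moments of order $p\ge3$ in dimension $d\ge2$ were previously open. Your argument therefore does not reach $d\ge2$, and Step~3 inherits the same gap for $\mathbb C^d$ with $d\ge2$ and for critical points.

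Two further points need repair even for $d=1$.
\begin{enumerate}
 \item Jet nondegeneracy and compactness bound the divided-difference covariance from below only near the diagonal, not on all of $K^k$. For example, $F(x)=\xi_1\cos x+\xi_2\sin x$ has a nondegenerate $1$-jet at every point, yet $F(x+\pi)=-F(x)$. For the same reason, the Kac--Rice formula of Lemma~\ref{lem:appendix-azais-wschebor-thm3-2} is not available on $K^k$: it needs nondegeneracy at distinct points, which is not a hypothesis. You must localize to cells shorter than the radius on which the divided-difference covariance stays positive definite, and get the bound from Minkowski's inequality in $L^p$. For the limit, proceed as follows.
 \begin{itemize}
  \item Convergence of the local moments, together with convergence in distribution of the cell counts, makes each $\#Z(F_n,Q)^p$ uniformly integrable.
  \item The products across cells are then handled by the AM--GM inequality and joint convergence in distribution of the cell counts.
 \end{itemize}
 \item The holomorphic case does not follow ``verbatim'' by realification. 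By the Cauchy--Riemann relations $\partial_{y_j}F=i\,\partial_{x_j}F$, the real $(p-1)$-jet of the $\R^{2d}$-valued field is degenerate for every $p\ge2$, so the real hypothesis never holds. In one complex variable you must redo the argument with complex divided differences. These give a density factor $|\Delta|^{-2}$ and a Jacobian factor $|\Delta|^{4}$.
\end{enumerate}
With these repairs your argument proves the real one-dimensional case, which is all the paper uses, but not the lemma as stated.
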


The final external result gives uniform and nonuniform bounds for Gaussian
approximation of sums of \(m\)-dependent random fields.

\begin{lemma}
\label{lem:appendix-chen-shao-thm2-6}
\textup{\cite[Theorem~2.6]{ChenShao2004}}.
Let \(d\geq1\), let \(J\) be a finite subset of
\(\mathbb Z_{>0}^d\) and equip \(\mathbb Z_{>0}^d\) with the distance
\[
 |i-j|_\infty=\max_{1\leq r\leq d}|i_r-j_r|.
\]
For subsets \(A,B\subset\mathbb Z_{>0}^d\), set
\[
 \rho(A,B)=\inf\{|i-j|_\infty:i\in A,\ j\in B\}.
\]
A random field \(\{X_i:i\in J\}\) is \(m\)-dependent if
\(\{X_i:i\in A\}\) and \(\{X_j:j\in B\}\) are independent whenever
\(A,B\subset J\) and \(\rho(A,B)>m\).

Suppose that \(\{X_i:i\in J\}\) is an \(m\)-dependent random field,
\(\E X_i=0\) and \(\E|X_i|^p<\infty\) for every \(i\in J\), where
\(2<p\leq3\).  Let
\[
 W=\sum_{i\in J}X_i,
 \qquad
 \Var(W)=1,
\]
let \(F_W\) be the distribution function of \(W\) and let \(\Phi\)
be the standard Gaussian distribution function.  Then
\[
 \sup_{z\in\R}|F_W(z)-\Phi(z)|
 \leq
 75(10m+1)^{(p-1)d}
 \sum_{i\in J}\E|X_i|^p.
\]
Moreover, for every \(z\in\R\),
\[
 |F_W(z)-\Phi(z)|
 \leq
 C(1+|z|)^{-p}19^{pd}(m+1)^{(p-1)d}
 \sum_{i\in J}\E|X_i|^p,
\]
where \(C\) is a finite absolute constant.
\end{lemma}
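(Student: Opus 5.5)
This statement is the Berry--Esseen theorem of Chen and Shao for \(m\)-dependent fields, quoted from \cite{ChenShao2004}; in the paper it is an external input, so the plan below only indicates how one would reprove it. I would use Stein's method with local-dependence neighbourhoods.

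\textbf{Neighbourhoods and the Stein identity.} For \(i\in J\), set
\[
A_i=\{j\in J:|j-i|_\infty\leq m\},\qquad B_i=\{j\in J:|j-i|_\infty\leq 2m\},
\]
and
\[
Y_i=\sum_{j\in A_i}X_j,\qquad W^{(i)}=W-\sum_{j\in B_i}X_j .
\]
By \(m\)-dependence, \(X_i\) is independent of \(W-Y_i\), and the pair \((X_i,Y_i)\) is independent of \(W^{(i)}\). Let \(f_z\) solve the Stein equation \(f'(w)-wf(w)=\mathbf 1_{\{w\leq z\}}-\Phi(z)\); recall \(\|f_z\|_\infty\leq\sqrt{2\pi}/4\) and \(\|f_z'\|_\infty\leq1\). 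Since \(\E X_i=0\) and \(X_i\) is independent of \(W-Y_i\), we get
\[
\E Wf(W)=\sum_i\E X_i\bigl(f(W)-f(W-Y_i)\bigr)=\E\int f'(W+t)\,\widehat K(t)\dd t,
\]
where \(\widehat K(t)=\sum_iX_i\bigl(\mathbf 1_{\{-Y_i\leq t\leq0\}}-\mathbf 1_{\{0<t\leq -Y_i\}}\bigr)\). Because \(\Var W=1\), \(\E\int\widehat K=\sum_i\E X_iY_i=1\). Hence \(\Pp(W\leq z)-\Phi(z)\) splits into two pieces. The first is a fluctuation term, \(\E\int f'(W+t)(\widehat K(t)-\E\widehat K(t))\dd t\). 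The second is \(\E\int\bigl(f'(W+t)-f'(W)\bigr)\E\widehat K(t)\dd t\). Using \(f'(w)=wf(w)+\mathbf 1_{\{w\leq z\}}-\Phi(z)\), the jump part of the second piece reduces to probabilities \(\Pp(z-|Y_i|\leq W\leq z)\).

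\textbf{Steps in order.} First, reduce to \(p=3\): truncate at \(|X_i|\leq1\), write \(\bar X_i\) for the truncated variables, and absorb the tails through \(\E|X_i|^2\mathbf 1_{\{|X_i|>1\}}\leq\E|X_i|^p\). This loses only constants, and the recentring and renormalisation of the truncated sum cost \(O(\sum\E|X_i|^p)\). Second, bound the smooth parts with the generalized Hölder inequality. Since \(|A_i|,|B_i|\leq(4m+1)^d\), each term \(\sum_i\E|X_iY_i|\,|Y_i|\) and each covariance in the fluctuation term is bounded by \((10m+1)^{(p-1)d}\sum_i\E|X_i|^p\). For the fluctuation term, condition on the \(\sigma\)-field of the variables in \(B_i\) and use the independence of \((X_i,Y_i)\) from \(W^{(i)}\), so that correlations only occur between indices at distance at most \(4m\). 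Third, control the indicator parts by a concentration inequality of the form
\[
\Pp\bigl(a\leq W^{(i)}\leq b\bigr)\leq C(b-a)+C(10m+1)^{(p-1)d}\sum_j\E|X_j|^p .
\]
I would prove it by applying the same Stein identity to the test function \(f\) with \(f'=\mathbf 1_{[a-\delta,b+\delta]}\), choosing \(\delta\) of order \(\sum\E|X_i|^p\) times the neighbourhood size, and using the lower bound \(\E\int_{|t|\leq\delta}\widehat K\geq\tfrac12\) on a suitable event. Fourth, insert the concentration bound with \(b-a=|Y_i|\), exploiting independence from \((X_i,Y_i)\), and sum over \(i\).

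\textbf{Main obstacle.} The concentration inequality in the third step is where I expect the difficulty. The variable \(W^{(i)}\) still contains dependent summands, so the bound has to be run with a second layer of neighbourhoods, and the constants must be tracked carefully to reach exactly \((10m+1)^{(p-1)d}\) and the prefactor \(75\). For the non-uniform bound, I would repeat the argument with the Stein solution's decay \(|f_z(w)|\leq C(1+|z|)^{-1}\) for \(|w|\leq|z|/2\), together with the moment bound \(\E|W|^p\leq C\), via Markov's inequality to handle \(|W|>|z|/2\). The polynomial factor \(19^{pd}\) then arises from iterating the neighbourhood enlargement.
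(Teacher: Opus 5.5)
\textbf{The paper does not prove this lemma.} It is quoted from \cite{ChenShao2004} and used as a black box in Theorem~\ref{thm:finite-range-zero-count-BE}, so there is no internal argument to compare with. The only question is whether your outline would reprove it. Your set-up is sound: both independence claims for $A_i,B_i$ hold, the identity $\E Wf(W)=\E\int f'(W+t)\widehat K(t)\dd t$ with $\E\int\widehat K=1$ is correct, and the truncation reduction works. But the central step fails for a structural reason, not a bookkeeping one.

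\textbf{The gap.} Under local dependence the kernel $\widehat K$ is signed, and its total mass $\sum_i|X_iY_i|$ is not controlled by $\Var(W)=1$; only $\sum_i\E X_iY_i=1$ is.

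\emph{Example.} Take $d=m=1$, i.i.d.\ standard Gaussian $\xi_i$, and $X_i=\sigma(\xi_i-\xi_{i-1})+n^{-1/2}\xi_i$, so that $\Var W=1+O(\sigma^2)$. Then $\sum_i\E|X_i|^3\asymp n\sigma^3$ but $\sum_i\E|X_iY_i|\asymp n\sigma^2$. With $\sigma=n^{-7/20}$ the first is $n^{-1/20}\to0$, while the product of the two is $n^{1/4}\to\infty$.

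This breaks your third and fourth steps in two places.
\begin{enumerate}
\item \emph{The concentration inequality.} The Stein proof you describe needs $\widehat K\geq0$ to discard the contributions from $|t|>\delta$ and from $W\notin[a,b]$, as in the independent case. Here the discarded negative part has expectation up to $\sum_i\E(X_iY_i)^-$, which is not small.
\item \emph{Inserting it.} Suppose you had $\Pp(a\le W^{(i)}\le b)\le C(b-a)+C\gamma$ with $\gamma=(10m+1)^{(p-1)d}\sum_j\E|X_j|^p$. Inserting it into the jump terms (interval length of order $|Y_i|$, weight $|X_i|$) gives $C\sum_i\E|X_i|Y_i^2+C\gamma\sum_i\E|X_iY_i|$. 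The second term is not $O(\gamma)$: in the example it diverges while the asserted bound tends to zero.
\end{enumerate}

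The fluctuation term has the same defect, because $f_z'$ is discontinuous. Reducing $\E[f_z'(W+t)(\widehat K(t)-\E\widehat K(t))]$ to local covariances again requires concentration of $W^{(i)}$. The weights there are of size $|X_iY_i|$, and the endpoint shift $\tau_i=\sum_{j\in B_i}X_j$ is not independent of $W^{(i)}$.

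\textbf{What is missing.} You need a mechanism ensuring that concentration or Kolmogorov errors are only ever multiplied by genuinely third-order local quantities such as $\sum_i\E|X_i|Y_i^2$. You also need a way to handle random endpoints correlated with $W^{(i)}$. Supplying these is the substance of Chen and Shao's proof, and your sketch does not contain it. Tracking the constants $75$ and $(10m+1)^{(p-1)d}$ is not the real difficulty.
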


\raggedbottom

\end{document}